\documentclass[a4paper,11pt]{amsart}
\usepackage{amssymb,xspace,amscd}
\usepackage[pdftex]{hyperref}

\title[Complex vector fields and hypoelliptic
PDO's]{Complex vector fields and hypoelliptic 
partial differential operators}

\author[A.~Altomani]{Andrea Altomani}
\address{A.\ Altomani:
Research Unity in Mathematics\\ University of Luxembourg\\ 162a,
avenue de la Fa\"iencerie\\ L-1511 Luxembourg}
\email{andrea.altomani@uni.lu}

\author[C.~D.~Hill]{C.~Denson Hill}
\address{C.D.Hill: Department of Mathematics\\
Stony Brook University\\ Stony Brook, NY 11794 \\USA}
\email{dhill@math.sunysb.edu}

\author[M.~Nacinovich]{Mauro Nacinovich}
\address{M.\ Nacinovich:
Dipartimento di Matematica\\ II Universit\`a di Roma
``Tor Ver\-ga\-ta''\\ Via della Ricerca Scientifica\\ 00133 Roma
\\Italy}
\email{nacinovi@mat.uniroma2.it}

\author[E.~Porten]{Egmont Porten}
\address{E.\ Porten: Department of Mathematics\\ 
Mid Sweden University\\ 85170 Sundsvall \\Sweden}
\email{Egmont.Porten@miun.se}

\date{\today}
\subjclass[2000]{Primary:  35H20  
Secondary: 35H10 32V20}
\keywords{Complex distribution, subelliptic estimate, hypoellipticity, 
Levi form,
$CR$ manifold, pseudoconcavity, flag manifold}

\numberwithin{equation}{section}

\theoremstyle{plain}
\newtheorem{thm}{Theorem}[section]
\newtheorem{lem}[thm]{Lemma}
\newtheorem{cor}[thm]{Corollary}
\newtheorem{prop}[thm]{Proposition}

\theoremstyle{definition}

\newtheorem{defn}[thm]{Definition}
\newtheorem{rmk}[thm]{Remark}

\setcounter{tocdepth}{1}

\begin{document}
\begin{abstract}
We prove a subelliptic estimate for systems of complex vector fields
under some assumptions that generalize the essential pseudoconcavity 
for
$CR$ manifolds, that was first introduced by two of the Authors,
and the H\"ormander's bracket condition 
for real vector fields.
Applications are given to prove the hypoellipticity of first order systems
and second order partial differential operators. Finally we describe
a class of compact homogeneous $CR$ manifolds for which the distribution
of $(0,1)$ vector fields satisfies a subelliptic estimate.
\end{abstract}
\maketitle

\tableofcontents
\section*{Introduction}
In this paper we prove a subelliptic estimate
for sums of squares of complex vector fields. 
Namely, given a distribution $\mathfrak{Z}(M)$ of complex vector fields
on an $m$ dimensional real smooth manifold $M$, 
we find conditions for 
the subellipticity of $\mathfrak{Z}(M)$, i.e. 
the validity of the estimate
\begin{equation}
  \label{eq:ob}
  \|u\|_{\epsilon}^2\leq{C}\left(\|u\|^2_0
+\sum_{j=1}^n\|L_j(u)\|_0^2
\right)\quad\forall{u}\in\mathcal{C}^{\infty}_0(U),
\end{equation}
where $U$ is a relatively compact open subset 
of $M$ and $\epsilon>0$,
$C>0$ are positive constants,
depending on $U$ and on
$L_1,\hdots,L_n\in\mathfrak{Z}(M)$, and $\mathcal{C}^{\infty}_0(U)$
is the space of smooth complex valued functions on $M$, with compact
support contained in~$U$. It is known that this estimate 
implies the $\mathcal{C}^{\infty}$-hypoellipticity in $U$ of
$\sum_{j=1}^n{L}_j^*L_j
$
(see e.g. \cite{K71, HN00}).\par
Our result directly applies to 
the study of the 
overdetermined systems of first order
partial differential operators
on $M$, that are canonically associated to
$\mathfrak{Z}(M)$
and to a $\mathbb{C}$-linear connection
on a complex vector bundle~$E\xrightarrow{\pi}M$.
\par
We also investigate the $\mathcal{C}^{\infty}$-hypoellipticity of
more general 
second order partial differential operators
on $M$, of the form 
\begin{equation}
  \label{eq:oa}
 P(u)=\sum_{j=1}^n{\bar{L}_jL_j}(u)+L_0(u)+a\,u, 
\end{equation}
where $a$ is a complex valued smooth function in 
$\mathcal{C}^{\infty}(M)$, $L_0,L_1,\hdots,L_n$ are 
smooth complex vector fields on $M$,
and only 
the imaginary part of $L_0$ is 
required to belong to the $\mathcal{C}^{\infty}(M)$-linear
span of
$L_1,\hdots,L_n,\bar{L}_1,\hdots,\bar{L}_n$.
\par
Our original motivation, and also our main applications, involve
$CR$ manifolds.
However, the study of 
\eqref{eq:ob} and of the 
operators \eqref{eq:oa} is of independent interest,
and
related questions have been considered recently in 
\cite{Ko05, christ-2005, BDKT06, pp1, pp2}.
These papers show that the consideration of complex $L_j$'s
brings completely new phenomena, 
compared with the real case
(see e.g. \cite{Hor67, ff81}).
\par
Condition \eqref{eq:be} for the subellipticity of $\mathfrak{Z}(M)$
can be viewed 
as a generalization,
at the same time, of both
the \emph{essential pseudoconcavity}
of \cite{HN00} in $CR$ geometry 
and H\"ormander's condition of \cite{Hor67} for the generalized Kolmogorov
equation.
In fact, in \S\ref{sec:C} we prove
that, at points that are \emph{generic} for
$\mathfrak{Z}(M)$ 
(in a sense made precise in Definition \ref{def:cg}), it is equivalent
to a condition \eqref{eq:cra},
that involves semidefinite 
\emph{generalized Levi forms} attached to $\mathfrak{Z}(M)$
and their kernels. This quite explicit formulation
was suggested by specific examples from \cite{AMN06}. However,
conditions \eqref{eq:be} and \eqref{eq:cra}
apply to more general contexts than $CR$ geometry.
At the beginning, we assume
neither that $\mathfrak{Z}(M)$ is a distribution of constant rank, nor
that it satisfies any formal integrability condition, nor anything special
about the intersection $\mathfrak{Z}(M)\cap\overline{\mathfrak{Z}(M)}$; but
conditions of this type need to be imposed in \S\ref{sec:C} to obtain 
the equivalence of \eqref{eq:be} and \eqref{eq:cra}. \par
The work of
\cite{Ko05, christ-2005, BDKT06} shows that the condition that
the Lie algebra generated by $\mathfrak{Z}(M)$ spans the full
complexified tangent space is 
in general not sufficient either for subellipticity
or for the $\mathcal{C}^{\infty}$-hypoellipticity of the sum of squares.
The present work is complementary, inasmuch as our sharpest results hold
away from singularities and in the case where
$\mathfrak{Z}(M)$ is a Lie algebra.\par
We reduce the question of the subellipticity of $\mathfrak{Z}(M)$
to one involving \emph{real} vector fields.
Indeed, our task is to find all real
vector fields $X$ that are \emph{enthralled} by $\mathfrak{Z}(M)$, i.e.
satisfy
\begin{equation}
  \label{eq:oc}
 \|X(u)\|_{\epsilon-1}^2\leq{C}\left(\sum_{j=1}^n\|L_j(u)\|_0^2+\|u\|_0^2\right),
\quad\forall{u}\in\mathcal{C}^{\infty}_0(U) 
\end{equation}
for some $L_1,\hdots,L_n\in\mathfrak{Z}(M)$ and
$\epsilon>0$, ${C}>0$ depending on $X$ and $U$.
In \S\ref{sec:a}
we observe that this set is a module over the Lie algebra
$\mathfrak{A}_{\mathfrak{Z}}(M)$
generated by those real vector fields $X$ for which 
\eqref{eq:oc} holds with $\epsilon=1$. This was essentially shown in \cite{K71}.
By an argument of J.J.Kohn
(see \cite{Ko05} and Lemma \ref{lm:Ae} below), we know 
that $(\mathrm{Re}\,Z)$ satisfies \eqref{eq:oc} with $\epsilon=\frac{1}{2}$
for all $Z\in\mathfrak{Z}(M)$. On the other hand,
on an open dense subset $M'$ of $M$ that excludes some \textit{singular}
points for $\mathfrak{Z}(M)$,
one can check that a real vector field $X$
satisfying \eqref{eq:oc} with $\epsilon>\frac{1}{2}$
is  necessarily the real part of some
$Z\in\mathfrak{Z}(M)$. 
Hence we conjecture that
the real vector fields $X$, satisfying \eqref{eq:oc}, coincide
on a dense open subset of $M$ with the elements of the
$\mathfrak{A}_{\mathfrak{Z}}(M)$-module generated by the real parts of
the elements of $\mathfrak{Z}(M)$. In \S\ref{sec:C} we characterize,
outside the singular set, the Lie algebra $\mathfrak{A}_{\mathfrak{Z}}(M)$.
Then equality \eqref{eq:Bcbb}, 
together with Lemma \ref{lm:Ae},
would give a complete and explicit description of the
set of real vector fields enthralled by $\mathfrak{Z}(M)$.\par
A motivation for \cite{HN00} was to understand the structure of
a large class of $CR$ 
manifolds, of higher $CR$ codimension, 
that are of finite type in the sense of \cite{BG77}, 
and are not pseudoconcave
in the sense of \cite{HN96}, because of the vanishing of their
scalar Levi forms
in some characteristic codirections.
To prove also in this case
a subelliptic estimate
for the tangential Cauchy-Riemann operator on functions, 
two of the authors
invented in \cite{HN00} 
the notion of \emph{essential pseudoconcavity}.
Compared with the
usual more restrictive notion of pseudoconcavity, 
it allows 
some of the scalar Levi forms to be zero. The subelliptic estimate
is then obtained for
weaker Sobolev norms than the $\frac{1}{2}$-norm
in \cite{HN96}. 
On essentially pseudoconcave almost $CR$ manifolds 
the maximum modulus principle and the weak unique continuation principle
for $CR$ functions
(\cite{HN00, HN03}) are valid. Under the additional assumption of 
formal integrability of the
$CR$ structure, it was also possible
to prove, in case the $CR$ manifold is
either compact or real analytic, 
finiteness and vanishing theorems for
the highest cohomology groups of the 
$\bar\partial_M$-complexes (\cite{N07}).\par
Our new condition is more general than
the weak pseudoconcavity of \cite{HN00},
as here we allow 
the scalar Levi forms in some characteristic codirections to be
semidefinite. As a heuristic motivation, consider a
$CR$ submanifold $M$ of a complex manifold $F$.
If $M$ 
is contained in a strictly pseudoconvex real hypersurface of $F$, then 
it is easy to find $L^2_{\mathrm{loc}}$
germs of $CR$ functions that are not smooth.
We can expect,
vice versa, that all  $CR$ distributions on $M$ 
are smooth when 
all germs of hypersurfaces through $M$ in  $F$ 
are strictly pseudoconcave. This is indeed the case when
our condition \eqref{eq:cra} is  verified. It is discussed in 
\S\ref{sec:examp}.
\par
As in \cite{HN00}, the homogeneous examples have strongly inspired
our investigation. After considering general homogeneous $CR$ manifolds in
\S\ref{sec:hom},
in \S\ref{sec:orb} 
we  classify
all minimal orbits of real forms $\mathbf{G}$ of
$\mathbf{G}^{\mathbb{C}}$-homogeneous
flag manifolds (see e.g. \cite{Wolf69, AMN06, AMN07})
that enjoy condition \eqref{eq:be}.
In \cite[\S 13]{AMN06}, 
together with Prof. Medori, two of the authors gave
the complete classification of the essentially pseudoconcave
minimal orbits. Here
we show that those satisfying \eqref{eq:be}
form a strictly larger class of $CR$ manifolds, on which the local 
$CR$ distributions are smooth.
\par
We collected our results on hypoellipticity in \S\ref{sec:ope}. 
Given a complex vector bundle $E\xrightarrow{\pi}M$, endowed with a
$\mathbb{C}$-linear connection $\nabla$, for each complex vector field
$Z$ on $M$ we obtain a differential operator $\nabla_Z$, acting on the
sections of $E$. We prove that the subellipticity of $\mathfrak{Z}(M)$
implies the $\mathcal{C}^{\infty}$-hypoellipticity 
of $(\nabla_Z)_{Z\in\mathfrak{Z}(M)}$.
This yields, for a compact $M$, finite dimensionality of the space of global solutions
of $(\nabla_Z(u))_{Z\in\mathfrak{Z}(M)}=0$, and closedness of the range
of $(\nabla_Z)_{Z\in\mathfrak{Z}(M)}$. For $CR$ manifolds, this implies that
the cohomology groups $H^{p,1}_{\bar\partial_M}(M)$ of the tangential 
Cauchy-Riemann complex are Hausdorff.\par
The subellipticity of $\mathfrak{Z}(M)$ gives 
$\mathcal{C}^{\infty}$-hypoellipticity for the sum of squares and also for
slightly more general operators (see Theorem \ref{tm:ell}). The question
of generalizing the Kolmogorov equation to the case of complex vector fields
is more complicated. We obtained two different formulations in
Theorems \ref{tm:cpd} and \ref{tm:bh}. In the former, we obtain
$\mathcal{C}^{\infty}$-hypoellipticity under the condition that the
$\mathfrak{A}_{\mathfrak{Z}}(M)$-module generated by the real parts of
the vector fields in $\mathfrak{Z}(M)$ and the ``time'' vector field
span the whole tangent space at a point. This distinction between ``space''
and ``time'' vector fields makes this result weaker than the one of
\cite{Hor67} for the case $\mathfrak{Z}(M)=\overline{\mathfrak{Z}(M)}$.
This is the reason we prove in Theorem \ref{tm:bh}
a result generalizing \cite{Hor67}, under the condition that the real parts
of the vector fields in $\mathfrak{Z}(M)$ satisfy \eqref{eq:oc} with
$\epsilon=1$.

\section{Subelliptic systems of complex vector fields}\label{sec:a}
Let $M$ be a smooth \textit{real} manifold 
of dimension $m$. 
For $U$ open in $M$, we denote by
$\mathcal{C}^{\infty}(U)$ (resp.
$\mathcal{C}^{\infty}_0(U)$) the space of \textit{complex
valued} smooth functions (resp. with
compact support) in $U$, 
and by  $\mathfrak{X}(U)$ (resp. $\mathfrak{X}^{\mathbb{C}}(U)$)
the Lie algebra of the smooth real (resp. complex) vector fields in~$U$.
\begin{defn}
A \emph{distribution of complex vector fields} 
$\mathfrak{Z}(M)$ in $M$ is 
any  $\mathcal{C}^{\infty}(M)$-left submodule of 
$\mathfrak{X}^{\mathbb{C}}(M)$. This means that $Z_1+Z_2$ and
$fZ$ belong to $\mathfrak{Z}(M)$ if $Z_1,Z_2,Z\in\mathfrak{Z}(M)$
and $f\in\mathcal{C}^{\infty}(M)$. \par
For each point $p\in{M}$ we set
\begin{equation}
  \label{eq:Aa}
  Z_pM=\{Z(p)\mid Z\in\mathfrak{Z}(M)\}\subset{T}^{\mathbb{C}}_pM=
\mathbb{C}\otimes_{\mathbb{R}}T_pM.
\end{equation}
The dimension of the complex vector space $Z_pM$ is called the
\emph{rank} of $\mathfrak{Z}(M)$ at $p$.
We \emph{do not} assume 
in the definition that $\mathfrak{Z}(M)$ has a constant rank.
The points where the rank of $\mathfrak{Z}(M)$ is not constant
on a neighborhood are the \emph{singularities} of $\mathfrak{Z}(M)$.
If $\mathfrak{Z}(M)=\{Z\in\mathfrak{X}^{\mathbb{C}}(M)\mid Z(p)\in{Z}_pM,
\;\forall{p}\in{M}\}$, we say that $\mathfrak{Z}(M)$ has at most
simple singularities. 
\par
We say that $\mathfrak{Z}(M)$ is \emph{formally integrable} if 
\begin{equation}
  \label{eq:Bbb}
  [\mathfrak{Z}(M),\mathfrak{Z}(M)]\subset\mathfrak{Z}(M).
\end{equation}
\par
Distributions of \emph{real} vector fields 
and their singular points are
defined likewise.
\end{defn}
\begin{defn}
The distribution $\mathfrak{Z}(M)$ is said to be \emph{subelliptic} at $p\in{M}$
if there is an open neighborhood $U$ of $p$ in $M$,
a real $\epsilon>0$, a constant $C>0$,
and a finite set
$L_1,\hdots,L_n$ of vector fields from $\mathfrak{Z}(M)$, such that
\eqref{eq:ob} is satisfied,
where $\|\;\cdot\;\|_{\epsilon}$ and $\|\;\cdot\;\|_0$ are the
$\epsilon$-Sobolev norms and the $L^2$-norm with respect to some
Riemannian metric on $M$ (see e.g. \cite{heb96}).
\end{defn}
\begin{rmk}
When $\mathfrak{Z}(M)$ is the complexification of a distribution of
\emph{real} vector fields $\mathfrak{Y}(M)\subset\mathfrak{X}(M)$,
then \eqref{eq:ob}, at a generic point $p\in{M}$, 
is equivalent to the fact
that $\mathfrak{Z}(M)$ and its 
higher order commutators span the
whole complexified
tangent space $T_{p}^{\mathbb{C}}M$ 
(see, e.g. \cite{Hor67, ff81}). However this condition is neither
necessary nor 
sufficient, and does not imply 
$\mathcal{C}^{\infty}$-hypoellipticity of the associated sum of squares operators
when the
vector fields are complex
(cf. \cite{Ko05,christ-2005,BDKT06, pp2}).
\end{rmk}
\begin{defn}\label{def:Ac}
 We say that $\mathfrak{Z}(M)$ \emph{enthralls} a vector field
$Z\in\mathfrak{X}^{\mathbb{C}}(M)$ if
\begin{equation}
  \label{eq:Ac}
  \left\{
    \begin{gathered} \forall 
      {U}^{\mathrm{open}}\Subset{M},\;\exists{L_1,\hdots,L_n}\in
\mathfrak{Z}(M),\;\exists\epsilon>0,\,C>0\;\text{s.t.}\\
\|Z(u)\|_{\epsilon-1}^2\leq{C}\left(\sum_{j=1}^n\|L_j(u)\|_0^2+\|u\|_0^2\right)
,\quad\forall u\in\mathcal{C}^{\infty}_0(U).
    \end{gathered}\right.
\end{equation}
Set
\begin{align}
  \label{eq:Ad}
\mathbb{S}_{\mathfrak{Z}}(M)
&=\{Z\in\mathfrak{X}^{\mathbb{C}}(M)
\mid \mathfrak{Z}(M)\;\text{enthralls}\; Z\}.
\end{align}
\end{defn}
We notice that
$\mathbb{S}_{\mathfrak{Z}}(M)$ is a distribution of complex
vector fields containing $\mathfrak{Z}(M)$, 
and $\mathbb{S}_{\mathfrak{Z}}(M)\cap\mathfrak{X}(M)$ is a distribution of real vector fields,
and that both are spaces of global sections of fine sheaves of 
left modules, the first over the sheaf of 
germs of \textit{complex} valued smooth functions, the second over the
sheaf of  
germs of \textit{real} valued smooth functions on~$M$.
\par
By the real Frobenius theorem we obtain (see e.g. \cite{Hor67})
\begin{prop}
Let $\mathfrak{W}_{\mathfrak{Z}}(M)$ be the Lie subalgebra 
of $\mathfrak{X}^{\mathbb{C}}(M)$ generated by
$\mathfrak{Z}(M)+\overline{\mathfrak{Z}(M)}$. Then, if
$\mathfrak{W}_{\mathfrak{Z}}(M)$ is a distribution of constant rank on $M$,
we have
\begin{equation}
  \label{eq:Aee}
 \mathbb{S}_{\mathfrak{Z}}(M)\cap\mathfrak{X}(M)\subset 
\mathbb{S}_{\mathfrak{Z}}(M)\subset\mathfrak{W}_{\mathfrak{Z}}(M).
\end{equation}
\end{prop}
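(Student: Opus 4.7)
The first inclusion $\mathbb{S}_{\mathfrak{Z}}(M)\cap\mathfrak{X}(M)\subset\mathbb{S}_{\mathfrak{Z}}(M)$ is tautological. The content is in the second inclusion, which I would argue pointwise and by contradiction: supposing there are $Z\in\mathbb{S}_{\mathfrak{Z}}(M)$ and $p_0\in M$ with $Z(p_0)\notin\mathfrak{W}_{\mathfrak{Z}}(M)_{p_0}$, I would produce an oscillatory family of test functions violating the enthrallment estimate \eqref{eq:Ac}.

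The structural input is that $\mathfrak{W}_{\mathfrak{Z}}(M)$ is invariant under complex conjugation: the generating set $\mathfrak{Z}(M)+\overline{\mathfrak{Z}(M)}$ is, and the Lie bracket commutes with conjugation. Hence $\mathfrak{W}_{\mathfrak{Z}}(M)$ is the complexification of the real Lie subalgebra $\mathfrak{V}:=\mathfrak{W}_{\mathfrak{Z}}(M)\cap\mathfrak{X}(M)\subset\mathfrak{X}(M)$. Under the constant rank assumption, the real Frobenius theorem applies to $\mathfrak{V}$ and furnishes, on some neighborhood of $p_0$, local coordinates $(x_1,\ldots,x_r,\phi_1,\ldots,\phi_{m-r})$ such that the $\phi_i$ are first integrals, i.e.\ $X(\phi_i)\equiv 0$ for all $X\in\mathfrak{V}$. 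Splitting $W=X+iY$ with $X,Y\in\mathfrak{V}$ yields $W(\phi_i)\equiv 0$ for every $W\in\mathfrak{W}_{\mathfrak{Z}}(M)$; in particular $L(\phi_i)\equiv 0$ for every $L\in\mathfrak{Z}(M)$. On the other hand, since $Z(p_0)\notin\mathfrak{W}_{\mathfrak{Z}}(M)_{p_0}$, some $\phi_i$, call it $\phi$, satisfies $Z(\phi)(p_0)\neq 0$.

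Fix a cutoff $\chi\in\mathcal{C}^{\infty}_0(U)$ with $\chi(p_0)=1$, where $U$ and $L_1,\ldots,L_n\in\mathfrak{Z}(M)$ are as in \eqref{eq:Ac} with some $\epsilon>0$, and test the estimate on
\begin{equation*}
u_\lambda:=\chi\,e^{i\lambda\phi}\in\mathcal{C}^{\infty}_0(U),\quad \lambda>0.
\end{equation*}
Since $L_j(\phi)\equiv 0$, one has $L_j(u_\lambda)=L_j(\chi)\,e^{i\lambda\phi}$, so both $\|L_j(u_\lambda)\|_0$ and $\|u_\lambda\|_0$ remain uniformly bounded as $\lambda\to\infty$. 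On the other hand $Z(u_\lambda)=\bigl(Z(\chi)+i\lambda Z(\phi)\chi\bigr)e^{i\lambda\phi}$, and a standard high-frequency estimate for Sobolev norms of oscillatory functions gives $\|Z(u_\lambda)\|_{\epsilon-1}\geq C_0\lambda^{\epsilon}$ for large $\lambda$, with $C_0>0$ proportional to $|Z(\phi)(p_0)|$. Substituting into \eqref{eq:Ac} forces $\lambda^{2\epsilon}\leq C'$ uniformly in $\lambda$, contradicting $\epsilon>0$.

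The main technical step is the sharp lower bound $\|Z(u_\lambda)\|_{\epsilon-1}\geq C_0\lambda^{\epsilon}$. I would derive it either via pseudodifferential calculus, by applying the symbol of $(1-\Delta)^{(\epsilon-1)/2}$ to the leading term $i\lambda Z(\phi)\chi\,e^{i\lambda\phi}$ and absorbing the lower order contributions into a suitable cutoff, or by a direct Fourier computation after straightening $\phi$ to a coordinate: in those coordinates the Fourier transform of $u_\lambda$ is concentrated near the frequency $\lambda$ in the $\phi$-direction, reducing the negative Sobolev norm to an elementary integral. Once this is in hand, the rest of the argument is routine bookkeeping.
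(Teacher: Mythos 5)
Your proof is correct and takes essentially the approach the paper intends: the paper offers no written argument beyond the phrase ``by the real Frobenius theorem,'' and your fleshing-out --- conjugation-invariance of $\mathfrak{W}_{\mathfrak{Z}}(M)$, Frobenius first integrals $\phi$ annihilated by every $L\in\mathfrak{Z}(M)$, and the oscillatory family $\chi\,e^{i\lambda\phi}$ defeating the enthrallment estimate \eqref{eq:Ac} --- is exactly the standard way to do it. The key lower bound $\|Z(u_\lambda)\|_{\epsilon-1}\geq C_0\lambda^{\epsilon}$ is sound as sketched (after straightening $\phi$, the Fourier transform of $Z(\phi)\chi\,e^{i\lambda\phi}$ concentrates near frequency $\lambda$, while the $Z(\chi)e^{i\lambda\phi}$ term stays bounded in the negative norm).
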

\begin{rmk}
  The complex distribution $\mathfrak{Z}(M)$ is subelliptic at $p\in{M}$
if and only if
\begin{equation}
  \label{eq:Aof}
  \{X(p)\mid X\in\mathbb{S}_{\mathfrak{Z}}(M)\cap\mathfrak{X}(M)\}=T_pM.
\end{equation}
\end{rmk}
\begin{rmk}
If $Z\in\mathbb{S}_{\mathfrak{Z}}(M)$ and
$\Lambda_{-1}$ is a properly supported pseudodifferential operator 
of order $(-1)$ having its symbol in the class $S^{-1}_{1,0}$, 
then
$\Lambda_{-1}\circ{Z}$ is a \emph{subelliptic multiplier} for
$\mathfrak{Z}(M)$ in the sense of J.J.Kohn (see e.g. \cite{Ko03}).
\end{rmk}
Pseudodifferential operators will be an important tool 
in the following.
For their definition and properties we refer to
\cite[Chapter XVIII]{Hor85}.
\par
If $U$ is an open subset of $M$,
 and $s$ a real number,
 we shall denote by
$\Psi^{s}(U)$ the space of properly supported pseudodifferential operators
in $U$, of order less or equal than $s$, with 
symbol in
$S^{s}_{1,0}$. For each coordinate neighborhood $V\subset{U}$,
a $\Lambda\in\Psi^s(U)$ is defined, in the local coordinates, by
\begin{equation}
  \label{eq:Ah}
  \Lambda(u)=\iint_{U\times\mathbb{R}^m}e^{i\langle{x-y},\xi\rangle}
a(x,\xi)u(y)dyd\xi,\quad\text{for}\; u\in\mathcal{C}^{\infty}_0(V),
\end{equation}
with $a\in\mathcal{C}^{\infty}(V\times\mathbb{R}^m)$ and
\begin{equation}
  \label{eq:Ai}
 \left\{ \begin{gathered}
 \forall K\Subset{V},\;\forall\alpha,\beta\in\mathbb{N}^m,\;\exists
C=C(K,\alpha,\beta)>0\;\text{s.t.}\\
\left|D_x^{\alpha}D_\xi^{\beta}a(x,\xi)\right|\leq
C\,(1+|\xi|)^{s-|\beta|},\;\forall (x,\xi)\in{K}\times\mathbb{R}^m.
  \end{gathered}\right.
\end{equation}
The fact that $\Lambda\in\Psi^s(U)$ is \textit{properly supported} means that
for every $K\Subset{U}$ there is $K'\Subset{U}$ such that
\begin{equation}
  \label{eq:Aj}
  \mathrm{supp}(u)\subset{K}\Rightarrow\mathrm{supp}(\Lambda(u))\subset{K}'.
\end{equation}
The following Lemma is essentially contained in \cite[p.949]{Ko05}.
\begin{lem}\label{lm:Ae}
Let $Z\in\mathfrak{X}^{\mathbb{C}}(M)$ be any complex vector field.
For every relatively compact open subset $U\Subset{M}$ there is a constant
$C>0$ such that 
\begin{equation}
  \label{eq:Aaa}
  \|\bar{Z}(u)\|^2_{-\frac{1}{2}}\leq C\left(\|Z(u)\|^2_0+\|u\|_0^2\right)
\quad\forall{u}\in\mathcal{C}^{\infty}_0(U).
\end{equation}  
Hence
\begin{gather}
  \label{eq:Aab}
  \mathfrak{Z}(M)+\overline{\mathfrak{Z}(M)}\subset
\mathbb{S}_{\mathfrak{Z}}(M),\\ \label{equ:Aab}
\{Z+\bar{Z}\mid Z\in{\mathfrak{Z}(M)}\}\subset
\mathbb{S}_{\mathfrak{Z}}(M)\cap\mathfrak{X}(M).
\end{gather}
\end{lem}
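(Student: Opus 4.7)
The proof will rest on one structural observation: although $Z$ and $\bar Z$ have different principal symbols, the second-order operators $Z^{*}Z$ and $\bar Z^{*}\bar Z$ share the same principal symbol $|\sigma_{1}(Z)|^{2}$, because $\sigma_{1}(Z^{*})=\overline{\sigma_{1}(Z)}$. Consequently the difference $R:=\bar Z^{*}\bar Z-Z^{*}Z$ lies in $\Psi^{1}(U)$, i.e.\ is of first order, not second. This single fact is what allows one to exchange $\bar Z$-control for $Z$-control, at the expense of half a derivative. The remainder of the proof is standard pseudodifferential bookkeeping.

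I would fix a relatively compact $U\Subset M$ and choose a properly supported, formally self-adjoint $\Lambda_{-1}\in\Psi^{-1}$ with positive elliptic principal symbol, on a neighborhood of $\bar U$, normalized so that
\begin{equation*}
\|v\|_{-1/2}^{2}\leq(\Lambda_{-1}v,v)_{0}+C\|v\|_{-1}^{2},\qquad v\in\mathcal{C}^{\infty}_{0}(U).
\end{equation*}
Applying this with $v=\bar Z u$ and integrating by parts,
\begin{equation*}
(\Lambda_{-1}\bar Z u,\bar Z u)_{0}=(\bar Z^{*}\Lambda_{-1}\bar Z u,u)_{0}=(\Lambda_{-1}\bar Z^{*}\bar Z u,u)_{0}+([\bar Z^{*},\Lambda_{-1}]\bar Z u,u)_{0}.
\end{equation*}
Using $\bar Z^{*}\bar Z=Z^{*}Z+R$ and then integrating $Z^{*}$ back onto $u$,
\begin{equation*}
(\Lambda_{-1}Z^{*}Z u,u)_{0}=(\Lambda_{-1}^{*}Zu,Zu)_{0}+(Zu,[Z,\Lambda_{-1}^{*}]u)_{0}.
\end{equation*}

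Each term is now controlled by routine order-counting: $(\Lambda_{-1}^{*}Zu,Zu)_{0}\leq C\|Zu\|_{-1/2}^{2}\leq C\|Zu\|_{0}^{2}$; $(\Lambda_{-1}Ru,u)_{0}\leq C\|u\|_{0}^{2}$ since $\Lambda_{-1}R\in\Psi^{0}$; $(Zu,[Z,\Lambda_{-1}^{*}]u)_{0}\leq C\|Zu\|_{0}\|u\|_{0}$ since $[Z,\Lambda_{-1}^{*}]\in\Psi^{-1}$; and finally, for the one term that could not immediately be controlled by the right-hand side of \eqref{eq:Aaa},
\begin{equation*}
([\bar Z^{*},\Lambda_{-1}]\bar Z u,u)_{0}=(\bar Z u,[\bar Z^{*},\Lambda_{-1}]^{*}u)_{0}\leq C\|\bar Z u\|_{-1/2}\|u\|_{0}\leq\epsilon\|\bar Z u\|_{-1/2}^{2}+C_{\epsilon}\|u\|_{0}^{2},
\end{equation*}
since $[\bar Z^{*},\Lambda_{-1}]\in\Psi^{-1}$ and $\|{\cdot}\|_{-1}\leq\|{\cdot}\|_{0}$ on functions supported in $U$. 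Choosing $\epsilon$ small and absorbing this term on the left produces \eqref{eq:Aaa}.

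For the two inclusions: given any $Z\in\mathfrak{Z}(M)$, taking $n=1$ and $L_{1}=Z$ in Definition \ref{def:Ac} shows, thanks to \eqref{eq:Aaa} with $\epsilon=1/2$, that $\bar Z\in\mathbb{S}_{\mathfrak{Z}}(M)$; trivially $Z\in\mathbb{S}_{\mathfrak{Z}}(M)$ as well. This proves \eqref{eq:Aab}. For \eqref{equ:Aab}, note that $Z+\bar Z$ is real, and $\mathbb{S}_{\mathfrak{Z}}(M)$ is a $\mathcal{C}^{\infty}(M)$-submodule, so $Z+\bar Z\in\mathbb{S}_{\mathfrak{Z}}(M)\cap\mathfrak{X}(M)$. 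The main conceptual obstacle is exactly the observation in the first paragraph; everything else is a small-large absorption argument after keeping careful track of the orders of the various commutators.
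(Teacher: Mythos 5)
Your proof is correct, and the two inclusions \eqref{eq:Aab}, \eqref{equ:Aab} are deduced exactly as in the paper. The underlying mechanism is also the same as the paper's — integration by parts turns $\bar Z$ into $-Z$ modulo a term of order zero — but the packaging differs. The paper writes $\|\bar Z(u)\|_{-1/2}^2=(\bar Z(u)|\Lambda_0(u))_0$ with $\Lambda_0=\Lambda_{-1}\circ\bar Z\in\Psi^0(U)$, integrates by parts once to get $\left|(\Lambda_0^*(u)|Z(u))_0\right|+C_1\|u\|_0^2$, and finishes by Cauchy--Schwarz, since $\|\Lambda_0^*(u)\|_0\leq C\|u\|_0$; there is no absorption step and the whole estimate is three lines. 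Your route instead passes through the second-order identity $\bar Z^*\bar Z=Z^*Z+R$ with $R\in\Psi^1$ (correct: both compositions have principal symbol $|\sigma_1(Z)|^2$), and then needs the small-constant absorption of $\epsilon\|\bar Z u\|_{-1/2}^2$ on the left. This is longer, but it has the merit of making explicit the energy identity $\|\bar Z u\|_0^2=\|Zu\|_0^2-\mathrm{Re}\,([Z,\bar Z]u|u)_0+O(\|Zu\|_0\|u\|_0+\|u\|_0^2)$, which is precisely the identity the paper exploits later (in Lemma \ref{lm:ai} and in \eqref{eq:Bbn}) when the commutator term has a sign. All the order counts in your bookkeeping are right ($\Lambda_{-1}R\in\Psi^0$, $[Z,\Lambda_{-1}^*]\in\Psi^{-1}$, $\|\bar Zu\|_{-1}\leq C\|u\|_0$), so the argument goes through; it is simply a slightly heavier realization of the same trick.
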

\begin{proof}
  Let $Z\in\mathfrak{Z}(M)$ and
$U^{\mathrm{open}}\Subset{M}$. Then, with $\Lambda_0\in\Psi^0(U)$ and
constants $C_1,C_2>0$, we obtain
\begin{equation*}
  \begin{aligned}
    \|\bar{Z}(u)\|_{-\frac{1}{2}}^2=(\bar{Z}(u)|\Lambda_0(u))_0
&\leq \left|(\Lambda_0^*(u)|Z(u))_0\right|+C_1\|u\|_0^2\\
&\leq C_2\left(\|Z(u)\|_0+\|u\|_0\right)\|u\|_0,\quad
\forall u\in\mathcal{C}^{\infty}(U).
  \end{aligned}
\end{equation*}
This yields \eqref{eq:Aaa}, and hence also \eqref{eq:Aab} and
\eqref{equ:Aab}.
\end{proof}

In particular, if $\mathfrak{Z}(M)\neq\{0\}$,
the distribution $\mathbb{S}_{\mathfrak{Z}}(M)\cap\mathfrak{X}(M)$ 
of \textit{real} vector fields that are enthralled by
$\mathfrak{Z}(M)$ is not trivial. 
\begin{defn}\label{def:Aof}
We denote by $\mathbb{E}_{\mathfrak{Z}}(M)$ 
the set of $Z\in\mathfrak{X}^{\mathbb{C}}(M)$
such that
\begin{equation}
  \label{eq:Aac}
  \left\{
    \begin{gathered}
      \forall {U}^{\mathrm{open}}\Subset{M},\;
\exists L_1,\hdots,L_n\in\mathfrak{Z}(M),\;\exists
C>0\;\text{s.t}\\
\|Z(u)\|_0^2\leq{C}\left(\sum_{j=1}^n\|L_j(u)\|_0^2+\|u\|_0^2\right),\quad
\forall u\in\mathcal{C}^{\infty}_0(U).
    \end{gathered}
\right.
\end{equation}
\end{defn}
Also $\mathbb{E}_{\mathfrak{Z}}(M)$ is a distribution 
of complex vector fields, 
with 
\begin{eqnarray}\label{eq:Aaf}
 \mathfrak{Z}(M)\subset\mathbb{E}_{\mathfrak{Z}}(M)
\subset\mathbb{S}_{\mathfrak{Z}}(M). 
\end{eqnarray}
As a consequence of Lemma \ref{lm:Ae}, we get
\begin{lem}\label{Lem:Aog}
  \begin{equation}
    \label{eq:Aot}
    \mathbb{E}_{\mathfrak{Z}}(M)+\overline{\mathbb{E}_{\mathfrak{Z}}(M)}
\subset\mathbb{S}_{\mathfrak{Z}}(M).
  \end{equation}
\end{lem}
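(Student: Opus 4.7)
The plan is to show that every $Z\in\mathbb{E}_{\mathfrak{Z}}(M)$ has the property that its conjugate $\bar Z$ lies in $\mathbb{S}_{\mathfrak{Z}}(M)$; once this is established, the stated inclusion follows since $\mathbb{E}_{\mathfrak{Z}}(M)\subset\mathbb{S}_{\mathfrak{Z}}(M)$ by \eqref{eq:Aaf} and $\mathbb{S}_{\mathfrak{Z}}(M)$ is a $\mathcal{C}^\infty(M)$-module, hence closed under finite sums (when combining two enthralling inequalities one uses the union of the generating $L_j$'s and the minimum of the two regularity indices $\epsilon$).

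The proof of $\bar Z\in\mathbb{S}_{\mathfrak{Z}}(M)$ is a direct two-step composition of estimates. First, apply Lemma \ref{lm:Ae} to the complex vector field $Z$: for any $U\Subset M$ there is a constant $C>0$ with
\begin{equation*}
\|\bar Z(u)\|^2_{-\frac{1}{2}}\leq C\bigl(\|Z(u)\|_0^2+\|u\|_0^2\bigr),\qquad \forall u\in\mathcal{C}^{\infty}_0(U).
\end{equation*}
Second, since $Z\in\mathbb{E}_{\mathfrak{Z}}(M)$, Definition \ref{def:Aof} furnishes $L_1,\dots,L_n\in\mathfrak{Z}(M)$ and a constant $C'>0$ such that
\begin{equation*}
\|Z(u)\|_0^2\leq C'\Bigl(\sum_{j=1}^n\|L_j(u)\|_0^2+\|u\|_0^2\Bigr),\qquad \forall u\in\mathcal{C}^{\infty}_0(U).
\end{equation*}
Chaining these two inequalities yields
\begin{equation*}
\|\bar Z(u)\|^2_{-\frac{1}{2}}\leq CC'\Bigl(\sum_{j=1}^n\|L_j(u)\|_0^2+\|u\|_0^2\Bigr)+C\|u\|_0^2,
\end{equation*}
which is exactly the enthralling condition \eqref{eq:Ac} with $\epsilon=\tfrac{1}{2}$. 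Therefore $\bar Z\in\mathbb{S}_{\mathfrak{Z}}(M)$, and passing to the conjugate shows $\overline{\mathbb{E}_{\mathfrak{Z}}(M)}\subset\mathbb{S}_{\mathfrak{Z}}(M)$.

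There is no real obstacle: the statement is a straightforward corollary of Lemma \ref{lm:Ae}, whose role is precisely to trade an $L^2$-bound on $Z(u)$ for an $H^{-1/2}$-bound on $\bar Z(u)$. The only point requiring mild care is the stability of $\mathbb{S}_{\mathfrak{Z}}(M)$ under addition, which is implicit in the comment immediately following Definition \ref{def:Ac} that $\mathbb{S}_{\mathfrak{Z}}(M)$ is a distribution of complex vector fields.
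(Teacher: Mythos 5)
Your proof is correct and is exactly the argument the paper intends: the paper states Lemma \ref{Lem:Aog} as an immediate consequence of Lemma \ref{lm:Ae} without writing out the details, and your chaining of the $H^{-1/2}$ estimate for $\bar Z(u)$ with the $L^2$ estimate defining $\mathbb{E}_{\mathfrak{Z}}(M)$, followed by closure of $\mathbb{S}_{\mathfrak{Z}}(M)$ under sums, fills in precisely those details.
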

\begin{defn}\label{def:hcg} 
Set
\begin{align}
  \label{eq:bb}
  \mathbb{A}_{\mathfrak{Z}}(M)&=\mathbb{E}_{\mathfrak{Z}}(M)\cap\mathfrak{X}(M),
\\[4pt] \label{eq:bbt}
\mathfrak{A}_{\mathfrak{Z}}(M)&=\text{the Lie subalgebra of $\mathfrak{X}(M)$
generated by $\mathbb{A}_{\mathfrak{Z}}(M)$},
\\[4pt]
\label{eq:bc}
\mathbb{T}^{(0)}_{\mathfrak{Z}}(M)&=\{Z+\bar{Z}\mid Z\in
{\mathfrak{Z}}(M)\},\\[4pt]
\label{eq:bd}
\mathbb{T}^{(h)}_{\mathfrak{Z}}(M)&=\left\langle
[X,Y]\mid X\in\mathbb{A}_{\mathfrak{Z}}(M),\; 
Y\in\mathbb{T}^{(h-1)}_{\mathfrak{Z}}(M)\}
\right\rangle,\quad
\text{for $h\geq{1}$,}
\\ \label{eq:bdt}
\mathfrak{T}_{\mathfrak{Z}}(M)
&=\sum_{h=0}^{\infty}\mathbb{T}^{(h)}_{\mathfrak{Z}}(M).
\end{align}
We shall consider the condition at $p\in{M}$: 
\begin{equation}
  \label{eq:be}
  \{X(p)\mid X\in\mathfrak{T}_{\mathfrak{Z}}(M)\}=T_{p}M.
\end{equation}
\end{defn}
\begin{rmk}
If condition \eqref{eq:be} is satisfied at a point
$p_0\in{M}$, then 
it is also satisfied at all points $p$ in an open neighborhood
$U$ of $p_0$.
\end{rmk}
It is convenient to introduce the notation $[Z_1,\hdots,Z_m]$
for the \textit{higher order commutator} of smooth 
real or complex vector
fields. It is recursively defined by
\begin{equation}
  \label{eq:hcm}
  \begin{cases}
[Z_1]=Z_1,\\
[Z_1,Z_2]=Z_1Z_2-
Z_2Z_1,\\
[Z_1,Z_2,\hdots,Z_m]=[Z_1,[Z_2,\hdots,Z_m]] &\text{for}\; m\geq{3}.
\end{cases}
\end{equation}
\begin{prop} \label{pp:Abb}
The distribution of real vector fields 
$\mathfrak{T}_{\mathfrak{Z}}(M)$ is an
$\mathfrak{A}_{\mathfrak{Z}}(M)$-Lie-module.  
\end{prop}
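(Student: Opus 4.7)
The plan is to verify two facts: that $\mathfrak{T}_{\mathfrak{Z}}(M)$ is itself a distribution of real vector fields (i.e.\ a $\mathcal{C}^{\infty}(M)$-submodule of $\mathfrak{X}(M)$), and that $[\mathfrak{A}_{\mathfrak{Z}}(M),\mathfrak{T}_{\mathfrak{Z}}(M)]\subset\mathfrak{T}_{\mathfrak{Z}}(M)$. The distribution property is routine: the stratum $\mathbb{T}^{(0)}_{\mathfrak{Z}}(M)$ is stable under multiplication by real smooth functions because $\overline{fZ}=f\bar{Z}$ when $f$ is real-valued, and a straightforward induction on $h$, based on the identity $f[X,Y]=[X,fY]-(Xf)Y$, propagates the $\mathcal{C}^{\infty}(M)$-module property through each $\mathbb{T}^{(h)}_{\mathfrak{Z}}(M)$ and hence to their sum.

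For the bracket property I would first dispose of the base case $X\in\mathbb{A}_{\mathfrak{Z}}(M)$: from the recursive definition of $\mathbb{T}^{(h+1)}_{\mathfrak{Z}}(M)$, one has $[X,\mathbb{T}^{(h)}_{\mathfrak{Z}}(M)]\subset\mathbb{T}^{(h+1)}_{\mathfrak{Z}}(M)\subset\mathfrak{T}_{\mathfrak{Z}}(M)$ for every $h\geq 0$, so by linearity $[\mathbb{A}_{\mathfrak{Z}}(M),\mathfrak{T}_{\mathfrak{Z}}(M)]\subset\mathfrak{T}_{\mathfrak{Z}}(M)$. To pass from $\mathbb{A}_{\mathfrak{Z}}(M)$ to the generated Lie algebra $\mathfrak{A}_{\mathfrak{Z}}(M)$, I would introduce the filtration $\mathfrak{A}^{(0)}=\mathbb{A}_{\mathfrak{Z}}(M)$, $\mathfrak{A}^{(k)}=\mathfrak{A}^{(k-1)}+[\mathbb{A}_{\mathfrak{Z}}(M),\mathfrak{A}^{(k-1)}]$, and observe that $\bigcup_{k}\mathfrak{A}^{(k)}=\mathfrak{A}_{\mathfrak{Z}}(M)$ via the standard left-normed reduction of iterated brackets. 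The inductive step on $k$ then rests on the Jacobi identity
\begin{equation*}
[[A,X],Y]=[A,[X,Y]]-[X,[A,Y]],\qquad A\in\mathbb{A}_{\mathfrak{Z}}(M),\ X\in\mathfrak{A}^{(k-1)},\ Y\in\mathfrak{T}_{\mathfrak{Z}}(M),
\end{equation*}
whose right-hand side lies in $\mathfrak{T}_{\mathfrak{Z}}(M)$ by the base case applied to $A$ together with the inductive hypothesis applied to $X$.

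The argument is essentially formal, so I do not anticipate a serious obstacle. The only point requiring a bit of care is setting up the filtration so that every element of $\mathfrak{A}_{\mathfrak{Z}}(M)$ is reached in finitely many steps, after which the Jacobi identity does all of the real work.
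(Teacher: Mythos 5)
Your proposal is correct and follows essentially the same route as the paper: both establish the base case $[\mathbb{A}_{\mathfrak{Z}}(M),\mathfrak{T}_{\mathfrak{Z}}(M)]\subset\mathfrak{T}_{\mathfrak{Z}}(M)$ directly from the recursive definition, and both extend to the full Lie algebra $\mathfrak{A}_{\mathfrak{Z}}(M)$ by an induction on the length of a left-normed iterated bracket (your filtration index $k$ is the paper's bracket length $r$), with the Jacobi identity $[[X_1,\hdots,X_r],Y]=-[[X_2,\hdots,X_r],[X_1,Y]]+[X_1,[[X_2,\hdots,X_r],Y]]$ doing the inductive step in both cases. Your additional remarks verifying the $\mathcal{C}^{\infty}(M)$-module property are sound but not part of the paper's argument, which takes that point as clear.
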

\begin{proof}
We prove by recurrence on $r\geq{1}$ that, if 
$X_1,\hdots,X_r\in\mathbb{A}_{\mathfrak{Z}}(M)$ and 
$Y\in\mathfrak{T}_{\mathfrak{Z}}(M)$, then
also $[[X_1,\hdots,X_r],Y]\in\mathfrak{T}_{\mathfrak{Z}}(M)$. This follows from
the definition of $\mathfrak{T}_{\mathfrak{Z}}(M)$ for $r=1$. Assume now that
$r>1$ and that 
$\mathfrak{T}_{\mathfrak{Z}}(M)$ 
contains all commutators $[[X_1,\hdots,X_{r-1}],Y]$
with $X_1,\hdots,X_{r-1}\in\mathbb{A}_{\mathfrak{Z}}(M)$ and 
$Y\in\mathfrak{T}_{\mathfrak{Z}}(M)$.
If $X_1,\hdots,X_r\in\mathbb{A}_{\mathfrak{Z}}(M)$ and 
$Y\in\mathfrak{T}_{\mathfrak{Z}}(M)$,
we obtain
\begin{equation*}
[[X_1,\hdots,X_r],Y]=
-[[X_2,\hdots,X_r],[X_1,Y]]+[X_1,[[X_2,\hdots,X_r],Y]].
\end{equation*}
Since $[X_1,Y]\in\mathfrak{T}_{\mathfrak{Z}}(M)$,
by our inductive assumption the first summand on the right hand side
also belongs to $\mathfrak{T}_{\mathfrak{Z}}(M)$. 
By the inductive assumption, the
commutator
$[[X_2,\hdots,X_r],Y]$ belongs to $\mathfrak{T}_{\mathfrak{Z}}(M)$, 
and hence also the second
summand in the right hand side belongs to $\mathfrak{T}_{\mathfrak{Z}}(M)$.
The proof is complete.
\end{proof}
\begin{prop}\label{pp:bk}
The Lie algebra of real vector fields $\mathfrak{A}_{\mathfrak{Z}}(M)$
is contained in $\mathbb{S}_{\mathfrak{Z}}(M)\cap\mathfrak{X}(M)$, and 
$\mathbb{S}_{\mathfrak{Z}}(M)\cap\mathfrak{X}(M)$ is 
an $\mathfrak{A}_{\mathfrak{Z}}(M)$-Lie-submodule of $\mathfrak{X}(M)$.
In particular, we have the inclusion
\begin{equation}
  \label{eq:nbk}
  \mathfrak{T}_{\mathfrak{Z}}(M)\subset\mathbb{S}_{\mathfrak{Z}}(M)
\cap\mathfrak{X}(M).
\end{equation}
\end{prop}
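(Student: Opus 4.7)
The heart of the proposition is the \emph{module property}: if $X \in \mathbb{A}_{\mathfrak{Z}}(M)$ and $Y \in \mathbb{S}_{\mathfrak{Z}}(M) \cap \mathfrak{X}(M)$, then $[X,Y] \in \mathbb{S}_{\mathfrak{Z}}(M) \cap \mathfrak{X}(M)$. Once this is established, the remaining claims follow formally. The inclusion $\mathbb{A}_{\mathfrak{Z}}(M) \subset \mathbb{S}_{\mathfrak{Z}}(M) \cap \mathfrak{X}(M)$ is immediate from \eqref{eq:Aaf}. Induction on the length of iterated brackets, using the module property at each step, then yields $\mathfrak{A}_{\mathfrak{Z}}(M) \subset \mathbb{S}_{\mathfrak{Z}}(M) \cap \mathfrak{X}(M)$. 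For \eqref{eq:nbk}, the base case $\mathbb{T}^{(0)}_{\mathfrak{Z}}(M) \subset \mathbb{S}_{\mathfrak{Z}}(M) \cap \mathfrak{X}(M)$ is Lemma \ref{lm:Ae} (equation \eqref{equ:Aab}), and the module property applied recursively to the filtration \eqref{eq:bd} handles each $\mathbb{T}^{(h)}_{\mathfrak{Z}}(M)$.

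To prove the module property, fix estimates $\|Xu\|_0^2 \leq C_X (\sum_j \|L_j u\|_0^2 + \|u\|_0^2)$ and $\|Yu\|_{\epsilon-1}^2 \leq C_Y (\sum_k \|L_k' u\|_0^2 + \|u\|_0^2)$, with $\epsilon \in (0,1]$ and $L_j, L_k' \in \mathfrak{Z}(M)$. I set $\delta = \epsilon/2 > 0$, pick a properly supported elliptic $\Lambda \in \Psi^{\delta - 1}(U)$, and let $A = \Lambda^* \Lambda \in \Psi^{2\delta - 2}(U)$ (of \emph{negative} order $\epsilon - 2$). Then
\begin{equation*}
\|[X,Y] u\|_{\delta - 1}^2 = ([X,Y] u \mid A [X,Y] u)_0 + O(\|u\|_0^2).
\end{equation*}
Expanding $[X,Y] = XY - YX$, integrating by parts via the real adjoint identities $X^* = -X + c_X$, $Y^* = -Y + c_Y$ (with smooth $c_X, c_Y$), and using the identities $X[X,Y] = [X,[X,Y]] + [X,Y]X$ and $Y[X,Y] = [Y,[X,Y]] + [X,Y]Y$ to rearrange terms so that $[X,Y]$ acts on the right, reduces the inner product to a finite sum of pairings of the form $(Yu \mid P_\alpha u)_0$ and $(Xu \mid Q_\beta u)_0$, where $P_\alpha, Q_\beta$ are properly supported pseudodifferential operators built from $A$, the vector fields $X, Y, [X,Y]$, and iterated brackets like $[X, [X, Y]]$ and $[Y, [X, Y]]$.

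The choice $\delta = \epsilon/2$ is precisely what keeps these operators at the right order: for instance $A[X,Y]$ and $AX$ have order $\epsilon - 1 \leq 0$, while commutators $[X, A]$ and $[Y, A]$ have order $\epsilon - 2 < 0$. Applying Cauchy--Schwarz with the duality pairing $|(f, g)_0| \leq \|f\|_s \|g\|_{-s}$ --- taking $s = \epsilon - 1$ for factors containing $Yu$ and $s = 0$ for factors containing $Xu$ --- each term is bounded by a product of $\|Yu\|_{\epsilon - 1}$ or $\|Xu\|_0$ with $\|u\|_0$, $\|Xu\|_0$, or $\|Yu\|_{\epsilon - 1}$, which all absorb into $\sum_j \|L_j u\|_0^2 + \sum_k \|L_k' u\|_0^2 + \|u\|_0^2$. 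The main obstacle is the careful bookkeeping of the pseudodifferential errors: since commuting a first-order vector field with a PDO of order $s$ yields a PDO of the same order $s$ (not $s-1$), the negativity $\delta - 1 < 0$ is essential, as it provides the slack needed for every remainder to be absorbed back into the enthralling right-hand side. This is the pseudodifferential multiplier calculation underlying \cite{K71} (cf.\ also \cite[p.~949]{Ko05}).
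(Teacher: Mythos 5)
Your proposal is correct and follows essentially the same route as the paper: the module property is proved by pairing $[X,Y]u$ against a properly supported operator of order $\epsilon-1$ (your $A[X,Y]$ is exactly the paper's $\Lambda_{\epsilon-1}$), integrating by parts, and absorbing the commutator errors using that $\epsilon-1\leq 0$, which yields the estimate in Sobolev index $\epsilon/2-1$; the inclusion of $\mathfrak{A}_{\mathfrak{Z}}(M)$ and of $\mathfrak{T}_{\mathfrak{Z}}(M)$ then follow by the same induction on bracket length, with \eqref{equ:Aab} as the base case for \eqref{eq:nbk}. No gaps.
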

\begin{proof}
Let $U$ be a relatively compact open subset of $M$. Assume that
$X\in\mathbb{A}_{\mathfrak{Z}}(M)$,
$Y\in\mathbb{S}_{\mathfrak{Z}}(M)\cap\mathfrak{X}(M)$ and 
let $\epsilon>0$ be such that
that $Y=Z$
satisfies the estimate in \eqref{eq:Ac}. We can assume that
$0<\epsilon\leq\frac{1}{2}$.
If $U'$ is an open relatively compact subset of $U$, 
we have, with some~$\Lambda_{\epsilon-1}\in\Psi^{\epsilon-1}(U)$
and suitable positive constants $\mathrm{C}_0$, $\mathrm{C}_1$,
\begin{equation*}
  \begin{aligned}
    \|[X,Y](u)\|_{\frac{\epsilon}{2}-1}^2&\leq
\left|([X,Y](u)|\Lambda_{\epsilon-1}(u))_0\right|\\
&\leq\left|(XY(u)|\Lambda_{\epsilon-1}(u))_0\right|+
\left|(YX(u)|\Lambda_{\epsilon-1}(u))_0\right|\\
&\leq \left|(\Lambda_{\epsilon-1}^*(Y(u))|X(u))_0\right|+
\left|(X(u)|\Lambda_{\epsilon-1}Y(u))\right|\\
&\qquad\qquad +\mathrm{C}_0\|u\|\,\left(\|u\|_0+\|Y(u)\|_{\epsilon-1}+
\|X(u)\|_0\right)\\
&\leq \mathrm{C}_1\left(\|X(u)\|_0^2+\|Y(u)\|^2_{\epsilon-1}+
\|u\|_0^2\right),\qquad\quad
\forall u\in\mathcal{C}^{\infty}(U')
  \end{aligned}
\end{equation*}
The last term of this chain of inequalities is bounded by
a constant times $\left(\sum_{j=1}^n\|L_j(u)\|^2+\|u\|_0^2\right)$,
for a suitable choice of $L_1,\hdots,L_n\in\mathfrak{Z}(M)$.
This shows that $[X,Y]\in\mathbb{S}_{\mathfrak{Z}}(M)\cap\mathfrak{X}(M)$.
\par
Since $\mathbb{A}_{\mathfrak{Z}}(M)\subset\mathbb{S}_{\mathfrak{Z}}(M)\cap\mathfrak{X}(M)$,
also $\mathfrak{A}_{\mathfrak{Z}}(M)\subset\mathbb{S}_{\mathfrak{Z}}(M)
\cap\mathfrak{X}(M)$.
The argument in the proof of Proposition \ref{pp:Abb}
shows that, since $[\mathbb{A}_{\mathfrak{Z}}(M),\mathbb{S}_{\mathfrak{Z}}(M)
\cap\mathfrak{X}(M)]
\subset \mathbb{S}_{\mathfrak{Z}}(M)\cap\mathfrak{X}(M)$, this distribution 
is
an $\mathfrak{A}_{\mathfrak{Z}}(M)$-Lie-submodule of $\mathfrak{X}(M)$.
Then the inclusion \eqref{eq:nbk} is a consequence of the inclusion
\eqref{equ:Aab}.
\end{proof}
By using Proposition \ref{pp:bk} we obtain 
\begin{cor}\label{cor:sa}
Let $\mathfrak{Z}(M)$ be a smooth distribution of complex vector fields
on $M$. Then $\mathfrak{Z}(M)$ is subelliptic at all points $p\in{M}$
at which condition \eqref{eq:be} of Definition \ref{def:hcg}
is satisfied.
\end{cor}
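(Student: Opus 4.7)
The plan is to choose an appropriate frame inside $\mathfrak{T}_{\mathfrak{Z}}(M)$ and use Proposition \ref{pp:bk} to turn frame-wise enthralling estimates into a subelliptic estimate for $\mathfrak{Z}(M)$. First, condition \eqref{eq:be} at $p$ allows me to select $X_1,\ldots,X_m\in\mathfrak{T}_{\mathfrak{Z}}(M)$ whose values at $p$ form a basis of $T_pM$. Linear independence is an open condition, so on some relatively compact coordinate neighborhood $U\Subset M$ of $p$ the fields $X_1,\ldots,X_m$ still span $T_qM$ at every $q\in U$, and the coordinate vector fields admit smooth expansions
\[
\partial_{x_j}=\sum_{i=1}^{m}a_{ij}(x)X_i,\qquad a_{ij}\in\mathcal{C}^{\infty}(U),\ 1\leq j\leq m.
\]

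Next, Proposition \ref{pp:bk} gives $\mathfrak{T}_{\mathfrak{Z}}(M)\subset\mathbb{S}_{\mathfrak{Z}}(M)\cap\mathfrak{X}(M)$, so each $X_i$ is enthralled by $\mathfrak{Z}(M)$: there exist $\epsilon_i>0$, $C_i>0$, and $L_{i,1},\ldots,L_{i,n_i}\in\mathfrak{Z}(M)$ with
\[
\|X_i(u)\|_{\epsilon_i-1}^2\leq C_i\Bigl(\sum_{k=1}^{n_i}\|L_{i,k}(u)\|_0^2+\|u\|_0^2\Bigr),\quad\forall u\in\mathcal{C}^{\infty}_0(U).
\]
Setting $\epsilon=\min_i\epsilon_i$, which I may assume lies in $(0,1]$, and collecting $\{L_{i,k}\}_{i,k}$ into a single finite list $L_1,\ldots,L_n\in\mathfrak{Z}(M)$, the monotonicity of Sobolev norms produces a uniform estimate
\[
\sum_{i=1}^{m}\|X_i(u)\|_{\epsilon-1}^2\leq C'\Bigl(\sum_{j=1}^{n}\|L_j(u)\|_0^2+\|u\|_0^2\Bigr),\quad\forall u\in\mathcal{C}^{\infty}_0(U).
\]

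Finally, I dominate $\|u\|_\epsilon$ by the left-hand side plus $\|u\|_0$. Since $\epsilon\leq 1$, the Fourier-side identity $(1+|\xi|^2)^{\epsilon}=(1+|\xi|^2)^{\epsilon-1}+|\xi|^2(1+|\xi|^2)^{\epsilon-1}$ together with $\|\,\cdot\,\|_{\epsilon-1}\leq\|\,\cdot\,\|_0$ yields
\[
\|u\|_\epsilon^2\leq\|u\|_0^2+\sum_{j=1}^{m}\|\partial_{x_j}(u)\|_{\epsilon-1}^2,\qquad u\in\mathcal{C}^{\infty}_0(U);
\]
substituting $\partial_{x_j}u=\sum_i a_{ij}X_iu$ and invoking the $H^{\epsilon-1}$-boundedness of multiplication by the smooth coefficients $a_{ij}$ replaces each $\|\partial_{x_j}u\|_{\epsilon-1}$ by a constant multiple of $\sum_i\|X_iu\|_{\epsilon-1}$ (modulo lower-order terms controlled by $\|u\|_0$). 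Chaining this with the previous estimate yields \eqref{eq:ob} with exponent $\epsilon$. The only technical point is the $H^{\epsilon-1}$-continuity of smooth multiplication, which is entirely routine within the $\Psi^{s}$-calculus already set up in the paper; I do not anticipate a serious obstacle.
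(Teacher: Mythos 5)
Your proposal is correct and is essentially the paper's argument: the paper declares the corollary a trivial consequence of the inclusion \eqref{eq:nbk} together with the unproved Remark containing \eqref{eq:Aof}, and what you have written is precisely the standard verification of that Remark (choosing a frame $X_1,\dots,X_m$ of enthralled real fields near $p$, expressing $\partial_{x_j}$ in this frame, and chaining the $\|\cdot\|_{\epsilon-1}$ estimates with the elementary Fourier-side inequality $\|u\|_\epsilon^2\leq\|u\|_0^2+\sum_j\|\partial_{x_j}u\|_{\epsilon-1}^2$). No gaps; the only implicit point, the equivalence on a relatively compact coordinate patch of the metric Sobolev norms with the coordinate ones, is routine.
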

Corollary \ref{cor:sa} is a trivial consequence of 
the inclusion \eqref{eq:nbk}. 
However, in \S\ref{sec:C} we will show
that actually we are able, in case $\mathfrak{Z}(M)$ is formally integrable,
to compute 
explicitly the left hand side of \eqref{eq:be} at 
the points of an open dense subset of $M$, where
$\mathfrak{Z}(M)$ satisfies some genericity assumptions.
\begin{rmk} \label{rmk:An}
When $\mathfrak{Z}(M)$ is 
the complexification of a distribution of \textit{real} vector fields,
then $\mathfrak{A}_{\mathfrak{Z}}(M)=\mathfrak{T}_{\mathfrak{Z}}(M)$, and
condition \eqref{eq:be} is equivalent to H\"ormander's condition
in \cite{Hor67}. 
Thus Corollary \ref{cor:sa} can be viewed as a generalization
of the analogous result for distributions of real vector fields.
\end{rmk}
\begin{rmk}
If $\mathfrak{Z}(M)$ is the distribution of $(0,1)$-vector fields
of an almost $CR$ manifold $M$, 
it follows from \S\ref{sec:C} below that the 
\textit{essential pseudoconcavity} condition of
\cite{HN00} implies 
that $\mathbb{E}_{\mathfrak{Z}}(M)=\mathfrak{Z}(M)+\overline{\mathfrak{Z}(M)}$ 
and
that $M$ is of finite type in the sense of \cite{BG77}. Therefore,
Corollary \ref{cor:sa} also generalizes \cite[Theorem 4.1]{HN00}.
\end{rmk}
\section{The distributions  \texorpdfstring{$\mathbb{K}_{\mathfrak{Z}}(M)$ and 
$\Theta_{\mathfrak{Z}}(M)$}{K Z (M) and Theta Z (M)}} \label{sec:C}
As pointed out after the statement of Corollary \ref{cor:sa},
condition \eqref{eq:be} becomes an effective criterion for subellipticity
when it is possible to give an explicit description of
$\mathbb{E}_{\mathfrak{Z}}(M)$, or of some nontrivial part of it.
We begin by giving an upper bound for $\mathbb{E}_{\mathfrak{Z}}(M)$.
\begin{lem} \label{lem:Aoh} 
Let $\mathfrak{Z}(M)$ be a distribution of complex vector fields.\par
If $(\mathfrak{Z}(M)+\overline{\mathfrak{Z}(M)})$
has at most simple singularities, and in particular if 
$(\mathfrak{Z}(M)+\overline{\mathfrak{Z}(M)})$
has constant rank, then
\begin{equation}
  \label{eq:Aad}
  \mathbb{E}_{\mathfrak{Z}}(M)\subset\mathfrak{Z}(M)+\overline{
\mathfrak{Z}(M)}.
\end{equation}
\end{lem}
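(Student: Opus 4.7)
The simple-singularities hypothesis characterizes $\mathfrak{Z}(M)+\overline{\mathfrak{Z}(M)}$ as the collection of complex vector fields $W$ with $W(p)\in Z_pM+\overline{Z_pM}$ for every $p\in M$. Consequently it suffices to prove, for every $Z\in\mathbb{E}_{\mathfrak{Z}}(M)$ and every $p\in M$, the pointwise inclusion $Z(p)\in Z_pM+\overline{Z_pM}$. Since $Z_pM+\overline{Z_pM}$ is the complexification of the real span of $\{\mathrm{Re}\,W(p),\,\mathrm{Im}\,W(p)\mid W\in\mathfrak{Z}(M)\}$, this is in turn equivalent to the following statement: whenever $\xi_0\in T^*_{p_0}M$ is a real covector with $\xi_0\cdot L(p_0)=0$ for all $L\in\mathfrak{Z}(M)$, one has $\xi_0\cdot Z(p_0)=0$.

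I would prove this by contradiction using highly oscillatory test functions. Suppose $\xi_0\cdot Z(p_0)\neq 0$ with $\xi_0$ as above, pick $U\Subset M$ containing $p_0$, and fix $L_1,\hdots,L_n\in\mathfrak{Z}(M)$ together with $C>0$ realizing the estimate~\eqref{eq:Aac} on $\mathcal{C}^{\infty}_0(U)$. By the choice of $\xi_0$, each smooth function $x\mapsto\xi_0\cdot L_j(x)$ vanishes at $p_0$, hence $|\xi_0\cdot L_j(x)|=O(|x-p_0|)$ nearby in local coordinates. Fix $\phi\in\mathcal{C}^{\infty}_0(\mathbb{R}^m)$ with $\phi(0)=1$ supported in the unit ball, and for $\tau\gg 1$, $0<\delta\ll 1$, consider
\begin{equation*}
u_{\tau,\delta}(x)=\phi\!\left(\tfrac{x-p_0}{\delta}\right)e^{i\tau\,\xi_0\cdot x}\in\mathcal{C}^{\infty}_0(U).
\end{equation*}
Splitting each vector field's action into the term hitting the phase and the term hitting $\phi$, one obtains the orders
\begin{equation*}
\|u_{\tau,\delta}\|_0^2\asymp\delta^m,\qquad
\|Z(u_{\tau,\delta})\|_0^2\geq c\,\tau^2\delta^m|\xi_0\cdot Z(p_0)|^2-C'\bigl(\tau\delta^{m-1}+\delta^{m-2}\bigr),
\end{equation*}
\begin{equation*}
\sum_{j=1}^n\|L_j(u_{\tau,\delta})\|_0^2\leq C''\bigl(\tau^2\delta^{m+2}+\delta^{m-2}\bigr),
\end{equation*}
where the key input for the $L_j$ bound is $|\xi_0\cdot L_j(x)|\leq\mathrm{const}\cdot\delta$ on $\mathrm{supp}\,u_{\tau,\delta}$. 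Substituting into~\eqref{eq:Aac}, dividing by $\tau^2\delta^m$, and letting first $\tau\to+\infty$ and then $\delta\to 0^+$ yields $|\xi_0\cdot Z(p_0)|^2\leq\mathrm{const}\cdot\delta^2\to 0$, the desired contradiction.

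The main obstacle is purely the asymptotic bookkeeping: verifying that the lower-order contributions from derivatives hitting $\phi$ (which carry a factor $\delta^{-1}$) are dominated by the $\tau^2\delta^2$-terms in the chosen scaling, and that the cross terms arising in the expansion of $|Z(u_{\tau,\delta})|^2$ do not spoil the leading asymptotics. Conceptually, the argument says that the $L^2$-estimate~\eqref{eq:Aac} forces the pointwise symbolic inequality $|\xi\cdot Z(p)|^2\leq\mathrm{const}\cdot\sum_j|\xi\cdot L_j(p)|^2$ for every real cotangent vector $\xi$, which is exactly the pointwise inclusion needed, rephrased in terms of the principal symbols of the first-order operators involved.
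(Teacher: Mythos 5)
Your argument is correct and follows essentially the same route as the paper: both proofs test the $L^2$ estimate \eqref{eq:Aac} against concentrating oscillatory wave packets to extract the pointwise symbol inequality $|\xi\cdot Z(p)|^2\leq C\sum_j|\xi\cdot L_j(p)|^2$, and then invoke the simple-singularities hypothesis to pass from the pointwise inclusion to membership in the module. The only (immaterial) differences are that the paper uses a one-parameter Gaussian packet $e^{i\tau\langle x,\xi\rangle-(\tau/2)|x|^2}$ after first reducing to the case of a real distribution and a real $Z$, whereas you use a two-parameter cutoff-times-plane-wave family and work directly with real characteristic covectors; your asymptotic bookkeeping (up to the harmless $O(\tau^2\delta^{m+1})$ correction from the variation of $\xi_0\cdot Z$ over the support) goes through.
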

\begin{proof} 
Since $\mathbb{E}_{\mathfrak{Z}}(M)\subset
\mathbb{E}_{\mathfrak{Z}+\overline{\mathfrak{Z}}}(M)$,
we can reduce the proof to the case where
$\mathfrak{Z}(M)=\mathfrak{Z}(M)+\overline{
\mathfrak{Z}(M)}$ is the complexification of a distribution of real
vector fields and  $Z\in\mathbb{E}_{\mathfrak{Z}}(M)$ is real.
Fix $p\in{M}$ and take a coordinate patch $U$ of $p$ in $M$,
centered at $p$,
for which
\eqref{eq:Aac} 
holds, for real
$L_1,\hdots,L_n\in\mathfrak{X}(M)\cap\mathfrak{Z}(M)$ such that
$L_1(p), \hdots, L_n(p)$
generate $Z_pM$.
We apply the inequality in \eqref{eq:Aac} to
the test function 
${u}_{\tau}(x)=\tau^{\frac{m-4}{4}}\chi(x)e^{i\tau\langle{x},\xi\rangle-(\tau/2) |x|^2}$,
where $\chi(x)\in\mathcal{C}^{\infty}_0(U)$ satisfies $\chi(x)=1$
for $x$ in a neighborhood of $0$.
 Denote by $z(x,\xi)$ and $\ell_j(x,\xi)$ the
symbols of $Z$ and $L_j$, respectively. We obtain
\begin{equation*}
L_j(u_{\tau})=\tau^{\frac{m-4}{4}}\left(\tau\ell_j(x,\xi+ix)+L_j(\chi)\right)
e^{i\tau\langle{x},\xi\rangle-(\tau/2) |x|^2}.
\end{equation*}
Computing the integral by the change of variables $y=x\sqrt{\tau}$,
we obtain
\begin{equation*}
  \|L_j(u_\tau)\|_0^2=\int{\chi^2(y/\sqrt{\tau})|\ell_j(y/\sqrt{\tau},i\xi+
y/\sqrt{\tau})|^2e^{-|y|^2}dy}+{O}(\tau^{-\infty}).
\end{equation*}
Likewise, we have
\begin{equation*}
  \|Z(u_\tau)\|_0^2=\int{\chi^2(y/\sqrt{\tau})|z(y/\sqrt{\tau},i\xi+
y/\sqrt{\tau})|^2e^{-|y|^2}dy}+{O}(\tau^{-\infty}).
\end{equation*}
By letting $\tau$ tend to $\infty$ in the estimate 
\eqref{eq:Aac}, we obtain that
\begin{equation*}
  |z(0,\xi)|^2\leq{C}\sum_{j=1}^n|\ell_j(0,\xi)|^2,\quad
\forall\xi\in\mathbb{R}^m.
\end{equation*}
Since $L_1,\hdots,L_n$ are real,
the above inequality implies that $Z(p)\in{Z}_pM$.
Since $p$ was an arbitrary point of $M$,
this implies that $Z\in\mathfrak{Z}(M)$.
\end{proof}
\begin{rmk}
The proof of Lemma \ref{lem:Aoh} yields a stronger statement:\par
\textit{
Let $\mathfrak{Z}(M)$ be a distribution of complex vector fields and
assume that $\mathfrak{Z}(M)+
\overline{\mathfrak{Z}(M)}$ has at most simple singularities.
If $Z\in\mathbb{S}_{\mathfrak{Z}}(M)$ and}
\begin{equation}
  \label{eq:CAb}
\left\{  \begin{gathered}
    \forall U^{\mathrm{open}}\Subset{M},\quad\exists \epsilon>\frac{1}{2},\;
\exists L_1,\hdots,L_n\in\mathfrak{Z}(M),\;
\exists C>0\;\text{s.t.}\\
\|Z(u)\|_{\epsilon-1}^2\leq C\left(\sum_{j=1}^n\|L_j(u)\|_0^2+\|u\|_0^2\right),\quad
\forall u\in\mathcal{C}^{\infty}_0(U),
  \end{gathered}\right.
\end{equation}
\textit{
then $Z\in\mathfrak{Z}(M)+
\overline{\mathfrak{Z}(M)}$.}\par
   It suffices indeed to apply \eqref{eq:CAb}, in a coordinate patch $U$,
as in the proof of Lemma \ref{lem:Aoh}, to the test functions
$v_{\tau}=\tau^{{(1-\epsilon')}/{2}}u_{\tau}$, 
with $\frac{1}{2}<\epsilon'<\epsilon$,
and let $\tau\to+\infty$. Then, if $\ell_j(0,\xi)=0$ for $j=1,\hdots,n$,
the right hand side of \eqref{eq:CAb}
stays bounded, while the left hand side tends to $+\infty$, unless
$z(p,\xi)=0$. 
\end{rmk}
\subsection{The distribution $\Theta_{\mathfrak{Z}}(M)$}
Lemma \ref{lem:Aoh} suggests that, in order to find non trivial elements
of $\mathbb{E}_{\mathfrak{Z}}(M)$, one should  
search in
$\overline{\mathfrak{Z}(M)}$. 
To this aim,
we introduce the following
\begin{defn} \label{def:Ag} Given a distribution
$\mathfrak{Z}(M)$ of complex vector fields, we set
\begin{equation}\label{eq:nd}
 \Theta_{\mathfrak{Z}}(M)=\left\{Z\in\mathfrak{Z}(M)\left|{\begin{matrix}
\exists r\geq0,\; \exists Z_1,\hdots,Z_r\in\mathfrak{Z}(M),\;
\text{s.t.}\\
[Z,\bar{Z}]+\sum_{j=1}^r[Z_j,\bar{Z}_j]
\in\mathfrak{Z}(M)+\overline{\mathfrak{Z}(M)}
\end{matrix}}\right\}\right. .
\end{equation}
\end{defn}
\begin{lem}\label{lm:hlb}
The set $\Theta_{\mathfrak{Z}}(M)$ is a distribution of 
complex vector fields.
\end{lem}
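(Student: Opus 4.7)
The goal is to check that $\Theta_{\mathfrak{Z}}(M)$ is a $\mathcal{C}^{\infty}(M)$-left submodule of $\mathfrak{X}^{\mathbb{C}}(M)$, i.e.\ closed under sums and under multiplication by complex smooth functions. For brevity write $B(Z):=[Z,\bar Z]$, so that $Z\in\mathfrak{Z}(M)$ belongs to $\Theta_{\mathfrak{Z}}(M)$ iff there exist $Z_1,\ldots,Z_r\in\mathfrak{Z}(M)$ with $B(Z)+\sum_{j=1}^{r}B(Z_j)\in\mathfrak{Z}(M)+\overline{\mathfrak{Z}(M)}$.

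For closure under addition I would exploit the polarization identity
\begin{equation*}
B(Z+W)+B(Z-W)=2B(Z)+2B(W),
\end{equation*}
which follows by expanding both brackets and observing that the cross terms $[Z,\bar W]+[W,\bar Z]$ cancel out. Given $Z,W\in\Theta_{\mathfrak{Z}}(M)$ with respective auxiliary families $\{Z_j\}_{j=1}^{r}$ and $\{W_k\}_{k=1}^{s}$, this identity rewrites as
\begin{equation*}
B(Z+W)+B(Z-W)+2\sum_{j}B(Z_j)+2\sum_{k}B(W_k)=2\bigl(B(Z)+\textstyle\sum_{j}B(Z_j)\bigr)+2\bigl(B(W)+\textstyle\sum_{k}B(W_k)\bigr),
\end{equation*}
and the right hand side lies in $\mathfrak{Z}(M)+\overline{\mathfrak{Z}(M)}$ by hypothesis. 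Since the definition of $\Theta_{\mathfrak{Z}}(M)$ places no distinctness requirement on $Z_1,\ldots,Z_r$, the integer coefficient $2$ can be absorbed by repeating each auxiliary. The list $(Z-W,Z_1,Z_1,\ldots,Z_r,Z_r,W_1,W_1,\ldots,W_s,W_s)$, together with $Z+W\in\mathfrak{Z}(M)$, then witnesses $Z+W\in\Theta_{\mathfrak{Z}}(M)$.

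For closure under left multiplication by $f\in\mathcal{C}^{\infty}(M)$, a direct Leibniz computation gives
\begin{equation*}
B(fZ)=[fZ,\bar f\bar Z]=|f|^{2}B(Z)+f(Z\bar f)\,\bar Z-\bar f(\bar Z f)\,Z,
\end{equation*}
where the last two terms visibly belong to $\mathfrak{Z}(M)+\overline{\mathfrak{Z}(M)}$. Applying the same identity to each $Z_j$ and summing, all error terms stay in $\mathfrak{Z}(M)+\overline{\mathfrak{Z}(M)}$, so
\begin{equation*}
B(fZ)+\sum_{j}B(fZ_j)\equiv |f|^{2}\bigl(B(Z)+\textstyle\sum_{j}B(Z_j)\bigr)\pmod{\mathfrak{Z}(M)+\overline{\mathfrak{Z}(M)}}.
\end{equation*}
The right hand side lies in $\mathfrak{Z}(M)+\overline{\mathfrak{Z}(M)}$ because this sum is a $\mathcal{C}^{\infty}(M)$-submodule, and $fZ_1,\ldots,fZ_r\in\mathfrak{Z}(M)$, so $fZ\in\Theta_{\mathfrak{Z}}(M)$.

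The only point requiring care is the coefficient $2$ in the polarization identity, which is neutralized by the freedom to list auxiliary vector fields with multiplicity; apart from this observation both verifications are routine.
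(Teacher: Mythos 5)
Your proof is correct and follows essentially the same route as the paper: closure under sums via the polarization identity $[Z+W,\overline{Z+W}]+[Z-W,\overline{Z-W}]=2([Z,\bar Z]+[W,\bar W])$, with the factor $2$ absorbed by listing auxiliaries with multiplicity, and closure under multiplication by $f$ via the Leibniz rule (a step the paper merely declares ``clear''). Nothing further is needed.
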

\begin{proof}
Clearly, if $Z\in\Theta_{\mathfrak{Z}}(M)$ and $\phi\in\mathcal{E}(M)$,
the product $\phi\,Z$ also belongs to $\Theta_{\mathfrak{Z}}(M)$.
To prove that $\Theta_{\mathfrak{Z}}(M)$ contains the 
finite sums of its elements,
it suffices to show that,
if, for some $r\geq{1}$, 
 $Z_0,\hdots,Z_r\in\Theta_{\mathfrak{Z}}(M)$ and
$\sum_{j=0}^r[Z_j,\bar{Z}_j]\in\mathfrak{Z}(M)+\overline{\mathfrak{Z}(M)}$,
then also $Z_0+Z_1\in\Theta_{\mathfrak{Z}}(M)$. This follows from:
\quad
 $ [Z_0+Z_1,\overline{Z_0+Z_1}]+[Z_0-Z_1,\overline{Z_0-Z_1}]=
2\left([Z_0,\bar{Z}_0]+[Z_1,\bar{Z}_1]\right).$
\end{proof}
\begin{lem}\label{lm:ai}
Let $\mathfrak{Z}(M)$ be a distribution of complex vector fields and
let $\Theta_{\mathfrak{Z}}(M)$ be defined by \eqref{eq:nd}. 
Then
\begin{equation}
  \label{eq:nbi}
  \overline{\Theta_{\mathfrak{Z}}(M)}\subset \mathbb{E}_{\mathfrak{Z}}(M).
\end{equation}
In particular, if $Z\in\Theta_{\mathfrak{Z}}(M)$, then
$Z+\bar{Z}\in\mathbb{A}_{\mathfrak{Z}}(M)$.
\end{lem}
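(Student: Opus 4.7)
The plan is to prove $\bar{Z}\in\mathbb{E}_{\mathfrak{Z}}(M)$ for each $Z\in\Theta_{\mathfrak{Z}}(M)$ by exploiting the defining commutator relation of $\Theta_{\mathfrak{Z}}(M)$ via integration by parts. The starting point is the elementary identity, valid for any $W\in\mathfrak{X}^{\mathbb{C}}(M)$ and $u\in\mathcal{C}^{\infty}_0(U)$,
$$\|\bar{W}u\|_0^2-\|Wu\|_0^2 \;=\; -(u,[W,\bar{W}]u)_0+R_W(u),$$
where $R_W(u)$ collects lower order contributions coming from $W^*=-\bar{W}+c_W$ with $c_W\in\mathcal{C}^{\infty}(M)$; such terms are dominated by $\|u\|_0(\|u\|_0+\|Wu\|_0+\|\bar{W}u\|_0)$ and can be absorbed into the left-hand side after a standard small/large constant trick. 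Applying this with $W=Z$ and $W=Z_j$ for the vector fields $Z_1,\dots,Z_r$ furnished by Definition \ref{def:Ag}, and summing, yields
$$\|\bar{Z}u\|_0^2+\sum_{j=1}^r\|\bar{Z}_ju\|_0^2\;\leq\; C\Bigl(\|Zu\|_0^2+\sum_{j=1}^r\|Z_ju\|_0^2+\|u\|_0^2\Bigr)-2(u,Vu)_0,$$
where $V:=[Z,\bar{Z}]+\sum_j[Z_j,\bar{Z}_j]$ lies in $\mathfrak{Z}(M)+\overline{\mathfrak{Z}(M)}$ by hypothesis.

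Next I would decompose $V=W_1+\bar{W}_2$ with $W_1,W_2\in\mathfrak{Z}(M)$ and bound $|(u,Vu)_0|$ term by term. The summand $|(u,W_1u)_0|$ is immediately controlled by $\|u\|_0\|W_1u\|_0$ via Cauchy--Schwarz, and since $W_1\in\mathfrak{Z}(M)$ this is of the desired form. The subtler summand is $(u,\bar{W}_2u)_0$: because $\bar{W}_2\notin\mathfrak{Z}(M)$, $\|\bar{W}_2u\|_0$ is not directly controlled by the right-hand side. The trick is to integrate by parts once more, using $\bar{W}_2^*=-W_2+c$, to rewrite
$$|(u,\bar{W}_2u)_0| \;=\; |((-W_2+c)u,u)_0| \;\leq\; \|W_2u\|_0\|u\|_0+C\|u\|_0^2,$$
which is again of the required form. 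Combining everything, and dropping the nonnegative terms $\sum_j\|\bar{Z}_ju\|_0^2$ from the left-hand side, gives
$$\|\bar{Z}u\|_0^2 \;\leq\; C'\Bigl(\sum_{k=1}^N\|L_ku\|_0^2+\|u\|_0^2\Bigr)$$
with $L_1,\dots,L_N=Z,Z_1,\dots,Z_r,W_1,W_2\in\mathfrak{Z}(M)$, i.e.\ $\bar{Z}\in\mathbb{E}_{\mathfrak{Z}}(M)$; this is precisely \eqref{eq:nbi}.

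For the final assertion, Lemma \ref{lm:hlb} (via $\Theta_{\mathfrak{Z}}(M)\subset\mathfrak{Z}(M)$) together with \eqref{eq:Aaf} gives $Z\in\mathbb{E}_{\mathfrak{Z}}(M)$; combined with $\bar{Z}\in\mathbb{E}_{\mathfrak{Z}}(M)$ just proved and the fact that $\mathbb{E}_{\mathfrak{Z}}(M)$ is a complex module, this yields $Z+\bar{Z}\in\mathbb{E}_{\mathfrak{Z}}(M)\cap\mathfrak{X}(M)=\mathbb{A}_{\mathfrak{Z}}(M)$. The main technical obstacle is precisely the handling of the $\bar{W}_2$ contribution: the entire scheme hinges on turning the hypothesis $V\in\mathfrak{Z}(M)+\overline{\mathfrak{Z}(M)}$ on the full commutator sum into an estimate of the quadratic form $(u,Vu)_0$ in terms of $\mathfrak{Z}(M)$-norms alone, and it is exactly the conjugate summand $\bar{W}_2$ that forces the extra integration by parts described above.
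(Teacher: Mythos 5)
Your proof is correct and follows essentially the same route as the paper's: integrate by parts to express $\sum_j\|\bar{Z}_j u\|_0^2-\sum_j\|Z_j u\|_0^2$ through the commutator sum, invoke the defining relation of $\Theta_{\mathfrak{Z}}(M)$ to replace that sum by an element of $\mathfrak{Z}(M)+\overline{\mathfrak{Z}(M)}$, and control the conjugated summand by one further integration by parts. The only cosmetic difference is that the paper normalizes that element to the form $Z_{r+1}-\bar{Z}_{r+1}$ instead of your generic $W_1+\bar{W}_2$; the resulting estimates are identical.
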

\begin{proof}
The proof closely follows that of \cite[Theorem 2.5]{HN00}. If
$Z\in\Theta_{\mathfrak{Z}}(M)$, by \eqref{eq:nd}, there are
$Z_1,\hdots,Z_r,Z_{r+1}\in\mathfrak{Z}(M)$ such that
\begin{equation}\label{eq:nbj}
  [Z,\bar{Z}]+\sum_{j=1}^r[Z_j,\bar{Z}_j]=Z_{r+1}-\bar{Z}_{r+1}.
\end{equation}
Set $Z_0=Z$ and let $Z_j^*=-\bar{Z}_j+a_j$, with $a_j\in\mathcal{C}^{\infty}(M)$,
the $L^2$ formal adjoint of $Z_j$, for $0\leq{j}\leq{r+1}$. 
Integrating by parts, and using \eqref{eq:nbj} to compute the 
sum of the commutators,
we obtain, for all $u\in\mathcal{C}^{\infty}_0(M)$,
\begin{equation*}
  \begin{aligned}
    \sum_{j=0}^r\|\bar{Z}_j(u)\|^2_0&=
-\sum_{j=0}^r((Z_j-\bar{a}_j)(\bar{Z}_j(u))|u)_0\\
&=-\sum_{j=0}^r\left\{([Z_j,\bar{Z}_j](u)|u)_0
+(\bar{Z}_jZ_j(u)|u)_0-(\bar{Z}_j(u)|a_ju)_0\right\}\\
&=\sum_{j=0}^r\|Z_j(u)\|_0^2+\sum_{j=0}^{r+1}\mathrm{Re}\,(Z_j(u)|b_ju)
+\mathrm{Re}(u|b'u)_0,
  \end{aligned}
\end{equation*}
where $b_j,b'\in\mathcal{C}^{\infty}(M)$. Hence 
the inequality in \eqref{eq:Aac} (with $\bar{Z}$ replacing $Z$)
follows,
with $n=r+2$ and $L_j=Z_{j-1}$ for $j=1,\hdots,r+2$.
\end{proof}
\subsection{The characteristic bundle and the scalar Levi forms}
\label{sec:pclf}
Next we define the characteristic bundle of $\mathfrak{Z}(M)$ and 
the analogues,  for a general distribution $\mathfrak{Z}(M)$,
of the scalar Levi forms of $CR$ manifolds.
\begin{defn}
The \emph{characteristic bundle} of $\mathfrak{Z}(M)$ is the
set $H^0M\subset{T}^*M$, consisting of the 
\textit{real} covectors $\xi$ with 
$\langle{L},{\xi}\rangle=0$ for all $L\in\mathfrak{Z}(M)$.\par
If the set $H^0_pM$ of characteristic covectors at $p\in{M}$
is $\{0\}$, we say that  $\mathfrak{Z}(M)$ is \emph{elliptic}
at $p$.
\end{defn}
For each $p\in{M}$, the set $H^0_pM=H^0M\cap{T}^*_pM$
is a vector space.
Its dimension $\mathrm{dim}_{\mathbb{R}}H^0_pM$ is an upper semicontinuous
function of $p\in{M}$.
In particular, if  $\mathfrak{Z}(M)$ is {elliptic} at a point $p_0\in{M}$,
it is elliptic for $p$
in an open neighborhood $U$ of $p_0$. In this
case \eqref{eq:ob} is valid with $\epsilon=1$ by G\r{a}rding's inequality.
Hence obstructions to the validity of the subelliptic estimate
\eqref{eq:ob} may come only from the characteristic codirections
of $\mathfrak{Z}(M)$. \par\smallskip

We restate condition \eqref{eq:be} in terms of the characteristic bundle.
\begin{prop}\label{prp:ca}
Condition \eqref{eq:be} at $p\in{M}$ is equivalent to  
\begin{equation}
  \label{eq:pcb}
\left\{\begin{gathered}
  \forall\xi\in{H}^0_{p}M,\;\text{with}\;\xi\neq{0},
\\
\exists Z_0\in\mathfrak{Z}(M),\,
Z_1,\hdots,Z_r\in\mathbb{E}_{\mathfrak{Z}}(M)\cap
\overline{\mathbb{E}_{\mathfrak{Z}}(M)},\; \\
s.t.\qquad i\xi([Z_1,\hdots,Z_r,\bar{Z}_0])\neq{0},
\end{gathered}\right.
\end{equation}
\end{prop}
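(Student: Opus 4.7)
The plan is to dualize and match the two failure conditions. Specifically, \eqref{eq:be} fails at $p$ iff there is a nonzero real $\xi\in T_p^*M$ annihilating $X(p)$ for every $X\in\mathfrak{T}_{\mathfrak{Z}}(M)$, while \eqref{eq:pcb} fails iff some nonzero $\xi\in H^0_pM$ satisfies $\xi([Z_1,\hdots,Z_r,\bar Z_0])(p)=0$ for every admissible choice of $Z_0\in\mathfrak{Z}(M)$ and $Z_1,\hdots,Z_r\in\mathbb{E}_{\mathfrak{Z}}(M)\cap\overline{\mathbb{E}_{\mathfrak{Z}}(M)}$. Thus the task is to show that a nonzero real $\xi$ annihilates $\mathfrak{T}_{\mathfrak{Z}}(M)(p)$ iff $\xi\in H^0_pM$ and it kills all such iterated brackets.

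First I would observe that $\xi$ annihilates $\mathbb{T}^{(0)}_{\mathfrak{Z}}(M)(p)=\{(Z+\bar Z)(p)\mid Z\in\mathfrak{Z}(M)\}$ iff $\xi\in H^0_pM$. Indeed, since $\mathfrak{Z}(M)$ is a \emph{complex} left $\mathcal{C}^{\infty}(M)$-module, replacing $Z$ by $iZ$ in $\xi((Z+\bar Z)(p))=0$ yields $\mathrm{Im}\,\xi(Z(p))=0$ alongside $\mathrm{Re}\,\xi(Z(p))=0$; the converse is trivial.

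Next I would unwind the recursive definition of the $\mathbb{T}^{(h)}_{\mathfrak{Z}}(M)$. Using $[X,fY]=f[X,Y]+(Xf)Y$ to absorb module corrections into lower-order brackets (which are already contained in the sum over $h$), one checks that $\mathfrak{T}_{\mathfrak{Z}}(M)(p)$ equals the real span of the iterated bracket values $[X_1,\hdots,X_h,Z+\bar Z](p)$ for all $h\geq 0$, $X_j\in\mathbb{A}_{\mathfrak{Z}}(M)$, $Z\in\mathfrak{Z}(M)$. Since $\xi$ and each $X_j$ are real, $\xi([X_1,\hdots,X_h,\bar Z])(p)=\overline{\xi([X_1,\hdots,X_h,Z])(p)}$, so $\xi([X_1,\hdots,X_h,Z+\bar Z])(p)=2\mathrm{Re}\,\xi([X_1,\hdots,X_h,Z])(p)$; replacing $Z$ by $iZ$ removes the $\mathrm{Re}$ restriction. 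Hence $\xi$ annihilates $\mathfrak{T}_{\mathfrak{Z}}(M)(p)$ iff $\xi\in H^0_pM$ and $\xi([X_1,\hdots,X_h,Z])(p)=0$ for all $h\geq 0$, $X_j\in\mathbb{A}_{\mathfrak{Z}}(M)$, $Z\in\mathfrak{Z}(M)$.

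Finally I would identify $\mathbb{E}_{\mathfrak{Z}}(M)\cap\overline{\mathbb{E}_{\mathfrak{Z}}(M)}$ with the complexification $\mathbb{A}_{\mathfrak{Z}}(M)\otimes_{\mathbb{R}}\mathbb{C}$: for $Z$ in the intersection, both $Z+\bar Z$ and $i(Z-\bar Z)$ are real and lie in $\mathbb{E}_{\mathfrak{Z}}(M)\cap\mathfrak{X}(M)=\mathbb{A}_{\mathfrak{Z}}(M)$. Writing $Z_j=X_j+iY_j$ with $X_j,Y_j\in\mathbb{A}_{\mathfrak{Z}}(M)$ and expanding multilinearly, $i\xi([Z_1,\hdots,Z_r,\bar Z_0])(p)$ becomes a $\mathbb{C}$-linear combination of values $\xi([W_1,\hdots,W_r,\bar Z_0])(p)=\overline{\xi([W_1,\hdots,W_r,Z_0])(p)}$ with $W_l\in\mathbb{A}_{\mathfrak{Z}}(M)$, matching exactly the condition from the previous paragraph. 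The only real obstacle is the bookkeeping of conjugations and of the $\mathcal{C}^{\infty}(M)$-module versus $\mathbb{C}$-linear structures; no hard estimate intervenes.
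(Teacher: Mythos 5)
Your proposal is correct and is essentially the paper's own argument made explicit: the paper's proof consists precisely of the two observations that the real and imaginary parts of the commutators $[Z_1,\hdots,Z_r,\bar{Z}_0]$ (with $Z_j\in\mathbb{E}_{\mathfrak{Z}}(M)\cap\overline{\mathbb{E}_{\mathfrak{Z}}(M)}$, i.e. $Z_j\in\mathbb{A}_{\mathfrak{Z}}(M)+i\,\mathbb{A}_{\mathfrak{Z}}(M)$) lie in $\mathfrak{T}_{\mathfrak{Z}}(M)$, and conversely that $\mathfrak{T}_{\mathfrak{Z}}(M)$ is spanned by such real parts, which is exactly what your dualization and multilinear expansion verify. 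The bookkeeping (absorbing $\mathcal{C}^{\infty}$-module corrections into lower-order terms, replacing $Z$ by $iZ$ to drop the real-part restriction) is sound.
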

\begin{proof}
This follows because the elements of $\mathfrak{T}_{\mathfrak{Z}}(M)$ 
can be expressed as linear combinations of the real parts of
the $[Z_1,\hdots,Z_r,\bar{Z}_0]$'s, with 
$Z_0\in\mathfrak{Z}(M)$ and 
$Z_1,\hdots,Z_r\in\mathbb{E}_{\mathfrak{Z}}(M)\cap
\overline{\mathbb{E}_{\mathfrak{Z}}(M)}$, and vice versa,
the real and imaginary parts of these $[Z_1,\hdots,Z_r,\bar{Z}_0]$
belong to $\mathfrak{T}_{\mathfrak{Z}}(M)$.
\end{proof}
\begin{defn}
If $\xi\in{H}^0_pM$, we define the \emph{scalar Levi form} 
of $\mathfrak{Z}(M)$ at $\xi$ as the Hermitian symmetric form
\begin{equation}
  \label{eq:sb}
  \mathfrak{L}_{\xi}(L_1,\bar{L}_2)=i\xi([L_1,\bar{L}_2]) \quad
\text{for}\quad L_1,L_2\in\mathfrak{Z}(M).
\end{equation}
The value of the right hand side of \eqref{eq:sb}
only depends on the values
$L_1(p)$, $L_2(p)$ of the two vector fields $L_1,L_2$
at the base point $p=\pi(\xi)$.
Thus \eqref{eq:sb} is a Hermitian symmetric form on the
finite dimensional complex vector space~$Z_pM$.
\end{defn}
\begin{rmk}
In the case where $\mathfrak{Z}(M)$ is the space of $(0,1)$-vector fields
of an abstract $CR$ manifold, it was shown
in \cite{HN96} 
that the subelliptic estimate \eqref{eq:ob}
is valid with $\epsilon=1/2$ under the assumption
that, for every 
$\xi\in{H}^0_pM\setminus\{0\}$,
the Levi form $\mathfrak{L}_{\xi}$ is indefinite; 
this assumption was weakened to allow $\mathfrak{L}_{\xi}=0$
for some nonzero characteristics $\xi$ in
\cite{HN00}. These results suggest that the
obstructions to the validity of \eqref{eq:ob}
come from the characteristic $\xi$'s for which
$\mathfrak{L}_{\xi}\neq{0}$ is semidefinite.
\end{rmk} 
\subsection{The distribution $\mathbb{K}_{\mathfrak{Z}}(M)$}
Our next aim is to relate $\mathbb{E}_{\mathfrak{Z}}(M)$ and
the \textit{Levi forms} associated to
$\mathfrak{Z}(M)$.
\begin{defn}
  Define \begin{gather}
  \label{eq:pcg}
  H^{\oplus}M=\left\{\xi\in{H}^0M\mid\mathcal{L}_{\xi}\geq{0}\right\},
\\
  \label{eq:Bba}
  \mathbb{K}_{\mathfrak{Z}}(M)=\{Z\in\mathfrak{Z}(M)\mid\mathcal{L}_{\xi}(Z,
\bar{Z})=0,\;\forall\xi\in{H}^{\oplus}M\}.
\end{gather}
\end{defn}
\begin{prop}\label{prop:Bbb}
For every distribution $\mathfrak{Z}(M)\subset\mathfrak{X}^{\mathbb{C}}(M)$,
the set $\mathbb{K}_{\mathfrak{Z}}(M)$ is also a distribution.\par
Assume in addition that $\mathfrak{Z}(M)$ is formally integrable
and that $\mathfrak{Z}(M)$, $\mathfrak{Z}(M)\cap\overline{\mathfrak{Z}(M)}$
are both distributions of constant rank. 
Then
\begin{equation}
  \label{eq:Bbc}\mathfrak{Z}(M)\cap
\overline{\mathbb{E}_{\mathfrak{Z}}(M)}
\subset
\mathbb{K}_{\mathfrak{Z}}(M).
\end{equation}
\end{prop}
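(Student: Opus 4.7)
My plan splits the two assertions of the proposition. The first—that $\mathbb{K}_{\mathfrak{Z}}(M)$ is a distribution—is formal: for any $\xi\in H^\oplus M$ the positive semidefinite Hermitian form $\mathcal{L}_\xi$ on $Z_pM$ has a complex-linear radical by Cauchy--Schwarz, so the pointwise condition defining $\mathbb{K}_{\mathfrak{Z}}(M)$ is preserved under $\mathbb{C}$-linear combinations. Closure under multiplication by $f\in\mathcal{C}^\infty(M)$ follows from the identity $\mathcal{L}_\xi(fZ,\overline{fZ})=|f|^2\mathcal{L}_\xi(Z,\bar Z)$, valid because $\xi$ annihilates both $Z$ and $\bar Z$ and so the Leibniz terms in $[fZ,\bar f\bar Z]$ drop out of $i\xi(\,\cdot\,)$.

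For the inclusion I would begin with $Z\in\mathfrak{Z}(M)$ such that $\bar Z\in\mathbb{E}_{\mathfrak{Z}}(M)$. The constant-rank assumptions force $\mathfrak{Z}(M)+\overline{\mathfrak{Z}(M)}$ to be of constant rank, so Lemma~\ref{lem:Aoh} applies and produces $\bar Z=Z_1+\bar Z_2$ with $Z_1,Z_2\in\mathfrak{Z}(M)$; since $\bar Z\in\overline{\mathfrak{Z}(M)}$ automatically, $Z_1\in\mathfrak{Z}(M)\cap\overline{\mathfrak{Z}(M)}$. Expanding $[Z,\bar Z]=[\bar Z_1+Z_2,\,Z_1+\bar Z_2]$ and using formal integrability of both $\mathfrak{Z}(M)$ and $\overline{\mathfrak{Z}(M)}$, all brackets other than $[Z_2,\bar Z_2]$ lie in $\mathfrak{Z}(M)\cup\overline{\mathfrak{Z}(M)}$, where any real $\xi\in H^0M$ vanishes. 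Hence $\mathcal{L}_\xi(Z,\bar Z)=\mathcal{L}_\xi(Z_2,\bar Z_2)$, and replacing $Z$ by $Z_2$ one is reduced to the case $\bar Z\in\overline{\mathfrak{Z}(M)}$ throughout, with the estimate on $\bar Z$ preserved.

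Next I would fix $p_0\in M$ and $\xi_0\in H^\oplus_{p_0}M\setminus\{0\}$ and argue by contradiction, assuming $\mathcal{L}_{\xi_0}(Z,\bar Z)>0$. Writing $[Z,\bar Z]=iW$ with $W$ real, the principal symbol $-\eta(W)$ of $[Z,\bar Z]$ equals $\mathcal{L}_\xi(Z,\bar Z)$ at each characteristic $\xi$, so by continuity $-\eta(W)\ge\epsilon|\eta|$ on a conic neighborhood $\Gamma$ of $(p_0,\xi_0)$. I would pick a properly supported $\Psi\in\Psi^0(U)$ with nonnegative principal symbol supported in $\Gamma$ and elliptic at $(p_0,\xi_0)$, apply the hypothesis on $\bar Z$ to $\Psi v$, and combine the integration-by-parts identity
\[
\|Z\Psi v\|_0^2-\|\bar Z\Psi v\|_0^2=(\Psi v,[Z,\bar Z]\Psi v)_0+\text{lower order}
\]
with the sharp G\aa rding inequality for $[Z,\bar Z]$ on $\Gamma$ to obtain a lower bound $\ge(\epsilon/2)\|\Psi v\|_{1/2}^2-C\|v\|_0^2$. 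The constant rank of $\mathfrak{Z}(M)$ lets the $L_j$'s be chosen as a local generating set, so $\|Z\Psi v\|_0^2\lesssim\sum_j\|L_j\Psi v\|_0^2+\|v\|_0^2$; the first-order vanishing of each symbol $\ell_j$ at the characteristic $(p_0,\xi_0)$ should then make $\sum_j\|L_j\Psi v\|_0^2$ genuinely subordinate to $\|\Psi v\|_{1/2}^2$ on a sufficiently tight $\Gamma$, yielding the contradiction.

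The genuine difficulty will lie in that last calibration: $\Gamma$ must be taken small enough that the first-order vanishing of the $\ell_j$ actually suppresses $\sum_j\|L_j\Psi v\|_0^2$ below the coercive $(\epsilon/2)\|\Psi v\|_{1/2}^2$, while simultaneously keeping $-\eta(W)\ge\epsilon|\eta|$ on $\Gamma$ and absorbing the zero-order commutator remainders $[\bar Z,\Psi]$ and $[L_j,\Psi]$ introduced when $\Psi$ is interchanged with the vector fields. The first two paragraphs are essentially formal once the content of Lemma~\ref{lem:Aoh} is in hand; the microlocal sharp-G\aa rding balance in the third is where the real analytic weight sits.
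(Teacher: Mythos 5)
Your first paragraph is fine and matches the paper's own argument: the null set of a positive semidefinite Hermitian form is its radical, hence a complex subspace, and the Leibniz terms in $[fZ,\bar{f}\bar{Z}]$ are annihilated by any $\xi\in H^0M$. Your second paragraph, however, is vacuous: the hypothesis $Z\in\mathfrak{Z}(M)\cap\overline{\mathbb{E}_{\mathfrak{Z}}(M)}$ already puts $Z$ in $\mathfrak{Z}(M)$ and hence $\bar{Z}$ in $\overline{\mathfrak{Z}(M)}$, so the decomposition you extract from Lemma \ref{lem:Aoh} is trivially $\bar{Z}=0+\bar{Z}$ and there is nothing to reduce to.

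The genuine gap is in the third paragraph, exactly where you suspect, and the mechanism you propose cannot be repaired by tightening $\Gamma$. First-order vanishing of the symbols $\ell_j$ on the characteristic set gives, on a conic neighborhood of aperture $\delta$, only $\|L_j\Psi v\|_0\lesssim\delta\|\Psi v\|_1+C\|v\|_0$, and no smallness of $\delta$ lets $\delta^2\|\Psi v\|_1^2$ be absorbed by $\epsilon\|\Psi v\|_{1/2}^2$: at frequency $\tau$ the former is of size $\delta^2\tau^2\|\Psi v\|_0^2$ against $\epsilon\tau\|\Psi v\|_0^2$. What your chain of inequalities actually produces, for arbitrary $v$, is the microlocal estimate $\epsilon\|\Psi v\|_{1/2}^2\leq C\bigl(\sum_j\|L_j\Psi v\|_0^2+\|v\|_0^2\bigr)$, which is not self-contradictory; to refute it one must exhibit test functions on which the right-hand side is $o$ of the left, i.e., approximate simultaneous null solutions of all the $L_j$'s concentrated at $(p_0,\xi_0)$ in a non-conic, second-microlocal way. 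That construction is the real content of the paper's proof and is precisely where formal integrability enters (your analytic step never uses it): one solves $L_j(i\langle x,\xi\rangle+q_{\xi})=O(|x|^2)$ for a quadratic $q_{\xi}$ with $\mathrm{Re}\,q_{\xi}\geq 0$, which requires the first-order compatibility $[L_i,L_j]=0$, and then feeds the Gaussian beams $u_{\tau}=\tau^{m/6}\chi(x\tau^{1/3})e^{-\tau(i\langle x,\xi\rangle+q_{\xi}(x))}$ into the identity $\|\bar{Z}u\|_0^2\geq\|Zu\|_0^2-\mathrm{Re}([Z,\bar{Z}]u|u)_0-C\|u\|_0^2$. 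For these, $\sum_j\|L_ju_{\tau}\|_0^2+\|u_{\tau}\|_0^2=O(\tau^{2/3})$ while the commutator term grows like $\tau\,\mathcal{L}_{\xi}(Z,\bar{Z})$, forcing $\mathcal{L}_{\xi}(Z,\bar{Z})=0$. Without such a concentration argument your sharp-G\aa rding balance does not close.
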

\begin{proof} The first claim is a consequence
of the fact that the set of isotropic vectors
of a semidefinite Hermitian symmetric form is a complex linear
space.\par
To complete the proof, we 
need to show that, if $\xi\in{H}^{\oplus}M$, we
have $\mathcal{L}_{\xi}(Z,\bar{Z})=0$ for all
$Z\in\mathfrak{Z}(M)\cap\overline{\mathbb{E}_{\mathfrak{Z}}(M)}$.
Having fixed $\xi\in{H}^{\oplus}M$, we can argue in a small coordinate
patch $U$ about its base point $p=\pi(\xi)$.
 By the assumption that $\mathfrak{Z}(M)$ is formally
integrable, we can choose real coordinates $x_1,\hdots,x_m$,
centered at $p$, such that, for a pair of nonnegative integers $h,\ell$ with
$2h+\ell\leq{m}$, setting
$z_j=x_j+i\, x_{h+j}$ for $j=1,\hdots,h$,
a system of generators of $\mathfrak{Z}(M)$ in $U$ is given by the
vector fields
\begin{equation}
  \label{eq:Bbd}
  \begin{cases}
    L_j=\frac{\partial}{\partial\bar{z}_j}+
L'_j&\text{for}\; j=1,\hdots,h,\\
L_{h+j}=\frac{\partial}{\partial{x_{m+1-j}}}&\text{for}\; j=1,\hdots,\ell.
  \end{cases}
\end{equation}
Here $\frac{\partial}{\partial\bar{z}_j}=\frac{1}{2}\left(
\frac{\partial}{\partial{x}_j}+i\frac{\partial}{\partial{x}_{h+j}}\right)$,
$L'_j\in\mathfrak{X}^{\mathbb{C}}(U)$ satisfies $L'_j(0)=0$,
and
\begin{equation}
  \label{eq:Bbe}
  [L_i,L_j]=0\quad\text{for}\quad 1\leq i,j\leq h+\ell.
\end{equation}
This is obtained by first noticing that the real vector fields in
$\mathfrak{Z}(M)$ are a formally integrable distribution of real vector
fields. By the classical Frobenius theorem, we can choose a system
of local coordinates in which this real distribution is locally
generated by the $L_{h+1},\hdots,L_{h+\ell}$ above. By linear algebra
we can obtain, in a neighborhood of $p$, from any basis of $\mathfrak{Z}(M)$ that includes $L_{h+1},\hdots,L_{h+\ell}$,
a new one in which the
$L_1,\hdots,L_h$ have the property that
$L_j-\frac{\partial}{\partial{x}_j}$
does not contain 
either $\frac{\partial}{\partial{x}_i}$ for $i=1,\hdots,h$, or
$\frac{\partial}{\partial{x}_{m+1-i}}$ for $i=1,\hdots,\ell$. By this choice
we obtain \eqref{eq:Bbe}. Let
\begin{equation}
  \label{eq:Bbg}
  L_j=\frac{\partial}{\partial\bar{z}_j}+\sum_{i=1}^{m-\ell}a_{j}^i
\frac{\partial}{\partial{x}_i},\quad\text{with}\; a_{j}^i(0)=0.
\end{equation}
We can assume, by a change of coordinates, that
\begin{equation}
  \label{eq:Bbh}
  L_j(a_r^i)(0)=\frac{\partial{a_r^i(0)}}{\partial\bar{z}_j}=
0\quad\text{for}\quad j,r=1,\hdots,h,\; i=2h+1,\hdots,m-\ell.
\end{equation}
In fact, by the formal integrability condition, it follows that
\begin{equation}
  \label{eq:Bbi}
  \frac{\partial}{\partial{\bar{z}}_j}+
\sum_{i=2h+1}^{m-\ell}\left(
\sum_{r=1}^h\frac{\partial{a^i_j(0)}}{\partial{\bar{z}}_r}\bar{z}_r\right)
\frac{\partial}{\partial{x}_i},\quad\text{for}\; j=1,\hdots,h,
\end{equation}
are  commuting vector fields, and, since they also commute with
their conjugates, by a change of the coordinates $x_1,\hdots,x_{m-\ell}$
we can obtain a new coordinate system for which 
\eqref{eq:Bbh} is also
satisfied.
Let 
$\xi\in\mathbb{R}^m$ be such that $\ell_j(0,\xi)=0$ for 
$j=1,\hdots,h+\ell$, where by $\ell_j(x,\xi)$ we indicate the symbols
of the differential operators $L_j$. This means that the components
$\xi_i$ of $\xi$ are zero for $1\leq{i}\leq{2h}$ and $(m-\ell)<i\leq{m}$.
By the formal integrability condition, there is a second degree
homogeneous polynomial
$q_{\xi}(x)\in\mathbb{C}[x_1,\hdots,x_{m-\ell}]$ such that
\begin{equation}
  \label{eq:Bbf}
  L_j(i\langle{x},\xi\rangle + q_{\xi}(x))=O(|x|^2)\quad \text{for}\; x\to{0}.
\end{equation}
Next we observe that, by identifying $\xi$ with the corresponding element
in $T^*_0\mathbb{R}^m$, and setting
$v_{\xi}=(i\langle{x},\xi\rangle + q_{\xi}(x))$, we obtain
\begin{equation*}\begin{aligned}
  0=d^2(i\langle{x},\xi\rangle + q_{\xi}(x))(Z,\bar{Z})=
Z\bar{Z}(v_{\xi})(0)-\bar{Z}Z(v_{\xi})(0)-\mathcal{L}_{\xi}(Z,\bar{Z}),
\\
\forall Z\in\mathfrak{X}^{\mathbb{C}}(U).
\end{aligned}
\end{equation*}
Thus, in particular, $Z\bar{Z}v_{\xi}(0)=\mathcal{L}_{\xi}(Z,\bar{Z})\geq{0}$
if $Z\in\mathfrak{Z}(M)$ and $\xi\in{H}^{\oplus}_0M$.
By \eqref{eq:Bbh}, we have $Z\bar{Z}v_{\xi}(0)=Z\bar{Z}q_{\xi}(0)$.
Consider the expression for $q_{\xi}$ as a polynomial in
$z_1,\hdots,z_h,\bar{z}_1,\hdots,\bar{z}_h,x_{2h+1},\hdots,x_{m-\ell}$,
\begin{equation}
  \label{eq:Bbj}\begin{aligned}
  q_{\xi}(x)=q^{2,0,0}_{\xi}(z,z)+q^{1,1,0}_{\xi}(z,\bar{z})+
q^{0,2,0}_{\xi}(\bar{z},\bar{z})+q^{1,0,1}_{\xi}({z},x'')\\
+
q^{0,1,1}_{\xi}(\bar{z},x'')+q^{0,0,2}_{\xi}(x'',x''),
\end{aligned}
\end{equation}
where $x''=(x_{2h+1},\hdots,x_{m-\ell})$. The assumption that
$\mathcal{L}_{\xi}\geq{0}$ means that $q^{(1,1,0)}_{\xi}(z,\bar{z})\geq{0}$.
We can add to $q_{\xi}$ any second degree homogeneous polynomial $f$
in $\mathbb{C}[z,x'']$, since $L_j(f)=O(|x|^2)$ for any such polynomial.
In this way, we obtain a new $q_{\xi}$, still satisfying \eqref{eq:Bbf},
with the property that
\begin{equation}
  \label{eq:Bbk}
  \mathrm{Re}(q_{\xi})(x)\geq{0},\;\forall x\in\mathbb{R}^m.
\end{equation}
Fix any real valued function $\chi\in\mathcal{C}^{\infty}_0(\mathbb{R}^m)$
with $\chi(x)=1$ for $|x|\leq{1}$, $0\leq\chi(x)\leq{1}$ in 
$\mathbb{R}^m$, and $\chi(x)=0$ for $|x|\geq{2}$.
For large $\tau>0$ the function
\begin{equation}
  \label{eq:Bbl}
  u_{\tau}=\root{6}\of{\tau^m}
\,\chi(x\root{3}\of{\tau})e^{-\tau(i\langle x,\xi\rangle+q_{\xi}(x))}
\end{equation}
belongs to $\mathcal{C}^{\infty}_0(U)$. We have
\begin{equation}\label{eq:Bbp}
  \|u_\tau\|_0^2=\int_{\mathbb{R}^m}e^{-2\root{3}\of{\tau}q_{\xi}(x)}
\chi^2(x)dx\leq\int_{\mathbb{R}^m}
\chi^2(x)dx,
\end{equation}
because of \eqref{eq:Bbk}. 
If $Z\in\mathfrak{Z}(M)$, we have:
\begin{equation*}
  \begin{aligned}
    |Z(u_\tau)|^2&=\root{3}\of{\tau^m}\left|-\tau 
\chi(x\root{3}\of{\tau}/2)Z(v_{\xi})+
\root{3}\of{\tau}[Z(\chi)](x\root{3}\of{\tau})\right|^2 e^{-2\tau
\mathrm{Re}\,{q}_{\xi}(x)}\\
&\leq C_0 \root{3}\of{\tau^m}\cdot\tau^2\,|x|^4 \chi(x\root{3}\of{\tau}/2)
\quad\text{for}\;\tau\gg{1},
  \end{aligned}
\end{equation*}
with a positive constant $C_0$. Indeed $\chi(x\root{3}\of{\tau}/2)=1$
when $Z(u_{\tau})\neq{0}$, and we used the fact that 
$Z(v_{\xi})=O(|x|^2)$ and that $Z(\chi)=O(|x|^2)$.
Computing $\|Z(u_\tau)\|_0^2$ by making the change of coordinates
$y=x\root{3}\of{\tau}$ shows that,
with a constant $C_1>0$ independent of $\tau$,
\begin{equation}\label{eq:Bbm}
  \|Z(u_\tau)\|_0^2\leq C_1\root{3}\of{\tau^2}\quad\text{for}\quad \tau\gg{1}.
\end{equation}
On the other hand, for $u\in\mathcal{C}^{\infty}_0(U)$, we have,
with a constant $C_2>0$ independent of $u$,
\begin{equation}\label{eq:Bbn}
      \|\bar{Z}(u)\|_0^2\geq \|Z(u)\|_0^2-\mathrm{Re}([Z,\bar{Z}](u)|u)_0
-C_2\|u\|_0^2.
 \end{equation}
For $u=u_{\tau}$ and by using, while
computing the integral, the change of variables $y=x\sqrt{\tau}$, we obtain
\begin{equation}\label{eq:Bbo} \begin{aligned}
-\mathrm{Re}([Z,\bar{Z}](u_{\tau})|u_{\tau})_0=
  \tau\int \mathcal{L}_{\xi}(Z,\bar{Z})& e^{-2
\mathrm{Re}\,{q}_{\xi}(y)}\chi^2(y/\root{6}\of{\tau})dy\\
&+{O}(\root{3}\of{\tau^2})\qquad\text{for}\;\tau\gg{1}.
\end{aligned}
\end{equation}
Since $\chi^2(y/\root{6}\of{\tau})$ is increasing with $\tau$, we get,
with a constant $C_3\geq{0}$,
\begin{equation}\label{eq:Bbx} \begin{aligned}
-\mathrm{Re}([Z,\bar{Z}](u_{\tau})|u_{\tau})_0\geq
  \tau\int \mathcal{L}_{\xi}(Z,\bar{Z}) e^{-2
\mathrm{Re}\,{q}_{\xi}(y)}\chi^2(y)dy-C_3|\tau|^{\frac{3}{2}}\quad\\
\quad\text{for}\;\tau\gg{1}.
\end{aligned}
\end{equation}
By \eqref{eq:Bbp} and \eqref{eq:Bbm}, 
$(\sum_{j=1}^{h+\ell}\|L_j(u_{\tau})\|_0^2+\|u\|_0^2)$
is ${O}(\root{3}\of{\tau^2})$ for $\tau\to\infty$.
Thus, 
by \eqref{eq:Bbn} and \eqref{eq:Bbx} we obtain that
$\mathcal{L}_{\xi}(Z,\bar{Z})=0$ for 
$Z\in\mathfrak{Z}(M)\cap\overline{\mathbb{E}_{\mathfrak{Z}}(M)}$.
\end{proof}
\begin{rmk}
By a slight variant of the proof of Proposition \ref{prop:Bbb} we obtain:
\textit{
  Assume that $\mathfrak{Z}(M)$ is formally integrable and that
$\mathfrak{Z}(M)$ and $\mathfrak{Z}(M)\cap\overline{\mathfrak{Z}(M)}$
are both distributions of constant rank. 
Let $p\in{M}$.
If there exists
$\xi\in{H}^0_pM$ such that $\mathcal{L}_{\xi}$ is definite on a
complement of $Z_pM\cap\overline{Z_pM}$ in
$Z_pM$, then $\mathfrak{Z}(M)$ is not subelliptic at $p$.}
\end{rmk}
\begin{prop}\label{pp:pcj}
For every distribution of complex vector fields $\mathfrak{Z}(M)$,
we have
\begin{equation}
  \label{eq:pce}
  \Theta_{\mathfrak{Z}}(M)\subset\mathbb{K}_{\mathfrak{Z}}(M). 
\end{equation}
Assume that 
\begin{align}
\label{eq:Bcb}
\mu_0(p)&=\mathrm{dim}_{\mathbb{R}}(Z_pM+\overline{Z_pM})\\
\label{eq:pcda} 
\delta_0({p})
&=
\mathrm{dim}_{\mathbb{R}}
\left\langle H^{\oplus}_pM\right\rangle,\\
\label{eq:pcdb}
\nu_0(p)&=\mathrm{dim}_{\mathbb{C}}\{Z(p)\in{Z}_pM
\mid Z\in\mathbb{K}_{\mathfrak{Z}}(M)\}
\end{align}
are constant in $M$.  Then
\begin{align}
  \label{eq:pch}
 \Theta_{\mathfrak{Z}}(M)&=\mathbb{K}_{\mathfrak{Z}}(M).\\
\intertext{Finally, if $\mathfrak{Z}(M)$ is formally integrable and of
constant rank,} 
\label{eq:Bca}
\mathbb{E}_{\mathfrak{Z}}(M)&=\mathfrak{Z}(M)+
\overline{\mathbb{K}_{\mathfrak{Z}}(M)},\\ \label{eq:Bcbb}
\mathbb{A}_{\mathfrak{Z}}(M)&=\{Z+\bar{Z}\mid Z\in\mathbb{K}_{\mathfrak{Z}}(M)\}.
\end{align}
\end{prop}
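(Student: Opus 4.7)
The two inclusions of \eqref{eq:pch} should be handled separately, after which \eqref{eq:Bca} and \eqref{eq:Bcbb} will follow formally from Lemma \ref{lm:ai}, Lemma \ref{lem:Aoh}, and Proposition \ref{prop:Bbb}. For $\Theta_{\mathfrak{Z}}(M)\subset\mathbb{K}_{\mathfrak{Z}}(M)$, the argument is immediate: if $Z\in\Theta_{\mathfrak{Z}}(M)$ and $Z_1,\dots,Z_r\in\mathfrak{Z}(M)$ satisfy $[Z,\bar{Z}]+\sum_{j=1}^r[Z_j,\bar{Z}_j]\in\mathfrak{Z}(M)+\overline{\mathfrak{Z}(M)}$, I would pair both sides with $i\xi$ at a point $p$ for an arbitrary $\xi\in H^{\oplus}_pM$. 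Since $\xi$ is real and annihilates $\mathfrak{Z}(M)$, it annihilates $\mathfrak{Z}(M)+\overline{\mathfrak{Z}(M)}$ as well, so the relation collapses to $\mathcal{L}_{\xi}(Z,\bar{Z})+\sum_{j=1}^r\mathcal{L}_{\xi}(Z_j,\bar{Z}_j)=0$. Every summand is nonnegative by definition of $H^{\oplus}M$, so every one vanishes; in particular $Z\in\mathbb{K}_{\mathfrak{Z}}(M)$.

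For the reverse inclusion $\mathbb{K}_{\mathfrak{Z}}(M)\subset\Theta_{\mathfrak{Z}}(M)$ under the constancy of $\mu_0,\delta_0,\nu_0$, the core idea is convex-cone duality at each point, followed by globalization. Let $V_p\subset T_pM$ be the real part of $Z_pM+\overline{Z_pM}$, so $V_p^{\perp}=H^0_pM$; note that $[W,\bar{W}]$ is purely imaginary, so $i[W,\bar{W}](p)$ is a real vector in $T_pM$. The polarization identity used in Lemma \ref{lm:hlb} shows that the set
\begin{equation*}
\mathcal{C}_p=\bigl\{i[W,\bar{W}](p)\bmod V_p\mid W\in\mathfrak{Z}(M)\bigr\}\subset T_pM/V_p
\end{equation*}
is a convex cone, and by definition its dual under the pairing with $V_p^{\perp}$ is exactly $H^{\oplus}_pM$. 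Constancy of $\mu_0$ and $\delta_0$ ensures that $\mathcal{C}_p$ is finitely generated and hence closed, so the bipolar theorem identifies its lineality space $\mathcal{C}_p\cap(-\mathcal{C}_p)$ with $(H^{\oplus}_pM)^{\perp}$. For $Z\in\mathbb{K}_{\mathfrak{Z}}(M)$ the vanishing of $\mathcal{L}_{\xi}(Z,\bar{Z})$ on $H^{\oplus}_pM$ means $i[Z,\bar{Z}](p)\in(H^{\oplus}_pM)^{\perp}\subset-\mathcal{C}_p$, which produces local sections $W_1,\dots,W_r\in\mathfrak{Z}(M)$ near $p$ with the relation in \eqref{eq:nd} holding at $p$. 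To upgrade this to a global relation I would combine a Carath\'eodory bound on $r$ (uniform by constancy of $\nu_0$ and $\delta_0$) with a partition of unity subordinate to coordinate patches, replacing the local choices $W_{\alpha,j}$ by $\chi_\alpha W_{\alpha,j}\in\mathfrak{Z}(M)$ and absorbing first-order error terms into $\mathfrak{Z}(M)+\overline{\mathfrak{Z}(M)}$. This pointwise-to-global passage is the main technical obstacle.

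Finally, for \eqref{eq:Bca}, the inclusion $\supset$ combines $\mathfrak{Z}(M)\subset\mathbb{E}_{\mathfrak{Z}}(M)$ with $\overline{\mathbb{K}_{\mathfrak{Z}}(M)}=\overline{\Theta_{\mathfrak{Z}}(M)}\subset\mathbb{E}_{\mathfrak{Z}}(M)$ from Lemma \ref{lm:ai}; conversely, given $W\in\mathbb{E}_{\mathfrak{Z}}(M)$, Lemma \ref{lem:Aoh} writes $W=Z_1+\bar{Z}_2$ with $Z_1,Z_2\in\mathfrak{Z}(M)$, then $\bar{Z}_2=W-Z_1\in\mathbb{E}_{\mathfrak{Z}}(M)$, so $Z_2\in\mathfrak{Z}(M)\cap\overline{\mathbb{E}_{\mathfrak{Z}}(M)}\subset\mathbb{K}_{\mathfrak{Z}}(M)$ by Proposition \ref{prop:Bbb}, giving $W\in\mathfrak{Z}(M)+\overline{\mathbb{K}_{\mathfrak{Z}}(M)}$. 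For \eqref{eq:Bcbb}, $\supset$ is immediate; conversely, given a real $X\in\mathbb{A}_{\mathfrak{Z}}(M)$, use \eqref{eq:Bca} to write $X=Z_1+\bar{Z}_2$ with $Z_1\in\mathfrak{Z}(M)$ and $Z_2\in\mathbb{K}_{\mathfrak{Z}}(M)$, observe by conjugating that $Y:=Z_1-Z_2\in\mathfrak{Z}(M)$ is real, hence $[Y,\bar{Y}]=0$ and $Y\in\mathbb{K}_{\mathfrak{Z}}(M)$, and take $Z:=Z_2+\frac{1}{2}Y\in\mathbb{K}_{\mathfrak{Z}}(M)$, which satisfies $Z+\bar{Z}=X$.
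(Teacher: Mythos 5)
Your handling of \eqref{eq:pce}, \eqref{eq:Bca} and \eqref{eq:Bcbb} is correct and follows the paper's route; in fact your derivation of \eqref{eq:Bcbb} from \eqref{eq:Bca} (observing that $Z_1-Z_2$ is a real element of $\mathfrak{Z}(M)$, hence commutes with its conjugate and lies in $\mathbb{K}_{\mathfrak{Z}}(M)$, and then taking $Z=Z_2+\tfrac{1}{2}(Z_1-Z_2)$) is more explicit than what the paper records. Your pointwise duality for \eqref{eq:pch} is also the right mechanism and is essentially the paper's argument transplanted from Hermitian forms on $KM$ to cones in $T_pM/V_p$: the dual cone of $\mathcal{C}_p$ is $H^{\oplus}_pM$, and $Z\in\mathbb{K}_{\mathfrak{Z}}(M)$ forces $i[Z,\bar{Z}](p)$ into the lineality space. (Two small corrections: a single polarization identity does not make the set of individual $i[W,\bar{W}](p)$ closed under addition, so you should work with the convex cone they generate; and that cone is closed not because $\mu_0,\delta_0$ are constant but because it is the linear image of the cone of positive semidefinite elements of $Z_pM\otimes\overline{Z_pM}$, which has a compact base.)

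The genuine gap is exactly the step you flag. Duality hands you $W_1,\hdots,W_r$ for which $[Z,\bar{Z}]+\sum_j[W_j,\bar{W}_j]\in\mathfrak{Z}(M)+\overline{\mathfrak{Z}(M)}$ holds \emph{at the single point} $p$, whereas \eqref{eq:nd} demands an identity of vector fields. A Carath\'eodory bound plus a partition of unity only handles the local-to-global passage; it does not produce, near each $p$, smooth sections for which the relation holds at \emph{every} nearby point --- a pointwise Carath\'eodory selection need not even vary continuously, and a smooth extension of the $W_j(p)$ will generically violate the relation off $p$, with errors that are second-order commutators and cannot be absorbed into $\mathfrak{Z}(M)+\overline{\mathfrak{Z}(M)}$. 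This is precisely where the paper uses the constancy of $\mu_0,\delta_0,\nu_0$: these hypotheses make $KM$, $H^0M/\langle H^{\oplus}M\rangle$, and hence the annihilator bundle $L^0M\subset KM\otimes_M\overline{KM}$ into smooth bundles; the positive definite elements form a nonempty \emph{open} subset of each fiber of $L^0M$, so a partition of unity yields a global smooth section $\mathfrak{z}$ with $\mathfrak{z}(p)>0$ for all $p$; and a smooth Gram--Schmidt factorization $\mathfrak{z}=\sum_jZ_j\otimes\bar{Z}_j$ near $p$, normalized so that $Z_1$ is a nonvanishing multiple of the given $Z$, gives $\sum_j\mathcal{L}_{\xi}(Z_j,\bar{Z}_j)=0$ for all $\xi\in H^0_qM$ and all $q$ near $p$, i.e.\ relation \eqref{eq:nd} as an identity on a neighborhood. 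You need to supply this smooth-selection argument (or an equivalent one) before your partition-of-unity step can be run.
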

\begin{proof} By the definition of $\Theta_{\mathfrak{Z}}(M)$, we have
$\mathcal{L}_{\xi}(Z,\bar{Z})=0$ for all 
$\xi\in{H}^{\oplus}M$ and
$Z\in\Theta_{\mathfrak{Z}}(M)$. Thus \eqref{eq:pce} is always valid.\par
To prove that, under the additional assumptions,
we have the equality \eqref{eq:pch},
we apply
an argument similar to the one 
employed in \cite[Theorem 2.5]{HN00}.\par
By the constancy of $\mu_0(p)$, the characteristic set $H^0M$ is a smooth
real vector bundle on $M$. Then the assumption that $\delta_0(p)$ is constant
implies that $H^\oplus{M}$ generates a smooth linear subbundle of $H^0M$,
and therefore
the quotient $H^0M/\langle{H^{\oplus}M}\rangle$
is a smooth real linear bundle on $M$.
\par
Since $\nu_0(p)$ is constant,
\begin{equation*}
  M\ni{p}\to{K}_pM=\{Z(p)\in{Z}_pM
\mid Z\in\mathbb{K}_{\mathfrak{Z}}(M)\}
\end{equation*}
is a complex vector bundle $KM$ on $M$.
The map $\xi\to\mathcal{L}_{\xi}|_{KM}$ is injective from the quotient bundle
$H^0M/\langle{H^{\oplus}M}\rangle$ to the bundle 
$\mathrm{Herm}(KM)$
of Hermitian symmetric forms
on $KM$. We denote by $LM$ the image bundle. \par
 The dual bundle $\mathrm{Herm}^*(KM)$ 
of $\mathrm{Herm}(KM)$
is the real 
linear subbundle
of $KM\otimes_M\overline{KM}$ generated by the elements of the
form $Z(p)\otimes\bar{Z}(p)$, for $p\in{M}$ and $Z(p)\in{K}_pM$.
The annihilator bundle $L^0M$ of $LM$ in $\mathrm{Herm}^*(KM)$
contains, for all $p\in{M}$, positive definite elements of
$\mathrm{Herm}^*(KM)$. Since the positive definite elements of
$L^0M$ form an open set in $L^0M$, it is easy to construct a global
section $\mathfrak{z}$ of $L^0M$ with $\mathfrak{z}(p)>0$ for all $p\in{M}$,
by first constructing local sections, and then patching them together
by a partition of unity. If 
$p_0\in{M}$ and $Z\in\mathbb{K}_{\mathfrak{Z}}(M)$, 
with $Z(p_0)\neq{0}$, by the standard Gram-Schmidt orthogonalization process,
we can find a smooth function $\phi\in\mathcal{E}(M)$ with
$\phi(p_0)\neq{0}$ and sections 
$Z_2,\hdots,Z_k\in\mathbb{K}_{\mathfrak{Z}}(M)$,
such that, by setting $Z_1=\phi\cdot{Z}$, we have
 $\mathfrak{z}(p)=\sum_{j=1}^kZ_j(p)\otimes\bar{Z}_j(p)$ for
$p$ in an open neighborhood $U$
of $p_0$ in $M$, where $k$ is the rank of  the complex bundle $KM$. 
The equality
\begin{equation*}
  \sum_{j=1}^k\mathcal{L}_{\xi}(Z_j,\bar{Z}_j)=0\quad\forall 
p\in{U},\;\forall\xi\in{H}^0_pM
\end{equation*}
implies that
\begin{equation*}
  \sum_{j=1}^k[Z_j,\bar{Z}_j]\in\mathfrak{Z}(U)+\overline{\mathfrak{Z}(U)}.
\end{equation*}
By repeating this argument for a set of elements
of $\mathbb{K}_{\mathfrak{Z}}(M)$ 
whose values at $p_0$ give a
basis for $K_{p_0}M$, we prove that
every $Z\in\mathbb{K}_{\mathfrak{Z}}(M)$ coincides, on an open neighborhood
of $p_0$ in $M$, 
with the restriction of an element of $\Theta_{\mathfrak{Z}}(M)$.
Moreover, the number of
elements of $\varTheta_{\mathfrak{Z}}(M)$ that are needed
in \eqref{eq:nd} does not exceed $\nu=k^2-k$.
Thus, if $Z\in\mathbb{K}_{\mathfrak{Z}}(M)$,
we can find an open covering
$\{U_a\}$ of $M$ and, for each $a$, some 
$Z_2^{(a)},\hdots,Z_{\nu}^{(a)}\in\Theta_{\mathfrak{Z}}(M)$, for which
\begin{equation*}
  [Z,\bar{Z}]+\sum_{j=1}^{\nu}[Z^{(a)}_j,
\bar{Z}^{(a)}_j]\in\mathfrak{Z}(U_a)+\overline{\mathfrak{Z}(U_a)}
,\quad\forall a.
\end{equation*}
We can assume that the covering $\{U_{a}\}$ has a finite index. 
Then, by using a partition of unity $\{\chi_a\}$ subordinated to
$\{U_{a}\}$,
and summing together vector fields $\chi_a\cdot{Z}_j^{(a)}$ with
disjoint supports, we end up with a finite subset
$Z_1,\hdots,Z_r$ of vector fields in $\mathfrak{Z}(M)$
such that
\begin{equation*}
  \mathcal{L}_{\xi}(Z,\bar{Z})+\sum_{j=1}^r\mathcal{L}_{\xi}(Z_j,
\bar{Z}_j)=0,
\end{equation*}
showing that $Z\in\Theta_{\mathfrak{Z}}(M)$.\par
When moreover $\mathfrak{Z}(M)$ is formally integrable
and has constant rank, we obtain
\eqref{eq:Bca} as a consequence of Lemma \ref{lm:ai} and
Proposition \ref{prop:Bbb}.
\end{proof}
\begin{defn}\label{df:cra}
  We say that a \emph{higher Levi form concavity condition} is
  satisfied at the point $p\in{M}$ if
  \begin{equation} \left\{
    \begin{gathered}
      \label{eq:cra}
      \forall\xi\in{H}^0_pM\setminus\{0\}\;\text{with}\;\mathcal{L}_\xi\geq0,\\
\exists
      Z_0\in\mathfrak{Z}(M),Z_1,\hdots,Z_r\in
\mathbb{K}_{\mathfrak{Z}}(M)+\overline{\mathbb{K}_{\mathfrak{Z}}(M)}\\
      \text{with}\;
      i\xi([Z_1,\hdots,Z_r,\bar{Z}_0])\neq{0}.
    \end{gathered}\right.
  \end{equation}
\end{defn}
\begin{defn} \label{def:cg} We say that the distribution
$\mathfrak{Z}(M)$ is  \emph{regular} at
a point $p_0\in{M}$ if its rank, and
the functions
$\mu_0(p)$ of \eqref{eq:Bcb},
$\delta_0(p)$ of \eqref{eq:pcda},
$\nu_0(p)$ of \eqref{eq:pcdb}, are all constant in 
an open neighborhood $U$ of $p_0$.
\end{defn}
Since the rank of $\mathfrak{Z}(M)$, and the functions $\mu_0$ and
$\nu_0$ are all integral valued and semicontinuous, and
$\delta_0$ is semicontinuous on the dense open subset where $\mu_0$
is constant,
 the set of regular points of
$\mathfrak{Z}(M)$
is open and dense in $M$.
\begin{prop}\label{pp:crx}
Let $p_0$ be a regular point for $\mathfrak{Z}(M)$. Then 
\eqref{eq:cra} implies \eqref{eq:be}. If in addition we assume that
$\mathfrak{Z}(M)$ is formally integrable, then the two conditions
\eqref{eq:cra} and  \eqref{eq:be} are equivalent.
\end{prop}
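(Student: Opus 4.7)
The plan is to use Proposition \ref{prp:ca}, which equates \eqref{eq:be} with the microlocal condition \eqref{eq:pcb} expressed in terms of $\mathbb{E}_\mathfrak{Z}(M)\cap\overline{\mathbb{E}_\mathfrak{Z}(M)}$, together with Proposition \ref{pp:pcj}, whose regularity hypotheses are precisely Definition \ref{def:cg} and which yields the identity $\Theta_\mathfrak{Z}(M)=\mathbb{K}_\mathfrak{Z}(M)$ (and, with additional formal integrability, the explicit formulas \eqref{eq:Bca} and \eqref{eq:Bcbb}). The link between \eqref{eq:cra} and \eqref{eq:pcb} will be the inclusion
\begin{equation*}
\mathbb{K}_\mathfrak{Z}(M)+\overline{\mathbb{K}_\mathfrak{Z}(M)}\;\subset\;\mathbb{E}_\mathfrak{Z}(M)\cap\overline{\mathbb{E}_\mathfrak{Z}(M)},
\end{equation*}
which follows from $\mathbb{K}_\mathfrak{Z}(M)\subset\mathfrak{Z}(M)\subset\mathbb{E}_\mathfrak{Z}(M)$, from Lemma \ref{lm:ai} applied via $\Theta_\mathfrak{Z}(M)=\mathbb{K}_\mathfrak{Z}(M)$ to give $\overline{\mathbb{K}_\mathfrak{Z}(M)}\subset\mathbb{E}_\mathfrak{Z}(M)$, and from the self-conjugacy of $\mathbb{K}+\overline{\mathbb{K}}$.

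For the forward direction I would fix a nonzero $\xi\in H^0_{p_0}M$ and verify \eqref{eq:pcb} at $\xi$ by cases on the sign of the Hermitian form $\mathcal{L}_\xi$. When $\mathcal{L}_\xi\geq 0$, condition \eqref{eq:cra} applied to $\xi$ directly produces $Z_0\in\mathfrak{Z}(M)$ and $Z_1,\hdots,Z_r\in\mathbb{K}+\overline{\mathbb{K}}$—hence in $\mathbb{E}\cap\overline{\mathbb{E}}$—with the required non-vanishing $i\xi$-bracket. When $\mathcal{L}_\xi\leq 0$ the same argument applies to $-\xi\in H^0_{p_0}M\setminus\{0\}$, for which $\mathcal{L}_{-\xi}\geq 0$. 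The remaining indefinite case is the main technical step: there exists $Z\in\mathfrak{Z}(M)$ with $i\xi([Z,\bar{Z}])(p_0)=\mathcal{L}_\xi(Z,\bar{Z})\neq 0$, and I would use that $i\xi([\,\cdot\,,\bar{Z}])(p_0)$ depends only on the value at $p_0$ of its first entry (because $\xi$ is characteristic for both entries), combined with the upper bound of Lemma \ref{lem:Aoh} and the precise description of $(\mathbb{E}\cap\overline{\mathbb{E}})(p_0)$ provided by Proposition \ref{pp:pcj}, to realize this nonzero commutator as $i\xi([Z_1,\bar{Z}_0])(p_0)$ with $Z_0=Z$ and an appropriate $Z_1\in\mathbb{E}\cap\overline{\mathbb{E}}$.

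For the backward direction, under formal integrability, I would take $\xi\in H^0_{p_0}M\setminus\{0\}$ with $\mathcal{L}_\xi\geq 0$ and derive \eqref{eq:cra} from \eqref{eq:be}. Proposition \ref{prp:ca} applied to \eqref{eq:be} supplies $Z_0\in\mathfrak{Z}(M)$ and $Z_1,\hdots,Z_r\in\mathbb{E}\cap\overline{\mathbb{E}}$ with $i\xi([Z_1,\hdots,Z_r,\bar{Z}_0])(p_0)\neq 0$. From \eqref{eq:Bca}, intersecting with its conjugate, one gets the decomposition
\begin{equation*}
\mathbb{E}_\mathfrak{Z}(M)\cap\overline{\mathbb{E}_\mathfrak{Z}(M)}=\mathbb{K}_\mathfrak{Z}(M)+\overline{\mathbb{K}_\mathfrak{Z}(M)}+\bigl(\mathfrak{Z}(M)\cap\overline{\mathfrak{Z}(M)}\bigr).
\end{equation*}
Writing each $Z_j=K_j+\overline{K'_j}+H_j$ with $K_j,K'_j\in\mathbb{K}_\mathfrak{Z}(M)$ and $H_j\in\mathfrak{Z}(M)\cap\overline{\mathfrak{Z}(M)}$, I would expand the iterated commutator multilinearly and argue by induction on $r$, using Jacobi's identity, that every term containing an $H_j$-factor has vanishing $i\xi$-pairing at $p_0$; indeed, by formal integrability $[H_j,\mathfrak{Z}(M)]\subset\mathfrak{Z}(M)$ and $[H_j,\overline{\mathfrak{Z}(M)}]\subset\overline{\mathfrak{Z}(M)}$, both annihilated by $\xi$, and these facts propagate through the nested brackets via Jacobi until they reach the innermost slot $\bar{Z}_0$. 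The surviving commutator has all $Z_j\in\mathbb{K}+\overline{\mathbb{K}}$ and witnesses \eqref{eq:cra} for $\xi$.

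The main obstacles will be, respectively, the indefinite case in the forward implication, where one must lift $Z$ to a vector field of $\mathbb{E}\cap\overline{\mathbb{E}}$ without losing non-vanishing of the commutator, and the multilinear cancellation in the backward implication, where the delicate point is to propagate, via Jacobi and formal integrability, the fact that the $\mathfrak{Z}\cap\overline{\mathfrak{Z}}$-components of the $Z_j$ do not contribute to the pairing with $\xi$ at $p_0$.
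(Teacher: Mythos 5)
Your overall strategy (reduce to Proposition \ref{prp:ca}, feed in Proposition \ref{pp:pcj} and Lemma \ref{lm:ai} through the inclusion $\mathbb{K}_{\mathfrak{Z}}(M)+\overline{\mathbb{K}_{\mathfrak{Z}}(M)}\subset\mathbb{E}_{\mathfrak{Z}}(M)\cap\overline{\mathbb{E}_{\mathfrak{Z}}(M)}$) is the paper's, and your treatment of the two semidefinite cases and of the converse via \eqref{eq:Bca} is sound in outline. The genuine gap is the indefinite case of the forward implication, and it is not a technicality: your trichotomy on the sign of $\mathcal{L}_{\xi}$ is not the right case split. The problematic covectors are those $\xi\in H^0_{p_0}M$ for which $\mathcal{L}_{\xi}$ is indefinite on $Z_{p_0}M$ but vanishes whenever one argument lies in $\mathbb{K}_{\mathfrak{Z}}(M)$ (for instance $\xi=\eta_1-\eta_2$ with $\eta_1,\eta_2\in H^{\oplus}_{p_0}M$ having different kernels). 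For such a $\xi$ \emph{no} length-one bracket $[Z_1,\bar{Z}_0]$ with $Z_1\in\mathbb{E}_{\mathfrak{Z}}(M)\cap\overline{\mathbb{E}_{\mathfrak{Z}}(M)}$ can work: the part of $Z_1$ in $\mathbb{K}_{\mathfrak{Z}}(M)$ contributes $\mathcal{L}_{\xi}(K,\bar{Z}_0)=0$ by hypothesis, the part in $\overline{\mathbb{K}_{\mathfrak{Z}}(M)}$ contributes $\xi([\bar{K},\bar{Z}_0])$, which is not controlled by the indefiniteness of $\mathcal{L}_{\xi}$ (and vanishes outright when $\mathfrak{Z}(M)$ is integrable), and the vectors $Z$ with $\mathcal{L}_{\xi}(Z,\bar{Z})\neq 0$ lie outside $\mathbb{K}_{\mathfrak{Z}}(M)$ and hence cannot in general be realized as values of elements of $\mathbb{E}_{\mathfrak{Z}}(M)\cap\overline{\mathbb{E}_{\mathfrak{Z}}(M)}$ (compare \eqref{eq:Bbc}). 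So the ``lift'' $Z_1$ you hope to construct need not exist, one is forced to brackets of length $r\geq 2$, and your case analysis gives no access to \eqref{eq:cra} for this $\xi$, since $\mathcal{L}_{\xi}$ is neither $\geq 0$ nor $\leq 0$. Note also that the ``precise description'' of $\mathbb{E}_{\mathfrak{Z}}(M)\cap\overline{\mathbb{E}_{\mathfrak{Z}}(M)}$ you invoke here is \eqref{eq:Bca}, which Proposition \ref{pp:pcj} only provides under formal integrability --- not available in this direction.

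The paper's case split is instead on whether $\mathcal{L}_{\xi}$ vanishes on $\Theta_{\mathfrak{Z}}(M)=\mathbb{K}_{\mathfrak{Z}}(M)$, and this is exactly where the regularity hypothesis earns its keep beyond giving $\Theta_{\mathfrak{Z}}(M)=\mathbb{K}_{\mathfrak{Z}}(M)$: if $\mathcal{L}_{\xi}$ does not vanish there, a single bracket $[Z_1,\bar{Z}_2]$ with $Z_1\in\Theta_{\mathfrak{Z}}(M)$ already witnesses \eqref{eq:pcb}; if it does vanish there, the constancy of $\delta_0$ and $\nu_0$ forces $\xi\in\left\langle H^{\oplus}_{p_0}M\right\rangle$, which is the only class of covectors on which hypothesis \eqref{eq:cra} gives any leverage. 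You should reorganize the forward direction along these lines. Your converse sketch is acceptable, though the assertion that every term of the expanded commutator containing an $H_j\in\mathfrak{Z}(M)\cap\overline{\mathfrak{Z}(M)}$ has vanishing $\xi$-pairing is not literally true term by term (e.g.\ $[K_1,[H_2,\bar{Z}_0]]$ is a Levi form value $\mathcal{L}_{\xi}(K_1,\bar{W})$); what one actually shows is that such terms reduce, modulo $\mathfrak{Z}(M)+\overline{\mathfrak{Z}(M)}$, to brackets of the admissible form, which then vanish or are absorbed into the witness for \eqref{eq:cra}.
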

\begin{proof} The regularity assumption implies that all $\xi\in{H}^0_{p_0}M$ with
$\ker\mathcal{L}_{\xi}\supset\Theta_{\mathfrak{Z}}(M)$ belong to the linear span
$\left\langle H^{\oplus}_{p_0}M\right\rangle$. Thus condition
\eqref{eq:pcb} holds if $\xi\in{H}^0_{p_0}M\setminus\{0\}$ and
$\ker\mathcal{L}_{\xi}\supset\Theta_{\mathfrak{Z}}(M)$. When
$\ker\mathcal{L}_{\xi}\not\supset\Theta_{\mathfrak{Z}}(M)$, the restriction
of $\mathcal{L}_{\xi}$ to $\Theta_{\mathfrak{Z}}(M)$ is semidefinite, and hence
there are $Z_1,Z_2\in\Theta_{\mathfrak{Z}}(M)$ with 
$\mathcal{L}_{\xi}(Z_1,\bar{Z}_2)
=i\xi([Z_1,\bar{Z}_2])\neq{0}$.

Then because of Propositions \ref{prp:ca} and \ref{pp:pcj}, 
and Lemma \ref{lm:ai}, we obtain that \eqref{eq:cra} implies 
condition \eqref{eq:be} at regular points of $\mathfrak{Z}(M)$.\par
If in addition $\mathfrak{Z}(M)$ is formally integrable, then by the equality
\eqref{eq:Bca}, the opposite implication is also true.
\end{proof}

\section{Pullbacks of Distributions}\label{sec:map}
Let $M,N$ be smooth real manifolds, and $N\xrightarrow{\varpi}M$
a smooth submersion. 
\par 
Given a distribution of complex vector fields $\mathfrak{Z}(M)$,
its pullback $[\varpi^*\mathfrak{Z}](N)$ 
consists of all $W\in\mathfrak{X}^{\mathbb{C}}(N)$ with the property
that
\begin{equation}
  \label{eq:Caa}
  \forall{U}^\mathrm{open}\subset{M},
\forall\sigma\in\mathcal{C}^{\infty}(U,N),\;\text{with}\;
\varpi\circ\sigma=\mathrm{id}_U,\;
d\varpi^{\mathbb{C}}(W\circ\sigma)\in\mathfrak{Z}(U).
\end{equation}
Let 
\begin{align}\label{eq:mpa}
V^{\varpi}N&=\{v\in{T}N\mid d\varpi(v)=0\}&&\text{be the vertical bundle, and}
\\\label{eq:mpha}
  \mathfrak{V}^{\varpi}(N)&=\mathcal{C}^{\infty}(N,V^{\varpi}N)&&\text{the vertical 
distribution}.
\end{align}

\begin{lem}
Let $N\xrightarrow{\varpi}M$ be a smooth submersion. Then
\begin{equation}
  \label{eq:mphb}
 \mathfrak{V}^{\varpi}(N)\subset[\varpi^*\mathfrak{Z}](M)
\quad\text{and}\quad  [\mathfrak{V}^{\varpi}(N),[\varpi^*\mathfrak{Z}](N)]
\subset[\varpi^*\mathfrak{Z}](N).
\end{equation}
\end{lem}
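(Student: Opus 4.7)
The plan would be to handle the two inclusions separately. The first, $\mathfrak{V}^{\varpi}(N)\subset[\varpi^*\mathfrak{Z}](N)$, is immediate from the definitions: if $V\in\mathfrak{V}^{\varpi}(N)$ then $d\varpi(V)=0$ pointwise, so for any section $\sigma:U\to N$ of $\varpi$ the vector field $d\varpi^{\mathbb{C}}(V\circ\sigma)$ is identically zero and therefore lies in the submodule $\mathfrak{Z}(U)$.

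For the second inclusion I would argue locally. After restricting to a chart trivializing $\varpi$ as the projection $(x,y)\mapsto x$, a vertical field has the form $V=\sum_k c_k(x,y)\partial_{y_k}$, while any $W\in[\varpi^*\mathfrak{Z}](N)$ has the form $W=\sum_i a_i(x,y)\partial_{x_i}+\sum_j b_j(x,y)\partial_{y_j}$ satisfying, by hypothesis, that $\sum_i a_i(x,y(x))\partial_{x_i}\in\mathfrak{Z}(U)$ for every smooth $y(\cdot)$. A routine commutator computation yields
\[
[V,W]=\sum_i V(a_i)\,\partial_{x_i}+\sum_j\bigl(V(b_j)-W(c_j)\bigr)\partial_{y_j},
\]
so what must be shown is that $\sum_i V(a_i)(x,y(x))\partial_{x_i}\in\mathfrak{Z}(U)$ for every section $\sigma(x)=(x,y(x))$.

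The key geometric input is that the flow $\Phi_t^V$ of a vertical $V$ preserves fibers, $\varpi\circ\Phi_t^V=\varpi$, so $\sigma_t:=\Phi_t^V\circ\sigma$ is again a section of $\varpi$ for every small $t$. Applying the defining property of $[\varpi^*\mathfrak{Z}](N)$ to $W$ and each $\sigma_t$ produces a smoothly varying one-parameter family
\[
Z_t:=d\varpi^{\mathbb{C}}(W\circ\sigma_t)\in\mathfrak{Z}(U).
\]
A short calculation, using that $V$ is vertical (so $V(f\circ\varpi)=0$ for every $f\in\mathcal{C}^{\infty}(M)$, whence $[V,W](f\circ\varpi)=V(W(f\circ\varpi))$), identifies $\tfrac{d}{dt}\big|_{t=0}Z_t$ with $d\varpi^{\mathbb{C}}([V,W]\circ\sigma)$.

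The hard part will be converting this $t$-derivative into an honest element of $\mathfrak{Z}(U)$, since a priori $\mathfrak{Z}(U)$ is only a $\mathcal{C}^{\infty}(U)$-module and not assumed closed under smooth limits. I would handle this by a Hadamard-type expansion: for $t\neq 0$ the difference quotient $(Z_t-Z_0)/t\in\mathfrak{Z}(U)$ (as a scalar multiple of a difference of module elements), and writing $Z_t=Z_0+t\,d\varpi^{\mathbb{C}}([V,W]\circ\sigma)+t^2 R(x,t)$ with $R$ smooth exhibits $d\varpi^{\mathbb{C}}([V,W]\circ\sigma)$ as the $t=0$ value of a smooth family lying in $\mathfrak{Z}(U)$; the closure of $\mathfrak{Z}$ as a sheaf of modules of smooth sections under such smooth limits then delivers the required conclusion, completing the proof of $[V,W]\in[\varpi^*\mathfrak{Z}](N)$.
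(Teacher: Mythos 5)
Your overall strategy coincides with the paper's: localize so that $\varpi$ becomes the projection $U\times\Omega\to U$, and verify the two inclusions directly (the paper simply declares the product case ``trivial''). Your treatment of the first inclusion is complete, since $d\varpi^{\mathbb{C}}(V\circ\sigma)=0$ lies in every submodule, and your computation of the horizontal part of $[V,W]$ is correct. The problem is the final step, which you rightly flag as ``the hard part'' but do not actually close. What you need is: if $(Z_t-Z_0)/t\in\mathfrak{Z}(U)$ for all $t\neq0$ and this family extends smoothly to $t=0$, then its value $A$ at $t=0$ lies in $\mathfrak{Z}(U)$. Your Hadamard expansion merely repackages this claim, and the closure property you invoke is \emph{false} for a general $\mathcal{C}^{\infty}(U)$-submodule. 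Concretely, on $U=\mathbb{R}$ let $\mathfrak{Z}(U)$ be generated by $e^{-1/x^2}\partial_x$, choose $\rho\in\mathcal{C}^{\infty}_0(\mathbb{R})$ with $\rho\equiv1$ near $0$, and set $B_t=e^{-1/(2x^2)}\rho(t/x^2)\,\partial_x$ (extended by $0$ at $x=0$). One checks that $(x,t)\mapsto B_t(x)$ is jointly smooth; for each $t\neq0$ the field $B_t$ vanishes identically on a neighborhood of $x=0$ and hence belongs to $\mathfrak{Z}(U)$; yet $B_0=e^{-1/(2x^2)}\partial_x\notin\mathfrak{Z}(U)$, since $e^{1/(2x^2)}$ is unbounded. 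So ``the $t=0$ value of a smooth family lying in $\mathfrak{Z}(U)$ for $t\neq0$'' need not lie in $\mathfrak{Z}(U)$, and your argument as written does not establish the second inclusion for an arbitrary distribution.

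To repair the step you must use more than the one-parameter family of sections $\Phi^V_t\circ\sigma$. The cheapest fix is an additional hypothesis making membership in $\mathfrak{Z}(U)$ a pointwise, hence limit-stable, condition: for instance if $\mathfrak{Z}(M)$ has at most simple singularities in the paper's sense, then $Z\in\mathfrak{Z}(U)$ if and only if $Z(p)\in Z_pU$ for every $p$, and since each difference quotient takes its value at $p$ in the finite-dimensional (hence closed) subspace $Z_pU$, so does the limit $A(p)$. Without such a hypothesis one would have to exploit the full quantifier ``for all sections $\sigma$'' in the definition of $[\varpi^*\mathfrak{Z}](N)$, which your argument never uses beyond the single family $\sigma_t$. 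In fairness, the paper's own proof is a one-sentence appeal to the product case being trivial, so it is no more careful than you are on exactly this point; but the gap is genuine and should be acknowledged or closed.
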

\begin{proof} We note that $[\varpi^*\mathfrak{Z}](N)$ is 
the space of global sections of a fine sheaf
of left $\mathcal{C}^{\infty}$-modules. Thus 
by localization we can reduce the discussion
to the case where $N=M\times\Omega$ 
for an open subset
$\Omega$ of a Euclidean space $\mathbb{R}^k$, and $\varpi$ is the
projection onto the first factor, in which situation
the statement is trivial.
\end{proof}

\begin{defn}
If $N\xrightarrow{\iota}M$ is a smooth immersion, we define the
\emph{pullback} $[\iota^*\mathfrak{Z}](N)$ of $\mathfrak{Z}(M)$ to $N$ to be 
the set of complex vector fields $Z'\in\mathfrak{X}^{\mathbb{C}}(N)$
having the following property
\begin{equation}
  \label{eq:mpe}\left\{\begin{gathered}
  \forall q_0\in{N},\;\exists {V}^{\mathrm{open}}\subset{N}\;\text{with}\;
V\ni{p_0},\;\text{and}\;
Z\in\mathfrak{Z}(M)\\
\text{s.t.}\quad d\iota(q)(Z'(q))=Z(\iota(q))\;\forall q\in{V}.
\end{gathered}\right.
\end{equation}
\end{defn}
Let $M,N$ be smooth manifolds.
A smooth map
$N\xrightarrow{\phi}M$ is a \emph{submersion onto its image} if
there exists a smooth manifold $S$, a submersion $N\xrightarrow{\varpi}S$
and an immersion $S\xrightarrow{\iota}M$ that factorize $\phi$, i.e.
that make the following diagram commute:
\begin{equation}
  \label{eq:mpd}
  \begin{CD}
    N @>\phi>> M\\
@V{\varpi}VV  @AA{\iota}A\\
S @= S
  \end{CD}
\end{equation}
\begin{defn}
The pullback of $\mathfrak{Z}(M)$ by a map $N\xrightarrow{\phi}M$,
which is a submersion onto its image, is the distribution of
complex vector fields
\begin{equation}
  \label{eq:mpf}
  [\phi^*\mathfrak{Z}](N)=[\varpi^*[\iota^*\mathfrak{Z}]](N),
\end{equation}
where $\varpi$ and $\iota$ are the maps in \eqref{eq:mpd}.
\end{defn}
\begin{rmk}
If $N$ is an open subset of $M$ and $\iota:N\hookrightarrow{M}$ is the
inclusion, then $\iota^*(\mathfrak{Z})(N)=\mathfrak{Z}(N)$ is the distribution
on $N$ that is generated by the restrictions to $N$ of the vector
fields $Z\in\mathfrak{Z}(M)$.\par
More generally, if $N\subset{M}$ is a smooth submanifold, 
and $\iota:N\hookrightarrow{M}$ 
the embedding map, then $[\iota^*\mathfrak{Z}](N)$
is the distribution generated by the restrictions to $N$ of the vector
fields $Z\in\mathfrak{Z}(M)$ with $Z(p)\in{T}_p^{\mathbb{C}}N$ 
for every~$p\in{N}$.
\end{rmk}
\begin{defn}\label{def:mpe}
Let $M,\, N$ be smooth manifolds, and $\mathfrak{Z}_M(M)$,
$\mathfrak{Z}_N(N)$ distributions of complex vector fields on $M$ and $N$,
respectively. Let $N\xrightarrow{\phi}{M}$ be a submersion onto its image.
We say that $\phi$ is a \emph{$Z$-morphism} if
\begin{equation}
  \label{eq:mpg}
  \mathfrak{Z}_N(N)\subset[\phi^*\mathfrak{Z}_M](M).
\end{equation}

\end{defn}
\begin{rmk}\label{rm:mpf}
We keep the notation of  Definition \ref{def:mpe}, and let $\varpi$,
$\iota$ be the maps in \eqref{eq:mpd}. Set 
$\mathfrak{Z}_S(S)=[\iota^*\mathfrak{Z}_M](S)$. Then 
$N\xrightarrow{\phi}M$ is a $Z$-morphism if and only if 
$N\xrightarrow{\varpi}S$ is a $Z$-morphism.
\end{rmk}
\begin{lem}\label{lem:Dg}
Let $M,N$ be two smooth real manifolds, and $N\xrightarrow{\varpi}M$
be a smooth submersion. Then 
\begin{align}
\label{eq:Daa}
\mathbb{E}_{\varpi^*{\mathfrak{Z}_{M}}}(N)&=[\varpi^*\mathbb{E}_{{\mathfrak{Z}_{M}}}](N)\\
\label{eq:Dab}
\mathbb{K}_{\varpi^*{\mathfrak{Z}_{M}}}(N)&=[\varpi^*\mathbb{K}_{{\mathfrak{Z}_{M}}}](N)\\
  \label{eq:mph}
  \Theta_{\varpi^*{\mathfrak{Z}_{M}}}(N)&=[\varpi^*\Theta_{{\mathfrak{Z}_{M}}}](N).
\end{align}
\end{lem}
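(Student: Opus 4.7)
The plan is to localize each equality to the product model. Both sides of each identity are sections of fine sheaves, so by a partition of unity I may assume $N = M\times\Omega$, with $\Omega\subset\mathbb{R}^k$ open and $\varpi$ the projection onto the first factor. In product coordinates each $W\in\mathfrak{X}^{\mathbb{C}}(N)$ splits uniquely as $W = W_H + W_V$, with $W_V\in\mathfrak{V}^\varpi(N)$ vertical and $W_H = \sum_i a_i(x,\omega)\,\partial/\partial x_i$ horizontal. Since $\mathfrak{V}^\varpi(N)\subset\varpi^*\mathfrak{Z}_M(N)$, the vertical part $W_V$ automatically lies in $\mathbb{E}_{\varpi^*\mathfrak{Z}_M}(N)$, $\mathbb{K}_{\varpi^*\mathfrak{Z}_M}(N)$, $\Theta_{\varpi^*\mathfrak{Z}_M}(N)$ (for $\mathbb{K}$ and $\Theta$ one checks that $[W_V,\overline{W_V}]$ is again vertical, hence lies in $\varpi^*\mathfrak{Z}_M+\overline{\varpi^*\mathfrak{Z}_M}$); and since $d\varpi(W_V\circ\sigma)=0$ lies in every distribution on $M$, $W_V$ also belongs to the three pullback distributions on the right. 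So it suffices to treat horizontal $W$, and I write $Z^\omega:=\sum_i a_i(\cdot,\omega)\,\partial/\partial x_i$ for the corresponding family of vector fields on $M$.

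For \eqref{eq:Dab} and \eqref{eq:mph}, the main observation is that $H^0N = \varpi^*H^0M$ (a real covector on $N$ that annihilates $\varpi^*\mathfrak{Z}_M(N)$ must kill the vertical distribution, hence equal $\varpi^*\xi$ for some $\xi\in H^0M$), together with the commutator identity $[\varpi^*L_1,\overline{\varpi^*L_2}] = \varpi^*[L_1,\overline{L_2}]$, valid in product coordinates because horizontal lifts carry no $\omega$-dependence. These give the pointwise identity $\mathcal{L}^N_{\varpi^*\xi}(W_H,\overline{W_H})(x,\omega) = \mathcal{L}^M_\xi(Z^\omega,\overline{Z^\omega})(x)$, so that $H^\oplus N = \varpi^*H^\oplus M$ and $W_H\in\mathbb{K}_{\varpi^*\mathfrak{Z}_M}(N)$ iff $Z^\omega\in\mathbb{K}_{\mathfrak{Z}_M}(M)$ for every $\omega$, which is \eqref{eq:Dab}. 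Likewise $[W_H,\overline{W_H}]$ evaluated on a fiber equals $[Z^\omega,\overline{Z^\omega}]$, so the defining relation of $\Theta$ on $N$ becomes the fiberwise relation on $M$; assembling the auxiliary fields smoothly in $\omega$ via a partition of unity together with the polarization identity used in the proof of Lemma \ref{lm:hlb} yields \eqref{eq:mph}.

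For \eqref{eq:Daa}, the inclusion $\supseteq$ proceeds by Fubini. Given $W\in[\varpi^*\mathbb{E}_{\mathfrak{Z}_M}](N)$ horizontal, each $Z^\omega\in\mathbb{E}_{\mathfrak{Z}_M}(M)$ yields an estimate $\|Z^\omega u(\cdot,\omega)\|^2_0 \leq C_\omega\sum_j\|L^\omega_j u(\cdot,\omega)\|^2_0+C_\omega\|u(\cdot,\omega)\|^2_0$ for $u\in\mathcal{C}^\infty_0(U\times\Omega')$; uniformity of the constants and a common finite generating set $\{L_j\}\subset\mathfrak{Z}_M$ on a neighborhood of the $\omega$-support follow from compactness and the continuity in $\omega$ of the coefficients of $W$, and integration in $\omega$ produces the $N$-estimate with generators $\varpi^*L_j$. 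For the opposite inclusion $\subseteq$, I test the $N$-estimate for $W\in\mathbb{E}_{\varpi^*\mathfrak{Z}_M}(N)$ against concentrated functions $u_\tau(x,\omega) = v(x)\phi_\tau(\omega-\omega_1)$, with $\phi_\tau$ an $L^2$-normalized bump shrinking to $\omega_1\in\Omega$ as $\tau\to\infty$, and extract in the limit the fiberwise estimate for $Z^{\omega_1}$ at $\omega_1$.

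The main obstacle is the $\subseteq$ direction of \eqref{eq:Daa}: the generators $\tilde L_j\in\varpi^*\mathfrak{Z}_M(N)$ appearing in the $N$-estimate may carry nonzero vertical parts $V_j$, for which $\|V_j u_\tau\|^2_0 = O(\tau)$ diverges as $\tau\to\infty$ while $\|W u_\tau\|^2_0$ stays bounded, trivializing the limit. To bypass this, I split $\tilde L_j = \tilde L^H_j + V_j$ and integrate by parts in $\omega$ to transfer vertical derivatives from $\phi_\tau$ onto the smooth coefficients of $V_j$, producing lower-order errors absorbed in $\|u_\tau\|^2_0$; the horizontal restriction $\tilde L^H_j|_{\omega=\omega_1}\in\mathfrak{Z}_M(M)$ then supplies the control of $Z^{\omega_1}$ on $M$. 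Since $\omega_1$ is arbitrary, this gives $d\varpi(W\circ\sigma)\in\mathbb{E}_{\mathfrak{Z}_M}(U)$ for every constant section $\sigma$, and the extension to an arbitrary section is obtained by repeating the argument in a product structure for $N$ adapted to $\sigma$.
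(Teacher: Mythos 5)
Your opening reduction to the product model $N=M\times\Omega$ is exactly the paper's proof (the paper says nothing more than that), and your verification of \eqref{eq:Dab} and \eqref{eq:mph} is sound: the identities $H^0N=\varpi^*H^0M$, $\mathcal{L}^N_{\varpi^*\xi}(W,\overline{W})=\mathcal{L}^M_\xi(Z^\omega,\overline{Z^\omega})$ (note the cross terms $[W_H,\overline{W_V}]$ are killed by $\varpi^*\xi$ because $\partial_{\omega_l}Z^\omega(x)$ stays in the linear space $Z_xM$), and the fiberwise commutator computation make these two equalities genuinely algebraic and local.

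The problem is the inclusion $\mathbb{E}_{\varpi^*\mathfrak{Z}_M}(N)\subset[\varpi^*\mathbb{E}_{\mathfrak{Z}_M}](N)$ in \eqref{eq:Daa}. You correctly identify the obstruction: the generators $\tilde L_j\in[\varpi^*\mathfrak{Z}_M](N)$ in the estimate \eqref{eq:Aac} on $N$ may have vertical parts $V_j$, and $\|V_ju_\tau\|_0^2$ diverges as the test functions concentrate in $\omega$. But your remedy is not a legitimate operation. The quantity $\|V_ju_\tau\|_0^2=\int|v|^2\bigl|\sum_lb_{jl}\,\partial_{\omega_l}\phi_\tau\bigr|^2$ is a squared norm sitting on the \emph{majorizing} side of the inequality; there is no pairing in which to integrate by parts, and no rearrangement changes its size --- it genuinely is of order $\tau^{2}$ whenever some $V_j$ is nonzero near $(x,\omega_1)$. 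Nor can you simply discard the $V_j$: replacing $\tilde L_j$ by $\tilde L_j^H$ \emph{strengthens} the right-hand side you are trying to prove an estimate against, and the hypothesis only gives you the weaker inequality with the $V_j$ present. Consequently your limit $\tau\to\infty$ yields $\text{(finite)}\leq\infty$ and no fiberwise estimate for $Z^{\omega_1}$ is extracted. If instead you keep $\phi$ fixed, you only control the average $\int|\phi|^2\|Z^\omega v\|_0^2\,d\omega$ with a constant blowing up as $\mathrm{supp}\,\phi$ shrinks, and the error $\|(Z^{\omega_1}-Z^\omega)v\|_0$ is of size $O(\rho)\|v\|_1$, which the right-hand side of \eqref{eq:Aac} does not dominate. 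So this inclusion is not established. (A secondary, smaller gap: in the opposite inclusion your ``uniformity of the constants and a common finite generating set follow from compactness and continuity'' needs an argument --- the estimate defining $\mathbb{E}_{\mathfrak{Z}_M}(M)$ is not stable under $C^0$-small perturbations of the vector field, so you should instead exhibit $W_H$ locally as a finite $\mathcal{C}^{\infty}(N)$-combination of pullbacks $\varpi^*E_k$ with $E_k\in\mathbb{E}_{\mathfrak{Z}_M}(M)$ before applying Fubini.)
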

\begin{proof}
Again the statement becomes trivial after, by localization, we reduce to
the case where $N=M\times\Omega$ with $\Omega$ open in $\mathbb{R}^k$ and
$\varpi$ being the projection onto the first factor.
\end{proof}

\begin{prop}
  \label{pp:da}
Let $M$ and $N$ be smooth real manifolds, with assigned complex valued
distributions of smooth 
complex vector fields $\mathfrak{Z}_M(M)$ and
$\mathfrak{Z}_N(N)$, respectively.
Let $N\xrightarrow{\phi}{M}$ be a smooth submersion onto the image and
a $Z$-morphism. Let $q_0\in{N}$ and $p_0=\phi(q_0)$ be
a regular point of $\mathfrak{Z}_M(M)$, according to Definition \ref{def:cg}.
Assume that
\begin{enumerate}
\item $\mathfrak{Z}_N(N)$ satisfies the higher Levi form concavity condition
 \eqref{eq:cra}  at the point $q_0$;
\item the pullback $\phi_{q_0}^*:H^0_{p_0}M\to{H}^0_{q_0}N$ 
is injective.
\end{enumerate}
Then 
conditions \eqref{eq:cra} and \eqref{eq:be} at $p_0$ are valid 
for $\mathfrak{Z}_M(M)$.
\end{prop}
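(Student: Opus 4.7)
I would pick an arbitrary nonzero $\xi \in H^0_{p_0}M$ with $\mathcal{L}_\xi\geq 0$ and transport it to $\eta := \phi^*_{q_0}\xi \in T^*_{q_0}N$; by hypothesis~(2), $\eta \neq 0$. The first step is to verify $\eta \in H^\oplus_{q_0}N$. Using the factorization $\phi = \iota\circ\varpi$ from \eqref{eq:mpd}, every $W \in \mathfrak{Z}_N(N)$ belongs to $[\varpi^*[\iota^*\mathfrak{Z}_M]](N)$ by the $Z$-morphism property, so, over a local section of $\varpi$ through $q_0$, $W$ splits into the sum of a horizontal lift of a vector field $Y \in [\iota^*\mathfrak{Z}_M](S)$ and a vertical piece. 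Since $\eta$ annihilates vertical vectors and $Y$ is in turn $\iota$-related to some $Z \in \mathfrak{Z}_M(M)$, we obtain $\eta(W(q_0)) = \xi(Z(p_0))=0$ and
$i\eta([W,\bar W])(q_0) = i\xi([Z,\bar Z])(p_0) = \mathcal{L}_\xi(Z,\bar Z) \geq 0$, whence $\eta \in H^\oplus_{q_0}N$.

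Applying assumption~(1) to $\eta$ yields $W_0 \in \mathfrak{Z}_N(N)$ and $W_1,\hdots,W_r \in \mathbb{K}_{\mathfrak{Z}_N}(N) + \overline{\mathbb{K}_{\mathfrak{Z}_N}(N)}$ with $i\eta([W_1,\hdots,W_r,\bar W_0])(q_0) \neq 0$. The rest of the proof lifts this identity to~$M$ by constructing vector fields $Z_j$ on $M$ that are $\phi$-related to the $W_j$'s in a neighborhood of~$q_0$.

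Write $W_j = U_j + \bar V_j$ with $U_j, V_j \in \mathbb{K}_{\mathfrak{Z}_N}(N)$. For each, the decomposition used in the first step produces local $\tilde U_j, \tilde V_j \in \mathfrak{Z}_M(M)$ that are $\phi$-related to $U_j, V_j$ near $q_0$. The key claim is that $\tilde U_j(p) \in K_pM := \{Z(p): Z \in \mathbb{K}_{\mathfrak{Z}_M}(M)\}$ for all $p\in\phi(N)$ close to $p_0$. Indeed, for any $\zeta \in H^\oplus_p M$ one has $\phi^*\zeta \in H^\oplus_q N$ (by the argument of the first paragraph applied at nearby points $p,q$), and hence $\mathcal{L}^M_\zeta(\tilde U_j, \bar{\tilde U}_j)(p) = \mathcal{L}^N_{\phi^*\zeta}(U_j, \bar U_j)(q) = 0$ since $U_j \in \mathbb{K}_{\mathfrak{Z}_N}(N)$. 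Now invoke the regularity of $\mathfrak{Z}_M$ at $p_0$: since $\nu_0$ is locally constant, $KM$ is a smooth complex subbundle of $ZM$ near $p_0$, and the section $\tilde U_j|_{\phi(N)}$ extends to a local section of $KM$; after multiplying by a cutoff equal to $1$ near $p_0$, we obtain $U_j^M \in \mathbb{K}_{\mathfrak{Z}_M}(M)$, still $\phi$-related to $U_j$ near $q_0$. Construct $V_j^M$ analogously and set $Z_j := U_j^M + \overline{V_j^M}$; choose $Z_0 \in \mathfrak{Z}_M(M)$ that is $\phi$-related to $W_0$ near $q_0$ by the same recipe. Since $\phi$-relatedness is preserved by iterated commutators,
\begin{equation*}
i\xi([Z_1,\hdots,Z_r,\bar Z_0])(p_0) = i\xi\bigl(d\phi\,[W_1,\hdots,W_r,\bar W_0](q_0)\bigr) = i\eta([W_1,\hdots,W_r,\bar W_0])(q_0) \neq 0,
\end{equation*}
which is exactly \eqref{eq:cra} for $\mathfrak{Z}_M$ at $p_0$; condition \eqref{eq:be} then follows from Proposition \ref{pp:crx}.

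The main obstacle is the lifting step: the vector fields $\tilde U_j, \tilde V_j$ produced by the $Z$-morphism decomposition a priori only have their values on $\phi(N)$ controlled, while membership in $\mathbb{K}_{\mathfrak{Z}_M}(M)$ is a condition over the whole of~$M$. The regularity hypothesis at $p_0$ is used precisely to turn $KM$ into a smooth subbundle, which allows the local section of $KM$ defined along $\phi(N)$ to be extended transversally and thus yields an honest element of $\mathbb{K}_{\mathfrak{Z}_M}(M)$ that remains $\phi$-related to $U_j$ on a whole neighborhood of~$q_0$.
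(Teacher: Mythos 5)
Your overall strategy matches the paper's: transport a characteristic covector $\xi$ with $\mathcal{L}_\xi\geq 0$ to $\eta=\phi^*_{q_0}\xi\in H^\oplus_{q_0}N$ (using the $Z$-morphism property and injectivity of $\phi^*$), invoke hypothesis~(1) to get $W_0,W_1,\dots,W_r$, and then produce vector fields $Z_j$ on $M$ in the correct spaces with $i\xi([Z_1,\dots,Z_r,\bar Z_0])(p_0)\neq 0$. You also correctly identify and flesh out the step that the paper leaves implicit: showing that the extended $\tilde U_j$ take values in $K_pM$ along $\phi(N)$ (by transporting $\zeta\in H^\oplus_pM$ to $\phi^*\zeta\in H^\oplus_qN$ and using $U_j\in\mathbb{K}_{\mathfrak{Z}_N}(N)$), and then using the regularity of $\mathfrak{Z}_M$ at $p_0$ — specifically the local constancy of $\nu_0$, so $KM$ is a smooth subbundle — to extend across $M$ and obtain genuine elements of $\mathbb{K}_{\mathfrak{Z}_M}(M)$. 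This part of the argument is a worthwhile amplification, and it is faithful to what the paper does in the immersion ($N=S\subset M$) case.

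There is, however, a gap in your treatment of the submersion direction. You assert that ``the decomposition used in the first step produces local $\tilde U_j,\tilde V_j\in\mathfrak{Z}_M(M)$ that are $\phi$-related to $U_j,V_j$ near $q_0$,'' and the final display relies on $\phi$-relatedness being preserved by iterated commutators. But a vector field $W\in[\varpi^*\mathfrak{Z}_M](N)$ is not in general $\varpi$-projectable: in the model $N=M\times\Omega$, a vector field $W=f(t)\partial_x$ with $f$ non-constant satisfies $d\varpi(W\circ\sigma)\in\mathfrak{Z}_M$ for every section $\sigma$, yet there is no $Z$ on $M$ $\varpi$-related to it on a neighborhood. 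Your ``horizontal lift plus vertical piece'' decomposition only holds along a fixed section $\sigma$, not on a neighborhood of $q_0$, so the iterated commutator $[W_1,\dots,W_r,\bar W_0](q_0)$ need not push down to $[Z_1,\dots,Z_r,\bar Z_0](p_0)$. The paper avoids this by factoring $\phi=\iota\circ\varpi$ (Remark \ref{rm:mpf}), and treating the two pieces separately: the submersion step is handled by localizing to $N=M\times\Omega$ and appealing to Lemma \ref{lem:Dg}, which shows that $\mathbb{E}$, $\mathbb{K}$, $\Theta$ all pull back functorially under submersions, so that the condition for $\mathfrak{Z}_N$ transfers to $[\varpi^*\mathfrak{Z}_M]$ and then descends to $\mathfrak{Z}_M$ without ever producing projectable representatives of the individual $W_j$. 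Your immersion half is sound; the submersion half needs either an argument along the lines of Lemma \ref{lem:Dg}, or a justification of why one may choose the $W_j$ projectable.
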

\begin{proof}
Using Remark \ref{rm:mpf} we shall split the proof by separately considering
the case in which $\phi$ is a submersion and the case where $\phi$ is
an immersion.\par
First we assume that $N\xrightarrow{\phi}{M}$ is a smooth submersion.
 In this case 
$\mathbb{E}_{\mathfrak{Z}_N}(N)\subset
\mathbb{E}_{\phi^*\mathfrak{Z}_M}(N)$. Moreover,
(2) is automatically satisfied 
because $\mathfrak{Z}_N(N)\subset[\phi^*\mathfrak{Z}_M](N)$. 
The statement is trivially true, as it is easily checked by
reducing it to the case where $N=M\times\Omega$, with
$\Omega$ open in $\mathbb{R}^k$ and $\varpi$ the projection onto the
first coordinate.
\par
To complete the proof of the general case, 
it suffices, by localization about $q_0$, to consider the case
in which $N=S$ is a smooth submanifold of $M$, 
and $\mathfrak{Z}_N(N)$ are the restrictions to $N$ of 
elements of $\mathfrak{Z}_M(M)$ with real and imaginary parts
tangent to $N$ at all points of $N$.
By (2), \eqref{eq:cra} for $\mathfrak{Z}_N(N)$ at $q_0=p_0$, implies 
that \eqref{eq:cra} is satisfied at $p_0$ for
$\mathfrak{Z}_M(M)$. By the regularity assumption, this implies
\eqref{eq:be} at $p_0$ for $\mathfrak{Z}_M(M)$.
\end{proof}

\section{Hypoellipticity for some differential operators
of the first and of the second order}\label{sec:ope}
We keep the notation of \S\ref{sec:a}, \S\ref{sec:C}. 
Theorems \ref{tm:cca}, Corollary \ref{cor:Db}, and Theorem \ref{tm:ell}
below, which concern systems of first order partial differential operators, and
second order partial differential operators closely related to
sums of squares of vector fields,
directly follow from the assumption that
$\mathfrak{Z}(M)$ be subelliptic. 
The other results of this section, namely
Theorems \ref{tm:cpd} and \ref{tm:bh},
refer to generalized parabolic second order operators, and
are proved under conditions \eqref{eq:cpc} and \eqref{eq:essj}, 
respectively, that more directly involve the Lie structure of
$\mathfrak{Z}(M)$ with respect to some
\textit{generalized time vector field}.
\par
Let $E\xrightarrow{\pi}M$ be a complex vector bundle 
of rank $r$ on $M$, endowed with a $\mathbb{C}$-linear connection
\begin{equation}
  \label{eq:opcon}
  \nabla:\mathfrak{X}(M)\times\mathcal{C}^{\infty}(M,E)\to\mathcal{C}^{\infty}(M,E).
\end{equation}
In a local trivialization $E|_U\simeq U\times\mathbb{C}^r$,
the connection $\nabla$ is described by the datum of a 
$\mathfrak{gl}(r,\mathbb{C})$-valued smooth one form
$\gamma=(\gamma^{\alpha}_{\beta})
\in
\mathcal{C}^{\infty}(U,\mathfrak{gl}(r,\mathbb{C})\otimes
T^*M)$. Using upper Greek letters for the components of
the sections in $\mathcal{C}^{\infty}(U,E)\simeq\mathcal{C}^{\infty}(U,\mathbb{C}^r)$,
we have
\begin{equation}
  \label{eq:cca}
  \left(\nabla_X(\sigma)\right)^{\alpha}=X\sigma^{\alpha}+
\sum_{\beta=1}^r\gamma^{\alpha}_{\beta}
(X)\sigma^{\beta},\quad\text{for}\;\alpha=1,\hdots,r.
\end{equation}
By $\mathbb{C}$-linearity, we can define,
for each complex valued vector field
$Z\in\mathfrak{X}^{\mathbb{C}}$, a linear partial differential
operator
\begin{equation}
  \label{eq:ccc}
  \nabla_Z:\mathcal{C}^{\infty}(M,E)\ni\sigma\to \nabla_{\mathrm{Re}\,Z}(\sigma)+
i\nabla_{\mathrm{Im}\,Z}(\sigma)\in\mathcal{C}^{\infty}(M,E).
\end{equation}
Let us fix a smooth
Riemannian metric $g$ on $M$ and a smooth Hermitian metric $h$ on the fibers
of $E\xrightarrow{\pi}M$. Then we can define the formal adjoint
$\nabla_Z^*:\mathcal{C}^{\infty}(M,E)\to\mathcal{C}^{\infty}(M,E)$ by
\begin{equation}
  \label{eq:ccd}\begin{gathered}
  \int_M{h(\nabla^*_Zu,v)d\lambda_g}=\int_M{h(u,\nabla_Zv)d\lambda_g}\\
\forall u,v\in\mathcal{C}^{\infty}(M,E),\;\text{with}\;
\mathrm{supp}(u)\cap\mathrm{supp}(v)\Subset{M},
\end{gathered}
\end{equation}
where $d\lambda_g$ is the Lebesgue density on $M$ with respect to
the Riemannian metric~$g$.\par
We obtain an analogue of \cite[Theorem 4.1]{HN00}:
\begin{thm}\label{tm:cca}
Let $E\xrightarrow{\pi}M$ be a smooth complex vector bundle of rank $r$
on $M$ and 
$\nabla$ 
a $\mathbb{C}$-linear connection 
on $E\xrightarrow{\pi}M$. If $\mathfrak{Z}(M)$ is subelliptic
at a point $p_0\in{M}$, 
then any weak solution $u\in{L}^2_{\mathrm{loc}}(M,E)$
of 
\begin{equation}
  \label{eq:cce}
  \nabla_Zu\in\mathcal{C}^{\infty}(M,E)\qquad\forall Z\in\mathfrak{Z}(M)
\end{equation}
is equal, a.e. in an open neighborhood $U$ of $p_0$ in $M$, to
a smooth section of~$E\xrightarrow{\pi}M$. \par 
Formula \eqref{eq:cce} means that
for every $Z\in\mathfrak{Z}(M)$ there is a smooth section 
$f_Z\in\mathcal{C}^{\infty}(M,E)$ such that
\begin{equation}
  \label{eq:ccf}\begin{aligned}
  \int_M{h(u,\nabla_Z^*v)d\lambda_g}=\int_M{h(f_Z,v)d\lambda_g},
\qquad\\\quad
\forall v\in\mathcal{C}^{\infty}(M,E),\;\text{with}\;
\mathrm{supp}(v)\Subset{M}.
\end{aligned}
\end{equation}
\end{thm}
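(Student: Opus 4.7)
The plan is to reduce to the known fact (cf.\ \cite{K71, HN00}) that a subelliptic estimate implies $\mathcal{C}^{\infty}$-hypoellipticity of the associated sum-of-squares-type operator, after first transferring the scalar subelliptic estimate of \eqref{eq:ob} to sections of~$E$.

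First, I would localize around $p_0$: pick a coordinate neighborhood $U$ of $p_0$ over which $E$ trivializes as $U\times\mathbb{C}^r$, and small enough that \eqref{eq:ob} holds for some $L_1,\hdots,L_n\in\mathfrak{Z}(M)$ and some $\epsilon>0$. In this trivialization, a section is a $\mathbb{C}^r$-valued function and, by \eqref{eq:cca}, $\nabla_{L_j}\sigma=L_j\sigma+\gamma(L_j)\sigma$ with $\gamma(L_j)$ a smooth $r\times r$ matrix. Applying \eqref{eq:ob} componentwise and absorbing the smooth zero-order term $\gamma(L_j)\sigma$ into the $\|\sigma\|_0$-piece yields the vector-valued estimate
\begin{equation*}
\|\sigma\|_{\epsilon}^2\leq C\Bigl(\|\sigma\|_0^2+\sum_{j=1}^n\|\nabla_{L_j}\sigma\|_0^2\Bigr),\qquad \sigma\in\mathcal{C}^{\infty}_0(U,\mathbb{C}^r).
\end{equation*}

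Second, I would run the standard Friedrichs-mollifier bootstrap. By induction on $k=0,1,2,\hdots$ I claim $u\in H^{k\epsilon}_{\mathrm{loc}}$ near~$p_0$. Fix cutoffs $\phi,\chi\in\mathcal{C}^{\infty}_0(U)$ with $\chi\equiv 1$ on a neighborhood of $\mathrm{supp}(\phi)$, a properly supported elliptic $\Lambda^{k\epsilon}\in\Psi^{k\epsilon}(U)$, and a Friedrichs mollifier family $\{J_\delta\}_{\delta>0}$. Apply the displayed estimate to $\Lambda^{k\epsilon}\phi J_\delta u$ and expand
\begin{equation*}
\nabla_{L_j}\bigl(\Lambda^{k\epsilon}\phi J_\delta u\bigr)=\Lambda^{k\epsilon}\phi J_\delta\bigl(\nabla_{L_j}u\bigr)+\bigl[\nabla_{L_j},\Lambda^{k\epsilon}\phi J_\delta\bigr]u.
\end{equation*}
The first summand is bounded in $L^2$ uniformly in $\delta$ because $\nabla_{L_j}u\in\mathcal{C}^{\infty}(M,E)$ by hypothesis. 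The commutator is a pseudodifferential operator of order $k\epsilon$ (the principal symbols cancel since $L_j$ is first-order while $\Lambda^{k\epsilon}\phi J_\delta$ has principal symbol of order $k\epsilon$ independent of $\delta$), so its action on $u$ is bounded uniformly in $\delta$ by a constant times $\|\chi u\|_{k\epsilon}$, which is finite by the inductive hypothesis. Letting $\delta\downarrow 0$ and using weak-$*$ compactness in $H^{\epsilon}$ yields $\phi u\in H^{(k+1)\epsilon}$, closing the induction. Shrinking $\phi$ at each step and using Sobolev embedding then gives $u\in\mathcal{C}^{\infty}$ in a neighborhood of~$p_0$.

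The main technical obstacle is the uniform-in-$\delta$ control of the commutator $[\nabla_{L_j},\Lambda^{k\epsilon}\phi J_\delta]$ together with the passage to the limit $\delta\downarrow 0$; this is the classical Friedrichs-lemma argument of Kohn--Nirenberg, and the smooth matrix $\gamma(L_j)$ contributes only a pseudodifferential operator of order~$k\epsilon$ to the commutator, so no genuinely new difficulty arises beyond the scalar case already treated in the references.
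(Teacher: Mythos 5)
Your argument is correct and follows essentially the same route as the paper: localize and trivialize $E$, absorb the zero-order connection coefficients to transfer the scalar subelliptic estimate to $\mathbb{C}^r$-valued sections, and then bootstrap through elliptic pseudodifferential operators of increasing order, gaining $\epsilon$ derivatives at each step. The only (inessential) difference is that the paper packages the regularization by invoking Friedrichs' theorem on the identity of weak and strong extensions of the first-order system, whereas you carry a Friedrichs mollifier $J_\delta$ through each inductive step and control the commutator uniformly in $\delta$ — two standard formulations of the same device.
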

\begin{proof}
The statement is local. Therefore
by substituting for $M$ a
relatively compact open neighborhood of $p_0$,
we can assume that $\mathfrak{Z}(M)$ is generated by a finite set of
vector fields $L_1,\hdots,L_n$, and that,
for some $\epsilon>0$ and $C>0$, we have the estimate
\begin{equation}
  \label{eq:ccg}
\|v\|_{\epsilon}^2\leq
C\left(  \sum_{j=1}^n\|L_j(v)\|_0^2+\|v\|_0^2\right)
,\quad\forall v\in\mathcal{C}^{\infty}_0(M).
\end{equation}
We can also assume that $E$ is the trivial bundle 
$M\times\mathbb{C}^r$ on $M$, so that
\eqref{eq:cce} is equivalent to the system
\begin{equation}
  \label{eq:cch}
  L_ju^{\alpha}+\sum_{\beta=1}^ra^{\alpha}_{j\, \beta}(p)u^{\beta}=f^{\alpha}_j
\in\mathcal{C}^{\infty}(M),\quad\text{for}\;j=1,\hdots,n,\;\alpha=1,\hdots,r
,\end{equation}
with $a^{\alpha}_{j\,{{\beta}}}\in\mathcal{C}^{\infty}(M)$.
Defining
\begin{equation}
  \label{eq:cci}\left\{\begin{gathered}
  \mathfrak{d}:\mathcal{C}^{\infty}(M,\mathbb{C}^r)\to
\left[\mathcal{C}^{\infty}(M,\mathbb{C}^r)\right]^n\quad\text{by}\qquad
\qquad\\
\mathfrak{d}((u^{\alpha})_{\alpha=1,\hdots,r})=
\left(L_ju^{\alpha}+\sum_{\beta=1}^ra^{\alpha}_{j\,\beta}(p)u^{\beta}\right)_{
  \begin{smallmatrix}
    1\leq\alpha\leq{r}\\
1\leq{j}\leq{n}
  \end{smallmatrix}}
\end{gathered}\right.
\end{equation}
we obtain from \eqref{eq:ccg} that, with some new constant $C>0$
and the same $\epsilon>0$:
\begin{equation}
  \label{eq:ccj} \|v\|_{\epsilon}^2\leq
 C\left( \|\mathfrak{d}(v)\|^2_0+\|v\|^2_0\right),
\quad\forall v\in\mathcal{C}^{\infty}_0(M,\mathbb{C}^r).
\end{equation}
We have, for $\chi\in\mathcal{C}^{\infty}_0(M)$,
\begin{equation}
  \label{eq:cck}
  \left(\mathfrak{d}(\chi\,u)\right)^{\alpha}_j=
\chi\left(\mathfrak{d}(u)\right)^{\alpha}_j+(L_j(\chi)\,u^{\alpha}).
\end{equation}
In particular, if $u\in{L}^2_{\mathrm{loc}}(M,\mathbb{C}^r)$ is a
weak solution of \eqref{eq:cch}, we have that 
$\mathfrak{d}(\chi\,{u})\in \left[{L}^2(M,\mathbb{C}^r)\right]^n$.
By applying Friedrichs' theorem on the identity of the weak and
strong extensions of a first order partial differential operator,
we can find a sequence 
$\{v_\nu\}\subset\mathcal{C}^{\infty}_0(M,\mathbb{C}^r)$ such that
\begin{equation}
  \label{eq:ccl}
  v_\nu\rightarrow \chi\,u\;\text{in}\; L^2(M,\mathbb{C}^r)\;
\text{and}\; \mathfrak{d}(v_\nu)\rightarrow\mathfrak{d}(\chi\,u)\;
\text{in}\; \left[L^2(M,\mathbb{C}^r)\right]^n. 
\end{equation}
The subelliptic estimate \eqref{eq:ccj} yields a uniform bound for the
Sobolev $\epsilon$-norm $\|v_{\nu}\|_{\epsilon}$. This implies that
$\chi\,u\in{W}^{\epsilon}(M,\mathbb{C}^r)$, where
${W}^{\epsilon}(M,\mathbb{C}^r)$ denotes the Sobolev space of 
$L^2$-vector valued functions that have $L^2$-derivatives of the
positive real order $\epsilon$. Hence 
$u\in{W}^{\epsilon}_{\mathrm{loc}}(M,\mathbb{C}^r)$. \par
To show that $u\in\mathcal{C}^{\infty}(M,\mathbb{C}^r)$,
we use the Sobolev embedding theorem: it suffices to show that
$u\in{W}^{s}_{\mathrm{loc}}(M,\mathbb{C}^r)$ for all $s>0$.
Assume that we already know that this is true for some $s_0>0$.
If $\chi\in\mathcal{C}^{\infty}_0(M)$, then
\begin{equation*}
  \mathfrak{d}(\chi\,u)=[\mathfrak{d},\chi](u)+\chi\,\mathfrak{d}(u)
\in{W}^{s_0}(M,\mathbb{C}^r)
\end{equation*}
and has compact support. 
Fix any scalar pseudodifferential operator 
$\Lambda_q\in\Psi^q(M)$,  with $s_0-\epsilon<q\leq{s}_0$. 
Then, by the continuity
properties of classical pseudodifferential operators, we obtain
\begin{equation}
  \label{eq:ccm}
  \mathfrak{d}(\Lambda_q(\chi u))=[\mathfrak{d},\Lambda_q](\chi u)+
\Lambda_q(\mathfrak{d}(\chi u))
\in{L}^2_{\mathrm{loc}}(M,\mathbb{C}^r),
\end{equation}
because $\Lambda_q$ and 
the commutator $[\mathfrak{d},\Lambda_q](u)$ have orders $\leq{s}_0$
and 
$\chi\,u,\;\mathfrak{d}(\chi\,u)\in{W}^{s_0}(M)$ and have 
compact support in $M$.
Thus, by the argument above, 
$\Lambda_q(\chi\,u)\in{W}^{\epsilon}(M,\mathbb{C}^r)$. This implies that 
$\chi\,u\in {W}^{q+\epsilon}(M,\mathbb{C}^r)$, with ${q+\epsilon>s}$.
Since $\chi$ was an arbitrary smooth function with compact support
in $M$, this yields $u\in{W}^{q+\epsilon}_{\mathrm{loc}}(M,\mathbb{C}^r)$.
By recurrence we obtain that 
$u\in{W}^{s}_{\mathrm{loc}}(M,\mathbb{C}^r)$ for all $s>0$,
and hence is equal a.e. to a smooth section. The proof is complete.
\end{proof}
\begin{cor}\label{cor:Db}
Assume that $M$ is 
compact and that $\mathfrak{Z}(M)$ is subelliptic at all 
points $p\in{M}$. Then
\begin{enumerate}
\item the space
  \begin{equation}
    \label{eq:coa}
    \mathcal{O}_{\mathfrak{Z}}(M)=
\{\sigma\in\mathcal{C}^{\infty}(M,E)\mid \nabla_Z(\sigma)=0,\;
\forall Z\in\mathfrak{Z}(M)\}
  \end{equation}
is finite dimensional.
\item The map
  \begin{equation}
    \label{eq:cob}
    \mathcal{C}^{\infty}(M,E)\times\mathfrak{Z}(M)\ni (\sigma,Z)\to
(\nabla_Z(\sigma),Z)\in\mathcal{C}^{\infty}(M,E)\times\mathfrak{Z}(M)
  \end{equation}
has a closed range. By this we
mean that, if $\{\sigma_\nu\}$ is a sequence of sections
in $\mathcal{C}^{\infty}(M,E)$ and for each $Z\in\mathfrak{Z}(M)$ there is
$f_Z\in\mathcal{C}^{\infty}(M,E)$ such that $\nabla_Z(\sigma_\nu)$ converges
uniformly to $f_Z$ in $M$, then there exists 
a section $\sigma\in\mathcal{C}^{\infty}(M,E)$
with $\nabla_Z(\sigma)=f_Z$ for all $Z\in\mathfrak{Z}(M)$.
\end{enumerate}
\end{cor}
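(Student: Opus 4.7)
The plan is to reduce both claims to a single global subelliptic estimate on the compact manifold $M$ together with the Rellich--Kondrachov compactness of $W^{\epsilon}\hookrightarrow L^{2}$. Since $M$ is compact and $\mathfrak{Z}(M)$ is subelliptic at every point, I first cover $M$ by finitely many relatively compact open sets $U_{1},\dots,U_{N}$ on each of which a local estimate of the form \eqref{eq:ccj} holds with some $\epsilon_{k}>0$ and finitely many $L_{1}^{(k)},\dots,L_{n_{k}}^{(k)}\in\mathfrak{Z}(M)$. Taking $\epsilon=\min_{k}\epsilon_{k}$ and gluing by a subordinate partition of unity $\{\chi_{k}\}$, I obtain a single global estimate
\begin{equation*}
\|\sigma\|_{\epsilon}^{2}\leq C\Bigl(\sum_{k,j}\|\nabla_{L_{j}^{(k)}}\sigma\|_{0}^{2}+\|\sigma\|_{0}^{2}\Bigr),\qquad \forall\,\sigma\in\mathcal{C}^{\infty}(M,E),
\end{equation*}
where the order-zero commutator terms generated by the cutoffs $\chi_{k}$ are absorbed into $\|\sigma\|_{0}$.

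For (1), I apply this estimate to $\sigma\in\mathcal{O}_{\mathfrak{Z}}(M)$: since $\nabla_{Z}(\sigma)=0$ for every $Z\in\mathfrak{Z}(M)$, the right hand side collapses to $C\|\sigma\|_{0}^{2}$, so $\|\sigma\|_{\epsilon}\leq C'\|\sigma\|_{0}$. The unit ball of $\mathcal{O}_{\mathfrak{Z}}(M)$ in $L^{2}(M,E)$ is therefore bounded in $W^{\epsilon}(M,E)$, hence relatively compact in $L^{2}$ by Rellich on the compact manifold $M$. Observing that $\mathcal{O}_{\mathfrak{Z}}(M)$ is $L^{2}$-closed, because the regularity result of Theorem \ref{tm:cca} identifies it with the set of $L^{2}$-weak solutions of \eqref{eq:cce} with $f_{Z}=0$, the classical fact that a Hilbert space with compact closed unit ball is finite dimensional yields the claim.

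For (2), I promote the global estimate to a coercive one on the $L^{2}$-orthogonal complement of $\mathcal{O}_{\mathfrak{Z}}(M)$: on $\mathcal{O}_{\mathfrak{Z}}(M)^{\perp}\cap\mathcal{C}^{\infty}(M,E)$ one has $\|\sigma\|_{\epsilon}^{2}\leq C''\sum_{k,j}\|\nabla_{L_{j}^{(k)}}\sigma\|_{0}^{2}$, proved by the standard contradiction argument in which a normalizing sequence violating it would be $W^{\epsilon}$-bounded, admit via Rellich an $L^{2}$-limit lying in both $\mathcal{O}_{\mathfrak{Z}}(M)$ and its orthogonal complement, forcing zero. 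Given a sequence $\{\sigma_{\nu}\}\subset\mathcal{C}^{\infty}(M,E)$ with $\nabla_{Z}(\sigma_{\nu})\to f_{Z}$ uniformly for every $Z\in\mathfrak{Z}(M)$, I project each $\sigma_{\nu}$ onto $\mathcal{O}_{\mathfrak{Z}}(M)^{\perp}$ (which preserves the $\nabla_{Z}$-images), apply the coercive estimate to $\sigma_{\nu}-\sigma_{\mu}$ to deduce that the projected sequence is Cauchy in $W^{\epsilon}$, and obtain an $L^{2}$-limit $\sigma$ weakly satisfying $\nabla_{Z}(\sigma)=f_{Z}$ for every $Z\in\mathfrak{Z}(M)$; Theorem \ref{tm:cca} then upgrades $\sigma$ to a smooth section. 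The main obstacle I anticipate is the initial globalization step: from the pointwise subellipticity hypothesis one has to extract a finite family $\{L_{j}^{(k)}\}\subset\mathfrak{Z}(M)$ and a single $\epsilon>0$ valid throughout $M$, which uses both compactness and the left $\mathcal{C}^{\infty}(M)$-module structure of $\mathfrak{Z}(M)$; once this is secured, the remaining steps are essentially the classical Hilbert-space packaging of subelliptic operators.
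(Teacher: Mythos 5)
Your proof is correct and follows essentially the same route as the paper's: globalize the local subelliptic estimate by compactness, deduce finite-dimensionality of $\mathcal{O}_{\mathfrak{Z}}(M)$ from the equivalence of the $L^{2}$ and $W^{\epsilon}$ norms together with Rellich compactness, and obtain closed range from the coercive estimate on $\left[\mathcal{O}_{\mathfrak{Z}}(M)\right]^{\perp}$. The paper's own proof merely states these two facts without elaboration, so your proposal just supplies the standard details (partition-of-unity gluing, the contradiction argument for coercivity, the Cauchy-sequence step and the smoothness upgrade via Theorem \ref{tm:cca}) that the paper leaves implicit.
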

\begin{proof}
$(1)$ follows from the 
Sobolev embedding theorem, because
$\mathcal{O}_{\mathfrak{Z}}(M)$ is an $L^2$-closed subspace of 
$\mathcal{C}^{\infty}(M,E)$ on which
the $L^2$ and the $\epsilon$-Sobolev norm, for some $\epsilon>0$,
are equivalent. Finally,
$(2)$ is a consequence of the fact that,
since $M$ is compact,  
for a finite set $L_1,\hdots,L_n\in\mathfrak{Z}(M)$,
some $\epsilon>0$ and some $\mathrm{const}>0$,
on the $L^2$-orthogonal complement of $\mathcal{O}_{\mathfrak{Z}}(M)$,
we have the coercive
estimate
\begin{equation}
  \label{eq:coc}
  \|u\|_{\epsilon}^2\leq\mathrm{const}\left(\sum_{j=1}^n\|L_j(u)\|_0^2\right)
,\quad\forall u\in\mathcal{C}^{\infty}(M,E)\cap\left[\mathcal{O}_{\mathfrak{Z}}(M)\right]^{\perp}.
\qedhere\end{equation}
\end{proof} 

\begin{thm}\label{tm:ell} Let $\mathfrak{Z}(M)$ be a distribution of complex vector
fields.
\begin{enumerate}
\item
We can find a locally finite family $\{L_j\}\subset\mathfrak{Z}(M)$,
such that, for any choice
of $a\in\mathcal{C}^{\infty}(M)$, the second order operator
\begin{equation}
  \label{eq:opab}
  P(u)=a\,u+\sum_j{\bar{L}_jL_j}(u)
\end{equation}
is hypoelliptic at all points of
$M$ at which $\mathfrak{Z}(M)$ is subelliptic.
\item
If $\mathfrak{Z}(M)$ is finitely generated then, 
for any set of generators
$L_1,\hdots,L_n$
of $\mathfrak{Z}(M)$, 
and for any choice of $Z_0\in\mathfrak{Z}(M)+\overline{\mathfrak{Z}(M)}$ and 
$a\in\mathcal{C}^{\infty}(M)$, the operator 
\begin{equation}\label{eq:toa}
P(u)=\sum_{j=1}^n\bar{L}_jL_j(u)+Z_0(u)+a\,u
\end{equation}
is hypoelliptic at all points $p$ of $M$ where $\mathfrak{Z}(M)$ is subelliptic.
\end{enumerate}
\par
Let $M'$ be the open subset of $M$ 
of points $p$ where $\mathfrak{Z}(M)$ is subelliptic.
Then the operators $P$ of $(1)$ and $(2)$ satisfy the following:
\begin{equation}
  \label{eq:opaa}\left\{
  \begin{gathered}
    \forall U^{\mathrm{open}}\Subset{M}',\quad\exists \epsilon>0,\;
C>0\;\text{such that}\\
\|u\|_{\epsilon}^2\leq C\left(\left|(P(u)|u)_0\right|+\|u\|_0^2\right),\quad
\forall u\in\mathcal{C}^{\infty}_0(U).
  \end{gathered}\right.
\end{equation}
\end{thm}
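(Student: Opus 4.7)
The strategy is to reduce both parts of the theorem to the a priori estimate \eqref{eq:opaa}; once this is in hand, the $\mathcal{C}^\infty$-hypoellipticity of $P$ follows by the same pseudodifferential bootstrap used in the proof of Theorem \ref{tm:cca}. Concretely, given $Pu=f$ with $f\in\mathcal{C}^\infty(M)$ and $u$ a distribution on $M'$, I would apply \eqref{eq:opaa} to $\Lambda_s(\chi u)$ for a cutoff $\chi\in\mathcal{C}^\infty_0(M')$ and a properly supported $\Lambda_s\in\Psi^s(M)$, control the commutator $[P,\Lambda_s]$ (which contributes through $L_j$ and $\bar L_j$, the only pieces that matter for absorption), and iterate to gain $\epsilon$ Sobolev derivatives at each step. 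The new content is therefore the proof of \eqref{eq:opaa}.

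For part~(2), using $L_j^*=-\bar L_j+b_j$ with $b_j\in\mathcal{C}^\infty(M)$, and hence $(\bar L_j)^*=-L_j+\tilde b_j$, I compute
\begin{equation*}
(\bar L_jL_j(u)\,|\,u)_0 = (L_j(u)\,|\,(\bar L_j)^*u)_0 = -\|L_j(u)\|_0^2+(L_j(u)\,|\,\tilde b_ju)_0,
\end{equation*}
which rearranges to
\begin{equation*}
\sum_{j=1}^n\|L_j(u)\|_0^2 = -\mathrm{Re}\,(P(u)\,|\,u)_0 + \mathrm{Re}\,(Z_0(u)\,|\,u)_0 + \mathrm{Re}\,(au\,|\,u)_0 + \sum_j\mathrm{Re}\,(L_j(u)\,|\,\tilde b_ju)_0.
\end{equation*}
The last two summands are dominated by $\varepsilon\sum_j\|L_j(u)\|_0^2+C_\varepsilon\|u\|_0^2$ via Cauchy--Schwarz. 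For the $Z_0$-term I write $Z_0=W+\bar V$ with $W,V\in\mathfrak{Z}(M)$ and expand $W=\sum c_iL_i$, $V=\sum d_iL_i$ in the given generators; then $(W(u)\,|\,u)_0$ admits the same Cauchy--Schwarz bound, and $(\bar V(u)\,|\,u)_0=-\overline{(V(u)\,|\,u)_0}+(u\,|\,b_{\bar V}u)_0$ by integration by parts is treated identically. Choosing $\varepsilon$ small and absorbing gives $\sum_j\|L_j(u)\|_0^2\le C(|(P(u)\,|\,u)_0|+\|u\|_0^2)$; combined with the subelliptic estimate for $L_1,\dots,L_n$, this is \eqref{eq:opaa}.

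For part~(1) the only additional step is to build the locally finite family $\{L_j\}$ by a partition of unity. I cover $M'$ by relatively compact open $U_\alpha$ on each of which \eqref{eq:ob} holds with a finite $L_1^\alpha,\dots,L_{n_\alpha}^\alpha\in\mathfrak{Z}(M)$, pick cutoffs $\chi_\alpha\in\mathcal{C}^\infty_0(U_\alpha)$ that equal $1$ on a neighborhood of a distinguished point of $U_\alpha$ and form a locally finite family, and set $\{L_j\}=\{\chi_\alpha L_i^\alpha\}\subset\mathfrak{Z}(M)$. Near any $p\in M'$, some $\chi_\alpha$ is identically $1$, so the corresponding $L_i^\alpha$'s satisfy the subelliptic estimate locally; the first-order byproducts coming from $\bar L_jL_j=\chi_\alpha^2\bar L_i^\alpha L_i^\alpha+\chi_\alpha(\bar L_i^\alpha\chi_\alpha)L_i^\alpha$ lie in $\mathfrak{Z}(M)+\overline{\mathfrak{Z}(M)}$ and are absorbed by the same argument as the $Z_0$-term above. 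The main obstacle throughout is precisely this bookkeeping of first-order errors: everything hinges on the structural fact that every first-order perturbation that appears is of the form $W+\bar V$ with $W,V\in\mathfrak{Z}(M)$, which is exactly what permits Cauchy--Schwarz absorption into $\sum_j\|L_j(u)\|_0^2$.
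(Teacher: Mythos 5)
Your argument is correct and follows essentially the same route as the paper: the locally finite family is built from cutoffs subordinate to a cover on which local subelliptic estimates hold, the key estimate \eqref{eq:opaa} comes from integrating by parts in $(P(u)|u)_0$ and absorbing the first-order terms (all of which lie in $\mathfrak{Z}(M)+\overline{\mathfrak{Z}(M)}$) by Cauchy--Schwarz into $\sum_j\|L_j(u)\|_0^2$, and hypoellipticity then follows by a standard bootstrap. The only presentational difference is that the paper delegates that last step to \cite{KN65} rather than redoing the second-order bootstrap, which is slightly more delicate than the first-order one of Theorem \ref{tm:cca} since $[P,\Lambda_s]$ has order $s+1$ and must be controlled using the bound on $\sum_j\|L_j(u)\|_0^2$ that your integration by parts also provides.
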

\begin{proof}
Let $\{U_\nu\}$ be an open covering of $M'$ by relatively compact open
subsets, and, for each of them, let
$L_1^{(\nu)},\hdots,
L_{n_{\nu}}^{(\nu)}\in\mathfrak{Z}(M)$ be chosen in such a way that,
for suitable $\epsilon_\nu>0$,
$C_\nu>0$, we have the estimate
\begin{equation}
  \label{eq:opda}
  \|u\|^2_{\epsilon_\nu}\leq C_{\nu}
\left(\sum_{h=1}^{n_{\nu}}\|L_h^{(\nu)}(u)\|^2_{0}
+\|u\|_0^2\right),\quad\forall u\in\mathcal{C}^{\infty}_0(U_{\nu}).
\end{equation}
Take 
smooth functions $\chi_\nu\in\mathcal{C}^{\infty}_0(U_\nu)$
such that $\mathrm{supp}(\chi_{\nu})\Subset{U}_{\nu}$, the family
$\{\mathrm{supp}(\chi_{\nu})\}$ is locally finite, and
$\sum_{\nu}|\chi_{\nu}(p)|^2>0$ for all $p\in{M}'$. Then
$(1)$ holds with $\{L_j\}=\{\chi_{\nu}L^{(\nu)}_h\}$.\par
By \cite{KN65}, the
hypoellipticity of \eqref{eq:opab} and of \eqref{eq:toa}
is a consequence of \eqref{eq:opda}. Since it suffices to prove that
for each $p\in{M}'$, there is a small open neighborhood $U\Subset{M}'$
of $p$ for which 
\eqref{eq:opda} holds true,
we can reduce the proof 
to the case where $\mathfrak{Z}(M)$ is the
$\mathcal{C}^{\infty}(M)$-module generated by 
$L_1,\hdots,L_n\in\mathfrak{Z}(M)$, and
the operator $P$ is of the form \eqref{eq:toa}.
\par
By integration by parts we obtain
\begin{equation}\label{eq:opdff}
  \begin{aligned}
    -(Pu|u)_0=\sum_{j=1}^n\|L_j(u)\|_0^2-(L_0(u)|u)_0+(u|
L_{n+1}(u))_0
+(a'u|u)_0\\[-3pt]
\qquad\qquad\qquad\qquad\qquad\qquad
\forall u\in\mathcal{C}^{\infty}_0(M).
  \end{aligned}
\end{equation}
Since $L_1,\hdots,L_n$ generate $\mathfrak{Z}(M)$, 
 we obtain that \eqref{eq:ob}
is valid 
for every
relatively compact open subset $U\Subset{M}'$,
and this in turn,
together with \eqref{eq:opdff},
 implies \eqref{eq:opda}. 
The proof is complete.
\end{proof}
\begin{thm}\label{tm:cpd}
  We keep the notation of Definition \ref{def:hcg}. Assume that
$\mathfrak{Z}(M)$ is generated by a finite set $L_1,\hdots,L_n$ of
complex vector fields. Let $X_0\in\mathfrak{X}(M)$ be a real vector
field. Let 
\begin{align}
  \label{eq:cpa}
  \mathbb{T}'_0(M)&=\mathcal{C}^{\infty}(M,\mathbb{R})\cdot{X}_0+
\mathfrak{T}_{\mathfrak{Z}}(M)\\
\label{eq:cpd}
\mathbb{T}'_h(M)&=[\mathfrak{A}_{\mathfrak{Z}}(M),\mathbb{T}'_{h-1}(M)],\quad
\text{for}\; h\geq{1},\\\label{eq:cpe}
\mathfrak{T}'(M)&=\sum_{h=0}^{\infty}\mathbb{T}'_{h}(M).
\end{align}
In particular,
$\mathfrak{T}'(M)$ is the $\mathfrak{A}_{\mathfrak{Z}}(M)$-Lie-submodule of
$\mathfrak{X}(M)$ 
generated
by $X_0$ and $\mathfrak{T}_{\mathfrak{Z}}(M)$. Then, for any choice of
$Y_0\in\mathbb{T}^{(0)}_{\mathfrak{Z}}(M)$ and $a\in\mathcal{C}^{\infty}(M)$, 
the second order partial differential operator
\begin{equation}
  \label{eq:cpb}
  P(u)=a\cdot{u}+X_0(u)+iY_0(u)+\sum_{j=1}^n\bar{L}_j{L}_j(u)
\end{equation}
is hypoelliptic at all points $p\in{M}$ where
\begin{equation}
  \label{eq:cpc}
  \{X(p)\mid X\in\mathfrak{T}'(M)\}=T_pM.
\end{equation}
\end{thm}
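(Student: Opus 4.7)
The plan is to mimic the argument of Corollary \ref{cor:sa}, replacing the quantity $\sum_j\|L_j(u)\|_0^2+\|u\|_0^2$ on the right-hand side of the subelliptic estimates by $|(Pu|u)_0|+\|u\|_0^2$. In analogy with Definitions \ref{def:Ac} and \ref{def:Aof}, I would introduce the ``$P$-enthralled'' distributions $\mathbb{S}^P(M)$ (of complex vector fields $Z$ satisfying $\|Z(u)\|_{\epsilon-1}^2\leq C(|(Pu|u)_0|+\|u\|_0^2)$ for some $\epsilon>0$, some $C>0$, and all $u\in\mathcal{C}^{\infty}_0(U)$, $U\Subset M$) and $\mathbb{E}^P(M)$ (the analogous set with $\epsilon=1$). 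The goal is to prove $\mathfrak{T}'(M)\subset\mathbb{S}^P(M)\cap\mathfrak{X}(M)$; under hypothesis \eqref{eq:cpc} this yields a maximal subelliptic estimate $\|u\|_\epsilon^2\leq C(|(Pu|u)_0|+\|u\|_0^2)$ for $u\in\mathcal{C}^{\infty}_0(U)$, from which hypoellipticity follows by \cite{KN65}, exactly as in the final part of the proof of Theorem \ref{tm:ell}.

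First, integration by parts in $(Pu|u)_0$ gives the basic energy estimate $\sum_j\|L_j(u)\|_0^2\leq C(|(Pu|u)_0|+\|u\|_0^2)$. The essential cancellations are $\mathrm{Re}(X_0u|u)_0=O(\|u\|_0^2)$ (because $X_0$ is real) and $|\mathrm{Im}(Y_0 u|u)_0|\leq\epsilon\sum_j\|L_j(u)\|_0^2+C_\epsilon\|u\|_0^2$. The latter rests on the hypothesis $Y_0=Z+\bar Z$ with $Z\in\mathfrak{Z}(M)$: one checks by integration by parts that $\mathrm{Im}(\bar Z u|u)_0=\mathrm{Im}(Z u|u)_0+O(\|u\|_0^2)$, and then bounds $|\mathrm{Im}(Zu|u)_0|\leq\|Zu\|_0\|u\|_0$ absorbably. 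Consequently $\mathfrak{Z}(M)\subset\mathbb{E}^P(M)$, $\overline{\mathfrak{Z}(M)}\subset\mathbb{S}^P(M)$ by Kohn's trick (Lemma \ref{lm:Ae}), and hence $\mathbb{S}_{\mathfrak{Z}}(M)\subset\mathbb{S}^P(M)$; by Proposition \ref{pp:bk} this already yields $\mathfrak{T}_{\mathfrak{Z}}(M)\subset\mathbb{S}^P(M)\cap\mathfrak{X}(M)$.

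Next, I would establish the new inclusion $X_0\in\mathbb{S}^P(M)\cap\mathfrak{X}(M)$. Writing $L_0=X_0+iY_0$ for the first-order part of $P$, so that $L_0u=Pu-au-\sum_j\bar{L}_j L_j u$, and using $X_0=L_0-iY_0$ together with $iY_0\in\mathbb{S}^P(M)$ from the previous step, it suffices to show $\|L_0(u)\|_{-1/2}^2\leq C(|(Pu|u)_0|+\|u\|_0^2)$. Pair $L_0 u$ with $v=\Lambda_{-1}L_0 u\in\Psi^0(U)$ and expand $(L_0 u|v)_0=(Pu|v)_0-(au|v)_0+\sum_j(L_j u|L_j v)_0+\mathrm{l.o.t.}$ by integration by parts in the diagonal $\sum\bar{L}_j L_j$; the $(L_j u|L_j v)_0$ terms and the lower-order remainders are controlled via the energy estimate of the previous step and classical pseudodifferential commutator estimates, while $(Pu|v)_0$ is handled by a Kohn--Nirenberg symmetrization that exploits the precise structure $Y_0\in\mathbb{T}^{(0)}_{\mathfrak{Z}}(M)$.

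Finally, the same commutator argument as in Proposition \ref{pp:bk} shows that $\mathbb{S}^P(M)\cap\mathfrak{X}(M)$ is an $\mathfrak{A}_{\mathfrak{Z}}(M)$-Lie-module: for $X\in\mathbb{A}_{\mathfrak{Z}}(M)\subset\mathbb{E}^P(M)$ and $Y\in\mathbb{S}^P(M)\cap\mathfrak{X}(M)$, the bracket $[X,Y]$ lies in $\mathbb{S}^P(M)\cap\mathfrak{X}(M)$, with exponent $\epsilon/2$. Since this module contains both $X_0$ and $\mathfrak{T}_{\mathfrak{Z}}(M)$, it contains the $\mathfrak{A}_{\mathfrak{Z}}(M)$-Lie-submodule they generate, namely $\mathfrak{T}'(M)$. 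Under \eqref{eq:cpc} finitely many $X_1,\hdots,X_N\in\mathfrak{T}'(M)$ have values spanning $T_p M$, and a standard G\aa{}rding-type argument produces the desired maximal estimate on a small neighborhood $U$ of $p$. The main obstacle is the inclusion $X_0\in\mathbb{S}^P(M)$: bounding $(Pu|\Lambda_{-1}L_0 u)_0$ by $|(Pu|u)_0|$ rather than $\|Pu\|_0$ requires a delicate symmetrization that makes essential use of the special algebraic hypothesis $Y_0\in\mathbb{T}^{(0)}_{\mathfrak{Z}}(M)$.
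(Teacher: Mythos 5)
Your overall scheme (energy estimate, a module of vector fields controlled by $P$, closure under brackets with $\mathbb{A}_{\mathfrak{Z}}(M)$, then a span condition) is the right one, and your treatment of $Y_0$ via $Y_0=Z+\bar Z$ and of $\mathrm{Re}(X_0u|u)_0$ matches the paper. But there is a genuine gap at the step you yourself flag as ``the main obstacle'', namely the inclusion $X_0\in\mathbb{S}^P(M)$ with your definition of $\mathbb{S}^P(M)$, i.e.\ with right-hand side $|(Pu|u)_0|+\|u\|_0^2$. To estimate $\|X_0(u)\|_{-1/2}^2$ one is forced to pair $X_0(u)$ against $A_0(u)$ with $A_0=\Lambda_{-1}\circ X_0\in\Psi^0(U)$, and after integrating by parts the term $(P(u)|A_0(u))_0$ survives with $A_0\neq\mathrm{id}$. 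There is no ``symmetrization'' in the paper that converts this back into $|(Pu|u)_0|$; the authors simply do not attempt it. Instead they enlarge the class of admissible right-hand sides to
\begin{equation*}
\sum_{h=1}^r\left|(P(u)\,|\,A_h(u))_0\right|+\|u\|_0^2,\qquad A_h\in\Psi^0(U),
\end{equation*}
prove that $\|u\|_\epsilon^2+\sum_j\|L_j(u)\|_0^2$ and each $\|X(u)\|_{\epsilon-1}^2$ for $X\in\mathfrak{T}'(M)$ are bounded by a quantity of this form (the commutator step \eqref{eq:cph}$\Rightarrow$\eqref{eq:cpi} preserves this class), and then run a separate bootstrap: conjugating by an elliptic $\Lambda_s\in\Psi^s(U)$ turns such an estimate into $\|u\|_{s+\epsilon}^2+\sum_j\|L_j(u)\|_s^2\leq C(\|P(u)\|_s^2+\|u\|_0^2)$ for every $s\geq 0$, from which hypoellipticity follows. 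This class is stable under the $\Lambda_s$-conjugation precisely because general $A_h\in\Psi^0$ are allowed; the single pairing $(Pu|u)_0$ is not stable under either the commutator step's base case or the conjugation.

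A secondary point: your appeal to \cite{KN65} at the end presupposes the maximal estimate $\|u\|_\epsilon^2\leq C(|(Pu|u)_0|+\|u\|_0^2)$, which is exactly what the missing step would have delivered. Since the available a priori estimate is only of the weaker form above, the paper replaces the citation by its own two lemmas (the $\Lambda_s$-bootstrap and Lemma \ref{lm:cpf}). If you reformulate your $\mathbb{S}^P(M)$ with the $\sum_h|(Pu|A_hu)_0|$ right-hand side from the start, the rest of your argument (energy estimate, $\mathfrak{T}_{\mathfrak{Z}}(M)\subset\mathbb{S}^P(M)$, Lie-module property, span condition) goes through essentially as in the paper.
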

We divide the proof of Theorem \ref{tm:cpd} into several steps. First we prove
\begin{lem}
  Let $U\Subset{M}$ be an open set
and assume that there are
$\epsilon>0$, $C>0$  and $A_1,\hdots,A_r\in\Psi^0(U)$ such that
\begin{equation}\left\{\begin{aligned}
  \label{eq:cpdx}
  \|u\|_{\epsilon}^2+\sum_{j=1}^n\|L_j(u)\|^2_0\leq
C\left(\sum_{h=1}^r\left|(P(u)|A_h(u))_0\right|+\|u\|_0^2\right),\\
\quad
\forall u\in\mathcal{C}^{\infty}_0(U).
\end{aligned}\right.
\end{equation}
Then for every real $s\geq{0}$, 
and every open subset $U'$ with $U'\Subset{U}$,
there is a constant $C'=C(s,U')$ such that
\begin{equation}
  \label{eq:pcee}
  \|u\|_{s+\epsilon}^2+\sum_{j=1}^n\|L_j(u)\|_s^2\leq
C'\left(\|P(u)\|_s^2+\|u\|_0^2\right),\quad\forall u\in\mathcal{C}^{\infty}(U').
\end{equation}
\end{lem}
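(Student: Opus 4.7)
The strategy is a pseudodifferential bootstrap, upgrading \eqref{eq:cpdx} to $s$-Sobolev regularity by induction on $s$ in steps of size $\epsilon$. The base case $s=0$ follows from \eqref{eq:cpdx} via $|(P(u)\mid A_h(u))_0|\leq\tfrac{1}{2}\|P(u)\|_0^2+\tfrac{C^2}{2}\|u\|_0^2$, using that each $A_h\in\Psi^0$ is $L^2$-bounded. For the inductive step I would fix intermediate $U'\Subset U''\Subset U$, a cutoff $\chi\in\mathcal{C}^{\infty}_0(U'')$ with $\chi\equiv 1$ on a neighborhood of $\overline{U'}$, and a properly supported $\Lambda_s\in\Psi^s(U)$ that is elliptic of order $s$ on $U''$; then apply \eqref{eq:cpdx} to the test function $\Lambda_s(\chi u)\in\mathcal{C}^{\infty}_0(U)$.

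Analyzing the left-hand side, ellipticity of $\Lambda_s$ and the identity $[L_j,\Lambda_s]\in\Psi^s$ give
\[
\|\Lambda_s(\chi u)\|_\epsilon^2+\sum_{j=1}^n\|L_j\Lambda_s(\chi u)\|_0^2\;\geq\; c\Bigl(\|u\|_{s+\epsilon}^2+\sum_{j=1}^n\|L_j(u)\|_s^2\Bigr)-C\|u\|_s^2
\]
on $U'$. For the right-hand side I would split $P\Lambda_s(\chi u)=\Lambda_s P(\chi u)+[P,\Lambda_s](\chi u)$. Pairing the first piece with $A_h\Lambda_s(\chi u)$ and dualizing with weights $\pm\tfrac{\epsilon}{2}$ gives, via Young's inequality, a bound $\delta\|\Lambda_s(\chi u)\|_\epsilon^2+C_\delta\bigl(\|P(u)\|_s^2+\|u\|_s^2\bigr)$, and the $\delta$-term is absorbed on the LHS.

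The delicate contribution is the commutator $[P,\Lambda_s]$, which naively has order $s+1$. The saving observation is that the second-order part of $P$ factors as $\sum_j\bar L_j L_j$, whence
\[
[\bar L_j L_j,\Lambda_s]=\bar L_j[L_j,\Lambda_s]+[\bar L_j,\Lambda_s]L_j,
\]
where each factor $[L_j,\Lambda_s]$, $[\bar L_j,\Lambda_s]$ has order only $s$. An integration by parts transfers the outer $\bar L_j$ onto the $A_h\Lambda_s(\chi u)$ side (producing harmless zero-order errors), and commuting the resulting $L_j$ past $A_h$ and $\Lambda_s$ recasts $\bigl([P,\Lambda_s](\chi u)\mid A_h\Lambda_s(\chi u)\bigr)_0$ as a sum of bilinear forms bounded by $C\|\chi u\|_s\bigl(\|L_j(\chi u)\|_s+\|\chi u\|_s\bigr)$. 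A further Young inequality absorbs a small multiple of $\sum_j\|L_j(u)\|_s^2$ into the LHS, leaving only error terms of the shape $\|u\|_s^2$.

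The residual $\|u\|_s^2$ contributions are handled by the standard convexity inequality $\|u\|_s^2\leq\eta\|u\|_{s+\epsilon}^2+C_\eta\|u\|_{s-\epsilon}^2$: for $\eta$ small the first term is absorbed on the LHS, and the second is estimated by $\|P(u)\|_{s-\epsilon}^2+\|u\|_0^2\leq\|P(u)\|_s^2+\|u\|_0^2$ by the inductive hypothesis applied on a slightly larger intermediate domain. Iterating $s\mapsto s+\epsilon$ yields \eqref{eq:pcee} for every real $s\geq 0$. The principal obstacle throughout is the bookkeeping of the $(s+1)$-order commutator $[P,\Lambda_s]$, but the factorization $P\equiv\sum\bar L_jL_j$ modulo lower order ensures that every resulting error term reduces to quantities already controlled by the base estimate \eqref{eq:cpdx}.
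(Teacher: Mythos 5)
Your proposal is correct and follows essentially the same route as the paper: conjugation by an elliptic $\Lambda_s\in\Psi^s$, exploiting the factorization of the second--order part of $P$ as $\sum_j\bar{L}_jL_j$ so that all commutators with $\Lambda_s$ cost only order $s$ (after an integration by parts moving $\bar{L}_j$ across), followed by absorption via Young's inequality and Sobolev interpolation of $\|u\|_s^2$ down to $\|u\|_0^2$. The only cosmetic differences are that you organize the argument as an induction in steps of $\epsilon$ with explicit cutoffs, whereas the paper proves the estimate for each $s$ in one pass (using properly supported operators and interpolating directly to the $L^2$ norm), passing through an intermediate estimate with operators of order $2s$ paired against $P(u)$.
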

\begin{proof}
  Let $\Lambda_s\in\Psi^s(U)$ be elliptic. Then we have,
with real constants ${C_1>0}$, $C_2\geq{0}$, uniformly for
$u\in\mathcal{C}^{\infty}_0(U')$,
  \begin{equation*}
    \begin{aligned}
      \|u\|_{s+\epsilon}^2+\sum_{j=1}^n\|L_j(u)\|_s^2&\leq C_1\left(
\|\Lambda_s(u)\|_{\epsilon}^2+\sum_{j=1}^n\|\Lambda_s(L_j(u))\|_0^2\right)\\
&\leq C_1\left(\|\Lambda_s(u)\|_{\epsilon}^2
+\sum_{j=1}^n\|L_j(\Lambda_s(u))\|_0^2\right)+C_2\|u\|^2_s
    \end{aligned}
  \end{equation*}
Thus, using the inequality
\begin{equation*}\left\{\begin{gathered}
\forall \delta>0,\quad\exists C_\delta>0\quad\text{s.t}\\
  \|u\|_s^2\leq\delta\|u\|^2_{s+\epsilon}+C_{\delta}\|u\|_0^2,\quad
\forall u\in\mathcal{C}^{\infty}_0(U),
\end{gathered}\right.
\end{equation*}
we obtain that
the left hand side of \eqref{eq:pcee} is bounded by a constant times
\begin{equation*}
  \sum_{h=1}^r\left|(P(\Lambda_s(u))|A_h(\Lambda_s(u)))_0\right|+
\|u\|_0^2.
\end{equation*}
The operator $Q=P+\sum_{j=1}^n{L}_j^*L_j$ is an operator of the first
order, with principal part $X_0+iY_0$. Therefore if $A\in\Psi^0(U)$,
we obtain
\begin{equation*}
  \begin{aligned}
    (P(\Lambda_s(u))|A(\Lambda_s(u))_0&=
(\Lambda_s(Q(u))|A(\Lambda_s(u)))_0+{O}(\|u\|_s^2)\\
&\qquad -\sum_{j=1}^n(L_j(\Lambda_s(u))|L_j(\Lambda_s(A(u))))_0\\
&=(\Lambda_s(Q(u))|A(\Lambda_s(u)))_0+{O}(\|u\|_s^2)\\
&\quad + \sum_{j=1}^n(\bar{L}_j\Lambda_sL_j(u)|A(\Lambda_s(u)))_0+
{O}(\|L_j(u)\|_s\|u\|_s)\\
&\quad + \sum_{j=1}^n([\Lambda_s,L_j](u)|[L_j,A\circ\Lambda_s](u))_0\\
&\quad
+\sum_{j=1}^n([\Lambda_s,L_j](u)|A\circ\Lambda_s(L_j(u)))_0\\
&=(\Lambda_s(P(u))|A\circ\Lambda_s(u))_0+{O}(\|u\|_s^2+
\sum_{j=1}^n\|L_j(u)\|_s^2),\\
&\qquad\qquad\forall u\in\mathcal{C}^{\infty}(U'),
  \end{aligned}
\end{equation*}
where we use ${O}(N(u))$ to indicate some quantity whose
modulus is bounded by a constant times $N(u)$. This computation
yields
\begin{equation}
  \label{eq:pcef}
 \left\{
   \begin{gathered}
     \forall U'\Subset{U},\;\forall s\in\mathbb{R}_+,\; \exists
A^{(2s)}_1,\hdots,A^{(2s)}_r\in\Psi^{(2s)}(U),\;\exists C_s'>0\;\text{s.t}\\
\begin{aligned}
\|u\|_{s+\epsilon}^2+\sum_{j=1}^{n}\|L_j(u)\|_s^2\leq C_s'
\left(\sum_{h=1}^r\left|(P(u)|A^{(2s)}_h(u))_0\right|+\|u\|_0^2\right),\\\quad
\forall u\in\mathcal{C}^{\infty}_0(U').
\end{aligned}
\end{gathered}\right.
\end{equation}
Clearly \eqref{eq:pcef} implies \eqref{eq:pcee}.
\end{proof}
It is known (see e.g. \cite{Hor67, Ko05}) that
\begin{lem}\label{lm:cpf}
If \eqref{eq:pcee} is valid for all $s\in\mathbb{R}_+$ and all
open subset $U'\Subset{U}$, then $P$ is $\mathcal{C}^{\infty}$-hypoelliptic
in $U$.
\end{lem}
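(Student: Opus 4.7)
The plan is to run the standard Sobolev bootstrap argument that derives $\mathcal{C}^\infty$-hypoellipticity from an a priori estimate of subelliptic type. Fix a distribution $u$ on $U$ with $Pu=f\in\mathcal{C}^{\infty}(U)$; by the Sobolev embedding theorem, it suffices to show that $\chi u\in W^{s}$ for every $\chi\in\mathcal{C}^{\infty}_0(U)$ and every real $s>0$.

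First I would choose, for each such $\chi$, a slightly larger cut-off $\chi'\in\mathcal{C}^{\infty}_0(U)$ with $\chi'\equiv 1$ on a neighborhood of $\mathrm{supp}(\chi)$, and an open $U'\Subset U$ containing $\mathrm{supp}(\chi')$. Since any distribution with compact support lies in some Sobolev space of possibly negative order, there exists $s_0\in\mathbb{R}$ with $\chi' u\in W^{s_0}$. The inductive step is then: assuming $\chi' u\in W^{s_0}$, I show $\chi u\in W^{s_0+\epsilon}$; iterating this step then gives membership in every Sobolev space.

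Inside the induction, pick $q$ with $s_0-\epsilon<q\le s_0$ and a properly supported elliptic $\Lambda_q\in\Psi^q(U)$. The idea is to apply the a priori estimate \eqref{eq:pcee} with $s=q$ to the function $\Lambda_q(\chi u)$. Computing commutators one finds that $P(\Lambda_q(\chi u)) = \Lambda_q(\chi P u) + \text{lower order in } u$, where the remainders involve $[\Lambda_q,P](\chi u)$ and localization errors from $[P,\chi]u$; both are controlled by $\chi' u\in W^{s_0}$, so $P(\Lambda_q(\chi u))\in W^s$ with a uniform bound. Since \eqref{eq:pcee} is stated only for smooth functions, to make this rigorous I would interpose a Friedrichs mollifier $J_\delta$, apply \eqref{eq:pcee} to $J_\delta\Lambda_q(\chi u)\in\mathcal{C}^\infty_0$, and let $\delta\to 0^+$. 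Friedrichs' lemma gives uniform $L^2$ control of the commutators $[J_\delta,L_j]$ and $[J_\delta,P]$ acting on functions of Sobolev class $s_0$, so the right-hand side of \eqref{eq:pcee} stays uniformly bounded. Passing to the limit yields $\Lambda_q(\chi u)\in W^{\epsilon}$, hence $\chi u\in W^{q+\epsilon}$ with $q+\epsilon>s_0$, closing the induction.

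The main obstacle will be the rigorous handling of the regularization: the estimate \eqref{eq:pcee} applies only to smooth functions on $U'$, but the inductive hypothesis provides only finite Sobolev regularity on $\chi u$. The Friedrichs mollifier argument must control $[J_\delta,L_j]$ and $[J_\delta,P]$ uniformly in $\delta$, and one must verify that each commutator remains bounded in the appropriate Sobolev norms so that the limit $\delta\to 0^+$ can be taken inside both sides of \eqref{eq:pcee}. This is standard but delicate because $P$ is of second order and $\Lambda_q$ has positive order when $q>0$; once the uniform estimates on the commutators are in place, the rest of the argument is a routine iteration, and the conclusion $u\in\mathcal{C}^\infty(U)$ follows from Sobolev embedding.
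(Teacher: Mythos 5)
The paper does not actually prove this lemma: it is stated as a known fact with a pointer to \cite{Hor67, Ko05}, and the localize--mollify--bootstrap scheme you describe is indeed the argument contained in those references. So your overall strategy is the right one. There are, however, two points where the sketch, taken literally, does not close, and the first is the whole technical content of the lemma.

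The claim that $P(\Lambda_q(\chi u))$ is controlled ``with a uniform bound'' because $[\Lambda_q,P](\chi u)$ and $[P,\chi]u$ are ``controlled by $\chi'u\in W^{s_0}$'' fails by order count. Since $P$ has order $2$, $[\Lambda_q,P]\in\Psi^{q+1}(U)$, so $[\Lambda_q,P](\chi u)$ a priori only lies in $W^{s_0-q-1}$; with $q>s_0-\epsilon$ and $\epsilon\leq 1$ this is strictly below $L^2$, and even the localization error $\Lambda_q([P,\chi]u)\in W^{s_0-1-q}$ already misses $L^2$ by almost a full derivative. The same loss recurs in the mollification step: $[J_\delta,\bar L_jL_j]=\bar L_j[J_\delta,L_j]+[J_\delta,\bar L_j]L_j$ is not uniformly bounded on $W^{s_0}$ unless one already controls $L_j u$ there. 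The repair is precisely the reason \eqref{eq:pcee} retains the terms $\sum_j\|L_j(u)\|_s^2$ on its \emph{left}-hand side: one writes $[\Lambda_q,P]=\sum_j(\bar L_jA_j+B_jL_j)+C$ with $A_j,B_j,C\in\Psi^{q}(U)$, absorbs the $B_jL_j$ contributions using the $\|L_j(\cdot)\|$ control carried along in the induction, and disposes of the $\bar L_j$ factors by passing them to the other side of an inner product --- i.e.\ one must run the iteration on the inner-product form \eqref{eq:pcef} (or \eqref{eq:cpdx}) rather than on $\|P(\cdot)\|_s$ alone; this commutator bookkeeping is exactly what the lemma immediately preceding this one performs, and it must be repeated for $J_\delta$. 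Two smaller slips: you invoke \eqref{eq:pcee} ``with $s=q$'' but then draw the conclusion $\Lambda_q(\chi u)\in W^{\epsilon}$, which is the $s=0$ case; and since \eqref{eq:pcee} is only available for $s\geq 0$ with $\|u\|_0$ on the right, starting the induction from a distribution of negative Sobolev order $s_0<0$ needs a separate step (extension of the estimate to negative $s$, or duality) before the bootstrap you describe can begin.
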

\begin{proof}[End of the proof of Theorem \ref{tm:cpd}]
By the previous Lemmas,
we only need to prove
\eqref{eq:cpdx}. First we note that, for all $u\in\mathcal{C}^{\infty}_0(U)$,
\begin{equation*}
  \begin{aligned}
    \left|\mathrm{Re}\,(X_0(u)|u)_0\right|&=\frac{1}{2}\left|
\int_UX_0(u\,\bar{u})d\lambda_g\right|\\
&=\frac{1}{2}
\left|\int_U{|u|^2X^*_0(1)d\lambda_g}\right|\leq
\frac{1}{2}\left(\sup_{p\in{U}}|X^*_0(1)|
\right)\,\|u\|_0^2
  \end{aligned}
\end{equation*}
Since, for some positive constant $C_0$ depending on $U$,
\begin{equation*}
\left|
(iY_0(u)+a\,u|u)_0\right|\leq C_0\|u\|_0\left(\|u\|_0+\sum_{j=1}^n\|L_j(u)\|_0
\right),\quad\forall u\in\mathcal{C}^{\infty}_0(U), 
\end{equation*}
we obtain, upon integrating by parts, that with a constant $C_1>0$,
\begin{equation}
  \label{eq:cpg}
  \sum_{j=1}^n\|L_j(u)\|_0^2\leq -2(P(u)|u)_0+C_1\|u\|_0^2,\quad
\forall u\in\mathcal{C}^{\infty}_0(U).
\end{equation}
Next we note that, taking for
$A_0\in\Psi^0(U)$ the composition with $X_0$ of an elliptic
pseudodifferential operator $\Lambda_{-1}\in\Psi^{-1}(U)$, we have,
with some constant $C_2>0$,
\begin{equation*}
  \begin{aligned}
    \|X_0(u)\|_{-\frac{1}{2}}^2&\leq C_2\left(|(X_0(u)|A_0(u))_0|+\|u\|_0^2
\right),\quad\forall u\in\mathcal{C}^{\infty}_0(U).
  \end{aligned}
\end{equation*}
We obtain, with constants $C_3,C_4>0$,
\begin{equation*}
  \begin{aligned}
    \left|(X_0(u)|A_0(u))\right|&
\leq\left|(P(u)|A_0(u))_0\right|+
\left|\sum_{j=1}^n(L_j(u)|L_j(A_0(u)))_0\right|\\&\qquad\qquad +C_3
\|u\|\left(\|u\|+\sum_{j=1}^n\|L_j(u)\|_0\right)\\
&\leq \left|(P(u)|A_0(u))_0\right|+
\sum_{j=1}^n\|L_j(u)\|^2_0
+C_4\|u\|_0^2,\\
&\qquad\qquad\qquad\forall{u}\in\mathcal{C}^{\infty}_0(U).
  \end{aligned}
\end{equation*}
To complete the proof, it suffices to show that, if $Y\in\mathfrak{X}(M)$
satisfies, for some $\delta>0$, for some $A_1,\hdots,A_r\in\Psi^0(U)$, 
and a constant $C'>0$, the estimate
\begin{equation}
  \label{eq:cph}
  \|Y(u)\|^2_{\delta-1}\leq{C'}\left(\sum_{h=1}^r\left|
(P(u)|A_h(u))_0\right|+\|u\|_0^2\right),\quad
\forall u\in\mathcal{C}^{\infty}_0(U),
\end{equation}
and $X\in\mathbb{A}_{\mathfrak{Z}}(M)$, then we have, with 
some $A'_1,\hdots,A'_{r'}\in\Psi^0(U)$, and constants 
$\delta'>0$, $C''>0$,
\begin{equation}
  \label{eq:cpi}\left\{\begin{aligned}
  \|[X,Y](u)\|^2_{\delta'-1}\leq{C''}\left(\sum_{h=1}^{r'}\left|
(P(u)|A_h'(u))_0\right|+\|u\|_0^2\right),\\
\quad\forall u\in\mathcal{C}^{\infty}_0(U).
\end{aligned}\right.
\end{equation}
Recall that the estimate \eqref{eq:Aac} holds for $Z=X$, and hence
\begin{equation}
  \label{eq:cpj}
  \|X(u)\|_0^2\leq{c}_X\left(\left|(P(u)|u)_0\right|+\|u\|_0^2\right)\,\quad
\forall u\in\mathcal{C}^{\infty}_0(U),
\end{equation}
with a constant $c_X>0$.
Then, with positive constants $c_i>0$ and a ${T\in\Psi^{\delta-1}(U)}$,
we get
\begin{equation*}
  \begin{aligned}
    \|[X,Y](u)\|_{\frac{\delta}{2}-1}^2&\leq
c_0\left(\left|([X,Y](u)|T(u))_0\right|
  +\|u\|_0^2\right), \quad \forall u\in\mathcal{C}^{\infty}_0(U).
  \end{aligned}
\end{equation*}
Assuming, as we can, that $\delta\leq\frac{1}{2}$, we obtain
\begin{equation*}
  \begin{aligned}
    \left|(XY(u)|T(u))_0\right|&\leq
\left|(T^*(Y(u))|X(u))_0\right|+
\left|([T,X]^*(Y(u))|u)_0\right|+c_1\|u\|_0^2\\
&\begin{aligned}
\leq c_2\|Y(u)\|_{\delta-1}\left(\|X(u)\|_0+\|u\|_0\right)+c_1\|u\|_0^2,\quad
\forall u\in\mathcal{C}^{\infty}(U).
\end{aligned}
\end{aligned}
\end{equation*}
The last term, in view of \eqref{eq:cpj},
can be estimated by the right hand side of \eqref{eq:cpi}.
Likewise
\begin{equation*}
  \begin{aligned}
    \left|(YX(u)|T(u))_0\right|&\leq
\left|(X(u)|T(Y(u)))_0\right|
+
\left|(X(u)|[T,Y](u))_0\right|
+ c_3\|u\|_0^2\\
&
\begin{aligned}
\leq c_4\|X(u)\|_0\left(\|Y(u)\|_{\delta-1}+\|u\|_0\right)+c_3\|u\|_0^2,\quad
\\
\forall u\in\mathcal{C}^{\infty}_0(U),
\end{aligned}
  \end{aligned}
\end{equation*}
and in view of \eqref{eq:cpj}, also this last term can be estimated
by the right hand side of \eqref{eq:cpi}. The proof is complete.
\end{proof}
\begin{rmk}
Theorem \ref{tm:cpd} is weaker than the analogous statement 
in \cite{Hor67} in the
case where  the $L_j$'s are real. Indeed, for $L_j\in\mathfrak{X}(M)$,
setting $X_0=L_0$,
our assumption requires that all commutators
$[L_{j_1},\hdots,L_{j_{r-1}},L_{j_r}]$ with $r\in\mathbb{Z}_+$, and 
$0\leq j_h\leq n$, and $j_h>0$ for $h<r$, span the tangent space $T_pM$.
The statement in \cite{Hor67}, also proved in \cite{K71} and \cite{ff81},
allows $j_h=0$ also for $1\leq{h}<r$. 
This motivates us to consider separately the special case where
$\mathbb{E}_{\mathfrak{Z}}(M)=\mathfrak{Z}(M)+\overline{\mathfrak{Z}(M)}$:
Theorem \ref{tm:bh} below generalizes the case where all the $L_j$'s are
real.
\end{rmk}
\begin{thm}\label{tm:bh} We keep the notation of Definition \ref{def:hcg}.
Assume that 
\begin{gather}
  \label{eq:essc}
L_1,\hdots,L_n\in\mathfrak{X}^{\mathbb{C}}(M)\;\text{generate}\;
\mathfrak{Z}(M),\\
 \label{eq:essa}
\mathbb{E}_{\mathfrak{Z}}(M)\supset\overline{\mathfrak{Z}(M)},\\
\label{eq:essd}
L_0\;\text{is a real vector field},\\
\label{eq:esse}
L_{n+1}\in\mathfrak{Z}(M)+\overline{\mathfrak{Z}(M)},\\
\label{eq:essi}
a\in\mathcal{C}^{\infty}(M).
\end{gather}
Let us define:
\begin{gather}
  \label{eq:essf}
\mathbb{A}'(M)=\mathbb{A}'_{(0)}(M)=
\mathbb{A}_{\mathfrak{Z}}(M)+\mathcal{C}^{\infty}(M,\mathbb{R})\,L_0\\
\label{eq:essg}
\mathbb{A}'_{(h)}(M)=[\mathbb{A}'(M),\mathbb{A}'_{(h-1)}(M)]\quad
\text{for}\;h\geq{1},\\
\label{eq:essh}
\mathfrak{T}''(M)=\sum_{h=0}^{\infty}\mathbb{A}'_{(h)}
\end{gather}
Then the second order differential operator
\begin{equation}
  \label{eq:essb}
  P(u)=L_0(u)+\sum_{j=1}^n\bar{L}_jL_j(u)+L_{n+1}(u)+a\,u
\end{equation}
is hypoelliptic at all points $p\in{M}$ where
\begin{equation}
  \label{eq:essj}
  \{X(p)\mid X\in\mathfrak{T}''(M)\}=T_pM.
\end{equation}
\end{thm}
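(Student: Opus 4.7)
The plan is to follow the blueprint already used for Theorem~\ref{tm:cpd}: reduce $C^\infty$-hypoellipticity of $P$ at $p$ to the validity, on a small neighborhood $U$ of $p$, of a subelliptic estimate of the form \eqref{eq:cpdx} for some $\epsilon>0$ and some $A_1,\dots,A_r\in\Psi^0(U)$, and then invoke the a~priori-estimate/bootstrap argument contained in the two lemmas preceding the end of the proof of Theorem~\ref{tm:cpd}. Those lemmas use nothing about $P$ beyond its symbolic shape (first-order remainder plus $\sum\bar L_jL_j$) and the form of the right-hand side of \eqref{eq:cpdx}, so they apply in the present situation as well. Hence everything reduces to proving the subelliptic estimate with this more symmetric list of commutators.

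The starting estimates are obtained by integration by parts, exploiting the two distinguishing hypotheses \eqref{eq:essd} (reality of $L_0$) and \eqref{eq:essa} ($\overline{\mathfrak Z(M)}\subset\mathbb E_{\mathfrak Z}(M)$). Writing $\mathrm{Re}\,(L_0u\mid u)_0=-\tfrac12\int L_0^*(1)|u|^2$ and estimating the $L_{n+1}$- and $au$-terms by Cauchy--Schwarz against $\sum\|L_ju\|_0$, one gets from $(Pu\mid u)_0$ the bound $\sum_j\|L_ju\|_0^2\le C(|(Pu,u)_0|+\|u\|_0^2)$. By \eqref{eq:essa}, each $\bar L_j$ is enthralled in the $L^2$-sense by $\mathfrak Z(M)$, so $\sum_j\|\bar L_ju\|_0^2$ is controlled by the same right-hand side. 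Finally, using $A_0=\Lambda_{-1}\!\circ L_0$ with $\Lambda_{-1}\in\Psi^{-1}(U)$ elliptic, and integrating by parts the resulting $\sum(L_ju\mid L_jA_0u)_0$ (which is legitimate because now both $L_j$ and $\bar L_j$ are under control), one obtains the key ``loss-$\tfrac12$'' estimate
\begin{equation*}
\|L_0u\|_{-1/2}^2\;\le\;C\bigl(|(Pu,A_0u)_0|+|(Pu,u)_0|+\|u\|_0^2\bigr),\qquad u\in\mathcal C^\infty_0(U).
\end{equation*}

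The heart of the argument is the commutator bootstrap: if a real vector field $Y$ satisfies an estimate of type \eqref{eq:cph} with loss $1-\delta$, then for every $X\in\mathbb A'(M)=\mathbb A_{\mathfrak Z}(M)+\mathcal C^\infty(M,\mathbb R)\,L_0$ the bracket $[X,Y]$ satisfies a similar estimate with a (possibly smaller) loss $1-\delta'>0$. When $X\in\mathbb A_{\mathfrak Z}(M)$, the argument of Theorem~\ref{tm:cpd} applies \emph{verbatim}, because such $X$ satisfies the $L^2$-enthrallment $\|Xu\|_0^2\le c_X(|(Pu,u)_0|+\|u\|_0^2)$. The new case $X=L_0$ is where the assumption \eqref{eq:essa} pays off: choosing $T\in\Psi^{\delta-1}(U)$ and writing
\begin{equation*}
([L_0,Y]u\mid Tu)_0=(L_0Yu\mid Tu)_0-(YL_0u\mid Tu)_0,
\end{equation*}
we integrate by parts the first pairing using reality of $L_0$, which only produces the commutator $[L_0,T]\in\Psi^{\delta-1}$ and a zeroth-order term, and we rewrite the second as $(L_0u\mid Y^*Tu)_0$. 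Both can then be bounded by products $\|Yu\|_{\delta-1}\cdot\|L_0u\|_{-1/2}$ (up to harmless lower-order terms), and a standard small/large-constant splitting combined with the loss-$\tfrac12$ estimate for $L_0$ and the inductive estimate for $Y$ closes the argument with $\delta'=\delta/2$.

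Iterating this commutator lemma starting from the fields already under control (the $L_j$, $\bar L_j$, and $L_0$) produces an estimate of type \eqref{eq:cph} for every element of the Lie module $\mathfrak T''(M)$ defined in \eqref{eq:essh}. Combined with the hypothesis \eqref{eq:essj}, a partition-of-unity and G{\aa}rding-type argument upgrades this to the full estimate \eqref{eq:cpdx} for a suitable $\epsilon=\epsilon(p,U)>0$, and the theorem follows from the two general lemmas proved just before the end of the proof of Theorem~\ref{tm:cpd}. The principal obstacle is the $X=L_0$ case of the commutator step: unlike in Theorem~\ref{tm:cpd}, where the ``time'' field played no role inside the bracketing, here $L_0$ must be legitimately inserted into arbitrarily deep brackets, and it is precisely the combination of reality of $L_0$ with the bound $\overline{\mathfrak Z(M)}\subset\mathbb E_{\mathfrak Z}(M)$ (which makes the integration-by-parts errors involving $\bar L_j$ absorbable) that makes this step work.
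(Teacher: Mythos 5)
Your overall architecture coincides with the paper's: establish $\sum_j(\|L_ju\|_0^2+\|\bar L_ju\|_0^2)\lesssim |(Pu|u)_0|+\|u\|_0^2$ using \eqref{eq:essa}, get a loss\nobreakdash-$\tfrac12$ estimate for $L_0$, bootstrap through commutators with $\mathbb{A}_{\mathfrak{Z}}(M)$ exactly as in Theorem \ref{tm:cpd}, and isolate the genuinely new step, namely that $[L_0,Y]$ inherits an estimate from $Y$. However, your treatment of that new step does not close, and it is precisely the hard part. If $T\in\Psi^{\delta-1}(U)$ and you integrate $(L_0Yu\mid Tu)_0$ by parts using the reality of $L_0$, you arrive at $-(Yu\mid TL_0u)_0$ plus harmless terms; but any splitting of this pairing gives either $\|Yu\|_{\delta-1}\cdot\|L_0u\|_{0}$ or $\|Yu\|_{\delta-1/2}\cdot\|L_0u\|_{-1/2}$ (more generally $\|Yu\|_{\delta-1+s}\,\|L_0u\|_{-s}$), and in each case one factor is \emph{not} controlled: the inductive hypothesis gives only $\|Yu\|_{\delta-1}$ and the a priori information on $L_0$ gives only $\|L_0u\|_{-1/2}$. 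There is an irreducible half-derivative deficit, so the product $\|Yu\|_{\delta-1}\cdot\|L_0u\|_{-1/2}$ you claim is not attainable by this route. The same problem afflicts $(YL_0u\mid Tu)_0=(L_0u\mid Y^*Tu)_0$, where $Y^*T$ has order $\delta$ and the pairing forces $\|u\|_{\delta+1/2}$ or $\|L_0u\|_0$.

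The paper (following Kohn) resolves this differently: it never pairs $L_0u$ directly, but substitutes $L_0=-P^*+\sum_jL_j^*L_j+L'_{n+1}+a'$ inside $(L_0Yu\mid T_{2\delta-1}u)_0$. The term $(P^*Yu\mid T_{2\delta-1}u)_0$ is moved onto $\|Pu\|_0$ (which is why the inductive estimate \eqref{eq:esb} must carry $\|Pu\|_0^2$ on the right, not just $|(Pu\mid A_hu)_0|$), and the commutator $[P,T_{2\delta-1}]$ is expanded as $\sum T'L_j^*+T''L_j+T'''$ with $T',T'',T'''\in\Psi^{2\delta-1}$ — this is where \eqref{eq:essa} is genuinely needed, since both $\|L_ju\|_0$ and $\|\bar L_ju\|_0$ appear. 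The remaining second-order terms $(L_j^*L_jYu\mid T_{2\delta-1}u)_0$ require a further layer: one applies the basic energy inequality to the function $T^*_{2\delta-1}Yu$ in place of $u$ and commutes $P$ past $T_{2\delta}=T^*_{2\delta-1}\circ Y$, at the cost of shrinking $\delta$ to $\delta\le\epsilon/4$. None of this machinery is present in your sketch, so as written the proposal has a genuine gap at its central step; the surrounding reductions are correct.
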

\begin{proof} We shall prove that, for every $X\in\mathfrak{T}''(M)$,
  \begin{equation}
    \label{eq:esb}\left\{
      \begin{gathered}
        \forall U^{\mathrm{open}}\Subset{M},\;
\exists\epsilon>0,\;\exists{C}>0,\;\text{s.t.}\\
\|X(u)\|_{\epsilon-1}^2\leq C\left(\|(P(u)\|^2_0+
\|u\|_0^2\right),\quad\forall u\in\mathcal{C}^{\infty}_0(U).
      \end{gathered}
    \right.
  \end{equation}
We observe that the proof of Theorem \ref{tm:cpd} shows that, if
$Y\in\mathfrak{X}(M)$ satisfies
\eqref{eq:esb}, and $X\in\mathbb{A}_{\mathfrak{Z}}(M)$, then $[X,Y]$
also satisfies \eqref{eq:esb} (with $\epsilon/2$ substituting $\epsilon$). 
Thus to show
that all $X\in\mathfrak{T}''(M)$ satisfy \eqref{eq:esb}, it suffices
to prove the following
\begin{lem}\label{lm:bi}
If $Y\in\mathfrak{X}(M)$ satisfies \eqref{eq:esb}, then
$[L_0,Y]$ also satisfies \eqref{eq:esb}.  
\end{lem}
\begin{proof} We closely follow the argument in \cite[p.66-68]{K71}.
Condition \eqref{eq:essa} means that, for every $U\Subset{M}$
there is a constant $C_0>0$ such that
\begin{equation}
  \label{eq:esa}
  \sum_{j=1}^n\|\bar{L}_j(u)\|^2_0\leq{C}_0\left(\sum_{j=1}^n\|L_j(u)\|^2_0+
\|u\|_0^2\right),\quad\forall u\in\mathcal{C}^{\infty}_0(U).
\end{equation}
Thus, by \eqref{eq:cpg}, we obtain, with a constant $C_1$ that only
depends on $U\Subset{M}$,
\begin{equation}
  \label{eq:esc}\left\{\begin{aligned}
  \sum_{j=1}^n\left(\|L_j(u)\|^2_0+\|\bar{L}_j(u)\|^2_0\right)
\leq C_1\left(|(P(u)|u)_0|+\|u\|^2_0\right),\qquad\\
\quad\forall u\in\mathcal{C}^{\infty}_0(U).
\end{aligned}\right.
\end{equation}
Let $Y\in\mathfrak{X}(M)$ satisfy \eqref{eq:esb}. We have,
with $T_{2\delta-1}\in\Psi^{2\delta-1}(U)$, and for all
$u\in\mathcal{C}^{\infty}_0(U)$,
\begin{equation*}
  \begin{aligned}
    \|[L_0,Y](u)\|_{\delta-1}^2&\leq C_2\left(
|([L_0,Y](u)|T_{2\delta-1}(u))_0|+\|u\|_0^2\right)\\
&\leq C_2\left(
|(L_0Y(u)|T_{2\delta-1}(u))_0|+
|(YL_0(u)|T_{2\delta-1}(u))_0|+
\|u\|_0^2\right).
  \end{aligned}
\end{equation*}
We shall estimate separately each summand inside the parentheses 
in the last term. In the following $U$ will be a relatively compact
open subset of $M$ and all functions $u$ will be smooth
and have compact support in some 
fixed relatively compact open subset of $U$.
\par
We have 
\begin{equation*}P^*(u)=-L_0(u)+\sum_{j=1}^nL_j^*L_j(u)
+L'_{n+1}(u)+a'u,
\end{equation*}
with $L'_{n+1}\in\mathfrak{Z}(M)+\overline{\mathfrak{Z}(M)}$
and $a'\in\mathcal{C}^{\infty}(M)$. Hence using
\eqref{eq:esc},
  \begin{equation}\tag{$*$}\label{eq:esxx}
\begin{aligned}
\left|(L_0Y(u)|T_{2\delta-1}(u))_0\right|
\leq\left|(P^*Y(u)|T_{2\delta-1}(u))_0\right|\qquad\qquad\qquad
\qquad\qquad\\
+
\sum_{j=1}^n\left|(L_j^*L_jY(u)|T_{2\delta-1}(u))_0\right|
+C_3(\|u\|_0^2+\|Y(u)\|_{2\delta-1}^2+\|P(u)\|_0^2).
\end{aligned}
\end{equation}
The term in parenthesis can be estimated by a constant times
$(\|u\|^2_0+\|P(u)\|_0^2)$, provided we choose a $\delta$ so small
that $Y$ satisfies \eqref{eq:esb} in $U$ with some $\epsilon\geq{2\delta}$.
For the first summand on the right hand side of \eqref{eq:esxx}, we have
\begin{equation*}
(P^*Y(u)|T_{2\delta-1}(u))_0=(T_{2\delta-1}^*(Y(u))|P(u))_0+
(Y(u)|[P,T_{2\delta-1}](u))_0.  
\end{equation*}
We obtain
\begin{equation*}
  |(T_{2\delta-1}^*(Y(u))|P(u))_0|\leq C_4\left(\|Y(u)\|_{2\delta-1}^2+
\|P(u)\|^2_0\right).
\end{equation*}
The right hand side is bounded 
by a constant times $(\|u\|_0^2+\|P(u)\|_0^2)$, provided again that
 \eqref{eq:esb} holds for $Y$ with $\epsilon\geq{2\delta}$.
\par  For the commutator $[P,T_{2\delta-1}]$, we have
\begin{equation*}
  [P,T_{2\delta-1}]=\sum_{j=1}^nT'_{2\delta-1}L_j^*(u)+T''_{2\delta-1}L_j(u)
+T'''_{2\delta-1},
\end{equation*}
with $T'_{2\delta-1},T''_{2\delta-1},T'''_{2\delta-1}\in\Psi^{2\delta-1}(U)$.

Thus, because of \eqref{eq:esc}, 
also the term $|(Y(u)|[P,T_{2\delta-1}](u))_0|$ is
 bounded 
by a constant times $(\|u\|_0^2+\|P(u)\|_0^2)$, provided again that
 \eqref{eq:esb} holds for $Y$ with $\epsilon\geq{2\delta}$.\par
We have
\begin{equation*}
  T^*_{2\delta-1}L_j^*L_j=L_j^*L_jT^*_{2\delta-1}+
L_jB_{2\delta-1}'+L_j^*B_{2\delta-1}''+B_{2\delta-1}'''
\end{equation*}
with $B_{2\delta-1}',B_{2\delta-1}'',B_{2\delta-1}'''\in\Psi^{2\delta-1}(U)$.
Thus with a constant $C_4>0$,
\begin{equation*}\begin{aligned}
  \left|(L_j^*L_jY(u)|T_{2\delta-1}(u))_0\right|&\leq
\left|(L_jT^*_{2\delta-1}(u)|L_j(u))_0\right|+
\left|(B_{2\delta-1}'Y(u)|L_j^*(u))_0\right|\\
&
+\left|(B_{2\delta-1}''Y(u)|L_j(u))_0\right|
+\left|(B_{2\delta-1}'''Y(u)|u)_0\right|\\
&\leq
C_4\left(\|L_jT^*_{2\delta-1}Y(u)\|_0^2+\|L_j(u)\|^2_0+\|\bar{L}_j(u)\|^2_0
\right.\\ &\qquad \qquad\qquad \qquad\qquad \left. +
\|Y(u)\|_{2\delta-1}^2+\|u\|_0^2\right).
\end{aligned}
\end{equation*}
Therefore,
provided again that \eqref{eq:esb} holds for $Y$ with some
$\epsilon\geq{2\delta}$, all terms but those of the form
$\|L_jT^*_{2\delta-1}(u)\|_0^2$ are bounded by a constant times
$(\|u\|_0^2+\|P(u)\|_0^2)$. Thus 
we only need
to bound the terms $\|L_jT^*_{2\delta-1}(u)\|_0^2$. We have, by 
\eqref{eq:cpg}, with some constant $C_5>0$,
\begin{equation*}
 \sum_{j=1}^n \|L_jT^*_{2\delta-1}Y(u)\|_0^2\leq C_5\left(
\left|(P(T^*_{2\delta-1}Y(u))|T^*_{2\delta-1}Y(u))_0\right|+
\|T^*_{2\delta-1}Y(u)\|_0^2\right).
\end{equation*}
The last summand inside
the parentheses on the right hand side is bounded by a constant
times
$(\|u\|_0^2+\|P(u)\|_0^2)$,
provided again that \eqref{eq:esb} holds for $Y$ with some
$\epsilon\geq{2\delta}$. Let us consider the first one.
Note that the composition
$T_{2\delta}=T^*_{2\delta-1}\circ{Y}$ is a
pseudodifferential
operator in 
$\Psi^{2\delta}(U)$. We have the 
commutation formula
\begin{equation*}
  [P,T_{2\delta}]=\sum_{j=1}^n(F'_{j_{2\delta}}L_j+F''_{j_{2\delta}}\bar{L}_j)+
F'''_{{2\delta}}
\end{equation*}
with $F'_{j_{2\delta}},F''_{j_{2\delta}},F'''_{{2\delta}}\in\Psi^{2\delta}(U)$.
\par
Thus
\begin{equation*}
 \begin{aligned}
  (P(T^*_{2\delta-1}Y(u))|T^*_{2\delta-1}Y(u))_0=
(P(T_{2\delta}(u))|T^*_{2\delta-1}Y(u))_0\qquad\qquad\qquad\\
=(P(u)|T^*_{2\delta}T^*_{2\delta-1}Y(u))_0+
\sum_{j=1}^n(L_j(u)|[F'_{j_{2\delta}}]^*T^*_{2\delta-1}Y(u))_0\\
+\sum_{j=1}^n(\bar{L}_j(u)|[F''_{j_{2\delta}}]^*T^*_{2\delta-1}Y(u))_0
+(u|[F'''_{{2\delta}}]^*T^*_{2\delta-1}Y(u))_0.
\end{aligned}
\end{equation*}
Hence we obtain, with some constant $C_6>0$,
\begin{equation*}\begin{aligned}
 \|L_jT^*_{2\delta-1}Y(u)\|_0^2\leq C_6\big(
\|P(u)\|_0\|Y(u)\|_{4\delta-1}+\sum_{j=1}^n\left(\|L_j(u)\|^2_0+
\|\bar{L}_j(u)\|^2_0\right)\\
+\|Y(u)\|_{4\delta-1}^2+\|u\|_0^2\big),
 \end{aligned}
\end{equation*}
which can be bounded by the right hand side of \eqref{eq:esb},
provided $Y$ satisfies \eqref{eq:esb} with $\epsilon\geq{4\delta}$.
\par
Finally we note that
\begin{equation}
\label{eq:esyyy}
  \begin{aligned}
    |(YL_0(u)|T_{2\delta-1}(u))_0|\leq
C_7(|(T^*_{2\delta-1}(u)|L_0Y(u))_0|+\|L_0(u)\|_{2\delta-1}^2\\+
\|Y(u)\|_{2\delta-1}^2+\|u\|_0^2).
  \end{aligned}
\end{equation}
Thus, by repeating the discussion above with $T^*_{2\delta-1}$
replacing $T_{2\delta-1}$, we find that the left hand side of
\eqref{eq:esyyy},
provided $Y$ satisfies \eqref{eq:esb} with $\epsilon\geq{4\delta}$,
 is bounded by a constant times $(\|u\|_0^2+\|P(u)\|_0^2)$.
This concludes the proof of the Lemma.
\end{proof}
\subsubsection*{End of the Proof of Theorem \ref{tm:bh}}
By the discussion at the beginning of the proof, and by Lemma \ref{lm:bi},
we obtain that for every relatively compact open subset $U$ of $M$,
which is contained in the open subset $M'$ of $M$, consisting of the points $p$
where \eqref{eq:essj} is satisfied, there are positive constants
$\epsilon>0$ and $c_0>0$ such that
\begin{equation}
  \label{eq:estb}\left\{\begin{aligned}
  \|u\|_{\epsilon}^2+\sum_{j=1}^n\left(\|L_j(u)\|_0^2+\|\bar{L}_j(u)\|_0^2\right)
\leq{c}_0\left(\|u\|_0^2+\|P(u)\|_0^2\right),\\
  \quad\forall u\in\mathcal{C}^{\infty}_0(U).
\end{aligned}\right.
\end{equation}
One easily shows by recurrence that, 
if $U$ is a relatively compact open subset of $M$, then
there is a positive constant $\epsilon>0$, and, 
for every real $s\geq{0}$ another constant $c_s\geq{0}$, such that
\begin{equation}
  \label{eq:esta}\left\{\begin{aligned}
  \|u\|_{\epsilon+s}^2+\sum_{j=1}^n\left(\|L_j(u)\|_s^2+
\|\bar{L}_j(u)\|_s^2\right)
\leq{c}_0\left(\|u\|_0^2+\|P(u)\|_s^2\right),\quad\\
  \quad\forall u\in\mathcal{C}^{\infty}_0(U).
\end{aligned}\right.
\end{equation}
The hypoellipticity of $P$ in $U$, with a gain of $\epsilon$
derivatives,  
follows in a standard way from
\eqref{eq:esta} (see e.g. \cite{Hor67}).
\end{proof}

\section{Applications to almost \texorpdfstring{$CR$}{CR} manifolds}
\label{sec:examp}
In this section we shall consider the case where
$M$ is an almost $CR$ manifold of $CR$ dimension $n$ and $CR$ codimension $k$,
and $\mathfrak{Z}(M)$ is the distribution of vector fields
of type $(0,1)$ on $M$. 
This means that conditions $(i)$, $(ii)$ and $(iii)$ below are satisfied:
\begin{align} 
\tag{$i$}&M \;\text{has real dimension}\; 2n+k,\\
\tag{$ii$}& \mathfrak{Z}(M)\;\text{has constant rank}\; n,\\
\tag{$iii$}& \mathfrak{Z}(M)\cap\overline{\mathfrak{Z}(M)}=\{0\},\\
\tag{$iv$}& [\mathfrak{Z}(M),\mathfrak{Z}(M)]\subset\mathfrak{Z}(M).
\end{align}
When the formal integrability 
condition $(iv)$ is also satisfied, we say that $M$ is a
$CR$ manifold.\par
When $M$ is an almost $CR$ manifold, it is customary to write
$T^{0,1}M$ for the complex bundle with fibers $T^{0,1}_pM=Z_pM$.
The $Z$-morphisms of \S\ref{sec:map} are then the $CR$ maps, i.e. the
smooth maps $\phi:N\to{M}$ with 
$d\phi^{\mathbb{C}}(T^{0,1}_qN)\subset T^{0,1}_{\phi(q)}M$ for all $q\in{N}$.
\begin{lem}
  \label{lm:dc}
  If an almost CR manifold $M$ satisfies the higher
  order Levi form concavity condition at a point $p$, then $M$ is of
  finite type at $p$.\qed
\end{lem}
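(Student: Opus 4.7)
The plan is to argue by contraposition: assuming finite type fails at $p$, I will produce a nonzero characteristic covector $\xi\in H^0_pM$ with $\mathcal{L}_\xi\equiv 0$, and then use \eqref{eq:cra} to derive a contradiction. Let $\mathfrak{g}\subset\mathfrak{X}^{\mathbb{C}}(M)$ denote the Lie subalgebra generated by $\mathfrak{Z}(M)+\overline{\mathfrak{Z}(M)}$, and set $V=\mathrm{span}_{\mathbb{C}}\{X(p)\mid X\in\mathfrak{g}\}\subset T_p^{\mathbb{C}}M$. Finite type at $p$ in the sense of \cite{BG77} is exactly the equality $V=T_p^{\mathbb{C}}M$.

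Suppose toward a contradiction that $V\subsetneq T_p^{\mathbb{C}}M$. Since $\mathfrak{g}$ is closed under complex conjugation, $V$ is as well, so $V$ is the complexification of $V\cap T_pM$, and I can pick a nonzero real covector $\xi\in T_p^*M$ that annihilates $V\cap T_pM$; extending $\mathbb{C}$-linearly, $\xi|_V=0$. Because $\mathfrak{Z}(M)\subset\mathfrak{g}$, the covector $\xi$ kills every $Z(p)$ with $Z\in\mathfrak{Z}(M)$, so $\xi\in H^0_pM\setminus\{0\}$. Moreover, for any $Z\in\mathfrak{Z}(M)$ the commutator $[Z,\bar{Z}]$ lies in $\mathfrak{g}$, hence $\xi([Z,\bar Z])=0$, which by polarization of the Hermitian form shows $\mathcal{L}_\xi\equiv 0$ on $Z_pM$. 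In particular $\mathcal{L}_\xi\geq 0$.

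Now I apply the hypothesis \eqref{eq:cra} to this $\xi$: there exist $Z_0\in\mathfrak{Z}(M)$ and $Z_1,\hdots,Z_r\in\mathbb{K}_{\mathfrak{Z}}(M)+\overline{\mathbb{K}_{\mathfrak{Z}}(M)}$ with $i\xi([Z_1,\hdots,Z_r,\bar Z_0])\neq 0$. But $\mathbb{K}_{\mathfrak{Z}}(M)\subset\mathfrak{Z}(M)$, so all of $Z_0,\bar Z_0,Z_1,\hdots,Z_r$ lie in $\mathfrak{Z}(M)+\overline{\mathfrak{Z}(M)}$, and therefore the iterated commutator $[Z_1,\hdots,Z_r,\bar Z_0]$ is an element of $\mathfrak{g}$. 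Its value at $p$ belongs to $V$, so $\xi$ must kill it, contradicting $i\xi([Z_1,\hdots,Z_r,\bar Z_0])\neq 0$. Thus $V=T_p^{\mathbb{C}}M$, i.e.\ $M$ is of finite type at $p$.

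There is no real obstacle here; the only thing to verify carefully is the inclusion $\mathbb{K}_{\mathfrak{Z}}(M)+\overline{\mathbb{K}_{\mathfrak{Z}}(M)}\subset\mathfrak{Z}(M)+\overline{\mathfrak{Z}(M)}$ (immediate from the definition of $\mathbb{K}_{\mathfrak{Z}}(M)$), together with the observation that a real covector annihilating $V\cap T_pM$ automatically annihilates the full complex span $V$ after $\mathbb{C}$-linear extension, which is what allows the same $\xi$ to kill iterated commutators involving arbitrary complex $(0,1)$ and $(1,0)$ vector fields.
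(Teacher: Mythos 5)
Your argument is correct, and it is the natural (indeed essentially the only) way to prove this: the paper states the lemma with an immediate \qed and leaves exactly this contrapositive argument implicit. All the key points check out --- the Lie algebra generated by $\mathfrak{Z}(M)+\overline{\mathfrak{Z}(M)}$ is conjugation-closed so its span at $p$ is a complexification, the resulting characteristic $\xi$ has $\mathcal{L}_\xi\equiv 0$, and since $\mathbb{K}_{\mathfrak{Z}}(M)\subset\mathfrak{Z}(M)$ every iterated commutator appearing in \eqref{eq:cra} lies in that Lie algebra and is therefore killed by $\xi$, giving the contradiction.
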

The next statements clarify in what sense condition \eqref{eq:cra}
is a pseudoconcavity condition.
\begin{prop}
  \label{pp:db}
  Let $M$ be a CR manifold of hypersurface type, i.e. of
$CR$ codimension $k=1$.  If $\mathfrak{Z}(M)$
  satisfies the higher Levi form concavity condition 
\eqref{eq:cra}
at a point $p$,
which is 
  regular for $\mathfrak{Z}(M)$, then $M$ is strictly pseudoconcave at
  $p$.
\end{prop}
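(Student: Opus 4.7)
The plan is to argue by contradiction. Under the regularity of $p$, $H^0M$ is a real line bundle on a neighborhood of $p$, so in the hypersurface case strict pseudoconcavity at $p$ is equivalent to $H^{\oplus}_pM=\{0\}$, i.e.\ to $\mathcal{L}_\xi$ having a strictly negative eigenvalue for every nonzero $\xi\in H^0_pM$. Suppose, on the contrary, that some $\xi_0\in H^0_pM\setminus\{0\}$ satisfies $\mathcal{L}_{\xi_0}\geq 0$. I will derive a contradiction with \eqref{eq:cra} by showing that for this $\xi_0$ every admissible iterated commutator $[Z_1,\hdots,Z_r,\bar{Z}_0]$ lies in $\mathfrak{Z}(M)+\overline{\mathfrak{Z}(M)}$ near $p$, hence is annihilated by $\xi_0$.

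The first step is to promote the pointwise inequality $\mathcal{L}_{\xi_0}\geq 0$ to a uniform statement on a neighborhood $U$ of $p$: I seek a smooth nowhere-vanishing real section $\xi_0(q)$ of $H^0M$ on $U$ with $\mathcal{L}_{\xi_0(q)}\geq 0$ for every $q\in U$. Regularity gives constancy of $\mu_0,\delta_0,\nu_0$; the hypothesis combined with $\dim_{\mathbb{R}}H^0_pM=1$ forces $\delta_0(p)=1$, hence $\delta_0\equiv 1$ near $p$, so at every nearby $q$ the Hermitian form $\mathcal{L}_{\xi_0(q)}$ is semidefinite in one of the two orientations of $H^0_qM$. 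This propagation is the step I expect to be most delicate: one must make the sign choice consistent throughout $U$ (shrinking $U$ if necessary) and rule out oscillation between positive and negative semidefiniteness on small scales; continuity of the Levi form together with $\delta_0\equiv 1$ and the hypersurface structure do the job. Once $\xi_0$ is smoothly fixed on $U$, $\mathbb{K}_{\mathfrak{Z}}(U)$ is the smooth rank-$\nu_0$ subbundle whose fiber at $q$ is $\ker\mathcal{L}_{\xi_0(q)}|_{T^{0,1}_qM}$.

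The key algebraic input is then the inclusion $[\mathbb{K}_{\mathfrak{Z}}(U),\overline{\mathfrak{Z}(U)}]\subset\mathfrak{Z}(U)+\overline{\mathfrak{Z}(U)}$. For $W\in\mathbb{K}_{\mathfrak{Z}}(U)$ and $V\in\mathfrak{Z}(U)$, the kernel property of a semidefinite Hermitian form yields $\mathcal{L}_{\xi_0(q)}(W(q),\bar{V}(q))=0$ at every $q\in U$, i.e.\ $\xi_0(q)\bigl([W,\bar{V}](q)\bigr)=0$; since in the hypersurface case $\xi_0(q)$ generates the full complex annihilator of $\mathfrak{Z}_qM+\overline{\mathfrak{Z}_qM}$ inside $T^{*\mathbb{C}}_qM$, the pointwise membership $[W,\bar{V}](q)\in\mathfrak{Z}_qM+\overline{\mathfrak{Z}_qM}$ follows, and constancy of the rank of $\mathfrak{Z}+\overline{\mathfrak{Z}}$ upgrades this to a smooth section. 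Combined with formal integrability $[\mathfrak{Z},\mathfrak{Z}]\subset\mathfrak{Z}$ and its conjugate, this shows that $\mathbb{K}_{\mathfrak{Z}}(U)+\overline{\mathbb{K}_{\mathfrak{Z}}(U)}$ normalizes $\mathfrak{Z}(U)+\overline{\mathfrak{Z}(U)}$. A straightforward induction on $r$ then places every iterated commutator $[Z_1,\hdots,Z_r,\bar{Z}_0]$ with $Z_0\in\mathfrak{Z}(M)$ and $Z_1,\hdots,Z_r\in\mathbb{K}_{\mathfrak{Z}}(M)+\overline{\mathbb{K}_{\mathfrak{Z}}(M)}$ inside $\mathfrak{Z}(U)+\overline{\mathfrak{Z}(U)}$, whence $i\xi_0([Z_1,\hdots,Z_r,\bar{Z}_0])=0$ at $p$, contradicting \eqref{eq:cra}.
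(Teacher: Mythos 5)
Your proof is correct, and it organizes the argument differently from the paper's. The paper also argues by contradiction from a semidefinite $\mathcal{L}_{\xi_0}$, but instead of propagating anything to a neighborhood it picks the witnesses $Z_0,Z_1,\hdots,Z_r$ of \eqref{eq:cra} with $r$ \emph{minimal}; minimality plus $\dim H^0_pM=1$ forces $[Z_2,\hdots,Z_r,\bar Z_0](p)\in Z_pM+\overline{Z_pM}$, and then only the outermost bracket is examined, killed by the kernel property of the semidefinite form together with integrability. You instead prove the stronger, neighborhood-level statement that $\mathbb{K}_{\mathfrak{Z}}+\overline{\mathbb{K}_{\mathfrak{Z}}}$ normalizes $\mathfrak{Z}+\overline{\mathfrak{Z}}$ on $U$ and induct from the inside out, so that \emph{every} admissible commutator is annihilated by $\xi_0$. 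What your route costs is the propagation step: you must know $\mathcal{L}_{\xi_0(q)}$ is semidefinite for all $q$ near $p$. Note that regularity already delivers this cleanly — $\delta_0(p)=1$ and constancy of $\delta_0$ give $H^{\oplus}_qM\neq\{0\}$, hence semidefiniteness of $\mathcal{L}_{\xi_0(q)}$ in one sign or the other at each $q$ — and your worry about choosing the sign consistently is actually moot, since your argument only uses $\ker\mathcal{L}_{\xi_0(q)}=\ker\mathcal{L}_{-\xi_0(q)}$ and the membership $W(q)\in\ker\mathcal{L}_{\xi_0(q)}$ for $W\in\mathbb{K}_{\mathfrak{Z}}(M)$, both of which are insensitive to the orientation of $H^{\oplus}_qM$. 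What your route buys is rigor at a point the paper glosses over: to split the inner commutator as $Z+\bar W$ \emph{inside} the outer bracket one really needs it to be a section of $\mathfrak{Z}+\overline{\mathfrak{Z}}$ near $p$, not merely to have its value at $p$ in $Z_pM+\overline{Z_pM}$; your constant-rank upgrade and induction supply exactly that.
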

\begin{proof}
  Let $0\neq\xi\in H^0_pM$. We want to prove that $\mathcal{L}_{\xi}$
is indefinite. Assume by contradiction that 
this is not the case. Replacing, if needed, $\xi$ by $(-\xi)$, we can
assume that
$\mathcal{L}_{\xi}\geq{0}$. By assumption \eqref{eq:cra} we can
choose $Z_0\in\mathfrak{Z}(M)$ and
  $Z_1,\dots,Z_r\in\mathbb{K}_{\mathfrak{Z}}(M)
\cup\overline{\mathbb{K}_{\mathfrak{Z}}(M)}$ satisfying
\begin{equation}\tag{$*$}\label{eq:starq}
  \xi([Z_1,\dots,Z_r,\bar{Z}_0])\neq{0}.  
\end{equation}
We can take
$r$ minimal with this property. In particular, we
  have 
\begin{equation}\tag{$**$}\label{eq:starqq}
[Z_2,\dots,Z_r,\bar{Z}_0](p)=Z(p)+\overline{W}(p),
\end{equation}
with
  $Z,W\in\mathfrak{Z}(M)$. Assume that $Z_1\in\mathbb{K}_{\mathfrak{Z}}(M)$.
Then $\xi([Z_1,Z])=0$, because of the integrability condition $(iv)$,
 and then, from \eqref{eq:starq},
we have
  $\mathcal{L}_\xi(Z_1,\overline{W})\neq 0$ and
  $\mathcal{L}_\xi(Z_1,\overline{Z}_1)=0$,
yielding a contradiction.
Likewise, if $Z_1\in\overline{\mathbb{K}_{\mathfrak{Z}}(M)}$,
we have ${\xi}([Z_1,\bar{W}])=0$ by $(iv)$, 
and hence, from
\eqref{eq:starq}, $\mathcal{L}_{\xi}(Z,Z_1)\neq{0}$, while
$\mathcal{L}_{\xi}(\bar{Z}_1,Z_1)={0}$, 
contradicting the assumption that $\mathcal{L}_{\xi}\geq{0}$. The proof
is complete.
\end{proof}

\begin{cor}
  \label{cor:dda}
  Let $N$ be a generic CR submanifold of a CR manifold $M$
(this means that 
$T^{0,1}_pN=T^{0,1}_pM\cap{T}^{\mathbb{C}}_pN$ 
and the restriction map $H^0_pM\to{H}^0_pN$ is injective
for all $p\in{N}\subset{M}$). 
If $M$ is of hypersurface type and
$\mathfrak{Z}(N)$ satisfies the higher Levi form concavity
  condition \eqref{eq:cra}
at a point $p_0$, regular  for 
$\mathfrak{Z}(M)$, then $M$ is
  strictly pseudoconcave at~$p_0$.
\end{cor}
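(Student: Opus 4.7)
The plan is to reduce Corollary \ref{cor:dda} to the combination of two already established results: the pullback Proposition \ref{pp:da}, which transfers condition \eqref{eq:cra} from $\mathfrak{Z}(N)$ to $\mathfrak{Z}(M)$ at the point $p_0$, and Proposition \ref{pp:db}, which, in the hypersurface-type setting, upgrades \eqref{eq:cra} at a regular point to strict pseudoconcavity.

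First I would realize the inclusion $\iota\colon N\hookrightarrow M$ as a submersion onto its image in the sense of the diagram \eqref{eq:mpd}, taking $\varpi=\mathrm{id}_N$ and $\iota$ itself as the immersion factor. The key preliminary check is that $\iota$ is a $Z$-morphism from $(N,\mathfrak{Z}(N))$ to $(M,\mathfrak{Z}(M))$. This is where I would use the first half of the genericity assumption $T^{0,1}_pN=T^{0,1}_pM\cap T^{\mathbb{C}}_pN$: any local $(0,1)$-vector field on $N$ is, near each point, the restriction of a $(0,1)$-vector field on $M$, so $\mathfrak{Z}(N)\subset[\iota^*\mathfrak{Z}(M)](N)$ as required by Definition \ref{def:mpe}. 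The second half of the genericity assumption supplies exactly hypothesis (2) of Proposition \ref{pp:da}, namely the injectivity of $\iota^*_{p_0}\colon H^0_{p_0}M\to H^0_{p_0}N$.

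Having assembled the data, the assumption that $\mathfrak{Z}(N)$ satisfies \eqref{eq:cra} at $p_0$ together with the regularity of $p_0$ for $\mathfrak{Z}(M)$ lets me invoke Proposition \ref{pp:da} to conclude that the higher Levi form concavity condition \eqref{eq:cra} holds at $p_0$ for $\mathfrak{Z}(M)$. Since $M$ is of hypersurface type and $p_0$ is regular for $\mathfrak{Z}(M)$, Proposition \ref{pp:db} applied directly to $(M,\mathfrak{Z}(M))$ at $p_0$ then yields that $M$ is strictly pseudoconcave at $p_0$.

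The main obstacle is essentially bookkeeping rather than new analysis: the substantive content has already been extracted in Propositions \ref{pp:da} and \ref{pp:db}, so the only delicate point is verifying the $Z$-morphism property of the embedding $\iota$ and correctly identifying the pullback of characteristic covectors with the restriction map on cotangent spaces. Once these two identifications are in place, the corollary follows as a direct concatenation of the two propositions.
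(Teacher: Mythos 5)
Your proof is correct and is essentially the paper's own argument: the paper's proof is the one-line citation of Propositions \ref{pp:da} and \ref{pp:db}, and your elaboration (checking the $Z$-morphism property from $T^{0,1}_pN=T^{0,1}_pM\cap T^{\mathbb{C}}_pN$ and supplying hypothesis (2) of Proposition \ref{pp:da} from the injectivity of $H^0_{p_0}M\to H^0_{p_0}N$) just makes explicit the bookkeeping the paper leaves implicit.
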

\begin{proof}
  The statement follows from Propositions~\ref{pp:da} and \ref{pp:db}.
\end{proof}
\begin{lem}
  \label{lm:ddb}
  Let $M,N$ be almost $CR$ manifolds, 
and $\varpi\colon N\to M$ a $CR$ map and a
  smooth submersion. 
We denote by $\mathfrak{Z}(M)$ and $\mathfrak{Z}(N)$ the distributions
of $(0,1)$ vector fields on $M$, $N$, respectively.
If $M$ is strictly pseudoconvex at a point $p_0$, i.e. if there is
$\xi_0\in{H}^0_{p_0}M$ with $\mathcal{L}_{\xi_0}>0$, and moreover
 $p_0$ is regular for $\mathfrak{Z}(M)$,
  then the higher Levi form concavity
  condition \eqref{eq:cra}
for $\mathfrak{Z}(N)$ is not satisfied at any point $q_0\in\varpi^{-1}(p_0)$.
\end{lem}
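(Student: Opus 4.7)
I propose arguing by contradiction. The core idea is that strict positive-definiteness of $\mathcal{L}_{\xi_0}$ on $M$ at $p_0$ forces every element of $\mathbb{K}_{\mathfrak{Z}}(N)+\overline{\mathbb{K}_{\mathfrak{Z}}(N)}$ to be $\varpi$-vertical on a whole neighborhood of $q_0$, and this verticality makes the iterated brackets in \eqref{eq:cra} collapse to zero when paired with the pullback characteristic covector.

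First, using regularity of $\mathfrak{Z}(M)$ at $p_0$, I extend $\xi_0$ to a smooth section $\tilde\xi_0$ of $H^0M$ on a neighborhood $U$ of $p_0$, shrinking $U$ so that $\mathcal{L}_{\tilde\xi_0(p)}$ stays positive definite on $T^{0,1}_pM$ for every $p\in U$. Let $U'=\varpi^{-1}(U)$ and $\alpha=\varpi^*\tilde\xi_0$. Because $\varpi$ is a $CR$ submersion, $\alpha$ is a nowhere-vanishing smooth section of $H^0N|_{U'}$, and by tensoriality of the Levi form
\[
\mathcal{L}_{\alpha(q)}(Z,\bar Z)=\mathcal{L}_{\tilde\xi_0(\varpi(q))}(d\varpi\,Z,d\varpi\,\bar Z)\geq 0,
\]
so $\alpha(q)\in H^\oplus_qN$ throughout $U'$. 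If $Z\in\mathbb{K}_{\mathfrak{Z}}(N)$, then $\mathcal{L}_{\alpha(q)}(Z(q),\bar Z(q))=0$ on $U'$, which by positive-definiteness of $\mathcal{L}_{\tilde\xi_0}$ forces $d\varpi\,Z\equiv 0$ on $U'$; conjugation gives the same for $\overline{\mathbb{K}_{\mathfrak{Z}}(N)}$.

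The key technical step is the Lie-derivative identity $\mathcal{L}_W\alpha=0$ for any $\varpi$-vertical vector field $W$ on $U'$. Indeed, $\alpha(W)=\tilde\xi_0(d\varpi\,W)=0$, and since $d\alpha=\varpi^*d\tilde\xi_0$ we get $d\alpha(W,Y)=d\tilde\xi_0(d\varpi\,W,d\varpi\,Y)=0$ for every $Y$; Cartan's formula then gives $\mathcal{L}_W\alpha=0$, equivalently $\alpha([W,Y])=W\,\alpha(Y)$. Iterating this identity along the vertical vector fields $Z_1,\dots,Z_r$ produced by \eqref{eq:cra} yields
\[
\alpha\bigl([Z_1,\dots,Z_r,\bar Z_0]\bigr)=Z_1Z_2\cdots Z_r\,\alpha(\bar Z_0)\qquad\text{on }U'.
\]

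Finally, $\bar Z_0\in T^{1,0}N$, hence $d\varpi\,\bar Z_0\in T^{1,0}M$, and since $\tilde\xi_0$ is a \emph{real} section of $H^0M$ it annihilates $T^{1,0}M$ as well as $T^{0,1}M$. So $\alpha(\bar Z_0)\equiv 0$, and the displayed identity gives $i\alpha([Z_1,\dots,Z_r,\bar Z_0])(q_0)=0$, contradicting the choice of $Z_0,Z_1,\dots,Z_r$ for the covector $\alpha(q_0)\in H^\oplus_{q_0}N\setminus\{0\}$. The main obstacle I foresee is pinpointing the correct identity $\mathcal{L}_W\alpha=0$ and realizing it requires verticality \emph{throughout} $U'$ rather than merely at $q_0$; this is precisely why one must first produce a smooth extension $\tilde\xi_0$ whose Levi form remains positive definite on all of $U$, so that $\mathbb{K}_{\mathfrak{Z}}(N)$-elements land in the vertical bundle over the full neighborhood and the telescoping along iterated brackets goes through.
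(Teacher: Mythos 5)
Your argument is correct, and its first and decisive half coincides with the paper's: both proofs shrink to a neighborhood where strict pseudoconvexity persists, and use positive definiteness of $\mathcal{L}_{\tilde\xi_0}$ together with the compatibility $\mathcal{L}_{\varpi^*\xi}(Z,\bar Z)=\mathcal{L}_{\xi}(d\varpi\,Z,\overline{d\varpi\,Z})$ to conclude that $\mathbb{K}_{\mathfrak{Z}}(N)+\overline{\mathbb{K}_{\mathfrak{Z}}(N)}$ is $\varpi$-vertical on a whole neighborhood of $q_0$. Where you diverge is in how the iterated brackets are then annihilated. The paper invokes the module property \eqref{eq:mphb}, i.e.\ $[\mathfrak{V}^{\varpi}(N),[\varpi^*\mathfrak{Z}](N)]\subset[\varpi^*\mathfrak{Z}](N)$: since $\bar Z_0$ lies in the pullback of $\mathfrak{Z}(M)+\overline{\mathfrak{Z}(M)}$ and the $Z_j$ are vertical, the whole bracket $[Z_1,\hdots,Z_r,\bar Z_0]$ stays in that pullback distribution, on which $\eta_0=\varpi^*\xi_0$ vanishes. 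You instead establish $\mathcal{L}_W\alpha=0$ for vertical $W$ via Cartan's formula and telescope to
\begin{equation*}
\alpha\bigl([Z_1,\dots,Z_r,\bar Z_0]\bigr)=Z_1\cdots Z_r\,\alpha(\bar Z_0)\equiv 0 .
\end{equation*}
The two mechanisms encode the same geometric fact, but yours is self-contained: it needs only the pointwise vanishing $\alpha(\bar Z_0)\equiv 0$ (which follows from $d\varpi\,\bar Z_0\in T^{1,0}M$ and the reality of $\tilde\xi_0$), not the sheaf-theoretic membership of $\bar Z_0$ in the pullback distribution, and it identifies the pairing explicitly as an iterated derivative of the zero function. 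The paper's version is shorter only because the module lemma is already on record. Your closing remark --- that verticality is needed throughout $U'$, not merely at $q_0$, which is why one must first produce the positive-definite extension $\tilde\xi_0$ --- is precisely the point the paper handles by replacing $M$ with a strictly pseudoconvex neighborhood of $p_0$ at the outset.
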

\begin{proof} Replacing $M$ by an open neighborhood of $p_0$ in $M$,
we can assume that $M$ is strictly pseudoconvex at all points.
Then $\mathbb{K}_{\mathfrak{Z}}(M)=0$, and hence 
$\mathbb{K}_{\mathfrak{Z}}(N)$ is contained in the complexification of
the vertical distribution $\mathfrak{V}^{\varpi}(N)$. 
Indeed, if $\xi\in{H}^{\oplus}_pM$ and $q\in\varpi^{-1}(p)$,
the pullback $\varpi^*(q)(\xi)$ belongs to ${H}^{\oplus}_qN$.\par
Thus if $\eta_0=\varpi^*(q_0)(\xi_0)$ for $q_0\in\varpi^{-1}(p_0)$,
then $\eta_0$ does not satisfy \eqref{eq:cra}.
Indeed, $\eta_0$ vanishes on the pullback of 
$\mathfrak{Z}(M)+\overline{\mathfrak{Z}(M)}$, and
this distribution is a $\mathfrak{V}^{\varpi}(N)$-module.
\end{proof}
Theorem \ref{tm:cca} and Corollary \ref{cor:Db} yield
\begin{thm}
Let $M$ be an almost $CR$ manifold and 
assume that condition \eqref{eq:be} is satisfied
by the distribution $\mathfrak{Z}(M)$ of its $(0,1)$ vector
fields at all points of $M$. Then
\begin{enumerate}
\item If
$E\xrightarrow{\pi}M$ a 
Hermitian vector bundle on $M$, 
endowed with a $\mathbb{C}$-linear connection $\nabla$, then all 
weak solutions $u\in{L}^2_{\mathrm{loc}}(M,E)$ of 
\begin{equation*}
\nabla_Z(u)\in\mathcal{C}^{\infty}(M,E),\quad\forall{Z}\in\mathfrak{Z}(M)  
\end{equation*}
are smooth sections of $E\xrightarrow{\pi}M$. \par
In particular, all $CR$ sections of $E$ \textup(i.e. weak $L^2_{\mathrm{loc}}$
solutions of $\nabla_Z(u)=0$, for all $Z\in\mathfrak{Z}(M)$\textup) are smooth.
\item
In case $M$ is compact, the space of $CR$ sections of $E$ is a finite
dimensional $\mathbb{C}$-linear space. 
\item
If 
$M$ is a compact $CR$ manifold, then the cohomology groups
$H^{p,1}_{\bar\partial_M}(M)$, for $p=1,\hdots,n+k$, 
of the tangential Cauchy-Riemann complexes, are Hausdorff. \qed
\end{enumerate}
\end{thm}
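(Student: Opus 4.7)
The plan is to deduce all three parts from the results already established in Section~\ref{sec:ope} once we observe that condition \eqref{eq:be} together with Corollary \ref{cor:sa} ensures that $\mathfrak{Z}(M)$ is subelliptic at every point of $M$. So $\mathfrak{Z}(M)$ satisfies the hypotheses of Theorem \ref{tm:cca} and Corollary \ref{cor:Db} globally.

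For part (1), I would apply Theorem \ref{tm:cca} pointwise. Given the complex vector bundle $E\xrightarrow{\pi}M$ with its $\mathbb{C}$-linear connection $\nabla$, and any weak $L^2_{\mathrm{loc}}$ solution $u$ of $\nabla_Z u \in \mathcal{C}^\infty(M,E)$ for all $Z\in\mathfrak{Z}(M)$, the theorem gives an open neighborhood $U_{p_0}$ of each $p_0\in M$ in which $u$ coincides a.e.\ with a smooth section. Covering $M$ by such neighborhoods and invoking the local character of smoothness yields $u\in\mathcal{C}^\infty(M,E)$. The statement about CR sections is the special case $\nabla_Zu=0$. Part (2) is immediate from Corollary \ref{cor:Db}(1), since $\mathcal{O}_{\mathfrak{Z}}(M)$ is exactly the space of CR sections.

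For part (3), the idea is to recast $\bar\partial_M$ on $(p,0)$-forms as a system of the form $(\nabla_Z)_{Z\in\mathfrak{Z}(M)}$ on sections of $E=\Lambda^{p,0}M$, and then apply Corollary \ref{cor:Db}(2). First, I would fix any $\mathbb{C}$-linear connection $\nabla$ on $E$ whose restriction to vector fields $Z\in\mathfrak{Z}(M)=\Gamma(T^{0,1}M)$ satisfies $\nabla_Z\omega=\iota_Z(\bar\partial_M\omega)$ for every $\omega\in\mathcal{E}^{p,0}(M)$; such an extension is constructed canonically from the canonical $(0,1)$-partial connection on any CR bundle, extended arbitrarily in $(1,0)$-directions. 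Under this identification, a $(p,1)$-form $\eta$ is determined by the $\mathcal{C}^\infty(M)$-linear family $\{\iota_Z\eta\}_{Z\in\mathfrak{Z}(M)}$, and $\bar\partial_M\sigma=\eta$ means precisely $\nabla_Z\sigma=\iota_Z\eta$ for all $Z\in\mathfrak{Z}(M)$.

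To conclude, suppose $\eta_\nu=\bar\partial_M\sigma_\nu$ converges to $\eta$ in $\mathcal{E}^{p,1}(M)$ (equivalently, in every $\mathcal{C}^k$-seminorm, hence uniformly on $M$). For each $Z\in\mathfrak{Z}(M)$, the sections $\nabla_Z\sigma_\nu=\iota_Z\eta_\nu$ converge uniformly on $M$ to $\iota_Z\eta=:f_Z\in\mathcal{C}^\infty(M,E)$. By Corollary \ref{cor:Db}(2) there exists $\sigma\in\mathcal{C}^\infty(M,E)=\mathcal{E}^{p,0}(M)$ with $\nabla_Z\sigma=f_Z$ for every $Z\in\mathfrak{Z}(M)$, i.e.\ $\bar\partial_M\sigma=\eta$. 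Thus the image of $\bar\partial_M:\mathcal{E}^{p,0}(M)\to\mathcal{E}^{p,1}(M)$ is closed in the Fr\'echet topology, and $H^{p,1}_{\bar\partial_M}(M)$ is Hausdorff. The only delicate point is writing down the bundle/connection data that identifies $\bar\partial_M$ on $(p,0)$-forms with a system $(\nabla_Z)_{Z\in\mathfrak{Z}(M)}$ of the type covered by Corollary \ref{cor:Db}; once this is in place, everything reduces to bookkeeping.
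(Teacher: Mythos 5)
Your proposal is correct and follows exactly the route the paper intends: the paper's entire justification is the phrase ``Theorem \ref{tm:cca} and Corollary \ref{cor:Db} yield,'' and you have simply supplied the details — subellipticity everywhere via Corollary \ref{cor:sa}, local-to-global smoothing for (1), Corollary \ref{cor:Db}(1) for (2), and the identification of $\bar\partial_M$ on $(p,0)$-forms with a system $(\nabla_Z)_{Z\in\mathfrak{Z}(M)}$ plus Corollary \ref{cor:Db}(2) for (3). Nothing in your argument deviates from or adds to what the authors had in mind.
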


\section{Subellipticity conditions for 
homogeneous 
\texorpdfstring{$CR$}{CR} manifolds}\label{sec:hom}
Let $M$ be a $CR$ manifold, homogeneous for the $CR$ action of a
Lie group~$\mathbf{G}$. 
Fix a base point $\mathbf{o}\in{M}$ and denote by
\begin{equation}
  \label{eq:pbbaa}
  \varpi:\mathbf{G}\ni{g}\to{g}\cdot\mathbf{o}\in{M}
\end{equation}
the associated principal bundle.
In \cite{MN05} we associated to $M$ and the base point $\mathbf{o}$
the $CR$ algebra $(\mathfrak{g},\mathfrak{q})$, that is the pair
consisting of
\begin{itemize}
  \item[-] the Lie algebra $\mathfrak{g}$ of $\mathbf{G}$,
\item[-] the complex Lie subalgebra $\mathfrak{q}$ of 
$\mathfrak{g}^{\mathbb{C}}=\mathfrak{g}+i\mathfrak{g}$ given by
\begin{equation}
  \label{eq:pbba}
  \mathfrak{q}=[d\varpi_{e}]_{{}_\mathbb{C}}^{-1}(T^{0,1}_{\mathbf{o}}M),
\end{equation}
where 
$[d\varpi_e]_{{}_\mathbb{C}}:
\mathfrak{g}^{\mathbb{C}}\to{T}^{\mathbb{C}}_{\mathbf{o}}M$
is the complexification of the differential 
$d\varpi_e:\mathfrak{g}\to{T}_{\mathbf{o}}M$ of $\varpi$ at the identity.
\end{itemize}
For $X\in\mathfrak{g}$ we denote by $X^*$ the left-invariant vector
field in $\mathbf{G}$ with $X^*(e)=X$. Let
\begin{align}
  \label{eq:pbbb}
\mathfrak{q}^*&=\{X^*+iY^*\mid X,Y\in\mathfrak{g},\;X+iY\in\mathfrak{q}\},\\
\label{eq:pbbc}
\mathfrak{Z}(\mathbf{G})&=\mathcal{E}(M)\otimes
\mathfrak{q}^*=\text{the vector distribution spanned by $\mathfrak{q}^*$}.
\end{align}
The following statement is straightforward.
\begin{prop}\label{pp:pba}
Let $M=\mathbf{G}/\mathbf{G}_{\mathbf{o}}$ be a $\mathbf{G}$-homogeneous
$CR$ manifold, $\mathfrak{Z}(M)$ the distribution of
$(0,1)$-vector fields on $M$, and $\mathfrak{Z}(\mathbf{G})$
the distribution on $\mathbf{G}$ defined by \eqref{eq:pbbc}. Then
\begin{enumerate}
\item The distributions $\mathfrak{Z}(M)$ and 
$\mathfrak{Z}(\mathbf{G})$ are regular at all points;
\item the principal bundle fibration $\mathbf{G}\xrightarrow{\varpi}M$
is a $Z$-morphism,
\begin{equation}\label{eq:pbbd}
\mathfrak{Z}(\mathbf{G})=[\varpi^*\mathfrak{Z}](\mathbf{G})
\end{equation}  
and conditions $(2)$, $(3)$, $(4)$ 
of Proposition \ref{pp:da} are satisfied.\qed
\end{enumerate}
\end{prop}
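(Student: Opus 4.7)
The plan is to derive everything from the homogeneity of $M$ together with the translation--invariance of the definitions. First I would verify part (1). The group $\mathbf{G}$ acts on $M$ by $CR$ diffeomorphisms, hence preserves $\mathfrak{Z}(M)$; consequently the complex rank of $\mathfrak{Z}(M)$ and the integer--valued invariants $\mu_0(p)$, $\nu_0(p)$ of \eqref{eq:Bcb}--\eqref{eq:pcdb}, as well as the dimension $\delta_0(p)$ of \eqref{eq:pcda}, are constant along $\mathbf{G}$-orbits. Since $M$ is a single orbit, they are globally constant, so every point of $M$ is regular in the sense of Definition \ref{def:cg}. On $\mathbf{G}$ itself the distribution $\mathfrak{Z}(\mathbf{G})=\mathcal{E}(\mathbf{G})\otimes\mathfrak{q}^*$ is generated by left--invariant vector fields, so it is left--invariant, and the same argument (using left translations on $\mathbf{G}$) gives the constancy of the four functions.

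For part (2) I would check the three assertions separately. The map $\varpi$ is a smooth submersion, hence automatically a submersion onto its image in the sense of the diagram \eqref{eq:mpd}. For each $X+iY\in\mathfrak{q}$ with $X,Y\in\mathfrak{g}$, the definition \eqref{eq:pbba} yields $d\varpi_e^{\mathbb{C}}(X+iY)\in T^{0,1}_{\mathbf{o}}M$, and by $\mathbf{G}$-equivariance of $\varpi$ the same holds at every point of $\mathbf{G}$ after left translation; this means $(X^*+iY^*)$ projects under any local section back into $\mathfrak{Z}(M)$, so $\mathfrak{Z}(\mathbf{G})\subset[\varpi^*\mathfrak{Z}](\mathbf{G})$, i.e. $\varpi$ is a $Z$-morphism. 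For the reverse inclusion in \eqref{eq:pbbd} I would work in a trivialization of $\mathbf{G}\to M$ about a point, decompose an arbitrary vector field on $\mathbf{G}$ as the sum of a horizontal part (lift of a vector field on $M$) and a vertical part, and observe that a horizontal vector field lies in $[\varpi^*\mathfrak{Z}](\mathbf{G})$ iff its image lies in $\mathfrak{Z}(M)$, while vertical vector fields (belonging to the kernel of $d\varpi$) are automatically contained in $\mathfrak{Z}(\mathbf{G})$ because $\ker d\varpi_e^{\mathbb{C}}\subset\mathfrak{q}$. This yields the desired equality.

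Finally, for the conditions of Proposition \ref{pp:da} I would note that since $\varpi$ is a submersion, the dual map $d\varpi^{*}(g):T^*_{\varpi(g)}M\hookrightarrow T^*_g\mathbf{G}$ is injective for every $g\in\mathbf{G}$; it carries $H^0_{\varpi(g)}M$ into $H^0_g\mathbf{G}$ because the pullback of a covector annihilating $\mathfrak{Z}(M)+\overline{\mathfrak{Z}(M)}$ annihilates $[\varpi^*\mathfrak{Z}](\mathbf{G})+\overline{[\varpi^*\mathfrak{Z}](\mathbf{G})}=\mathfrak{Z}(\mathbf{G})+\overline{\mathfrak{Z}(\mathbf{G})}$. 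Hence the pullback on characteristic covectors is injective, completing the verification of the hypotheses that \eqref{eq:pbbd} requires to feed Proposition \ref{pp:da}.

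The only nontrivial point is really the reverse inclusion $[\varpi^*\mathfrak{Z}](\mathbf{G})\subset\mathfrak{Z}(\mathbf{G})$; once this is established by the fiber--decomposition argument above, the remaining items are formal consequences of the homogeneity of $M$ and of $\varpi$ being a principal bundle projection, which is why the statement is qualified as straightforward.
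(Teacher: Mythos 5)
Your verification is correct, and it is exactly the ``straightforward'' argument the paper has in mind (the paper omits the proof entirely, ending the statement with \qed): homogeneity gives constancy of the rank and of $\mu_0,\delta_0,\nu_0$, equivariance of $\varpi$ gives the inclusion $\mathfrak{Z}(\mathbf{G})\subset[\varpi^*\mathfrak{Z}](\mathbf{G})$, and the pointwise identification $(d\varpi_g^{\mathbb{C}})^{-1}(T^{0,1}_{\varpi(g)}M)=dL_g(\mathfrak{q})$ together with $\mathfrak{g}_{\mathbf{o}}^{\mathbb{C}}\subset\mathfrak{q}$ gives the reverse inclusion and the injectivity of $\varpi^*$ on characteristic covectors. (The reference to ``conditions $(2)$, $(3)$, $(4)$'' is a numbering slip in the paper itself --- Proposition \ref{pp:da} as printed has only hypothesis $(2)$ to check --- and you have verified the one that matters.)
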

Proposition \ref{pp:pba} can be used to reduce
the question of the subellipticity
of the distribution of $(0,1)$-vector fields of a
homogeneous $CR$ manifold to
Lie algebra computations. \par
Let $(\mathfrak{g},\mathfrak{q})$ be the $CR$ algebra associated
with the $\mathbf{G}$-homogeneous $CR$ manifold $M$ and its base
point $\mathbf{o}$. Let
\begin{equation}
  \label{eq:pbbe}
  \mathfrak{t}^0=H^0_e\mathbf{G}=
\{\xi\in\mathfrak{g}^*\mid \xi(\mathrm{Re}(Z))=0,\;
\forall{Z}\in\mathfrak{q}\}.
\end{equation}
To each $\xi\in\mathfrak{t}^0$ we associate the Levi form
\begin{equation}
  \label{eq:pbbg}
\mathcal{L}_{\xi}(Z,\bar{W})=i\xi([Z,\bar{W}]),\;
\text{for}\; Z,W\in\mathfrak{q}.  
\end{equation}
We also set
\begin{align}
  \label{eq:Faa}
  \mathfrak{t}^{\oplus}&=\{\xi\in\mathfrak{t}^0\mid \mathcal{L}_{\xi}\geq{0}\},\\
\label{eq:Fab}
\mathfrak{k}_{\mathfrak{q}}&=\{Z\in\mathfrak{q}\mid \mathcal{L}_{\xi}(Z,\bar{Z})=0,
\;\forall\xi\in\mathfrak{t}^{\oplus}\}.
\end{align}
Since all points of $M$ and of $\mathbf{G}$ are regular for
$\mathfrak{Z}(M)$ and $\mathfrak{Z}(\mathbf{G})$, respectively, we obtain
from Proposition \ref{pp:pcj}:
\begin{lem}\label{lem:Fab}
Let $(\mathfrak{g},\mathfrak{q})$ be a $CR$ algebra. 
The set
$\mathfrak{k}_{\mathfrak{q}}$
is a linear subspace of $\mathfrak{q}$ and is equal to the set
\begin{equation*}
  \left\{Z_0\in\mathfrak{q}\left|\exists Z_1,\hdots,Z_r\in\mathfrak{q},\;\text{s.t.}\;
\sum_{j=0}^r[Z_j,\bar{Z}_j]\in\mathfrak{q}+\bar{\mathfrak{q}}\right\}\right. .
\end{equation*}
The elements 
\begin{equation*}
Z^*=(\mathrm{Re}\,Z)^*+i(\mathrm{Im}\,Z)^*,\quad\text{for
$Z\in\mathfrak{k}_{\mathfrak{q}}$},
\end{equation*}
generate the distribution
$\mathbb{K}_{\mathfrak{Z}}(\mathbf{G})$, that is equal to 
$\Theta_{\mathfrak{Z}}(\mathbf{G})$. \qed
\end{lem}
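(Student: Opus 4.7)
The plan is to reduce the three assertions of Lemma~\ref{lem:Fab} to an application of Proposition~\ref{pp:pcj} for the distribution $\mathfrak{Z}(\mathbf{G})$, translating between the Lie-algebraic picture at $e$ and the distribution-theoretic picture on $\mathbf{G}$ via left-invariance. First I would check linearity of $\mathfrak{k}_{\mathfrak{q}}$: for each $\xi\in\mathfrak{t}^{\oplus}$ the Hermitian form $\mathcal{L}_\xi$ is positive semidefinite, so by Cauchy--Schwarz its isotropic vectors form the complex subspace $\ker\mathcal{L}_\xi$; intersecting over $\xi\in\mathfrak{t}^{\oplus}$ gives $\mathfrak{k}_{\mathfrak{q}}$. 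Next I would set up the dictionary: by left-invariance of $\mathfrak{Z}(\mathbf{G})$, the characteristic bundle $H^0\mathbf{G}$, the semidefinite locus $H^{\oplus}\mathbf{G}$ and the distribution $\mathbb{K}_{\mathfrak{Z}}(\mathbf{G})$ are left-invariant, with fibres at $e$ equal to $\mathfrak{t}^0$, $\mathfrak{t}^{\oplus}$ and $\mathfrak{k}_{\mathfrak{q}}$. In particular $\mathbb{K}_{\mathfrak{Z}}(\mathbf{G})$ is generated as a $\mathcal{C}^{\infty}(\mathbf{G})$-module by $\{Z^*:Z\in\mathfrak{k}_{\mathfrak{q}}\}$, which is the generation statement of the lemma. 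Since $\mathfrak{q}$ is a complex Lie subalgebra of $\mathfrak{g}^{\mathbb{C}}$, $\mathfrak{Z}(\mathbf{G})$ is formally integrable and has constant rank; by Proposition~\ref{pp:pba} it is regular. The hypotheses of Proposition~\ref{pp:pcj} are thus satisfied, yielding $\Theta_{\mathfrak{Z}}(\mathbf{G})=\mathbb{K}_{\mathfrak{Z}}(\mathbf{G})$, the last assertion of the lemma.

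It remains to prove the algebraic characterization $\mathfrak{k}_{\mathfrak{q}}=S$, where $S=\{Z_0\in\mathfrak{q}\mid\exists\, Z_1,\dots,Z_r\in\mathfrak{q},\ \sum_{j=0}^r[Z_j,\bar Z_j]\in\mathfrak{q}+\bar{\mathfrak{q}}\}$. Since $\mathfrak{q}+\bar{\mathfrak{q}}=(\mathrm{Re}\,\mathfrak{q})^{\mathbb{C}}$, a $\mathbb{C}$-linearly extended functional $\xi\in\mathfrak{g}^*$ annihilates $\mathfrak{q}+\bar{\mathfrak{q}}$ if and only if $\xi\in\mathfrak{t}^0$, so the defining relation of $S$ is equivalent to $\sum_{j=0}^r\mathcal{L}_\xi(Z_j,\bar Z_j)=0$ for all $\xi\in\mathfrak{t}^0$. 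Restricting to $\xi\in\mathfrak{t}^{\oplus}$, each summand is nonnegative, so each vanishes and in particular $\mathcal{L}_\xi(Z_0,\bar Z_0)=0$; thus $S\subset\mathfrak{k}_{\mathfrak{q}}$. For the reverse inclusion I would carry out the Gram--Schmidt construction from the proof of Proposition~\ref{pp:pcj} equivariantly. The bundles $\mathrm{Herm}(\mathbb{K}_{\mathfrak{Z}}(\mathbf{G}))$, $\mathrm{Herm}^*(\mathbb{K}_{\mathfrak{Z}}(\mathbf{G}))$, the image bundle $L\mathbf{G}$ of $\xi\mapsto\mathcal{L}_\xi|_{\mathbb{K}_{\mathfrak{Z}}(\mathbf{G})}$ and its annihilator $L^0\mathbf{G}$ are all left-invariant; a positive definite element of the fibre $L^0_e\mathbf{G}$ therefore left-translates to a global left-invariant positive definite section $\mathfrak{z}$ of $L^0\mathbf{G}$. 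Given $Z_0\in\mathfrak{k}_{\mathfrak{q}}$, for large $N$ the element $N\mathfrak{z}_e-Z_0\otimes\bar Z_0$ is positive semidefinite in $\mathrm{Herm}^*(\mathfrak{k}_{\mathfrak{q}})$, hence equals $\sum_{j=1}^r Z_j\otimes\bar Z_j$ for some $Z_j\in\mathfrak{k}_{\mathfrak{q}}$; since $N\mathfrak{z}_e\in L^0_e\mathbf{G}$, the vanishing $\sum_{j=0}^r\mathcal{L}_\xi(Z_j,\bar Z_j)=0$ holds for every $\xi\in\mathfrak{t}^0$, giving $\sum_{j=0}^r[Z_j,\bar Z_j]\in\mathfrak{q}+\bar{\mathfrak{q}}$, i.e.\ $Z_0\in S$.

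The main obstacle is the existence of a positive definite element of $L^0_e\mathbf{G}$. By the separation theorem between the real subspace $L^0_e\mathbf{G}$ and the open cone of positive definite elements of $\mathrm{Herm}^*(\mathfrak{k}_{\mathfrak{q}})$, this is equivalent to the assertion that $L_e\mathbf{G}=\{\mathcal{L}_\xi|_{\mathfrak{k}_{\mathfrak{q}}}:\xi\in\mathfrak{t}^0\}\subset\mathrm{Herm}(\mathfrak{k}_{\mathfrak{q}})$ contains no nonzero positive semidefinite form. This is the fibrewise version of the injectivity claim underlying Proposition~\ref{pp:pcj} (that $\xi\mapsto\mathcal{L}_\xi|_{KM}$ has kernel $\langle H^{\oplus}M\rangle$); once granted, the rest of the argument is routine finite-dimensional convex analysis.
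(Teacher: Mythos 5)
Your proof is correct and follows essentially the same route as the paper, which derives the lemma directly from the left-invariance and everywhere-regularity of $\mathfrak{Z}(\mathbf{G})$ (Proposition \ref{pp:pba}) together with Proposition \ref{pp:pcj}; you have merely spelled out, at the single fibre over $e$, the Gram--Schmidt and convex-separation steps that the paper's proof of Proposition \ref{pp:pcj} carries out at the bundle level. The one point you flag as an obstacle --- the existence of a positive definite element of $L^0_e\mathbf{G}$ --- is exactly the assertion already made and used inside the proof of Proposition \ref{pp:pcj}, so deferring to it is consistent with the paper's own logic.
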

As a consequence of Propositions \ref{pp:crx}, \ref{pp:pba} 
and Lemma \ref{lem:Fab} we have
\begin{prop}\label{pp:pbc}
Let $M=\mathbf{G}/\mathbf{G}_{\mathbf{o}}$ be a homogeneous $CR$ manifold
and let $(\mathfrak{g},\mathfrak{q})$ be the $CR$ algebra associated with
$M$ and the base point $\mathbf{o}$. 
We denote by $\mathfrak{Z}(M)$ the distribution of $(0,1)$-vector fields
on $M$.
Then the following are equivalent.
\begin{enumerate}
\item $\mathfrak{Z}(M)$ satisfies condition \eqref{eq:cra}.
\item $\mathfrak{Z}(M)$ satisfies condition \eqref{eq:be}.
\item $\mathfrak{Z}(\mathbf{G})$ 
 satisfies condition \eqref{eq:cra}.
\item $\mathfrak{Z}(\mathbf{G})$ 
 satisfies condition \eqref{eq:be}.
\item $(\mathfrak{g},\mathfrak{q})$ satisfies the condition:
\begin{equation}
  \label{eq:pbbh} \left\{\begin{gathered}
\forall\xi\in\mathfrak{t}^{\oplus}\setminus\{0\},\quad
\exists Z_0\in\mathfrak{q}\;\text{and}\; 
Z_1,\hdots,Z_r\in\mathfrak{k}_{\mathfrak{q}}\cup\bar{\mathfrak{k}}_{\mathfrak{q}}\\
\text{s.t.}\quad
 i\xi([Z_1,\hdots,Z_r,\bar{Z}_0])\neq{0}.
\end{gathered}\right.\begin{gathered} \quad\\
\quad \qed
\end{gathered}
\end{equation}
\end{enumerate}
\end{prop}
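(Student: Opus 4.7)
The plan is to organize the five conditions into three natural groups and show that each group is internally equivalent, then bridge the groups. The groups are: (1)--(2) living on $M$; (3)--(4) living on $\mathbf{G}$; and (5) a purely algebraic condition at $e\in\mathbf{G}$. The equivalences (1)$\Leftrightarrow$(2) and (3)$\Leftrightarrow$(4) will be immediate from Proposition \ref{pp:crx}, since by Proposition \ref{pp:pba}(1) both $\mathfrak{Z}(M)$ and $\mathfrak{Z}(\mathbf{G})$ are regular at every point, and both are formally integrable (the $CR$ integrability of $M$ transfers to $\mathbf{G}$ because $\mathfrak{q}$ is a subalgebra of $\mathfrak{g}^{\mathbb{C}}$ and left-invariant vector fields have left-invariant brackets).

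To prove (1)$\Leftrightarrow$(3), I would invoke Proposition \ref{pp:da} applied to the principal fibration $\mathbf{G}\xrightarrow{\varpi}M$, which by Proposition \ref{pp:pba}(2) is a submersion and a $Z$-morphism with $\mathfrak{Z}(\mathbf{G})=[\varpi^*\mathfrak{Z}](\mathbf{G})$. In one direction, for every $q\in\mathbf{G}$ the point $p=\varpi(q)$ is regular for $\mathfrak{Z}(M)$, the map $\varpi_q^*\colon H^0_pM\to H^0_q\mathbf{G}$ is injective (the vertical fibre kernel has trivial pullback because $\varpi$ is a $Z$-morphism), so Proposition \ref{pp:da} gives (3)$\Rightarrow$(1). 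For the converse, $\varpi$ being a submersion onto the whole of $M$, one uses Lemma \ref{lem:Dg}: the equalities $\mathbb{K}_{\varpi^*\mathfrak{Z}}(\mathbf{G})=[\varpi^*\mathbb{K}_{\mathfrak{Z}}](\mathbf{G})$ and $\mathbb{E}_{\varpi^*\mathfrak{Z}}(\mathbf{G})=[\varpi^*\mathbb{E}_{\mathfrak{Z}}](\mathbf{G})$ show that the higher Levi form concavity condition at $q$ is the pullback of that at $p=\varpi(q)$, giving (1)$\Rightarrow$(3).

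For (3)$\Leftrightarrow$(5), I would exploit homogeneity: everything in sight on $\mathbf{G}$ is left-invariant. Indeed, $\mathfrak{q}^*$ is by construction a left-invariant $\mathcal{C}^{\infty}$-generating family for $\mathfrak{Z}(\mathbf{G})$, so $H^0\mathbf{G}$ is the left-translate of $\mathfrak{t}^0$, the scalar Levi forms satisfy $\mathcal{L}_{L_g^*\xi}(Z^*(g),\overline{W^*(g)})=\mathcal{L}_{\xi}(Z,\bar{W})$ for $Z,W\in\mathfrak{q}$ and $\xi\in\mathfrak{t}^0$, and hence $H^\oplus\mathbf{G}$ is the left-translate of $\mathfrak{t}^\oplus$. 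By Lemma \ref{lem:Fab}, the distribution $\mathbb{K}_{\mathfrak{Z}}(\mathbf{G})$ is $\mathcal{C}^{\infty}(\mathbf{G})$-generated by $\{Z^*\mid Z\in\mathfrak{k}_{\mathfrak{q}}\}$. Applying a left-invariant higher commutator $[Z_1^*,\hdots,Z_r^*,\bar{Z}_0^*]$ at the identity reproduces the iterated bracket $[Z_1,\hdots,Z_r,\bar{Z}_0]\in\mathfrak{g}^{\mathbb{C}}$. Condition \eqref{eq:cra} at $e$ for $\mathfrak{Z}(\mathbf{G})$ is therefore literally \eqref{eq:pbbh}, and by left translation it holds at $e$ if and only if it holds at every $g\in\mathbf{G}$.

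The only real subtlety I anticipate is making sure that the step (1)$\Rightarrow$(3) is handled correctly, since Proposition \ref{pp:da} is stated as a criterion to \emph{descend} \eqref{eq:cra} from $N$ to $M$. One has to justify, via Lemma \ref{lem:Dg}, that when $\phi=\varpi$ is a surjective submersion the formulation of \eqref{eq:cra} at a point $q$ of the total space is purely pulled back from the base, so the implication is automatic; the subtle direction is already subsumed into Proposition \ref{pp:da} because condition (2) there is vacuous in the submersive case. Once this is pinned down, the chain (1)$\Leftrightarrow$(2)$\Leftrightarrow$(3)$\Leftrightarrow$(4)$\Leftrightarrow$(5) is complete.
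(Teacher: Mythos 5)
Your proposal is correct and follows essentially the route the paper indicates: the paper states this proposition as an immediate consequence of Propositions \ref{pp:crx} and \ref{pp:pba} and Lemma \ref{lem:Fab} without writing out details, and your argument is precisely a fleshed-out version of that chain (regularity and formal integrability giving (1)$\Leftrightarrow$(2) and (3)$\Leftrightarrow$(4), the $Z$-morphism/pullback machinery of Proposition \ref{pp:da} and Lemma \ref{lem:Dg} giving (1)$\Leftrightarrow$(3), and left-invariance plus Lemma \ref{lem:Fab} giving (3)$\Leftrightarrow$(5)).
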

\section{Orbits of a real form in a complex flag manifold}
\label{sec:orb}
In this section we investigate the subellipticity of the 
distribution of the $(0,1)$-vector fields of the homogeneous 
$CR$ manifolds which are real orbits of 
real forms in complex
flag manifolds. The study of their $CR$ geometry has been 
already started
in \cite{AMN06, AMN07},
to which we refer for the complete explanation of many
details.\par
\subsection{Complex flag manifolds and orbits of a real form}
We recall that a \emph{complex flag manifold} is a 
closed complex projective variety, that is a
coset space
of a connected semisimple complex Lie group
$\mathbf{G}^{\mathbb{C}}$ 
with respect to 
a complex \emph{parabolic} subgroup.
\par
A \emph{real form} $\mathbf{G}$ of $\mathbf{G}^{\mathbb{C}}$ is
a \emph{real}
Lie subgroup of $\mathbf{G}^{\mathbb{C}}$ 
whose Lie algebra $\mathfrak{g}$ is a real form of
$\mathfrak{g}^{\mathbb{C}}$, i.e. such that $\mathfrak{g}\subset
\mathfrak{g}^{\mathbb{C}}$ and $\mathfrak{g}^{\mathbb{C}}=\mathfrak{g}+
i\mathfrak{g}$. 
We shall write $\bar{Z}$ for the conjugate of an element 
$Z\in\mathfrak{g}^{\mathbb{C}}$ with respect to the real form $\mathfrak{g}$.
The left action of $\mathbf{G}$ 
decomposes $F$ 
into
a finite set of $\mathbf{G}$-orbits (see \cite{Wolf69}).
All $\mathbf{G}$-orbits $M=\mathbf{G}\cdot\mathbf{o}$ are
generically embedded $CR$ submanifolds of $F$.
It turns out that
$F$ and 
also $M$, if we take $\mathbf{G}$ connected, 
are completely determined
by the Lie algebra $\mathfrak{g}^{\mathbb{C}}$, 
by its real form
$\mathfrak{g}$, and by the complex parabolic Lie subalgebra 
$\mathfrak{q}\subset\mathfrak{g}^{\mathbb{C}}$ of the isotropy
subgroup $\mathbf{Q}$ at $\mathbf{o}$.
Thus we shall write
$M=M(\mathfrak{g},\mathfrak{q})$, for the homogeneous $CR$ manifold
$M$. We denote by
$\mathbf{G}_{\mathbf{o}}=\mathbf{Q}\cap\mathbf{G}$ the isotropy
subgroup at the base point $\mathbf{o}$ 
and by $\mathbf{G}\xrightarrow[\mathbf{G}_{\mathbf{o}}]{\varpi}M$
the principal $\mathbf{G}_{\mathbf{o}}$-bundle 
$\varpi\colon\mathbf{G}\ni{g}\to{g}\cdot\mathbf{o}\in{M}$.
\par
The Lie algebra $\mathfrak{g}_{\mathbf{o}}$ of 
$\mathbf{G}_{\mathbf{o}}$ 
contains a Cartan subalgebra $\mathfrak{h}$
of $\mathfrak{g}$ (see e.g. \cite{Wolf69, AMN06, AMN07}).
Its complexification 
$\mathfrak{h}^{\mathbb{C}}=\mathfrak{h}+i\mathfrak{h}$ is a Cartan subalgebra
of $\mathfrak{g}^{\mathbb{C}}$. Let $\mathcal{R}$ be the set of roots
of $\mathfrak{g}^{\mathbb{C}}$ with respect to $\mathfrak{h}^{\mathbb{C}}$,
and 
$\mathfrak{g}^{\mathbb{C}}_{\alpha}=\{Z\in\mathfrak{g}^{\mathbb{C}}\mid
[H,Z]=\alpha(H)Z,\;\forall{H}\in\mathfrak{h}^{\mathbb{C}}\}$
the eigenspace corresponding to the root $\alpha\in\mathcal{R}$.
Since $\mathfrak{h}^{\mathbb{C}}\subset\mathfrak{q}$, the subalgebra
$\mathfrak{q}$ is 
$\mathrm{ad}_{\mathfrak{g}^{\mathbb{C}}}(\mathfrak{h}^{\mathbb{C}})$-invariant. 
Hence we have
\begin{align}\label{eq:mof}
  \mathfrak{q}&=\mathfrak{h}^{\mathbb{C}}\oplus\sum_{\alpha\in\mathcal{Q}}
\mathfrak{g}^{\mathbb{C}}_{\alpha},\quad\text{for}\\
\mathcal{Q}&=\{\alpha\in\mathcal{R}\mid \mathfrak{g}^{\mathbb{C}}_{\alpha}
\subset\mathfrak{q}\}.
\end{align}
To say that $\mathfrak{q}$ is parabolic means that
$\mathcal{Q}$ contains a positive system 
of roots $\mathcal{R}^+$. Let $\prec$ be a corresponding partial order
on the linear span $\mathfrak{h}^*_{\mathbb{R}}$ of $\mathcal{R}$, 
and
$\mathcal{B}=\{\alpha_1,\hdots,\alpha_{\ell}\}$ 
the set of simple positive roots in $\mathcal{R}^+$.
Every root $\alpha\in\mathcal{R}$ can be written in a unique way as
a linear combination with integral coefficients of the elements of
$\mathcal{B}$:
\begin{equation}
  \label{eq:moh}
  \alpha=\sum_{j=1}^{\ell}k_j\alpha_j,
\end{equation}
where all $k_j$'s are either $\geq{0}$, or $\leq{0}$, according to whether
$\alpha$ is positive or negative with respect to $\prec$.
We define the \emph{support} of $\alpha$ to be the set
\begin{equation}
  \label{eq:moi}
  \mathrm{supp}(\alpha)=\{\alpha_j\in\mathcal{B}\mid k_j\neq{0}\}.
\end{equation}
Let 
\begin{equation}
  \label{eq:moia}
  \Phi=\{\alpha\in\mathcal{B}\mid -\alpha\notin\mathcal{Q}\}
\end{equation}
be the set of simple roots $\alpha$ whose opposite $(-\alpha)$ does not belong to
$\mathcal{Q}$.
Then
\begin{equation}
  \label{eq:moj}
  \mathcal{Q}=\mathcal{R}^+\cup\{\alpha\prec{0}\mid\mathrm{supp}(\alpha)
\cap\Phi=\emptyset\}.
\end{equation}
Since $\mathcal{Q}$ is completely determined by $\Phi$, we shall write
$\mathcal{Q}_{\Phi}$, $\mathfrak{q}_{\Phi}$, $\mathbf{Q}_{\Phi}$
for the parabolic set of roots, the complex parabolic subalgebra and the
complex parabolic subgroup, 
respectively,
that are attached to any special choice of the
subset $\Phi$ of $\mathcal{B}$. We shall also introduce the notation
\begin{equation}
  \label{eq:mok}
  \begin{aligned}
    \mathcal{Q}^n_{\Phi}&=\{\alpha\succ{0}\mid\mathrm{supp}(\alpha)\cap\Phi
\neq\emptyset\}\\
\mathcal{Q}^r_{\Phi}&=\{\alpha\mid \alpha,-\alpha\in\mathcal{Q}\}
=\{\alpha\in\mathcal{R}\mid \mathrm{supp}(\alpha)\cap\Phi=\emptyset\}.
  \end{aligned}
\end{equation}
\par
The conjugation in $\mathfrak{g}^{\mathbb{C}}$ 
induced by the real form $\mathfrak{g}$
defines, by duality, a conjugation
$\alpha\to\bar{\alpha}$ in $\mathcal{R}$. 
We partition $\mathcal{R}$ into three subsets:
\begin{align}
\mathcal{R}_{\mathrm{re}}&=\{\alpha\in\mathcal{R}\mid\bar{\alpha}=\alpha\}
&&\text{real roots},\\
\mathcal{R}_{\mathrm{im}}&=\{\alpha\in\mathcal{R}\mid\bar{\alpha}=-\alpha\}
&&\text{imaginary roots},\\
\mathcal{R}_{\mathrm{cx}}&=\{\alpha\in\mathcal{R}\mid\bar{\alpha}\neq\pm\alpha\}
&&\text{complex roots}.
\end{align}
The Cartan subalgebra $\mathfrak{h}$ is invariant under a Cartan 
involution
$\vartheta$ of $\mathfrak{g}$, whose set of fixed points $\mathfrak{k}$
is a maximal compact Lie subalgebra of $\mathfrak{g}$. With
$\mathfrak{p}=\{X\mid \vartheta(X)=-X\}$, 
we have that for imaginary $\alpha$, the eigenspace
$\mathfrak{g}^{\mathbb{C}}_{\alpha}$ is contained either in
$\mathfrak{k}^{\mathbb{C}}=\mathfrak{k}+i\mathfrak{k}$, or
in $\mathfrak{p}^{\mathbb{C}}=\mathfrak{p}+i\mathfrak{p}$.
In the first case we say that $\alpha$ is a \emph{compact} root.
Compact roots form a root subsystem $\mathcal{R}_{\bullet}$ of 
$\mathcal{R}$.
\par
The $\mathbf{G}$-homogeneous $CR$ structure of $M$ is defined,
as in \S\ref{sec:hom}, by
assigning the subspace 
\begin{equation}
  \label{eq:mol}
  T^{0,1}_{\mathbf{o}}=d\varpi(e)(\mathfrak{q}).
\end{equation}
Here $e$ is the identity of $\mathbf{G}$ and
we denote by the same symbol $d\varpi(e)$
the complexification of the differential
$d\varpi(e):\mathfrak{g}=T_e\mathbf{G}\to{T}_{\mathbf{o}}M$. 

For each $Z\in\mathfrak{g}^{\mathbb{C}}$ we can consider the 
\textit{fundamental vector field} $Z^*\in\mathfrak{X}^{\mathbb{C}}(M)$.
Its real and imaginary parts are the infinitesimal generators of the
flows associated to the left translations by $\exp(t\mathrm{Re}(Z))$
and $\exp(t\mathrm{Im}(Z))$, respectively. 
\par
Fix a Chevalley basis\footnote{
This means that $Z_{\alpha}\in\mathfrak{g}^{\mathbb{C}}_{\alpha}$ for
all $\alpha\in\mathcal{R}$ and that, for $\alpha\in\mathcal{B}$, we also
have $[H_{\alpha},Z_{\alpha}]=2Z_{\alpha}$, 
$[H_{\alpha},Z_{-\alpha}]=-2Z_{-\alpha}$,
$[Z_{\alpha},Z_{-\alpha}]=-H_{\alpha}$; and moreover that the 
linear map defined by $H_{\alpha}\to -H_{\alpha}$ and
$Z_{\alpha}\to{Z}_{-\alpha}$ is an involutive automorphism
of the Lie algebra~$\mathfrak{g}^{\mathbb{C}}$.
Moreover, $Z_{\alpha}\in\mathfrak{g}$ when $\alpha\in\mathcal{R}_{\mathrm{re}}$
and,
if $\alpha\in\mathcal{R}_{\mathrm{im}}$,
we have $\bar{Z}_{\alpha}=Z_{-\alpha}$ when $\alpha$ is compact
and $\bar{Z}_{\alpha}=-Z_{-\alpha}$
when $\alpha$ is not compact. In general, $Z_{\bar\alpha}=
t_{\alpha}\bar{Z}_{\alpha}$, with $t_{\alpha}=\pm{1}$, 
for all roots $\alpha\in\mathcal{R}$, and we can take 
$t_{\alpha}=1$ when $\alpha\in\mathcal{R}_{\mathrm{re}}$ 
(see e.g. \cite{Bou75}).}
$\{H_{\alpha}\mid\alpha\in\mathcal{B}\}
\cup\{Z_{\alpha}\mid\alpha\in\mathcal{R}\}$ of $\mathfrak{g}^{\mathbb{C}}$.
The restrictions to $\mathbf{G}$ of the vector fields $H_{\alpha}$, for
$\alpha\in\mathcal{B}$, and $Z_{\alpha}$, for $\alpha\in\mathcal{Q}_{\Phi}$,
form a basis for the pullback $\mathfrak{Z}(\mathbf{G})$ of
$\mathfrak{Z}(M)$. 
\par
Since $\mathfrak{g}^{\mathbb{C}}$ is semisimple, the Killing form
$\kappa_{\mathfrak{g}^{\mathbb{C}}}$ is nondegenerate. Thus, 
as in \cite[\S 13]{AMN06}, we can use the Killing form to
identify the complexification 
$\mathfrak{t}^{0\,{\mathbb{C}}}$ of 
the space $\mathfrak{t}^0$ of \eqref{eq:pbbe} with the linear span
of
\begin{equation}
  \label{eq:moq}
  Z_{\alpha}\quad \text{for}\quad \alpha\in\mathcal{Q}_{\Phi}^n\cap
\bar{\mathcal{Q}}^n_{\Phi}.
\end{equation}
This is obtained by associating
to $Z_{\alpha}$ the linear form
\begin{equation}
  \label{eq:mor}
f_{\alpha}:  \mathfrak{g}^{\mathbb{C}}\ni{Z}\to
\kappa_{\mathfrak{g}^{\mathbb{C}}}(Z_{-\alpha},Z)=\mathrm{trace}
\left(\mathrm{ad}_{\mathfrak{g}^{\mathbb{C}}}(Z_{-\alpha})\circ
\mathrm{ad}_{\mathfrak{g}^{\mathbb{C}}}(Z)\right)\in\mathbb{C}.
\end{equation}
Correspondingly we obtain the complexified Levi forms
\begin{equation}
  \label{eq:mos}
  \mathbf{L}_{\alpha}(Z,\bar{W})=i f_{\alpha}([Z,\bar{W}]),\;
\text{for}\; Z,W\in\mathfrak{q}_{\Phi}.
\end{equation}
When $\alpha\in\mathcal{R}_{\mathrm{re}}\cap\mathcal{Q}$, it actually
corresponds to a Levi form \eqref{eq:sb} at $\mathbf{o}$. 
The intersection $\mathcal{Q}^n_{\Phi}\cap\bar{\mathcal{Q}}^n_{\Phi}$
does not contain imaginary roots. To a pair of complex roots 
$\alpha,\bar\alpha\in\mathcal{Q}^n_{\Phi}\cap\bar{\mathcal{Q}}^n_{\Phi}$,
correspond the two Hermitian symmetric forms obtained by polarization
from the Hermitian quadratic forms
\begin{equation*}
  \begin{aligned}
    \left[\mathrm{Re}\,\mathbf{L}_{\alpha}\right](Z,\bar{Z})&=
\frac{i}{2}\left(f_{\alpha}([Z,\bar{Z}])-\overline{f_{\alpha}([Z,\bar{Z}])}\right)
,\\
\left[\mathrm{Im}\,\mathbf{L}_{\alpha}\right](Z,\bar{Z})&=
\frac{1}{2}\left(f_{\alpha}([Z,\bar{Z}])+\overline{f_{\alpha}([Z,\bar{Z}])}\right)
.  \end{aligned}
\end{equation*}
Note that $L_{\alpha}(Z,\bar{W})=\pm\overline{{L}_{\bar\alpha}(W,\bar{Z})}$,
when $Z_{\bar\alpha}$ equals $\pm\bar{Z}_{\alpha}$, respectively.
Thus each Levi form can be written as a linear combination of the
$\mathbf{L}_{\beta}$'s for $\beta\in\mathcal{Q}_{\Phi}^n\cap
\bar{\mathcal{Q}}^n_{\Phi}$, the coefficients of $\mathbf{L}_{\beta}$
and $\mathbf{L}_{\bar{\beta}}$, for $\beta\in\mathcal{Q}_{\Phi}^n\cap
\bar{\mathcal{Q}}^n_{\Phi}\cap\mathcal{R}_{\mathrm{cx}}$ being either
conjugate or anticonjugate according to the sign in the equality
$Z_{\bar\beta}=\pm\bar{Z}_{\beta}$.
\par
\subsection{Semidefinite Levi forms}
\begin{lem}
  \label{lm:mmaa}
  Let $\mathcal{R}$ be a root system. There exist no triples
$\alpha,\beta,\gamma\in\mathcal{R}$ with
  \begin{equation*}\left\{
    \begin{gathered}
  \alpha+\bar{\alpha},\;\beta+\bar{\beta},\;\gamma+\bar{\gamma}\in\mathcal{R}, \\
   \alpha+\bar{\alpha}\neq\beta+\bar{\beta},\quad
\alpha+\bar{\beta}=\gamma+\bar{\gamma}.
    \end{gathered}\right.
  \end{equation*}
 \end{lem}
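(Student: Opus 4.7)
\textit{Plan of proof.} The strategy is to argue by contradiction: assume a triple $(\alpha,\beta,\gamma)$ satisfying the three conditions exists and extract an arithmetic obstruction from Cartan integers. First I would set $r=\alpha+\bar\alpha$, $s=\beta+\bar\beta$, $\rho=\gamma+\bar\gamma$; each is fixed by the conjugation (being a sum of a root and its conjugate), hence lies in $\mathcal{R}_{\mathrm{re}}$. Applying conjugation to the identity $\alpha+\bar\beta=\rho$ gives $\bar\alpha+\beta=\rho$, and adding the two yields $r+s=2\rho$, while by hypothesis $r\neq s$.

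The heart of the argument is a single Cartan-integer computation. Decompose $\gamma=\rho/2+\gamma''$ where $\gamma''=(\gamma-\bar\gamma)/2$ lies in the $(-1)$-eigenspace of the conjugation. Since the conjugation is induced by the Killing-form-preserving real-form involution of $\mathfrak{g}^{\mathbb{C}}$, it acts as an \emph{isometric} involution of $\mathfrak{h}^*_{\mathbb{R}}$; its $\pm 1$-eigenspaces are therefore mutually orthogonal, so $\langle r,\gamma''\rangle=0$ and consequently $\langle r,\gamma\rangle=\langle r,\rho\rangle/2$. Because $r$ and $\gamma$ are both roots, the Cartan integer $N:=2\langle r,\gamma\rangle/|r|^2=\langle r,\rho\rangle/|r|^2$ must be an integer. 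Substituting $\rho=(r+s)/2$ produces $N=\tfrac12+\tfrac{n}{4}$, where $n:=2\langle r,s\rangle/|r|^2\in\{-3,\dots,3\}$ is also a Cartan integer. The integrality of $N$ then forces $n\equiv 2\pmod 4$, i.e.\ $n\in\{-2,2\}$.

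Running the symmetric argument with $s$ in place of $r$ (and using $\langle s,\gamma\rangle=\langle s,\rho\rangle/2$) gives $n':=2\langle r,s\rangle/|s|^2\in\{-2,2\}$ as well. But $nn'=4\langle r,s\rangle^2/(|r|^2|s|^2)=4\cos^2\theta_{rs}$ must lie in $\{0,1,2,3,4\}$, so the only option is $nn'=4$, which forces $\cos^2\theta_{rs}=1$ and hence $r=\pm s$ in a reduced root system. The case $r=s$ is excluded by hypothesis, while $r=-s$ would give $\rho=(r+s)/2=0\notin\mathcal{R}$. Either way a contradiction, and the lemma follows.

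The main obstacle I foresee is not the arithmetic — which is short and decisive — but the preliminary verification that the conjugation acts as an \emph{orthogonal} involution on $\mathfrak{h}^*_{\mathbb{R}}$, since this is exactly what licenses replacing $\langle r,\gamma\rangle$ by $\langle r,\rho\rangle/2$. This should reduce cleanly to the observation that the real-linear involution of $\mathfrak{g}^{\mathbb{C}}$ fixing $\mathfrak{g}$ preserves the Killing form, which restricts to a positive-definite real inner product on the real span of the roots.
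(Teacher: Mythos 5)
Your proof is correct, and it reaches the conclusion by a genuinely different computation than the paper's. The paper first reduces to irreducible $\mathcal{R}$, argues that the subsystem generated by $\alpha+\bar{\alpha}$, $\beta+\bar{\beta}$, $\gamma+\bar{\gamma}$ is of type $\mathrm{B}_2$, introduces explicit orthogonal coordinates $\mathrm{e}_1,\dots,\mathrm{e}_4$ adapted to the $\pm 1$-eigenspaces of the conjugation, and evaluates two \emph{products} of Cartan integers (of $\alpha$ against $\gamma+\bar{\gamma}$, and of $\gamma$ against $\alpha+\bar{\alpha}$) to pin down $\Vert\gamma\Vert^2=1$, $\Vert\alpha\Vert^2=2$, $\Vert\alpha+\bar{\alpha}\Vert^2=4$; the contradiction is that an irreducible root system admits at most two lengths. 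You instead work coordinate-free with the two \emph{individual} Cartan integers of $\gamma$ against $r=\alpha+\bar{\alpha}$ and $s=\beta+\bar{\beta}$: the orthogonality of the eigenspaces of the conjugation gives $(\gamma,r)=\tfrac{1}{2}(\rho,r)$, whence $2(\gamma,r)/|r|^2=\tfrac{1}{2}+\tfrac{n}{4}$, and integrality forces $n,n'\in\{\pm 2\}$ of equal sign, so $nn'=4\cos^2\theta_{rs}=4$ and $r=\pm s$, which is excluded ($r=s$ by hypothesis, $r=-s$ because $\rho\neq 0$). This buys you a shorter argument with no rank-two classification, no length count, and no preliminary reduction to irreducible $\mathcal{R}$ (only reducedness is needed, to pass from $\cos^2\theta_{rs}=1$ to $r=\pm s$). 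The one ingredient both arguments share, and which you rightly isolate, is that $\alpha\mapsto\bar{\alpha}$ is an isometry of $\mathfrak{h}^*_{\mathbb{R}}$ — the paper uses it tacitly in its choice of coordinates; your justification (the antilinear involution $\sigma$ of $\mathfrak{g}^{\mathbb{C}}$ defining the real form satisfies $\kappa(\sigma X,\sigma Y)=\overline{\kappa(X,Y)}$, so the induced linear involution of $\mathfrak{h}^*_{\mathbb{R}}$ is a root-system automorphism preserving the Killing inner product) is the standard one and is adequate. A purely expository remark: you should dispose of the proportional case $r=\pm s$ \emph{before} invoking the bound $|n|\leq 3$, since that bound presupposes non-proportionality; as you already have both halves of that observation, this is a matter of ordering, not a gap.
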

\begin{proof} Note that $\alpha,\;\beta,\;\gamma$ belong to the same
irreducible component of $\mathcal{R}$. Thus we may as well assume that
$\mathcal{R}$ is irreducible.
  We argue
by contradiction. \par
   The
  root subsystem $\mathcal{R}'$ generated by
  $\{\alpha+\bar{\alpha},\beta+\bar{\beta},\gamma+\bar{\gamma}\}$ is of type
  $\mathrm{B}_2$. 
Indeed, it is not of type $\mathrm{G}_2$ because it 
  is a proper subsystem of a larger root system, 
since all roots in $\mathcal{R}'$ are real, while 
the fact that $\alpha+\bar{\alpha}$ is a root implies that 
$\mathcal{R}_{\mathrm{cx}}$ is not empty.
Moreover, from
  $(\alpha+\bar{\alpha})+(\beta+\bar{\beta})=2(\gamma+\bar{\gamma})$,
  we deduce that $\mathcal{R}'$ has rank $2$ and contains roots
  of different lengths.
  Thus we can choose a basis $\{\mathrm{e}_j\}$ in
  $\mathfrak{h}_{\mathbb{R}}^*$ such that
  \[
  \alpha+\bar{\alpha}=2\mathrm{e}_1,\quad
  \beta+\bar{\beta}=2\mathrm{e}_2,\quad
  \gamma+\bar{\gamma}=\mathrm{e}_1+\mathrm{e}_2,
  \]
and, furthermore, that 
  \[
  \alpha=\mathrm{e}_1+a\mathrm{e}_3,\quad
  \beta=\mathrm{e}_2+a\mathrm{e}_3,\quad 
  \gamma=(1/2)\mathrm{e}_1+(1/2)\mathrm{e}_2+b\mathrm{e}_3+c\mathrm{e}_4
  \]
  for some $a,b,c\in\mathbb{R}$, and $a>0$.

  Let $\langle\alpha|\beta\rangle=
  2(\alpha|\beta)/(\alpha|\alpha)$. Recall that, if
  $\alpha_1,\alpha_2\in\mathcal{R}$ are not proportional, then
  $\left(\langle\alpha_1|\alpha_2\rangle\langle\alpha_2|\alpha_1\rangle\right)
\in\{0,1,2,3\}$. 
  Since
  \[
  \langle\alpha|\gamma+\bar{\gamma}\rangle
  \langle\gamma+\bar{\gamma}|\alpha\rangle = \frac{2}{1+a^2},
  \]
we have
$a=1$, that is
  $\alpha=\mathrm{e}_1+\mathrm{e}_3$ and
  $\beta=\mathrm{e}_2+\mathrm{e}_3$.
  Then
  \[
  \langle\gamma|\alpha+\bar{\alpha}\rangle
  \langle\alpha+\bar{\alpha}|\gamma\rangle= \frac{1}{(1/2)+b^2+c^2}
  \]
 implies that $b^2+c^2=1/2$. Hence $\Vert\gamma\Vert^2=1$. This gives
a contradiction,
because $\Vert\alpha\Vert^2=2$ and 
  $\Vert\alpha+\bar{\alpha}\Vert^2=4$, and at most two different root
  lengths are allowed in an irreducible root system.
\end{proof}

As a consequence of Proposition \ref{pp:pbc} we obtain
\begin{prop}\label{pp:moa}
Let $M(\mathfrak{g},\mathfrak{q}_{\Phi})$ be a
$\mathbf{G}$-orbit in the complex flag manifold $F$,
and $\mathfrak{Z}(M)$ the distribution of its $(0,1)$-vector fields.
Set
\begin{equation}
  \label{eq:mou}
  \mathcal{K}_{\Phi}=\left\{\alpha\in\mathcal{R}\mid
\mathfrak{g}^{\mathbb{C}}_{\alpha}\subset\mathfrak{k}_{\mathfrak{q}_{\Phi}}
\right\}.
\end{equation}
Then
\begin{equation}
  \label{eq:moua}
 \mathcal{K}_{\Phi}=\left\{\alpha\in\mathcal{Q} \left|
     \begin{gathered}
       \text{either } {-(\alpha+\bar\alpha)}
\notin\mathcal{Q}\\
\text{or } \mathbf{L}_{\alpha+\bar\alpha}\;\text{is indefinite}
     \end{gathered}\right\}\right. 
\end{equation}
and we have
\begin{equation}
  \label{eq:mov}
  \mathfrak{k}_{\mathfrak{q}_{\Phi}}
=\mathfrak{h}^{\mathbb{C}}\oplus
\sum_{\alpha\in 
\mathcal{K}_{\Phi}}\mathfrak{g}^{\mathbb{C}}_{\alpha}.
\end{equation}

A sufficient condition, in order that $\mathfrak{Z}(M)$
satisfy the higher Levi form concavity condition \eqref{eq:cra},
is that
\begin{equation}
  \label{eq:mot}\begin{cases}
  \text{for each }\gamma\in\mathcal{Q}^n_{\Phi}\;\text{with}\;
\gamma=\bar{\gamma}\;
  \text{and}\; \mathbf{L}_{\gamma}\geq{0},\\
\text{there exist}\;\alpha_0\in\bar{\mathcal{Q}}_{\Phi},\;\text{and}\;
\alpha_1,\hdots,\alpha_r\in\mathcal{K}_{\Phi}\cup\bar{\mathcal{K}}_{\Phi}
\;\,\text{s.t.}\\
\sum_{j=0}^h\alpha_j\in\mathcal{R}\;\text{for}\;\,1\leq{h}\leq{r},\;
\;-\gamma=\sum_{j=0}^r\alpha_j.
\end{cases}
\end{equation}
\end{prop}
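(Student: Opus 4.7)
The plan is to first establish the weight-space decomposition \eqref{eq:mov} of $\mathfrak{k}_{\mathfrak{q}_\Phi}$ under $\mathrm{ad}(\mathfrak{h}^{\mathbb{C}})$, which reduces \eqref{eq:moua} to a case-by-case analysis on individual root vectors $Z_\alpha$ via the $\Theta$-characterization of Lemma \ref{lem:Fab}. For the sufficient condition \eqref{eq:mot}, I invoke Proposition \ref{pp:pbc} to reduce to the Lie-algebra condition \eqref{eq:pbbh}, and then construct explicit witnesses using iterated commutators in the Chevalley basis, with Lemma \ref{lm:mmaa} ensuring that the contributions from distinct real characteristic directions decouple.

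For the decomposition, I first observe that $\mathfrak{h}^{\mathbb{C}}\subset\mathfrak{k}_{\mathfrak{q}_\Phi}$ (brackets of Cartan elements with their conjugates vanish), and that $\mathfrak{k}_{\mathfrak{q}_\Phi}$ is stable under $\mathrm{ad}(\mathfrak{h}^{\mathbb{C}})$, via the Jacobi identity $[H,[Z,\bar Z]]=[[H,Z],\bar Z]+[Z,[H,\bar Z]]$ combined with the invariance of $\mathfrak{q}$ and $\bar{\mathfrak{q}}$ under $\mathrm{ad}(\mathfrak{h}^{\mathbb{C}})$. Hence $\mathfrak{k}_{\mathfrak{q}_\Phi}$ decomposes as $\mathfrak{h}^{\mathbb{C}}$ plus a sum of full root spaces, yielding \eqref{eq:mov}. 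For each $\alpha\in\mathcal{Q}$, the bracket $[Z_\alpha,\bar Z_\alpha]$ is either zero (when $\gamma:=\alpha+\bar\alpha$ is neither $0$ nor a root), lies in $\mathfrak{h}^{\mathbb{C}}$ (when $\alpha$ is imaginary so $\gamma=0$), or is a nonzero scalar multiple of $Z_\gamma$ (when $\gamma$ is a necessarily real root). By Lemma \ref{lem:Fab}, $Z_\alpha\in\mathfrak{k}_{\mathfrak{q}_\Phi}$ iff the class of $[Z_\alpha,\bar Z_\alpha]$ in $\mathfrak{g}^{\mathbb{C}}/(\mathfrak{q}+\bar{\mathfrak{q}})$ can be cancelled by sums $\sum_j[Z_j,\bar Z_j]$ with $Z_j\in\mathfrak{q}$. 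Whenever $-\gamma\notin\mathcal{Q}$—which subsumes the non-root, zero, and positive-nilpotent cases of $\gamma$—the class already vanishes, so $\alpha\in\mathcal{K}_\Phi$; in the remaining situation, cancellation by $[Z_\beta,\bar Z_\beta]$ with $\beta+\bar\beta=\gamma$ is possible precisely when the Hermitian form $\mathbf{L}_\gamma$, restricted to the relevant subspace of $\mathfrak{q}$, fails to be semidefinite, giving \eqref{eq:moua}.

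For the sufficiency part, by Proposition \ref{pp:pbc} it suffices to verify \eqref{eq:pbbh}. Given $\xi\in\mathfrak{t}^{\oplus}\setminus\{0\}$, I first reduce to the case that $\xi$ has a nonzero coefficient along $f_\gamma$ for some $\gamma\in\mathcal{Q}^n_\Phi$ real with $\mathbf{L}_\gamma\ge 0$: Lemma \ref{lm:mmaa} shows that diagonal blocks indexed by distinct real roots $\gamma_1\ne\gamma_2$ of the form $\beta+\bar\beta$ are mutually orthogonal within $\mathcal{L}_\xi$, so semipositivity of $\mathcal{L}_\xi$ propagates block-wise and forces the existence of such a $\gamma$. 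With this $\gamma$ fixed, hypothesis \eqref{eq:mot} supplies $\alpha_0\in\bar{\mathcal{Q}}$ and $\alpha_1,\ldots,\alpha_r\in\mathcal{K}_\Phi\cup\bar{\mathcal{K}}_\Phi$ with $-\gamma=\sum_{j=0}^r\alpha_j$ and every partial sum a root. After reordering the indices to match the nesting convention of $[\cdot,\ldots,\cdot]$, and setting $\bar Z_0=Z_{\alpha_0}$ (so $Z_0\in\mathfrak{q}$) and $Z_j=Z_{\alpha_j}$ for $j\ge 1$, the iterated commutator $[Z_1,Z_2,\ldots,Z_r,\bar Z_0]$ becomes a nonzero scalar multiple of $Z_{-\gamma}$—the partial-sum condition ensures that each nested bracket is nonzero by the Chevalley structure constants—and its pairing with $\xi$ is nonzero, establishing \eqref{eq:pbbh}.

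The main obstacle is the reduction-to-real-$\gamma$ step: conjugate pairs $\{\gamma,\bar\gamma\}$ of complex-root contributions to $\mathcal{L}_\xi$ do assemble into genuine Hermitian forms even for real $\xi$, and it is Lemma \ref{lm:mmaa} that rules out cancellation between these complex-pair blocks and the real-$\gamma$ blocks, so that a real-root witness must exist whenever $\xi$ is nonzero and semipositive. A secondary subtlety is the sign convention implicit in the Killing-form identification of $\mathfrak{t}^{0\,\mathbb{C}}$ with $\mathrm{span}\{Z_\alpha:\alpha\in\mathcal{Q}^n_\Phi\cap\bar{\mathcal{Q}}^n_\Phi\}$, which must be tracked consistently to confirm that the final commutator lands on $Z_{-\gamma}$ rather than $Z_\gamma$ and so pairs nontrivially with the chosen characteristic form.
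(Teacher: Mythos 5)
Your overall strategy matches the paper's: reduce to the Lie-algebra condition \eqref{eq:pbbh} via Proposition \ref{pp:pbc}, describe $\mathfrak{k}_{\mathfrak{q}_{\Phi}}$ as $\mathfrak{h}^{\mathbb{C}}$ plus a sum of root spaces, and realize $Z_{-\gamma}$ as an iterated Chevalley bracket along the chain provided by \eqref{eq:mot}; your remarks on the nesting order and on the sign convention in the Killing-form identification are on target. The gap is that the two load-bearing assertions you make are precisely the content of the one nontrivial claim the paper actually proves, and neither is self-evident. That claim is: for every real characteristic covector with $\mathcal{L}_{\xi}=\sum_{\beta}c_{\beta}\mathbf{L}_{\beta}$ semidefinite, one has $\bigcap_{\gamma}\ker\mathbf{L}_{\gamma}\subset\ker\mathcal{L}_{\xi}$, the intersection running over the real roots $\gamma\in\mathcal{Q}^n_{\Phi}\cap\bar{\mathcal{Q}}^n_{\Phi}$ with $\mathbf{L}_{\gamma}$ semidefinite. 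The paper proves it by partitioning $\mathcal{Q}^n_{\Phi}\cap\bar{\mathcal{Q}}^n_{\Phi}$ into four classes according to whether $\mathbf{L}_{\beta}$ is semidefinite diagonal, indefinite diagonal, purely off-diagonal, or mixed; Lemma \ref{lm:mmaa} rules out a nonzero coefficient on a mixed form (it forces at least one vanishing diagonal entry in the relevant $2\times 2$ block, hence indefiniteness), and the residual off-diagonal couplings are absorbed by the elementary fact that if $D+A$ is Hermitian semidefinite with $D$ diagonal and $A$ zero on the diagonal, then $\ker D\subset\ker(D+A)$.

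Two of your steps silently rely on this. First, in \eqref{eq:moua}, the implication ``$\mathbf{L}_{\alpha+\bar\alpha}$ indefinite $\Rightarrow Z_{\alpha}\in\mathfrak{k}_{\mathfrak{q}_{\Phi}}$'' is not settled by exhibiting one cancelling bracket $[Z_{\beta},\bar{Z}_{\beta}]$: when the indefiniteness of $\mathbf{L}_{\gamma}$ lives only in off-diagonal entries, any cancelling $Z_{j}\in\mathfrak{q}$ is a combination of root vectors and $[Z_{j},\bar{Z}_{j}]$ spills into other characteristic directions $\beta+\bar\beta$, so the cancellation must be controlled across all directions at once; it is cleaner to bypass the bracket characterization of Lemma \ref{lem:Fab} and argue directly with $\mathfrak{k}_{\mathfrak{q}_{\Phi}}=\bigcap_{\xi\in\mathfrak{t}^{\oplus}}\ker\mathcal{L}_{\xi}$ plus the claim above. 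Second, your reduction ``semipositivity of $\mathcal{L}_{\xi}$ forces a real-root witness'' overstates what Lemma \ref{lm:mmaa} gives: the lemma does not make distinct diagonal blocks orthogonal (couplings through characteristic roots whose forms have no diagonal entries persist, and are only neutralized by the $D+A$ fact), and a nonzero $\xi\in\mathfrak{t}^{\oplus}$ need not have a nonzero coefficient on a real root at all --- e.g.\ when it is supported on directions where the individual Levi forms vanish, so that $\mathcal{L}_{\xi}=0$. The correct conclusion from the claim is that the support of such a $\xi$ is confined to semidefinite real-root directions together with directions $\beta$ with $c_{\beta}\mathbf{L}_{\beta}=0$; note that \eqref{eq:mot} does cover real $\gamma$ with $\mathbf{L}_{\gamma}=0$ (since $0\geq 0$), and this is essential for the argument to close. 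Without the partition argument and the matrix lemma, both halves of your proof remain assertions rather than proofs.
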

\begin{proof}
  Denote by $\mathcal{H}^{\oplus}$ the set of real roots
  $\gamma\in\mathcal{Q}^n_{\Phi}\cap\bar{\mathcal{Q}}^n_{\Phi}$ such
  that the corresponding Levi form $\mathbf{L}_{\gamma}$ is
  semidefinite.\par 
  Consider any Levi form
  $\mathbf{L}=
  \sum_{\beta\in\mathcal{Q}^n_{\Phi}\cap\bar{\mathcal{Q}}^n_{\Phi}}c_\beta
  \mathbf{L}_\beta$, with $c_{\bar{\beta}}=\pm\bar{c}_\beta$. We claim
  that if $\mathbf{L}$ is semidefinite then 
  \[
  \bigcap_{\gamma\in\mathcal{H}^{\oplus}}\ker\mathbf{L}_\gamma\subset\ker\mathbf{L}.
  \]
  This claim implies \eqref{eq:moua} and \eqref{eq:mov}, from which
  the last statement follows.
\par
  With respect to the basis
  $\{Z_\alpha\}_{\alpha\in\mathcal{Q}\setminus\bar{\mathcal{Q}}}$, the
complexified 
  Levi forms $\mathbf{L}_\beta$ have mutually
  non overlapping nonzero entries, and each of them has at most one
  nonzero entry in each row (or column). Note that, for 
$\beta\in\mathcal{Q}^n_{\Phi}\cap\bar{\mathcal{Q}}^n_{\Phi}
\cap\mathcal{R}_{\mathrm{cx}}$, the matrix of $\mathbf{L}_{\beta}$ has no
diagonal entries.
  Partition
  $\mathcal{Q}^n_{\Phi}\cap\bar{\mathcal{Q}}^n_{\Phi}$ into the
  following subsets:
  \begin{equation*}
    \begin{aligned}
  \mathcal{H}_0&=
    \left\{\beta\in\mathcal{Q}^n_{\Phi}\cap\bar{\mathcal{Q}}^n_{\Phi}
\cap\mathcal{R}_{\mathrm{re}}\mid
      \mathbf{L}_\beta
      \text{ is semidefinite (hence diagonal)}\right\},\\      
\mathcal{H}_1&=
    \left\{\beta\in\mathcal{Q}^n_{\Phi}\cap\bar{\mathcal{Q}}^n_{\Phi}
\cap\mathcal{R}_{\mathrm{re}}\mid
      \mathbf{L}_\beta\text{ is indefinite
        and diagonal}\right\},\\
\mathcal{H}_2&=
    \left\{\beta\in\mathcal{Q}^n_{\Phi}\cap\bar{\mathcal{Q}}^n_{\Phi}\;\left|\;
        \mathbf{L}_\beta\;\;
  \text{has no
nonzero entry on the
          diagonal}
      \right.\right\},\\
\mathcal{H}_3&=
    \left\{\beta\in\mathcal{Q}^n_{\Phi}\cap\bar{\mathcal{Q}}^n_{\Phi}
\cap\mathcal{R}_{\mathrm{re}}
\left|
        \mathbf{L}_\beta\;\;
        \begin{gathered}
          \text{is indefinite and has nonzero}\\[-5pt]
\text{entries on and off the
          diagonal}
        \end{gathered}
\right.\right\}.
    \end{aligned}
  \end{equation*}
  If a coefficient $c_\beta$ in the decomposition of $\mathbf{L}$ is
  different from zero for some $\beta\in\mathcal{H}_1$, then
  $\mathbf{L}$ is indefinite. The same is true for
  $\beta\in\mathcal{H}_3$. Indeed, $\beta$ is real and there exists
  $\gamma$ such that $\beta=\gamma+\bar{\gamma}$ because
  $\mathbf{L}_{\beta}$ has a nonzero diagonal entry. Moreover, there
  are roots $\alpha,\alpha'\in\mathcal{Q}$ such that
  $\beta=\alpha+\bar{\alpha}'$ because $\mathbf{L}_{\beta}$ has a
  nonzero entry off of the main diagonal. By Lemma~\ref{lm:mmaa}, the
  restriction of $\mathbf{L}_{\beta}$ to
  $\mathfrak{g}^{\alpha}+\mathfrak{g}^{\alpha'}$ is indefinite.  Hence
  the only possible nonzero coefficients are those corresponding to
  roots in $\mathcal{H}_0=\mathcal{H}^{\oplus}$ and $\mathcal{H}_2$. 
  The statement then follows from the following statement about
  Hermitian matrices:\par
\textit{Let $A=(a_{i,j})_{1\leq{i,j}\leq\ell}$ 
be a Hermitian symmetric matrix with $a_{i,i}=0$ for $i=1,\dots,\ell$, and
$D=\mathrm{diag}(d_{1,1},\hdots,d_{\ell,\ell})$. If $D+A$ is
semidefinite then
$\ker{D}\subset\ker{D+A}$.}
\end{proof}
By using Proposition \ref{pp:da} we obtain
\begin{prop} Let $\mathfrak{g}$ be a semisimple real Lie algebra,
$\mathfrak{h}$ a 
Cartan subalgebra of $\mathfrak{g}$ and
$\mathcal{B}$ a system of simple roots of $\mathcal{R}^+$.
Let $\Psi\subset\Phi\subset\mathcal{B}$. \par
 If $\mathfrak{Z}(M(\mathfrak{g},\mathfrak{q}_{\Phi}))$
is subelliptic, then 
$\mathfrak{Z}(M(\mathfrak{g},\mathfrak{q}_{\Psi}))$ is
also subelliptic.\qed
\end{prop}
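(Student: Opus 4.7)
The strategy is to exhibit a natural $CR$ submersion between the two real orbits and then invoke Proposition \ref{pp:da} to transfer the higher Levi form concavity condition \eqref{eq:cra} from $M_\Phi:=M(\mathfrak{g},\mathfrak{q}_\Phi)$ to $M_\Psi:=M(\mathfrak{g},\mathfrak{q}_\Psi)$.

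First, I observe that $\Psi\subset\Phi$ combined with formula \eqref{eq:moj} yields $\mathcal{Q}_\Phi\subset\mathcal{Q}_\Psi$, hence $\mathfrak{q}_\Phi\subset\mathfrak{q}_\Psi$ and $\mathbf{Q}_\Phi\subset\mathbf{Q}_\Psi$. Choosing base points $\mathbf{o}_\Phi\in{M}_\Phi$ and $\mathbf{o}_\Psi\in{M}_\Psi$ corresponding to matching cosets under the canonical projection $\mathbf{G}^{\mathbb{C}}/\mathbf{Q}_\Phi\to\mathbf{G}^{\mathbb{C}}/\mathbf{Q}_\Psi$, this restricts to a $\mathbf{G}$-equivariant surjection
\[
\phi:M_\Phi\to M_\Psi,\qquad g\cdot\mathbf{o}_\Phi\mapsto g\cdot\mathbf{o}_\Psi.
\]
At $\mathbf{o}_\Phi$, the differential $d\phi$ is the canonical quotient $\mathfrak{g}/(\mathfrak{g}\cap\mathfrak{q}_\Phi)\twoheadrightarrow\mathfrak{g}/(\mathfrak{g}\cap\mathfrak{q}_\Psi)$, which is surjective since $\mathfrak{g}\cap\mathfrak{q}_\Phi\subset\mathfrak{g}\cap\mathfrak{q}_\Psi$. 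By $\mathbf{G}$-equivariance, $\phi$ is a submersion at every point. Moreover, the identity $d\varpi_\Psi=d\phi\circ d\varpi_\Phi$ together with $\mathfrak{q}_\Phi\subset\mathfrak{q}_\Psi$ gives $d\phi(T^{0,1}_{\mathbf{o}_\Phi}M_\Phi)\subset T^{0,1}_{\mathbf{o}_\Psi}M_\Psi$, i.e.\ $\phi$ is a $CR$ map, hence a $Z$-morphism in the sense of Definition \ref{def:mpe}.

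With these preparations, I apply Proposition \ref{pp:da} to $N=M_\Phi$, $M=M_\Psi$, $\phi$ as above, $q_0=\mathbf{o}_\Phi$ and $p_0=\mathbf{o}_\Psi$. By Proposition \ref{pp:pba}, every point of $M_\Phi$ and $M_\Psi$ is regular for the respective distribution of $(0,1)$ vector fields. The subellipticity hypothesis on $\mathfrak{Z}(M_\Phi)$ is condition \eqref{eq:be} at $\mathbf{o}_\Phi$, which by Proposition \ref{pp:crx} (applicable since $M_\Phi$ is a $CR$ manifold, hence formally integrable, with all points regular) is equivalent to \eqref{eq:cra} at $\mathbf{o}_\Phi$. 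The injectivity of the pullback $\phi^*_{q_0}\colon H^0_{p_0}M_\Psi\to H^0_{q_0}M_\Phi$ is automatic from $\phi$ being a submersion. Proposition \ref{pp:da} then yields conditions \eqref{eq:cra} and \eqref{eq:be} at $\mathbf{o}_\Psi$ for $\mathfrak{Z}(M_\Psi)$, and Corollary \ref{cor:sa} gives subellipticity at $\mathbf{o}_\Psi$. By $\mathbf{G}$-homogeneity, $\mathfrak{Z}(M_\Psi)$ is subelliptic at every point of $M_\Psi$.

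The argument is essentially functorial; no genuine analytic work is needed beyond what was already encoded into Proposition \ref{pp:da}. The only points requiring care are the verification that the real orbit structure is compatible with the projection of complex flag manifolds (handled by choosing matching base points and using $\mathbf{G}$-equivariance) and the translation between \eqref{eq:be} and \eqref{eq:cra}, which rests on formal integrability and on the regularity statement of Proposition \ref{pp:pba}.
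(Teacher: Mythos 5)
Your proposal is correct and follows the same route as the paper: the inclusions $\mathfrak{q}_{\Phi}\subset\mathfrak{q}_{\Psi}$, $\mathbf{Q}_{\Phi}\subset\mathbf{Q}_{\Psi}$ induce a $\mathbf{G}$-equivariant $CR$ submersion $M(\mathfrak{g},\mathfrak{q}_{\Phi})\to M(\mathfrak{g},\mathfrak{q}_{\Psi})$ to which Proposition \ref{pp:da} is applied, with regularity supplied by Proposition \ref{pp:pba}. You merely make explicit the details (surjectivity of the differential, the $Z$-morphism property, and the translation between \eqref{eq:be} and \eqref{eq:cra} via Propositions \ref{pp:crx} and \ref{pp:pbc}) that the paper leaves implicit, including the tacit identification of ``subelliptic'' with condition \eqref{eq:be} in the homogeneous setting, which the paper's own one-line proof also assumes.
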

\begin{proof}
The inclusions $\mathfrak{q}_{\Phi}\subset\mathfrak{q}_{\Psi}$ 
and $\mathbf{Q}_{\Phi}\subset\mathbf{Q}_{\Psi}$ 
induce a smooth submersion $M(\mathfrak{q},\mathfrak{q}_{\Phi})\to
M(\mathfrak{q},\mathfrak{q}_{\Psi})$, that is also a $CR$ map and satisfies
the hypotheses of Proposition \ref{pp:da}.
\end{proof}

\subsection{Minimal orbits}
In \cite{Wolf69} it is shown that there is a unique $\mathbf{G}$ orbit
in $F$ (the \emph{minimal orbit})
that is compact. It is 
connected and has minimal dimension.
Minimal orbits
have been
studied, from the point of view of $CR$ geometry, in \cite{AMN06}.
If $(\mathfrak{g},\mathfrak{q})$ is the $CR$ algebra of a minimal orbit
$M$, then $\mathfrak{g}_{\mathbf{o}}$ contains a maximally vectorial
Cartan subalgebra $\mathfrak{h}$ of $\mathfrak{g}$.
This choice of $\mathfrak{h}$ is equivalent to the fact that 
$\mathcal{R}_{\mathrm{im}}=\mathcal{R}_{\bullet}$, i.e. that
all imaginary
roots are compact (see \cite{Ara62}).
\par
The fact that $M=M(\mathfrak{g},\mathfrak{q})$ is minimal is then
equivalent to the possibility of choosing the positive root system
$\mathcal{R}^+\subset\mathcal{Q}$ in such a way that
\begin{equation}
  \label{eq:mog}
\text{for}\;\alpha\in\mathcal{R}_{\mathrm{cx}}\quad
\alpha\succ{0}\Longleftrightarrow \bar{\alpha}\succ{0}.
\end{equation}
This leads to a complete classification of the minimal orbits in terms
of \textit{cross-marked Satake diagrams} (see e.g. \cite{AMN06}),
i.e. by systems $\Phi\subset\mathcal{B}$, where $\mathcal{B}$
are the simple roots of a positive system 
$\mathcal{R}^+$ satisfying \eqref{eq:mog}.
Using these $\Phi$'s, we give below a complete classification 
of the minimal orbits for which the distribution of $(0,1)$ vector
fields satisfies the higher order Levi concavity condition \eqref{eq:cra}.\par
When $\mathfrak{g}$ decomposes into a sum
$\mathfrak{g}=\mathfrak{g}^{(1)}\oplus\cdots\oplus\mathfrak{g}^{(\ell)}$
of simple ideals, the $CR$ manifold $M(\mathfrak{g},\mathfrak{q})$
is a Cartesian product 
$M(\mathfrak{g}^{(1)},\mathfrak{q}^{(1)})\times\cdots\times
M(\mathfrak{g}^{(\ell)},\mathfrak{q}^{(\ell)})$,
(see \cite[p.490]{AMN06}). Thus we can reduce the question of
the validity of the higher order Levi form concavity condition,
and also of the subellipticity and hypoellipticity of 
$\mathfrak{Z}(M(\mathfrak{g},\mathfrak{q}))$,
to the case where $\mathfrak{g}$ is a simple real Lie algebra.
Namely, this property will be valid for $M(\mathfrak{g},\mathfrak{q})$
if, and only if, it is valid for each factor 
$M(\mathfrak{g}^{(h)},\mathfrak{q}^{(h)})$, for $1\leq{h}\leq{\ell}$.\par
Thus we state the following classification theorem for the case of
a simple~$\mathfrak{g}$.\par
For the Satake diagrams characterizing the different simple real
Lie algebras, their labels and those of the roots of a simple positive
system,
 we refer to \cite{Hel78}, or to the Appendix in \cite{AMN06}.
\begin{thm}
  Let $(\mathfrak{g},\mathfrak{q}_{\Phi})$ be the $CR$ algebra of
a minimal orbit $M$, with $\mathfrak{g}$ simple and assume that
$M$ is a $CR$ manifold of finite type in the sense of \cite{BG77}.
 Then $\mathfrak{Z}(M)$
satisfies the higher order Levi form concavity condition 
\eqref{eq:cra} if and only if
one of the following conditions is satisfied
\begin{enumerate}
\item $\mathfrak{g}$ is either of the complex type, or compact, or of one
of the real types $\mathrm{AI\!{I}}$, $\mathrm{AI\!{I}\!{I}b}$,
$\mathrm{B}$, $\mathrm{CI\!{I}b}$, $\mathrm{DI}$, $\mathrm{DI\!{I}}$,
$\mathrm{DI\!{I}\!{I}a}$, $\mathrm{EI\!{I}}$, $\mathrm{EI\!{V}}$,
$\mathrm{EV\!{I}}$, $\mathrm{EV\!{I}\!{I}}$, $\mathrm{EI\!{X}}$;
\item $\mathfrak{g}\simeq\mathfrak{su}(p,q)$ 
is of type $\mathrm{AI\!{I}\!{I}a}-\mathrm{AI\!{V}}$
and $\Phi\cap\{\alpha_p,\alpha_q\}=\emptyset$;
\item $\mathfrak{g}\simeq\mathfrak{sp}(p,\ell-p)$, with $2p<\ell$,
is of type $\mathrm{CI\!{I}a}$ and either
$\Phi$ is all contained
in $\{\alpha_{2j-1}\mid 1\leq{j}<{p}\}
\cup\{\alpha_j\mid 2p<j\leq{\ell}\}$,
or $\Phi$ is all contained
in $\{\alpha_{2j-1}\mid 1\leq{j}\leq{p}\}$;
\item $\mathfrak{g}\simeq\mathfrak{so}^*(2\ell)$, with $\ell\in 2\mathbb{Z}_+$,
$
\Phi\cap\{\alpha_{\ell-1},\alpha_{\ell}\}=\emptyset$; 
\item $\mathfrak{g}$ is of type $\mathrm{EI\!{I}\!{I}}$ and
$\Phi\subset\{\alpha_3,\alpha_4,\alpha_5\}=\mathcal{R}_{\bullet}\cap
\mathcal{B}$;
\item $\mathfrak{g}$ is of type $\mathrm{FI\!{I}}$ and
$\Phi\subset\{\alpha_1,\alpha_2\}$.
\end{enumerate}
\end{thm}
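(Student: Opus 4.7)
The plan is to reduce the classification to a purely root-theoretic problem by combining Proposition \ref{pp:pbc} with Proposition \ref{pp:moa}, and then to carry out a case-by-case analysis over the Satake diagrams of simple real Lie algebras. First I would translate the higher Levi form concavity condition \eqref{eq:cra} into the condition \eqref{eq:pbbh} on the $CR$ algebra $(\mathfrak{g},\mathfrak{q}_{\Phi})$, and then use \eqref{eq:mou}--\eqref{eq:mov} of Proposition \ref{pp:moa} to compute $\mathcal{K}_{\Phi}$. The characteristic real covectors whose Levi forms are semidefinite are parametrized by real roots $\gamma\in\mathcal{Q}^n_{\Phi}\cap\bar{\mathcal{Q}}^n_{\Phi}\cap\mathcal{R}_{\mathrm{re}}$ with $\mathbf{L}_{\gamma}\geq 0$, so the task reduces to showing that for each such $\gamma$ there is a chain of roots $\alpha_0\in\bar{\mathcal{Q}}_{\Phi}$, $\alpha_1,\dots,\alpha_r\in\mathcal{K}_{\Phi}\cup\bar{\mathcal{K}}_{\Phi}$ with consecutive partial sums in $\mathcal{R}$ summing to $-\gamma$, as required in \eqref{eq:mot}.

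For the sufficiency direction I would inspect the Satake diagram of each simple $\mathfrak{g}$ and determine, as a function of $\Phi\subset\mathcal{B}$, which real roots $\gamma\in\mathcal{Q}^n_{\Phi}$ yield a semidefinite $\mathbf{L}_{\gamma}$. For complex, compact and essentially pseudoconcave real types the classification of \cite[\S 13]{AMN06} already provides enough: the condition \eqref{eq:cra} is weaker than essential pseudoconcavity, so those minimal orbits are automatically in our list. For the remaining series of type $\mathrm{AI\!{I}\!{I}a}$--$\mathrm{AI\!{V}}$, $\mathrm{CI\!{I}a}$, $\mathrm{DI\!{I}\!{I}b}$, $\mathrm{EI\!{I}\!{I}}$, $\mathrm{FI\!{I}}$, I would use the explicit descriptions of $\mathcal{R}$, of the Cartan involution on roots and of $\mathcal{R}_{\bullet}$ to check that, under the stated combinatorial constraints on $\Phi$, enough simple roots lie in $\mathcal{K}_{\Phi}\cup\bar{\mathcal{K}}_{\Phi}$ to build the required chains descending from $\bar\alpha_0$ to $-\gamma$. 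In parallel, the finite type hypothesis ensures that $-\gamma$ lies in the additive span of $\mathcal{Q}_{\Phi}\cup\bar{\mathcal{Q}}_{\Phi}$, so the combinatorial obstruction is only about keeping the intermediate partial sums inside $\mathcal{R}$ and their addenda inside $\mathcal{K}_{\Phi}\cup\bar{\mathcal{K}}_{\Phi}$.

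For the necessity direction I would, for every pair $(\mathfrak{g},\Phi)$ outside the list, exhibit an explicit real root $\gamma\in\mathcal{Q}^n_{\Phi}\cap\mathcal{R}_{\mathrm{re}}$ with $\mathbf{L}_{\gamma}$ positive semidefinite and nonzero and such that no chain from $\bar{\mathcal{Q}}_{\Phi}$ with increments in $\mathcal{K}_{\Phi}\cup\bar{\mathcal{K}}_{\Phi}$ can reach $-\gamma$. Concretely, for $\mathrm{AI\!{I}\!{I}a}$--$\mathrm{AI\!{V}}$ the forbidden $\Phi$ are those meeting $\{\alpha_p,\alpha_q\}$: a crossed root in that position forces $\mathbf{L}_{\gamma}$ to be definite on the complement of $\mathfrak{k}_{\mathfrak{q}_{\Phi}}$, and by the Remark after Proposition \ref{prop:Bbb} subellipticity fails. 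Analogous arguments, using the structure of $\mathcal{R}_{\bullet}$ and the conjugation action on $\mathcal{R}$, rule out the excluded $\Phi$'s in the remaining types; the shortest root of the subsystem generated by $\{\alpha,\bar\alpha\}$ in each case provides the obstructing $\gamma$, and Lemma \ref{lm:mmaa} is invoked to show that no alternative decomposition of $-\gamma$ as a chain of roots in $\mathcal{K}_{\Phi}\cup\bar{\mathcal{K}}_{\Phi}$ exists.

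The hardest part will be the bookkeeping for the exceptional types, especially $\mathrm{EI\!{I}\!{I}}$ and $\mathrm{FI\!{I}}$, where one must enumerate the positive roots with support meeting $\Phi$, identify all real roots arising as $\alpha+\bar\alpha$, and check positivity of the complexified Levi forms $\mathbf{L}_{\alpha+\bar\alpha}$ on each relevant two-dimensional subspace using Lemma \ref{lm:mmaa} to control the interaction of non-overlapping entries. I would organize this bookkeeping by tabulating, for each Satake diagram, the subset $\mathcal{Q}^n_{\Phi}\cap\bar{\mathcal{Q}}^n_{\Phi}\cap\mathcal{R}_{\mathrm{re}}$ and the induced restrictions on the admissible $\Phi$, so that the six cases in the statement appear as the precise combinatorial outcome of this analysis.
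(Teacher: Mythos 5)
Your reduction is the right one: Proposition \ref{pp:pbc} converts \eqref{eq:cra} into the root-theoretic condition \eqref{eq:pbbh}, Proposition \ref{pp:moa} computes $\mathcal{K}_{\Phi}$ and reduces sufficiency to the chain condition \eqref{eq:mot}, and for the sufficiency direction your plan matches the paper: the essentially pseudoconcave cases are disposed of by \cite[Theorem 13.4]{AMN06}, and the residual cases ($\mathrm{AI\!{I}\!{I}a}$, $\mathrm{CI\!{I}a}$, $\mathrm{EI\!{I}\!{I}}$, $\mathrm{FI\!{I}}$) are handled by explicitly listing the real roots $\gamma$ with $\mathbf{L}_{\gamma}\geq 0$ and exhibiting chains with increments in $\mathcal{K}_{\Phi}\cup\bar{\mathcal{K}}_{\Phi}$.

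The necessity direction is where you have a genuine gap. You propose to show, for every excluded $(\mathfrak{g},\Phi)$, that some $\mathbf{L}_{\gamma}$ is ``definite on the complement of $\mathfrak{k}_{\mathfrak{q}_{\Phi}}$'' and then to invoke the Remark after Proposition \ref{prop:Bbb}. That Remark requires $\mathcal{L}_{\xi}$ to be definite on a complement of $Z_pM\cap\overline{Z_pM}$, which for a $CR$ manifold is all of $T^{0,1}_pM$; in the excluded cases the Levi forms of $M$ itself are degenerate (that is precisely why a higher-order condition is needed), so the Remark does not apply. More fundamentally, semidefiniteness of $\mathbf{L}_{\gamma}$ with a large kernel does not by itself refute \eqref{eq:pbbh}: one must show that no iterated bracket with entries in $\mathfrak{k}_{\mathfrak{q}}\cup\bar{\mathfrak{k}}_{\mathfrak{q}}$ pairs nontrivially with $f_{\gamma}$, and Lemma \ref{lm:mmaa} (whose actual role is to detect indefiniteness of $\mathbf{L}_{\beta}$ with mixed diagonal and off-diagonal entries, i.e.\ to compute $\mathcal{K}_{\Phi}$) gives you no such statement. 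The paper avoids this case-by-case obstruction almost entirely: for all but two excluded configurations it exhibits a $\mathbf{G}$-equivariant $CR$ submersion of $M$ onto one of the strictly pseudoconvex manifolds of \cite[Examples~14.1--14.3]{AMN06} and applies Lemma \ref{lm:ddb}, which shows that the pullback of a positive characteristic codirection can never satisfy \eqref{eq:cra} because $\mathbb{K}_{\mathfrak{Z}}(N)$ is then vertical. Only for $\mathrm{CI\!{I}a}$ with $p=q$ and for $\mathrm{FI\!{I}}$ with $\alpha_3\in\Phi$ does it run the direct combinatorial obstruction, and there the mechanism is a coefficient count: the coefficient of a distinguished simple root in $-\gamma$ equals $-2$, whereas every root of $\bar{\mathcal{Q}}_{\Phi}$, and every increment from $\mathcal{K}_{\Phi}\cup\bar{\mathcal{K}}_{\Phi}$, contributes at least $-1$ only once. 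You would need either the fibration argument or this coefficient argument, carried out for each excluded $\Phi$, to close the necessity direction.
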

Note that 
the condition for $M$ being 
of finite type in the sense of \cite{BG77} is explicitly described in terms
of $\Phi$ in \cite[Theorem 9.1]{AMN06}.
\begin{proof}
We use the results of \cite[\S 13, \S 14]{AMN06}. 
We stress the fact that we are assuming that $M$ is of finite type,
so that some choices of $\Phi$ are excluded from our consideration
because of \cite[Theorem 9.1]{AMN06}.
\par
First we prove that if \eqref{eq:cra} holds true, then 
$(\mathfrak{g},\mathfrak{q}_{\Phi})$ satisfies one of the conditions
$(1)\hdots(6)$. 
If
$(\mathfrak{g},\mathfrak{q}_{\Phi})$ is not one of those listed in the
statement, 
in all cases, with the two exceptions of $\mathrm{{C}I\!{I}a}$ with 
$\alpha_{2p-1}\in\Phi$ and $\Phi\cap\{\alpha_h\mid{h}>2p\}\neq\emptyset$,
and $\mathrm{{F}I\!{I}}$ with $\alpha_3\in\Phi$,
then $M$ admits a $\mathbf{G}$-equivariant fibration onto
one of the manifolds described in \cite[Examples~14.1, 14.2, and
14.3]{AMN06}. These are strictly pseudoconvex, hence $M$ does not
satisfy the higher order Levi form concavity condition by
Lemma~\ref{lm:ddb}. The two remaining cases 
will be discussed below, while proving the opposite implications.
\par\medskip
We know from \cite[Theorem 13.4]{AMN06} that $M$
is essentially pseudoconcave, and hence in particular
\eqref{eq:mot} and \eqref{eq:cra} hold,  in case
one of the following is satisfied:
\begin{enumerate}
  \item[$(1')$] case (1),
\item[$(2')$] case (2) with either $\Phi\subset\mathcal{R}_{\bullet}$,
or $\Phi\cap\mathcal{R}_{\bullet}=\emptyset$,
\item[$(3')$] case (3) with either
  $\Phi\subset\{\alpha_1,\dots,\alpha_{2p-1}\}$, or
  $\Phi\subset\{\alpha_{2p+1},\dots,\alpha_{\ell}\}$,
\item[$(4')$]  case (4),
\item[$(5')$]  case (5) when either 
$\alpha_4\in\Phi\subset\mathcal{R}_{\bullet}$, or $\Phi=\{\alpha_3,\alpha_5\}$;
\item[$(6')$] case (6) when $\Phi\subset\{\alpha_1,\alpha_2\}$.
\end{enumerate}
In all of these situations the subellipticity of
$\mathfrak{Z}(M)$ was already proved in \cite{HN00}.\par
To complete the proof, we need only consider the cases in the list 
in which
$M$ is not essentially pseudoconcave. Next we proceed to a case by case
discussion. 
\subsection*{$\mathrm{AI\!{I}\!{I}a}$} 
In this case $\mathfrak{g}\simeq\mathfrak{su}(p,q)$ with $2\leq{p}<q$.
Let $\Phi=\{\alpha_{j_1},\hdots,\alpha_{j_k}\}$. We need to discuss 
the case where $\Phi\cap\{\alpha_p,\alpha_q\}=\emptyset$,
but $\Phi$ intersects both $\mathcal{R}_{\bullet}$ and its complement
$\mathcal{B}\setminus\mathcal{R}_{\bullet}$. 
By the condition that $M$ is of finite type, we know from
\cite[Theorem 13.4]{AMN06} that
$\Phi$ does not contain at the same time
a simple root $\alpha_j$, with $1\leq{j}<p$, and its symmetrical
$\alpha_{p+q-j}$.
Since
$\Phi$ and $\Phi'=\{\alpha_{p+q-j}\mid\alpha_j\in\Phi\}$
define anti-isomorphic $CR$ manifolds, we can assume in the proof
that{\small
\begin{equation*}\begin{gathered}
  1\leq{j}_1<\cdots<j_a<p<j_{a+1}<\cdots<j_b<q<j_{b+1}<\cdots<j_k<j_{k+1}=p+q,\\
 p-j_a<j_{b+1}-q .
\end{gathered}
\end{equation*}}
The real roots that correspond to non zero semidefinite Levi forms are
\begin{equation*}
  \gamma_s=\sum_{j=s}^{p+q-s}\!\alpha_j\quad\text{for}\quad p+q-j_{b+1}<s\leq
j_a.
\end{equation*}
Indeed, those $\alpha\in\bar{\mathcal{Q}}_{\Phi}\setminus{\mathcal{Q}}_{\Phi}$
for which $\mathfrak{g}^{\mathbb{C}}_{\alpha}$ is not contained 
in the kernel of the Levi form $\mathbf{L}_{\gamma}$ 
must satisfy $\mathrm{supp}(\alpha)\cap\Phi=\mathrm{supp}(\gamma)\cap\Phi$.
\par
Thus all roots 
$\alpha$ in $\bar{\mathcal{Q}}_{\Phi}\setminus{\mathcal{Q}}_{\Phi}$
with $\mathrm{supp}(\alpha)\cap\Phi\not\supset\{\alpha_{j_a},\alpha_{j_b}\}$
belong to $\overline{\mathcal{K}}_{\Phi}$.
This is the case in particular for all the simple roots
not in $\mathcal{Q}$ and for the roots $-\sum_{j=p}^{q-1}\!\alpha_j$ and
$-\sum_{j=p+1}^{q}\!\alpha_j$, whose conjugates are $-\alpha_q$ and $-\alpha_p$,
respectively. This implies that all roots in 
$-(\mathcal{Q}^n_{\Phi}\cap\bar{\mathcal{Q}}^n_{\Phi})$ are sums of
roots in
$\mathcal{K}_{\Phi}
\cup\overline{\mathcal{K}}_{\Phi}$, giving condition \eqref{eq:mot}.
\subsection*{$\mathrm{CI\!{I}a}$}  We need to consider the cases where
$\Phi=\{\alpha_{j_1},\hdots,\alpha_{j_k}\}$ 
contains at the same time a root $\alpha_j$ with $j<2p$ and a root
$\alpha_j$ with $j>2p$. This means that
$k\geq{2}$, and we can order the indices in such a way that
\begin{equation*}
  1\leq\cdots<{j_a}<2p<{j_{a+1}}<\cdots\leq
\ell,
\end{equation*}
with $j_r$ odd for $1\leq{r}\leq{a}$. The positive real roots 
are
\begin{equation*}
  \gamma_r=\alpha_{2r-1}+\alpha_{\ell}+2\sum_{j=2r}^{\ell-1}\!\!\alpha_j.
\end{equation*}
Since their supports contain $\alpha_{j_{a+1}}$,
they all belong to $\mathcal{Q}_{\Phi}^n\cap\bar{\mathcal{Q}}_{\Phi}^n$.
Let $j_a=2q-1$.
The Levi forms $\mathbf{L}_{\gamma_r}$ are indefinite if $r>q$
by \cite[$(ii)$, p.520]{AMN06}.  Next we note that
$\mathbf{L}_{\gamma_r}$ with $1\leq{r}<q$ are identically zero, because
there is no root $\alpha$ in 
$\bar{\mathcal{Q}}_{\Phi}\setminus{\mathcal{Q}}_{\Phi}$ with
$\mathrm{supp}(\alpha)\cap\Phi=\mathrm{supp}(\gamma_r)\cap\Phi$.
Indeed, if $\alpha$ is a negative root whose support contains
$\mathrm{supp}(\gamma_r)\cap\Phi$ for some $1\leq{r}<q$, then
$\alpha_{2q-2}$ and $\alpha_{2q}$ both belong to $\mathrm{supp}(\alpha)$
and hence also to $\mathrm{supp}(\bar{\alpha})$. This implies that
also $\alpha_{2q-1}\in\Phi$ belongs to the support of 
both $\alpha$ and $\bar{\alpha}$,
showing that $-\alpha\in\mathcal{Q}_{\Phi}^n\cap\bar{\mathcal{Q}}_{\Phi}^n$.
\par
It was shown in \cite[p.520, $(iii)$]{AMN06} that $\mathbf{L}_{\gamma_q}$
is not zero and is positive semidefinite. The set of pairs 
$\beta,\beta'\in\bar{\mathcal{Q}}_{\Phi}\setminus{\mathcal{Q}}_{\Phi}$
with $\beta+\bar{\beta}'=\gamma_q$ was shown in \cite[p.520, $(iii)$]{AMN06}
to consist of the pairs $(\beta_s,\beta_s)$ with
\begin{equation*}
  \bar\beta_s=-\sum_{j=2q}^{s}\!\!\alpha_j\quad\text{for}\quad
2p\leq{s}<j_{a+1}.
\end{equation*}
We now distinguish two cases. If $q<p$, the root 
$-\bar\alpha_{2p}$ belongs to $\mathcal{K}_{\Phi}$. 
Then the root system generated by 
$\mathcal{K}_{\Phi}
\cup\overline{\mathcal{K}}_{\Phi}$
contains $-(\mathcal{Q}^n_{\Phi}\cap\bar{\mathcal{Q}}^n_{\Phi})$
and hence the higher order Levi form concavity condition 
\eqref{eq:mot} is satisfied.\par
When $p=q$, then no element of $(\mathcal{K}_{\Phi}\cup
\overline{\mathcal{K}}_{\Phi})\setminus\mathcal{R}^+$ contains $\alpha_{2p}$ 
in its support. Thus condition \eqref{eq:mot} fails for
$\gamma_q$ because, in the expression 
for $(-\gamma_q)$ in \eqref{eq:moh},
the coefficient of $\alpha_{2p}$ is $(-2)$, while for
all roots in $\bar{\mathcal{Q}}_{\Phi}$ it is $\geq(-1)$.

\subsection*{$\mathrm{EI\!I\!I}$} 
We only need to consider the cases where either $\Phi=\{\alpha_3\}$,
or $\Phi=\{\alpha_5\}$. Due to the symmetry of the $\mathrm{EI\!I\!I}$
diagram, we can restrict our attention to the case where $\Phi=\{\alpha_3\}$.\par
We note that $\gamma=\alpha_1+2\alpha_2+2\alpha_3+3\alpha_4+2\alpha_5+\alpha_6$
is the unique positive real root. It belongs to $\mathcal{Q}^n_{\Phi}\cap
\bar{\mathcal{Q}}^n_{\Phi}$ and the corresponding Levi form
$\mathbf{L}_{\gamma}$ is semidefinite, having rank $1$. 
Indeed, $\alpha=-\alpha_1-\alpha_2-2\alpha_3-2\alpha_4-\alpha_5$
is the unique root in $\bar{\mathcal{Q}}_{\Phi}\setminus{\mathcal{Q}}_{\Phi}$ 
for which 
$\alpha+\bar\alpha=-\gamma$, and there is no other pair
$\beta,\beta'\in\bar{\mathcal{Q}}_{\Phi}\setminus{\mathcal{Q}}_{\Phi}$ 
for which $\beta+\bar{\beta}'=-\gamma$.
Thus all simple roots 
belong to
$\mathcal{K}_{\Phi}
\cup\overline{\mathcal{K}}_{\Phi}$,
and hence the higher Levi form concavity condition \eqref{eq:mot} is satisfied.
\subsection*{$\mathrm{FI\!I}$}
We are left to discuss the case where $\{\alpha_3\}
\subset\Phi\subset\{\alpha_1,\alpha_2,\alpha_3\}$. There is only one
positive real root, namely
$\gamma=\alpha_1+2\alpha_2+3\alpha_3+2\alpha_4$, that belongs to
$\mathcal{Q}^n_{\Phi}\cap\bar{\mathcal{Q}}^n_{\Phi}$.  The
corresponding Levi form $\mathbf{L}_{\gamma}$ has rank one, and hence
it is semidefinite.  The set $\mathcal{K}_{\Phi}$ is the
complement of
$\{-{\alpha}_4\}$ in
$\mathcal{Q}_{\Phi}$. Then
no element of $(\mathcal{K}_{\Phi}\cup
\overline{\mathcal{K}}_{\Phi})\setminus\mathcal{R}^+$ contains $\alpha_{4}$ 
in its support. Thus condition \eqref{eq:mot} fails for
$\gamma$ because, in the expression 
for $(-\gamma)$ in \eqref{eq:moh},
the coefficient of $\alpha_{4}$ is $(-2)$, while for
all roots in $\bar{\mathcal{Q}}_{\Phi}$ it is $\geq(-1)$.
\end{proof}


\begin{thebibliography}{10}

\bibitem{AMN06}
A.~Altomani, C.~Medori, and M.~Nacinovich, \emph{The {CR} structure of minimal
  orbits in complex flag manifolds}, J. {L}ie Theory \textbf{16} (2006), no.~3,
  483--530. \MR{MR2248142 (2007c:32043)}

\bibitem{AMN07}
\bysame, \emph{Orbits of real forms in complex flag manifolds}, Ann. Scuola
  Norm. Sup. Pisa Cl. Sci. (to appear), preprint, arXiv:math/0611755.

\bibitem{Ara62}
S.~Araki, \emph{On root systems and an infinitesimal classification of
  irreducible symmetric spaces}, J. Math. Osaka City Univ \textbf{13} (1962),
  1--34.

\bibitem{BG77}
T.~Bloom and I.~Graham, \emph{A geometric characterization of points of type
  {$m$} on real submanifolds of {${\bf C}\sp{n}$}}, J. Differential Geometry
  \textbf{12} (1977), no.~2, 171--182. \MR{MR0492369 (58 \#11495)}

\bibitem{Bou75}
N.~Bourbaki, \emph{\'{E}l\'ements de math\'ematique}, Hermann, Paris, 1975,
  Fasc. XXXVIII: Groupes et alg\`ebres de {L}ie. Chapitre VII: Sous-alg\`ebres
  de {C}artan, \'el\'ements r\'eguliers. Chapitre VIII: Alg\`ebres de {L}ie
  semi-simples d\'eploy\'ees, Actualit\'es Scientifiques et Industrielles, No.
  1364. \MR{MR0453824 (56 \#12077)}

\bibitem{BDKT06}
A.~Bove, M.~Derridj, J.~J. Kohn, and D.~S. Tartakoff, \emph{Sums of squares of
  complex vector fields and (analytic-) hypoellipticity}, Math. Res. Lett.
  \textbf{13} (2006), no.~5-6, 683--701. \MR{MR2280767 (2007k:35051)}

\bibitem{christ-2005}
M.~Christ, \emph{A remark on sums of squares of complex vector fields}, 2005.

\bibitem{ff81}
C.~Fefferman and D.~H. Phong, \emph{The uncertainty principle and sharp
  {G}\.arding inequalities}, Comm. Pure Appl. Math. \textbf{34} (1981), no.~3,
  285--331. \MR{MR611747 (82j:35140)}

\bibitem{heb96}
E.~Hebey, \emph{Sobolev spaces on {R}iemannian manifolds}, Lecture Notes in
  Mathematics, vol. 1635, Springer-Verlag, Berlin, 1996. \MR{MR1481970
  (98k:46049)}

\bibitem{Hel78}
S.~Helgason, \emph{Differential geometry, {L}ie groups, and symmetric spaces},
  Pure and Applied Mathematics, vol.~80, Academic Press, New York, 1978.
  \MR{MR514561 (80k:53081)}

\bibitem{HN96}
C.~D. Hill and M.~Nacinovich, \emph{Pseudoconcave {$CR$} manifolds}, Complex
  Analysis and Geometry (New York) (V.~Ancona, E.~Ballico, and A.~Silva, eds.),
  Lecture Notes in Pure and Applied Mathematics, vol. 173, Marcel Dekker, Inc,
  1996, pp.~275--297.

\bibitem{HN00}
\bysame, \emph{A weak pseudoconcavity condition for abstract almost {$CR$}
  manifolds}, Invent. Math. \textbf{142} (2000), 251--283.

\bibitem{HN03}
\bysame, \emph{Weak pseudoconcavity and the maximum modulus principle}, Ann.
  Mat. Pura Appl. (4) \textbf{182} (2003), no.~1, 103--112. \MR{MR1970466
  (2004k:32056)}

\bibitem{Hor67}
L.~H{\"o}rmander, \emph{Hypoelliptic second order differential equations}, Acta
  Math. \textbf{119} (1967), 147--171. \MR{MR0222474 (36 \#5526)}

\bibitem{Hor85}
\bysame, \emph{The analysis of linear partial differential operators. {III}},
  Grundlehren der Mathematischen Wissenschaften [Fundamental Principles of
  Mathematical Sciences], vol. 274, Springer-Verlag, Berlin, 1985,
  Pseudodifferential operators. \MR{MR781536 (87d:35002a)}

\bibitem{K71}
J.~J. Kohn, \emph{Pseudo-differential operators and hypoellipticity}, Partial
  differential equations (Proc. Sympos. Pure Math., Vol. XXIII, Univ.
  California, Berkeley, Calif., 1971), Amer. Math. Soc., Providence, R.I.,
  1973, pp.~61--69. \MR{MR0338592 (49 \#3356)}

\bibitem{Ko03}
\bysame, \emph{Multiplier ideals and microlocalization}, Lectures on partial
  differential equations, New Stud. Adv. Math., vol.~2, Int. Press, Somerville,
  MA, 2003, pp.~141--151. \MR{MR2055843 (2005a:35050)}

\bibitem{Ko05}
\bysame, \emph{Hypoellipticity and loss of derivatives}, Ann. of Math. (2)
  \textbf{162} (2005), no.~2, 943--986, With an appendix by Makhlouf Derridj
  and David S. Tartakoff. \MR{MR2183286 (2006k:35036)}

\bibitem{KN65}
J.~J. Kohn and L.~Nirenberg, \emph{Non-coercive boundary value problems}, Comm.
  Pure Appl. Math. \textbf{18} (1965), 443--492. \MR{MR0181815 (31 \#6041)}

\bibitem{MN05}
C.~Medori and M.~Nacinovich, \emph{Algebras of infinitesimal {CR}
  automorphisms}, J. Algebra \textbf{287} (2005), no.~1, 234--274.
  \MR{MR2134266 (2006a:32043)}

\bibitem{N07}
M.~Nacinovich, \emph{On weakly pseudoconcave {CR} manifolds}, Hyperbolic
  problems and regularity questions, Trends Math., Birkh\"auser, Basel, 2007,
  pp.~137--150. \MR{MR2298789 (2008d:32034)}

\bibitem{pp1}
C.~Parenti and A.~Parmeggiani, \emph{On the hypoellipticity with a big loss of
  derivatives}, Kyushu J. Math. \textbf{59} (2005), no.~1, 155--230.
  \MR{MR2134059 (2005m:35049)}

\bibitem{pp2}
\bysame, \emph{A note on {K}ohn's and {C}hrist's examples}, Hyperbolic problems
  and regularity questions, Trends Math., Birkh\"auser, Basel, 2007,
  pp.~151--158. \MR{MR2298790 (2007m:35019)}

\bibitem{Wolf69}
J.~A. Wolf, \emph{The action of a real semisimple group on a complex flag
  manifold. {I}. {O}rbit structure and holomorphic arc components}, Bull. Amer.
  Math. Soc. \textbf{75} (1969), 1121--1237.

\end{thebibliography}
\providecommand{\bysame}{\leavevmode\hbox to3em{\hrulefill}\thinspace}
\providecommand{\MR}{\relax\ifhmode\unskip\space\fi MR }
\providecommand{\MRhref}[2]{%
  \href{http://www.ams.org/mathscinet-getitem?mr=#1}{#2}
}
\providecommand{\href}[2]{#2}

\end{document}